\newtheorem{theorem}{Theorem}[section]
\newtheorem{proposition}[theorem]{Proposition}
\newtheorem{lemma}[theorem]{Lemma}
\theoremstyle{remark}
\newtheorem{remark}{Remark}[section]
\theoremstyle{definition}
\newtheorem{definition}[theorem]{Definition}
\numberwithin{equation}{section}
\newcommand{\p}{\partial}
\newcommand{\R}{\mathbb{R}}
\renewcommand{\H}{{\mathcal H }}
\newcommand{\al}{\alpha}
\newcommand{\f}{\frac}
\newcommand{\bw}{\mathbf{W}}
\newcommand{\nP}{{\mathbf P}}
\newcommand{\CalAZ}{{\mathcal{A}_0}}
\newcommand{\ASSharp}{{\mathcal{A}_{\sharp,\frac{7}{4}}^2}}
\keywords{hydroelastic waves, low regularity, modified energy estimate}
\subjclass[2020]{76B15, 74F10, 76B07}
\author{Lizhe Wan}
\address{Beijing International Center for Mathematical Research, Peking University}
\curraddr{}
\email{wanlizhe@pku.edu.cn}
\author{Jiaqi Yang}
\address{School of Mathematics and Statistics, Northwestern Polytechnical University}
\curraddr{}
\email{yjqmath@nwpu.edu.cn, yjqmath@163.com}
\begin{document}

\title{Low regularity well-posedness for two-dimensional hydroelastic waves}

\begin{abstract}
	We investigate the low regularity local well-posedness of two-dimensional irrotational deep hydroelastic waves.
   Building on the approach of Ifrim-Tataru \cite{MR3667289} and Ai-Ifrim-Tataru \cite{AIT}, in particular by constructing a cubic modified energy that incorporates a paradifferential weight chosen carefully, we prove that the hydroelastic waves are locally well-posed in $\H^s$ for $s>\f34$.
	\end{abstract}

	\maketitle
    \tableofcontents

\section{Introduction}
The \textbf{hydroelastic wave problem} describes the interaction between elastic structures and hydrodynamic excitation. 
It arises in a wide range of applications, including biology, medical science, and ocean engineering; see, for example, \cite{MR2333062, MR1312617} and the references therein. 
Based on the Cosserat shell theory under Kirchhoff’s hypotheses, which accounts for both bending stresses in the sheet and membrane-stretching tension, Toland \cite{MR2413099} introduced a fully nonlinear elastic model for two-dimensional hydroelastic waves with a clear Hamiltonian structure. This model was later extended to three spatial dimensions by Plotnikov \& Toland \cite{MR2812947}. 
A comprehensive review by Părău \textit{et al.} \cite{MR2812939,MR4823877} summarizes recent advances in the analysis, numerical simulation, experimentation, and applications of hydroelastic waves.

\begin{figure}[htbp] 
    \centering 
\includegraphics[width=0.7\textwidth]{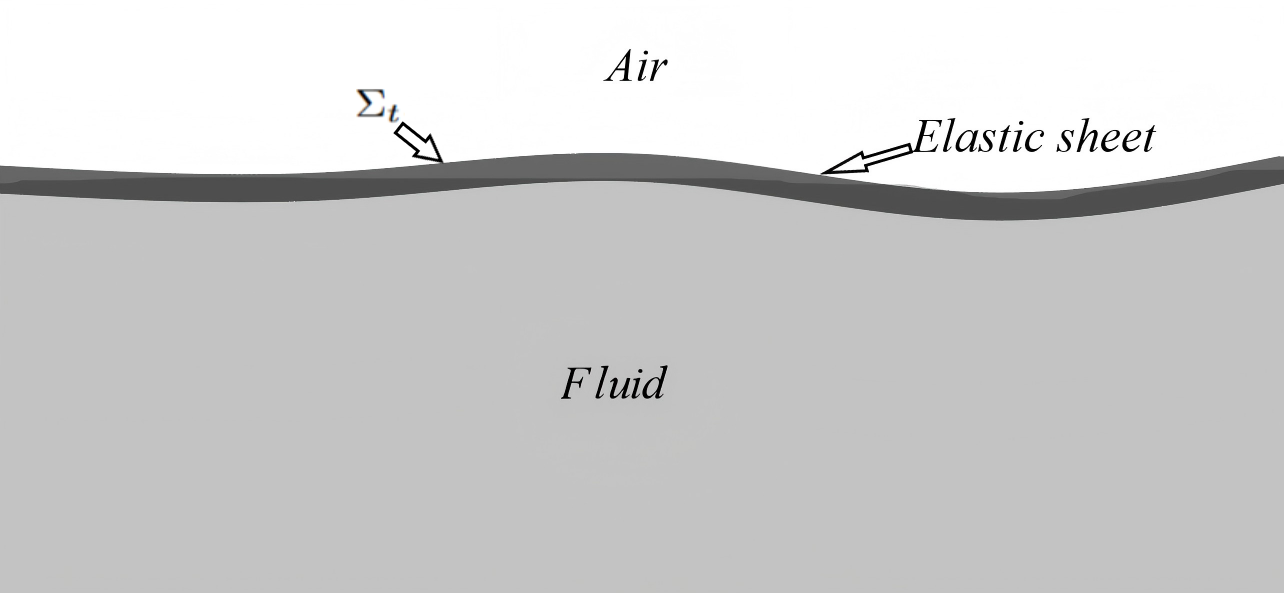}
\caption{Hydroelastic waves} 
    \label{fig}
\end{figure}
In this paper, we investigate a hydroelastic model in which a two-dimensional, inviscid, incompressible fluid undergoes irrotational motion beneath a frictionless thin elastic sheet, as illustrated in Figure \ref{fig}. A typical example of this configuration occurs in polar regions, where water freezes to form an ice sheet during winter. Such ice sheets are often used as roads or runways and can subsequently be fractured by air-cushioned vehicles.

We begin by recalling its mathematical formulation. The fluid domain at time \( t \) is denoted by \( \Omega_t \subset \mathbb{R}^2 \), defined as the region below the graph of a function \( \eta: \mathbb{R}_t \times \mathbb{R}_x \to \mathbb{R} \),
\[
\Omega_t = \{ (x, y) \in \mathbb{R}^2 : y < \eta(t, x) \}.
\]
Its free boundary, corresponding to the deformed elastic sheet, is given by
\[
\Sigma_t = \{ (x, y) \in \mathbb{R}^2 : y = \eta(t, x) \}.
\]
In the absence of gravity, the fluid motion is governed by the following system:
\begin{equation}\label{HWE1}
    \begin{cases}
        \mathbf{u}_t + \mathbf{u} \cdot \nabla \mathbf{u} = -\nabla p & \text{in } \Omega_t, \\
\operatorname{div} \mathbf{u} = 0, \quad \operatorname{curl} \mathbf{u} = 0 & \text{in } \Omega_t, \\
\eta_t=u_2-u_1\eta_x&\text{on } \Sigma_t,\\
        p = \sigma \mathbf{E}(\eta) & \text{on } \Sigma_t, \\
        \mathbf{u}(0, x) = \mathbf{u}_0(x) & \text{in } \Omega_t,
    \end{cases}
\end{equation}
where \( \mathbf{u} = (u_1, u_2) \in \mathbb{R}^2 \) denotes the fluid velocity, \( p \) is the pressure, and \( \sigma \) represents the coefficient of flexural rigidity. The term
\begin{equation}\label{Elastic}
    \sigma\mathbf{E}(\eta) =\sigma\left\{\frac{1}{\sqrt{1+\eta_x^2}} 
    \left[ \frac{1}{\sqrt{1+\eta_x^2}} 
    \left( \frac{\eta_{xx}}{(1+\eta_x^2)^{3/2}} \right)_x \right]_x
    + \frac{1}{2} \left( \frac{\eta_{xx}}{(1+\eta_x^2)^{3/2}} \right)^3\right\}
\end{equation}
represents the restoring force generated by the elastic sheet, expressed as a pressure jump across the interface;
 see Toland \textit{et al.} \cite{MR2413099,MR2812947}, Guyenne and P\u ar\u au \cite{MR3002503} and Groves \textit{et al.} \cite{MR4733013,MR3566507} for related discussions. 
 
Given the irrotational condition \( \operatorname{curl} \mathbf{u} = 0 \), we may introduce a velocity potential \( \phi \) such that \( \mathbf{u} = \nabla \phi \).
Since $\phi$ solves the Laplace equation, by the theory of elliptic PDEs, it suffices to consider its evolution on the boundary $\Gamma_t$.
Let \( \varphi(t, x) = \phi(t, x, \eta(t, x)) \) denote the trace of the velocity potential on the free surface, the kinematic and dynamic boundary conditions (\eqref{HWE1}$_3$ and \eqref{HWE1}$_4$) become:
\begin{align}
    \eta_t &=(\phi_y - \eta_x \phi_x)|_{y=\eta(t,x)}=: G(\eta)\varphi, \label{kin} \\
    \varphi_t &=\left[\frac{1}{2} \left( \phi_y^2 - \phi_x^2 \right) - \eta_x \phi_x \phi_y-p\right]_{y=\eta(t,x)}=-\frac{1}{2} \varphi_x^2+ \f12\f{(\eta_x\varphi_x+G(\eta)\varphi)^2}{1+\eta^2_x}- \sigma \mathbf{E}(\eta). \label{dyn}
\end{align}

The system of \eqref{kin} and \eqref{dyn} is the well-known Zakharov-Craig-Sulem formulation; see \cite{MR2129655,MR1158383}.
 The aim of this paper is to establish the \textbf{low-regularity well-posedness} of the hydroelastic waves.
We begin by recalling some known well-posedness results for water waves and hydroelastic waves.

\subsection{Local well-posedness results of water waves and hydroelastic waves.}
The local well-posedness of water waves has been extensively studied.
These studies typically consider models in which the elastic term \(\sigma\mathbf{E}(\eta)\) in \eqref{dyn} is replaced by one of the following:
\begin{itemize}
    \item a gravity term \(g\eta\) (where \(g\) is the gravitational acceleration),
    \item a capillary term \(-\kappa \eta_{xx}/(1+\eta_x^2)^{3/2}\) (where \(\kappa\) is the coefficient of surface tension),
    \item or a combination of both.
\end{itemize}

The small-data problem was first treated by Nalimov \cite{MR609882} \& Yoshihara \cite{MR660822, MR728155}. A major breakthrough for the local well-posedness with general data was achieved by Wu \cite{MR1471885, MR1641609}. 
Subsequent developments include the works of Christodoulou \& Lindblad \cite{MR1780703}, Lannes \cite{MR2138139}, Coutand \& Shkoller \cite{MR2291920}, Ambrose \& Masmoudi \cite{MR2162781, MR2514378}, Shatah \& Zeng \cite{MR2388661,MR2400608,MR2763036}, Alazard, Burq \& Zuily \cite{MR2805065, MR3260858}, Hunter, Ifrim \& Tataru \cite{MR3535894}, Ifrim \& Tataru \cite{MR3667289}, Ai \cite{MR4161284, MR4098033}, and Ai, Ifrim \& Tataru\cite{AIT}.

Regarding the Strichartz estimates of water waves, Christianson \textit{et al.} \cite{MR2763354}, Alazard \textit{et al.} \cite{MR2931520,MR3852259}, de Poyferr\'{e} \& Nguyen  \cite{MR3487264}, Nguyen \cite{MR3724757}, and Ai \cite{MR4161284, MR4098033, A2023} proved the Strichartz estimate arising directly from the dispersive property of free-surface water waves.

In contrast to water waves, the well-posedness theory for hydroelastic waves remains less developed. Initial progress was made by Ambrose \& Siegel \cite{MR3656704}, who established the local well-posedness of two-dimensional hydroelastic waves using a vortex sheet formulation. This was extended by Liu \& Ambrose \cite{MR3608168}, who incorporated the mass of the elastic sheet into the analysis. In later work \cite{MR4850536}, the same authors further examined the asymptotic behavior of two-dimensional hydroelastic waves with respect to various physical parameters. Recently, the second author and Wang \cite{MR4104949} proved the local well-posedness of hydroelastic waves with vorticity in arbitrary spatial dimensions. Nevertheless, to the best of our knowledge, there are no results addressing the low regularity problem for hydroelastic waves.

\subsection{Hydroelastic waves in holomorphic coordinates}
In \cite{MR3535894,MR3667289}, holomorphic coordinates were used to develop the well-posedness theory for both deep  gravity and capillary water waves, respectively.
This formulation was also used to study a variety of other water wave problems; see  \cite{MR3535894, MR3499085, MR3625189,  AIT, MR4483135, MR4462478, MR4858219, MR4891579}.
In Section $2$ of Yang \cite{MR4846707}, the second author derived the two-dimensional deep hydroelastic wave equations in holomorphic coordinates.
We refer the interested reader to \cite{MR4846707} for the detailed derivation.
Although other formulations may be applicable, we utilize holomorphic coordinates to study the well-posedness theory in this paper.

Let $\mathbf{P}:= \frac{1}{2}(\mathbf{I} - iH)$ be the holomorphic projection that selects the holomorphic portion of the complex-valued function, with $H$ being the Hilbert transform.
On the Fourier side, $\nP$ projects onto the negative frequencies.
The complex conjugate operator $\bar{\nP}$ then projects onto the anti-holomorphic component.

    Let $W$ denote the holomorphic position variable and  $Q$ the holomorphic velocity potential.
    These functions are defined on $\mathbb{R}_t \times \mathbb{R}_\al$ and take values in $\mathbb{C}$.
    Then, according to the derivations in \cite{MR4846707}, the free boundary irrotational Euler equations \eqref{HWE1} and \eqref{Elastic} are equivalent to the following system of equations:
	\begin{equation}\label{HF14}
		\begin{cases}
			&W_t+F(1+W_{\al})=0,\\
			&Q_t+FQ_{\al}+\nP\left[\f{|Q_{\al}|^2}{J}\right]
			-i\sigma \nP\left\{\f{1}{J^{\f12}}\f{d}{d\al}\left[\f{1}{J^{\f12}}\f{d}{d\al}\left(\f{W_{\al\al}}{J^{\f12}(1+W_{\al})}
			-\f{\bar{W}_{\al\al}}{J^{\f12}(1+\bar{W}_{\al})}\right)\right]\right\}\\
			&-\f{i}{2}\sigma \nP\left\{\left[\f{W_{\al\al}}{J^{\f12}(1+W_{\al})}
			-\f{\bar{W}_{\al\al}}{J^{\f12}(1+\bar{W}_{\al})}\right]^3\right\}=0,
		\end{cases}
	\end{equation}
     where $J := |1+W_\alpha|^2$ is the Jacobian, and $F = \nP\left[\frac{Q_\alpha - \bar Q_\alpha}{J}\right]$.
The system \eqref{HF14} is  fully nonlinear.
By taking the $\alpha$ derivative and diagonalizing, we introduce the differentiated variables
    \begin{equation}\label{HF18}
		\mathbf{W}=W_\al, \quad \,R=\frac{Q_\al}{1+W_\al}.
	\end{equation}
    The pair $(\bw, R)$ solves the differentiated system
    \begin{equation}\label{HF19}
		\begin{cases}
&\mathbf{W}_t+b\mathbf{W}_{\al}+\f{(1+\mathbf{W})R_{\al}}{(1+\bar{\mathbf{W}})}=(1+\mathbf{W})M,\\
&R_t+bR_{\al}+\f{ia}{1+\mathbf{W}}\\
&-\f{i\sigma}{1+\mathbf{W}} \nP\left\{\f{1}{J^{\f12}}\f{d}{d\al}\left[\f{1}{J^{\f12}}\f{d}{d\al}\left(\f{\mathbf{W}_{\al}}{J^{\f12}(1+\mathbf{W})}\right)\right]\right\}_{\al}-\f12\f{i\sigma}{1+\mathbf{W}} \nP\left[\f{\mathbf{W}^3_{\al}}{J^{\f32}(1+\mathbf{W})^3}-\f{3\mathbf{W}_{\al}|\mathbf{W}_{\al}|^2}{J^{\f52}(1+\mathbf{W})}\right]_{\al}\\
&=-\f{i\sigma}{1+\mathbf{W}} \nP\left\{\f{1}{J^{\f12}}\f{d}{d\al}\left[\f{1}{J^{\f12}}\f{d}{d\al}\left(\f{\bar{\mathbf{W}}_{\al}}{J^{\f12}(1+\bar{\mathbf{W}})}\right)\right]\right\}_{\al}-\f12\f{i\sigma}{1+\mathbf{W}} \nP\left[\f{\bar{\mathbf{W}}^3_{\al}}{J^{\f32}(1+\bar{\mathbf{W}})^3}
-\f{3\bar{\mathbf{W}}_{\al}|\mathbf{W}_{\al}|^2}{J^{\f52}(1+\bar{\mathbf{W}})}\right]_{\al},
\end{cases}
\end{equation}
where the real-valued \textit{frequency-shift} $a$ and the \textit{advection velocity} $b$ are given by
\begin{equation}\label{HF17}
		a:=i\left(\bar{P}[\bar{R}R_{\al}]-P[R\bar{R}_{\al}]\right),\, \quad b:= \nP\left[ \frac{R}{1+\bar{\bw}}\right] + \bar{\nP}\left[ \frac{\bar{R}}{1+\bw}\right].
\end{equation}
The auxiliary function $M$ is given by
\begin{equation} \label{MDef}
		M:=\f{R_{\al}}{1+\bar{\mathbf{W}}}+\f{\bar{R}_{\al}}{1+\mathbf{W}}-b_{\al}=\bar{\nP}[\bar{R}Y_{\al}-R_{\al}\bar{Y}]+\nP[R\bar{Y}_{\al}-\bar{R}_{\al}Y].
	\end{equation}
Here we use the notation $Y:=\f{\bw}{1+\bw}$.
 The variable $R$ also has an intrinsic meaning:  it represents the complex conjugate of the complex velocity evaluated at (or restricted to) the water surface.
    
We remark that \eqref{HF19} is a diagonal and self-contained system.
It is invariant under spatial translations and admits the scaling
\begin{equation} \label{Scaling}
 \left(\bw(t,\alpha), R(t,\alpha)\right) \to \left( \bw(\lambda^\f52 t,\lambda \alpha), 
\lambda^{\frac32} R(\lambda^\f52 t,\lambda \alpha)\right).
\end{equation}
In the remainder of this paper, we study the low-regularity well-posedness of \eqref{HF19}.
For simplicity, the flexural rigidity coefficient $\sigma$ is normalized to be $1$.
 We also note that the setting and analysis in the periodic case differ slightly from those on the real line; see Appendix $A.4$ in \cite{MR3535894} for an explanation. 
We focus exclusively on the analysis on $\R$.
	
\subsection{Main results of this paper}
We state the main results of the paper. 
As in \cite{MR3667289}, we first introduce function spaces for measuring the Sobolev regularity of hydroelastic waves. 
A simplified model for \eqref{HF19} is given by its linearization around the zero solution:
\begin{equation}
\left\{
             \begin{array}{lr}
             \partial_t w + \p_\alpha r = 0, &  \\
             \p_t r  -i\p^4_\alpha w =0,&
             \end{array}
\right.\label{e:ZeroLinear}
\end{equation}
restricted to holomorphic functions.
\eqref{e:ZeroLinear} is a system of equations that can be written as a linear dispersive equation
\begin{equation*}
    \p_t^2w+  i \p^5_\al w =0.
\end{equation*}
Its dispersion relation is given by
\begin{equation*}
    \tau^2 + \xi^5 =0, \quad \xi< 0,
\end{equation*}
which is different from the dispersion relation of linearized gravity or capillary water waves.

A conserved energy associated with \eqref{e:ZeroLinear} is given by
\begin{equation}
    \mathcal{E}_0(w,r) = \int -iw_\al \bar{w}_{\al \al}  + |r|^2\, d\alpha = \|w\|_{\dot{H}^\f32}^2 + \|r\|_{L^2}^2. \label{ConservedEnergy}
\end{equation}
This conserved energy suggests the functional framework to study \eqref{HF19}.
The system \eqref{e:ZeroLinear} is well-posed in the product space $\dot{H}^\f32 \times L^2$.
Motivated by the linearized analysis, we introduce inhomogeneous product Sobolev spaces and the corresponding inhomogeneous product Zygmund spaces:
	\[
	\mathcal{H}^s :=H^{s+\f32}\times H^{s}, \quad \mathcal{W}^r :=  C^{r+\f32}_{*}\times C^r_{*}.
	\]
The hydroelastic wave system \eqref{HF19} is a quasilinear dispersive PDE system of order $\f{5}{2}$.

Ambrose and Siegel \cite{MR3656704} showed that the system \eqref{HF14} is locally well-posed in $\mathcal{H}^{n}$ when $n$ is large enough.
Wang and Yang obtained a refined result for hydroelastic waves  in \cite{MR4104949} showing that the system \eqref{HF19} is locally well-posed in $\mathcal{H}^{\f52k}$ for $k>2$. 
Since the flexural elasticity $\mathbf{E}(\eta)$ is highly nonlinear and strongly geometry-dependent, they adopted the geometric framework developed by Shatah \& Zeng \cite{MR2388661,MR2400608,MR2763036}, formulating the problem as a dispersive equation for $\mathbf{E}(\eta)$. 
This approach, while robust, necessitates relatively high regularity assumptions on the initial data.
	
To state the main results in this paper, we define the control norms that will be used in the energy estimate.
Let $0<\epsilon<\epsilon^{'} \ll 1$ be two positive constants, and $s\geq 0$. Define
\begin{equation*}
\mathcal{A}_s : = \|(\bw, R) \|_{\mathcal{W}^{s- \f32+\epsilon}}, \quad \mathcal{A}_{\sharp,s} : = \|(\bw, R) \|_{W^{s+\f14+\epsilon^{'},4}(\mathbb{R})\times W^{s-\f54+\epsilon^{'},4}(\mathbb{R})}.
\end{equation*}
Using Sobolev embedding, $\mathcal{A}_s \lesssim \mathcal{A}_{\sharp, s}$.
It is noted that $\mathcal{A}_0$ is scale-invariant with respect to the scaling \eqref{Scaling} up to the presence of the small parameter $\epsilon$.
The small constants $\epsilon$ and $\epsilon^{'}$ are needed to avoid the delicate endpoint Sobolev estimates.
	
The first main result of this paper is an improved cubic energy estimate. 
\begin{theorem} \label{t:MainEnergyEst}
 Let $(\bw, R)$ solve the hydroelastic wave system \eqref{HF19}.
 For any $s> 0$, there exists an energy functional $E_s(\bw, R)$ that has the following properties:
 \begin{enumerate}
     \item Norm equivalence:
       \begin{equation}
           E_s(\mathbf{W},R) \approx_\CalAZ \|(\bw,  R)\|^2_{\mathcal{H}^s}.\label{normEquivalence}
       \end{equation}
     \item Improved energy estimates:
      \begin{equation}
     \frac{d}{dt}E_s\lesssim_\CalAZ \ASSharp  E_s.\label{strongcubicest}
 \end{equation} 
 \end{enumerate} 
\end{theorem}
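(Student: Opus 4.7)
My plan is to follow the modified-energy strategy of Ifrim--Tataru and Ai--Ifrim--Tataru: starting from the quadratic energy suggested by the conserved $\mathcal{E}_0$ of the linearization \eqref{e:ZeroLinear}, I build a paradifferentially weighted quadratic form, augmented by a cubic correction, whose time derivative contains no quadratic nor quartic-or-higher contributions, but only cubic multilinear integrals estimable by the $L^4$-based control norm $\mathcal{A}_{\sharp,s}$. The weights are tuned exactly to cancel the quadratic commutator errors produced by the quasilinear transport $b\p_\al$ and by the variable-coefficient highest-order elastic term.

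The first step is to put \eqref{HF19} into a paradifferential normal form. At principal $\f52$-order the $R$-equation reads $\p_t R + T_b\p_\al R + iT_{a_1}\p_\al^4 \bw = \mathrm{l.o.t.}$, where $a_1$ is a real, positive symbol built out of $J$, $1+\bw$ and $1+\bar\bw$, while the $\bw$-equation becomes $\p_t\bw + T_b\p_\al \bw + T_{a_2}\p_\al R = \mathrm{l.o.t.}$ with $a_2$ coming from the factor $(1+\bw)/(1+\bar\bw)$ and lower-order contributions of $M$. Guided by this normal form, I look for positive symbols $p = p(\bw,\bar\bw)$ and $q = q(\bw,\bar\bw)$ such that the weighted quadratic energy
\begin{equation*}
E_s^{(2)}(\bw,R) \;=\; \|T_p |D|^{s+\f32}\bw\|_{L^2}^2 \;+\; \|T_q |D|^{s} R\|_{L^2}^2
\end{equation*}
produces, upon differentiation in time, only cubic-or-higher remainders. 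Matching powers of $J$ and of $1+\bw$ in the quadratic error coming from $T_b\p_\al$, from the symmetrization of $iT_{a_1}\p_\al^4$ against $T_{a_2}\p_\al$, and from the auxiliary term $M$ fixes $p$ and $q$ up to multiplicative constants. The full modified energy is then $E_s := E_s^{(2)} + E_s^{(3)}$, with $E_s^{(3)}$ a trilinear correction of the schematic form $\int T_r \bw \cdot \overline{T_q|D|^s R}\, d\alpha$, whose symbol $r$ is selected precisely so that $\frac{d}{dt}E_s^{(3)}$ cancels the residual quadratic contribution in $\frac{d}{dt}E_s^{(2)}$.

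The norm equivalence \eqref{normEquivalence} follows from $H^s$-boundedness of $T_p, T_q$ and their inverses, which uses only $L^\infty$-control of $\bw$ and $\bw_\al$, itself a consequence of $\CalAZ$-smallness; the cubic correction $E_s^{(3)}$ is bounded trilinearly by $\CalAZ\,\|(\bw,R)\|_{\mathcal{H}^s}^2$ and is hence a small perturbation. After all cancellations, what remains in $\frac{d}{dt}E_s$ is a finite sum of cubic multilinear integrals of the schematic shape
\begin{equation*}
\int \p^{a_1}\! u_1 \cdot \p^{a_2}\! u_2 \cdot \p^{a_3}\! u_3\, d\alpha, \qquad u_j \in \{\bw,\bar\bw,R,\bar R\},
\end{equation*}
with at most one factor at top regularity. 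I would bound the top factor in $L^2$ by $E_s^{\f12}$ and distribute the remaining two factors in $L^4$; Littlewood--Paley trichotomy combined with Bernstein's inequality ensures no derivative loss, and each such integral is controlled by $\ASSharp \cdot E_s$, giving \eqref{strongcubicest}. The choice of the $L^4$-based norm in $\mathcal{A}_{\sharp,s}$, rather than $L^\infty$, is precisely what allows distributing two derivatives between two such factors without loss.

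The main obstacle is the precise algebraic choice of the weights $(p,q)$ and of the cubic correction $E_s^{(3)}$. The hydroelastic law \eqref{Elastic} is considerably heavier than capillarity: besides the leading $\p_\al^5\bw$ contribution, there is a genuine cubic curvature term, and the whole expression is holomorphic-projected only after two nested multiplications by $J^{-\f12}$ and a division by $1+\bw$. Consequently, the quadratic errors in $\frac{d}{dt}E_s^{(2)}$ that must be cancelled arise from multiple heterogeneous sources --- $[T_p, T_b\p_\al]$, the lack of exact symmetry between $iT_{a_1}\p_\al^4$ and $T_{a_2}\p_\al$, the frequency shift $a$ from \eqref{HF17}, and the auxiliary $M$ of \eqref{MDef} --- and they must vanish \emph{simultaneously}. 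Pinning down symbols $(p,q)$ and a trilinear correction that achieve this simultaneous cancellation while keeping $E_s$ coercive, and then verifying that only the acceptable cubic remainders survive, is the delicate technical core of the argument.
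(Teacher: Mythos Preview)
Your outline follows the right modified-energy philosophy, but the claim that a quadratic weighted energy plus a single cubic correction leaves ``only cubic multilinear integrals'' is incorrect and constitutes the main gap. The elastic law $\mathbf{E}(\eta)$ contains a genuine cubic curvature term, so $\p_t R$ already has cubic contributions in $\bw$; these produce \emph{quartic} integrals in $\frac{d}{dt}E_s^{(2)}$, and differentiating your $E_s^{(3)}$ produces further quartic terms from the quadratic part of the flow. The paper computes these explicitly (Section~\ref{s:HomoFlow}) and finds that several survive as non-perturbative: integrals of the schematic form $\Re\int i\, r\cdot T_{J^{-9/4}(1-\bar Y)|\bw_\al|^2}\,\bar w_{\al\al}\,d\al$ carry half a derivative too many on the high-frequency factors to be bounded by $\ASSharp E_s$. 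To close, one must build \emph{quartic} energy corrections, and here the four-wave resonance structure is decisive (Appendix~\ref{s:Discuss}): when the two low-frequency $\bw_\al$ factors have the same sign of frequency, resonances are excluded and a quartic correction $E^{4,1}_{cor}$ exists; but for the mixed term with $|\bw_\al|^2$ at low frequency, four-wave resonances can occur, and the paper handles this case by a special correction $E^{4,2}_{cor}=-c\,\Re\int\p_\al^{-1}r\cdot T_{J^{-3/4}|\bw_\al|^2}\p_\al^{-1}\bar r\,d\al$ that works only because of a specific algebraic cancellation. Your plan mentions neither quartic corrections nor any resonance analysis.

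A second omission is the treatment of balanced (high-high) paraproducts. Paralinearizing \eqref{HF19} leaves non-perturbative balanced source terms such as $T_{J^{-5/2}(1-Y)}\nP\Pi(\bar\bw,\p_\al^4\bw)$ and $T_{1-\bar Y}\p_\al\Pi(\bw,R)$; these cannot be absorbed by any choice of the weights $(p,q)$, since paracoefficients act only on low-high interactions. The paper removes them via separate balanced normal-form variables $(\tilde\bw,\tilde R)$ (Section~\ref{s:NormalHydroWaves}) whose bilinear symbols solve explicit algebraic systems, and only after passing to $(\bw_{NF},R_{NF})=(\bw+\tilde\bw,R+\tilde R)$ does one reach a linear paradifferential flow to which the weighted-energy machinery applies. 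Your proposal collapses this entire step into ``choosing $p,q$'', which is not enough.
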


\begin{remark}
For comparison, in the cubic energy estimate in \cite{MR4846707}, the constant in the energy estimate is $\CalAZ\mathcal{A}_{\f52}$.
The Sobolev index $\f74$ in \eqref{strongcubicest} is much smaller than $\f52$.
In addition, Theorem \ref{t:MainEnergyEst} works for any positive index $s>0$, while the corresponding energy estimate in \cite{MR4846707} only states the result for $s= n+\f12$, where $n\geq 2$ is an integer.
\end{remark}
\begin{remark}
When the gravity is taken into account, one can prove almost the same result as Theorem \ref{t:MainEnergyEst}.
The only difference is that in \eqref{strongcubicest}, the constant $\ASSharp$ is replaced by $g+\ASSharp$.
This is because the gravity term is of lower order than the elastic terms and does not play a significant role in the analysis.
\end{remark}
	
The second main result of this paper is the local well-posedness result of the hydroelastic wave system \eqref{HF19}.

\begin{theorem} \label{t:HydroWellPosed}
Let $s> \f34$, the system \eqref{HF19} is locally well-posed  in $\H^s(\mathbb{R})$ $(\text{or }\, \H^s(\mathbb{T}) )$.
Moreover, the solutions of \eqref{HF19} exist on $[0,T]$ as long as $\mathcal{A}_0(t) \lesssim 1$ and $\ASSharp(t) \in L^2([0,T])$.
\end{theorem}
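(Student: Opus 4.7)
The plan is to combine Theorem \ref{t:MainEnergyEst} with the standard quasilinear well-posedness machinery of Ifrim--Tataru \cite{MR3667289} and Ai--Ifrim--Tataru \cite{AIT}. First I would derive a priori bounds: Gronwall applied to \eqref{strongcubicest} gives
$$E_s(t)\leq E_s(0)\exp\Bigl(C(\CalAZ)\int_0^t \ASSharp(\tau)\,d\tau\Bigr),$$
which, combined with the norm equivalence \eqref{normEquivalence}, immediately yields the continuation criterion stated in the theorem. For local existence at $s>\f34$, one-dimensional Sobolev embedding in the form $H^{s+\f32}(\R)\hookrightarrow W^{2+\e^{'},4}(\R)$ and $H^s(\R)\hookrightarrow W^{\f12+\e^{'},4}(\R)$, each requiring $s>\f34+\e^{'}$, gives $\ASSharp(t)\lesssim_{\CalAZ}\|(\bw,R)(t)\|_{\H^s}^{2}$. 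This is precisely where the threshold $s>\f34$ enters; feeding this back into the Gronwall bound and bootstrapping produces a time $T>0$ depending only on $\|(\bw_0,R_0)\|_{\H^s}$ on which smooth solutions enjoy uniform $\H^s$ control.

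Existence would then follow by a Bona--Smith approximation. I would regularize the initial data by frequency truncation $(\bw_0^{(n)},R_0^{(n)})=P_{\leq n}(\bw_0,R_0)$, solve at high regularity using the well-posedness result of Wang--Yang \cite{MR4104949}, use the a priori bounds above to extend these solutions to a common interval $[0,T]$ with uniform $\H^s$ bounds, and then pass to a weak-$\ast$ limit in \eqref{HF19}. For uniqueness, I would derive an energy estimate for the linearization of \eqref{HF19} in a weaker, scale-invariant norm such as $L^2\times\dot H^{-\f32}$: the difference of two $\H^s$ solutions essentially satisfies this linearization, and the required paradifferential modified energy should be a (simpler) analog of the one constructed for Theorem \ref{t:MainEnergyEst}. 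This yields Lipschitz dependence at low regularity, hence uniqueness. Continuous dependence in $\H^s$ would follow from a frequency-envelope argument: for a sequence $(\bw_0^{(k)},R_0^{(k)})\to(\bw_0,R_0)$ in $\H^s$, decompose each datum into low- and high-frequency pieces, control low-frequency differences of the corresponding solutions via the weak Lipschitz bound, and control high-frequency tails via the uniform $\H^s$ estimate obtained from Theorem \ref{t:MainEnergyEst}.

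The main obstacle is the linearized energy estimate at the scale-invariant level. The linearization of \eqref{HF19} is itself a quasilinear paradifferential system of order $\f52$ whose coefficients are controlled only by $\CalAZ$ and whose coefficient derivatives live in $\ASSharp$-type spaces; constructing a modified energy controlling it in $L^2\times\dot H^{-\f32}$ at this coefficient regularity is the technically delicate step. It is the need to match base-solution regularity on both ends---enough $\CalAZ$ control for the paradifferential symbols to make sense, and enough $\ASSharp$ control for the Gronwall step on the linearization to close---that pins the well-posedness threshold at $s>\f34$, exactly the Sobolev threshold already identified in the a priori bounds.
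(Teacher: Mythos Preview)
Your high-level strategy is correct and mirrors the paper's proof: Gronwall on Theorem \ref{t:MainEnergyEst} for a priori bounds and the continuation criterion, regularization of data combined with \cite{MR4104949} for smooth solutions with uniform lifespan, a linearized estimate for difference bounds, and frequency envelopes for continuous dependence. Two points of comparison are worth noting.

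First, the ``main obstacle'' you identify --- the modified energy estimate for the linearized system --- is not left open in the paper: it is exactly Theorem \ref{t:LinearizedWellposed}, proven in Section \ref{s:EstLin}. That theorem gives well-posedness of the linearization of the \emph{undifferentiated} system \eqref{HF14} in $\H^0=H^{\f32}\times L^2$, with linearized variables $(w,r)=(\delta W,\,\delta Q-R\,\delta W)$. Your proposal to linearize \eqref{HF19} directly and work in $L^2\times\dot H^{-\f32}$ is essentially the same object shifted by one spatial derivative, but the paper's choice avoids some bookkeeping: the linearized system for $(w,r)$ has a cleaner structure (Section \ref{s:ParaMat}) and the $\H^0$ energy \eqref{ElinDef} connects directly to the conserved quantity \eqref{ConservedEnergy}.

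Second, the paper does not estimate differences of two arbitrary solutions via an approximate linearization. Instead, following \cite{AIT}, it takes a one-parameter family $(\bw^k,R^k)$ indexed by the truncation frequency and observes that $(w^k,r^k)=(\partial_k W^k,\,\partial_k Q^k-R^k\partial_k W^k)$ \emph{exactly} solves the linearized equations around $(\bw^k,R^k)$; Theorem \ref{t:LinearizedWellposed} then controls $(w^k,r^k)$ in $\H^0$, which integrates to a bound on $(\bw^{k+1}-\bw^k,R^{k+1}-R^k)$. Summing these telescoping differences and interpolating with the uniform $\H^s$ bounds gives convergence and uniqueness simultaneously. Your ``difference essentially satisfies the linearization'' is the right heuristic, but the exact-solution-of-linearization trick is what makes the argument clean.
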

Well-posedness is understood in the sense of Hadamard, namely existence, uniqueness, and continuous dependence on initial data.

We also address the related work of Alazard, Kukavica \& Tuffaha  \cite{MR4905081} and Alazard, Shao \& Yang \cite{ACH}.
They proved the global existence of water waves where the interface evolution is governed by the law of linear elasticity.
The equation in their model can be reduced to a Schr\"odinger type equation whose leading dispersive term has a constant coefficient.
In our model, the dynamic boundary condition \eqref{dyn} with nonlinear elasticity is different.
The hydroelastic waves \eqref{HF19} can be rewritten as a quasilinear dispersive equation of order $\f52$ instead of a Schr\"odinger type equation.
Consequently, both the structure of the governing equations and the analytical methods employed here differ substantially from those in \cite{MR4905081, ACH}.

\subsection{Key Difficulties and Strategy of Proof}
The primary challenge in establishing the low-regularity well-posedness for the hydroelastic wave system \eqref{HF19} lies in its quasilinear nature and the high order of the dispersive term. 
The system is a dispersive equation of order $\f52$. 
In standard Sobolev spaces, the nonlinearity induces a ``loss of derivatives" in the energy estimates, where the time derivative of the energy norm is controlled only by norms of higher order. 
To overcome this, we employ a strategy based on paradifferential calculus and the method of modified energy, inspired by the works of Hunter-Ifrim-Tataru \cite{MR3535894}, and Ai-Ifrim-Tataru \cite{AIT} on gravity water waves.

\textbf{1. Quasilinear Structure and Holomorphic Coordinates.} We work in holomorphic coordinates, which effectively diagonalizes the system and simplifies the structure of the Dirichlet-Neumann operator $G(\eta)$. 
However, the resulting system \eqref{HF19} remains quasilinear. 
Our first step is to paralinearize the equations. 
By decomposing the nonlinearity into paradifferential operators (low-high interactions) and  remainders (high-high interactions), we isolate the principal transport and dispersive terms. 
Roughly speaking, this reduction allows us to treat the equations as a linear dispersive system with rough, variable coefficients, plus perturbative source terms. Notably, the highly nonlinear nature of the elastic term $\mathbf{E}(\eta)$ makes this step particularly intricate and subtle compared to the case of gravity/capillary water waves.

\textbf{2. The Modified Energy Functional.}
Standard energy estimates are insufficient due to the variable coefficients in the leading-order terms. 
A crucial component of our proof is the construction of a modified energy functional $E_s(\bw, R)$ in Theorem \ref{t:MainEnergyEst} and the linearized modified energy $E^{s,para}_{lin}(w,r)$ in Proposition \ref{t:Hswellposedflow}. 
Unlike the standard $H^s$ energies, the principal part of our functional includes a carefully chosen paradifferential weight that depends on the Jacobian $J$ and the parameter $s$.
Specifically, for the linearized variables $(w,r)$, we construct the principal part of the modified energy of the form in Section \ref{s:HomoFlow}:
\begin{equation*} 
    E_{lin}(w,r) \approx \int \Im \left(T_{J^{-\f54}}w_{\al}\cdot \bar{w}_{\al \al} \right) + \Re \left(r\cdot T_{J^{\f14}}\bar{r}\right) \,d\al .
\end{equation*}
The specific powers of the Jacobian $J^{-\f54}$ and $J^{\f14}$ are critical.
They are selected to ensure that the leading-order contributions from the time derivatives of the para-coefficients cancel out exactly with the leading-order non-perturbative terms arising from the elastic nonlinearity. This cancellation is essential to close the energy estimates without losing regularity.

\textbf{3. Normal Forms and Low Regularity.}
To reach the low regularity threshold $s>3/4$, we must control cubic and quartic nonlinear interactions that essentially behave as perturbative source terms in the energy estimate. 
We employ paradifferential normal form transformations to eliminate the non-perturbative portions of these terms. 
We define a transformation $(\bw,R) \mapsto( \bw_{NF}, R_{NF})$ such that the new variables satisfy a ``better" paradifferential system where the cubic nonlinearities are either null forms or have favorable derivative structures.
The feasibility of this normal form method relies on the non-resonant structure of the hydroelastic dispersion relation \eqref{TauXiRelation}.
As discussed in Appendix \ref{s:Discuss}, the strict convexity of the function $|\xi|^\f52$ ensures that three-wave resonances cannot occur for nonzero frequencies, and four-wave resonances are avoided in the relevant interaction regimes. 
This allows us to define bounded bilinear and trilinear symbols that remove the problematic terms from the energy inequality.

The remainder of this paper is organized as follows.
Section \ref{s:ParaMat} is devoted to the computation of para-material derivatives for both the full hydroelastic waves and the linearized hydroelastic waves.
In Section \ref{s:EstLin}, we derive the modified energy estimate of the linearized hydroelastic waves \eqref{linearizedeqn}, namely Theorem \ref{t:LinearizedWellposed}.
Then, in Section \ref{s:Energy}, we prove the modified energy estimate for the full hydroelastic waves Theorem \ref{t:MainEnergyEst}.
We briefly explain how to obtain the low regularity well-posedness of two-dimensional deep hydroelastic waves following the argument in \cite{AIT} in Section \ref{s:LowWellPosed}.
As for the appendix, we put the paradifferential estimates that we will need in this paper in Appendix \ref{s:Norms}.
We will rewrite  different variants of hydroelastic waves in the paradifferential format, so that these paradifferential estimates play a crucial role.
In Appendix \ref{s:HydroWaveEst}, we recall some estimates for several auxiliary functions in Sobolev and Zygmund spaces.
Finally, in Appendix \ref{s:Discuss}, we discuss three-wave and four-wave interactions. 
We will perform paradifferential quadratic/cubic normal forms and construct cubic/quartic modified energy later in the analysis, which require appropriate non-resonant conditions.
   
\section{Para-material derivatives and the linearized hydroelastic waves} \label{s:ParaMat}
In holomorphic coordinates, the material derivative is defined to be $D_t = \partial_t + b \partial_\alpha$.
At the paradifferential level, it is replaced by the para-material derivative $T_{D_t} = \partial_t + T_b \partial_\alpha$. 
In this section, we compute the leading contributions to the para-material derivatives of variables $(\bw, R)$ and the weight $J^s$.  
In addition, we compute and simplify the linearized hydroelastic wave system, and identify the principal terms of para-material derivatives of linearized variables $(w, r)$. 
These formulas will play a key role in estimating the time derivative of the modified energy in later sections.
We rewrite each equation as the paradifferential equation.
By carefully separating principal contributions from lower-order terms, and by treating all perturbative components as error terms, we obtain the leading terms of para-material derivatives of each variable.
  
\subsection{Leading terms of para-material derivatives}
We first compute the leading terms of the para-material derivatives of $\bw$.
\begin{lemma} 
The unknown $\bw$ satisfies the paradifferential equation:
\begin{equation} \label{WParaMat}
       T_{D_t}\bw  + T_{(1+\bw)(1-\bar{Y})}R_\alpha + T_{1-\bar{Y}}T_{\bw_\alpha}R -T_{(1+\bw)(1-\bar{Y})^2}T_{\bar{\bw}_\alpha}R = G,  
\end{equation}
where the source term $G$  satisfies the bound
\begin{equation*}
 \|G\|_{C_{*}^1} \lesssim_\CalAZ \mathcal{A}^2_\f74, \quad  \|G\|_{H^\f32} \lesssim_\CalAZ \ASSharp.
\end{equation*}
\end{lemma}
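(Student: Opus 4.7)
The proof starts from the first equation of \eqref{HF19}, which, upon using the identity $\frac{1}{1+\bar{\bw}} = 1-\bar{Y}$ (an algebraic consequence of $Y = \bw/(1+\bw)$), reads
\begin{equation*}
\bw_t + b\bw_\alpha + (1+\bw)(1-\bar{Y}) R_\alpha = (1+\bw) M.
\end{equation*}
I would apply Bony's paraproduct decomposition to every nonlinear term, identify the paradifferential terms that reproduce the left-hand side of \eqref{WParaMat}, and collect everything else into $G$. The $C^1_*$ and $H^{3/2}$ bounds would then follow from the paradifferential estimates in Appendix \ref{s:Norms} together with the Sobolev/Zygmund bounds for $b$, $M$, $Y$ from Appendix \ref{s:HydroWaveEst}.

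\textbf{Treatment of the transport and flux terms.} First, I would split the transport term as $b\bw_\alpha = T_b \bw_\alpha + T_{\bw_\alpha} b + \Pi(b,\bw_\alpha)$, which isolates the leading piece $T_b \bw_\alpha$ of $T_{D_t}\bw$ and places the remaining two pieces into $G$. Similarly, for the flux term, Bony's decomposition gives
\begin{equation*}
(1+\bw)(1-\bar{Y}) R_\alpha = T_{(1+\bw)(1-\bar{Y})} R_\alpha + T_{R_\alpha}\bigl[(1+\bw)(1-\bar{Y})\bigr] + \Pi\bigl(R_\alpha, (1+\bw)(1-\bar{Y})\bigr).
\end{equation*}
The first term matches the target. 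The second term, which carries one $\alpha$-derivative on $R$, is the one that will have to be cancelled against a matching low-high contribution coming from $(1+\bw)M$; what survives will be absorbed in $G$.

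\textbf{Expansion of $(1+\bw)M$ and extraction of the mixed paraproducts.} The crucial step is expanding $M$ from \eqref{MDef} as
\begin{equation*}
M = \bar{\nP}\bigl[\bar{R} Y_\alpha - R_\alpha \bar{Y}\bigr] + \nP\bigl[R \bar{Y}_\alpha - \bar{R}_\alpha Y\bigr],
\end{equation*}
then applying Bony to each bilinear pairing. Using $Y_\alpha = (1-Y)^2 \bw_\alpha$ and $\bar{Y}_\alpha = (1-\bar{Y})^2 \bar{\bw}_\alpha$, I would rewrite the paraproducts with $Y_\alpha, \bar{Y}_\alpha$ in terms of $\bw_\alpha, \bar{\bw}_\alpha$, picking up quartic remainders. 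Multiplying by $(1+\bw)$ and organizing the low-high contributions, one obtains two principal paraproduct structures of the type $T_{1-\bar{Y}} T_{\bw_\alpha} R$ and $T_{(1+\bw)(1-\bar{Y})^2} T_{\bar{\bw}_\alpha} R$, with signs that match the target equation after combining with the leftover $T_{R_\alpha}[(1+\bw)(1-\bar{Y})]$ from the flux step via the paraproduct substitution rule. All further pieces — balanced paraproducts $\Pi(\cdot,\cdot)$, holomorphic-projection commutators, and quartic remainders from the $Y_\alpha \leftrightarrow \bw_\alpha$ substitution — go into $G$.

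\textbf{Estimates for $G$ and the main obstacle.} Once the algebraic identity is established, the bounds on $G$ reduce to controlling paraproducts and balanced products of the schematic form $(\bw_\alpha)(R_\alpha)(\bw_\alpha)$ or $(R)(\bw_\alpha)(\bw_\alpha)$. The $C^1_*$ estimate requires that each factor be controlled in $\mathcal{W}^{\f14+\e}$-type norms, which is precisely what $\mathcal{A}_{\f74}$ provides. The $H^{\f32}$ estimate requires putting the highest-frequency factor into a Sobolev norm while the remaining two factors sit in $W^{\cdot,4}$; this is designed to match the $\ASSharp$ control via the $L^4 \cdot L^4 \hookrightarrow L^2$ pairing. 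The main obstacle is the careful bookkeeping in the previous paragraph: the mixed paraproducts $T_{\bw_\alpha}R$ and $T_{\bar{\bw}_\alpha}R$ are non-standard in that the ``coefficient" carries a derivative, so the substitution identities and paralinearization rules must be applied in the correct order to ensure that only genuinely cubic (in $(\bw,R)$, $(\bar{\bw},\bar{R})$) remainders are left. Any error in this step would either produce a missing principal term on the left or a quadratic remainder in $G$, which would be too rough to fit in the $H^{\f32}$ bound.
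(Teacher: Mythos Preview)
Your overall strategy — Bony decomposition of each nonlinear term — matches the paper's, but the bookkeeping has a genuine error. You place the transport remainder $T_{\bw_\alpha}b$ directly into $G$; this fails both target bounds. Since $b$ has only $C^{1/4}_*$ (resp.\ $W^{1/2,4}$) regularity under $\mathcal{A}_{7/4}$ (resp.\ $\mathcal{A}_{\sharp,7/4}$) by \eqref{BCOneStar}, the low-high paraproduct $T_{\bw_\alpha}b$ inherits that same regularity at output and cannot be placed in $C^1_*$ or $H^{3/2}$. In fact this term is precisely the source of the $T_{1-\bar{Y}}T_{\bw_\alpha}R$ piece on the left of \eqref{WParaMat}: writing $\nP b = T_{1-\bar{Y}}R - \nP\Pi(\bar{Y},R)$, one has $\nP T_{\bw_\alpha}b = T_{\bw_\alpha}T_{1-\bar{Y}}R + G$, and then one commutes the two para-coefficients. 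Your claim that $T_{1-\bar{Y}}T_{\bw_\alpha}R$ instead arises from $(1+\bw)M$ is not correct: the holomorphic low-high content of $M$ is $T_{\bar{Y}_\alpha}R - T_{\bar{R}_\alpha}Y$ plus balanced terms, which after multiplication by $T_{1+\bw}$ yields the $T_{(1+\bw)(1-\bar{Y})^2}T_{\bar{\bw}_\alpha}R$ structure (from the first piece) and a $T_{\bar{R}_\alpha}\bw$-type contribution (from the second), but no $T_{\bw_\alpha}R$ structure.

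Conversely, the flux leftover $T_{R_\alpha}[(1+\bw)(1-\bar{Y})-1]$, which you say must be cancelled against $(1+\bw)M$, is actually already perturbative: with $R_\alpha\in C^{-3/4}_*$ and $(1+\bw)(1-\bar{Y})-1\in C^{7/4}_*$ one lands directly in $C^1_*$ via \eqref{CsCmStar}, and similarly in $H^{3/2}$ via \eqref{HsLFour}. The paper sidesteps all of this by first rewriting the equation in the compact form $\bw_t+\partial_\alpha\nP[(1+\bw)b]=0$ (equivalent to the form borrowed from \cite{AIT}), then paradecomposing $\nP[(1+\bw)b]=T_b\bw+T_{1+\bw}\nP b+\nP\Pi(\bw,b)$; applying $\partial_\alpha$ to $T_{1+\bw}T_{1-\bar{Y}}R$ produces both non-perturbative para-terms $T_{\bw_\alpha}T_{1-\bar{Y}}R$ and $-T_{1+\bw}T_{\bar{Y}_\alpha}R$ at once, and the four explicit error terms on the right are estimated one by one.
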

\begin{proof}
We rewrite the first equation in \eqref{HF19} in the form of paradifferential equations 
\begin{align*}
&T_{D_t}\bw + T_{(1+\bw)(1-\bar{Y})} R_\alpha + T_{\bw_\alpha}T_{1-\bar{Y}}R -T_{1+\bw}T_{\bar{Y}_\alpha}R\\
=&  \left(T_{(1+\bw)(1-\bar{Y})}-T_{1+\bw}T_{1-\bar{Y}}\right)R_\alpha 
  -T_{b_\alpha}\bw   + \partial_\alpha T_{1+\bw}\nP\Pi(\bar{Y}, R)-\partial_\alpha\nP\Pi(\bw, b).
\end{align*}
Using \eqref{CompositionTwo}, \eqref{BCOneStar}, \eqref{CCCEstimate} and \eqref{CsCmStar} to estimate the Zygmund bound of the right-hand side of above equation, we write
\begin{align*}
&\| (T_{1+\bw}T_{1-\bar{Y}}-T_{(1+\bw)(1-\bar{Y})})R_\alpha \|_{C_{*}^1} \lesssim_\CalAZ \mathcal{A}_\f74 \|R\|_{C_{*}^{\f14}}  \lesssim_\CalAZ \mathcal{A}^2_\f74, \\
& \|T_{b_\alpha}\bw \|_{C_{*}^1}\lesssim \|b\|_{C^{\f14}_{*}}\| \bw\|_{C_{*}^{\f74}}  \lesssim_\CalAZ \mathcal{A}^2_\f74, \\
& \|\partial_\alpha T_{1+\bw}\nP\Pi(\bar{Y}, R) \|_{C_{*}^1}\lesssim_\CalAZ \|Y\|_{C^{\f74}_*}\|R\|_{C^\f14_*}\lesssim_\CalAZ \mathcal{A}^2_\f74, \\
&\|\partial_\alpha\nP\Pi(\bw, b) \|_{C_{*}^1}\lesssim_\CalAZ \|\bw\|_{C^{\f74}_*}\|b\|_{C^\f14_*}\lesssim_\CalAZ \mathcal{A}^2_\f74.
\end{align*}
Similarly, for Sobolev bound, we obtain
\begin{align*}
&\| (T_{1+\bw}T_{1-\bar{Y}}-T_{(1+\bw)(1-\bar{Y})})R_\alpha \|_{H^\f32} \lesssim_\CalAZ \mathcal{A}_{\sharp,\f74} \|R\|_{W^{\f12,4}}  \lesssim_\CalAZ \ASSharp, \\
& \|T_{b_\alpha}\bw \|_{H^\f32}\lesssim \|b\|_{W^{\f12,4}}\| \bw\|_{W^{2,4}}  \lesssim_\CalAZ \ASSharp, \\
& \|\partial_\alpha T_{1+\bw}\nP\Pi(\bar{Y}, R) \|_{H^\f32}\lesssim_\CalAZ \|Y\|_{W^{2,4}}\|R\|_{W^{\f12,4}}\lesssim_\CalAZ \ASSharp, \\
&\|\partial_\alpha\nP\Pi(\bw, b) \|_{H^\f32}\lesssim_\CalAZ \|\bw\|_{W^{2,4}}\|b\|_{W^{\f12,4}}\lesssim_\CalAZ \ASSharp.
\end{align*}
Furthermore, applying the symbolic calculus rules \eqref{CompositionPara} and \eqref{CompositionTwo} gives
\begin{equation*}
T_{\bw_\alpha}T_{1-\bar{Y}}R -T_{1+\bw}T_{\bar{Y}_\alpha}R =  T_{1-\bar{Y}}T_{\bw_\alpha}R -T_{(1+\bw)(1-\bar{Y})^2}T_{\bar{\bw}_\alpha}R + G.
\end{equation*}
This leads to the equation for $T_{D_t}\bw$ \eqref{WParaMat}.
\end{proof}
We note that estimating the error term in $L^\infty$ norm, then
\begin{equation*}
    \bw_t = - T_{(1+\bw)(1-\bar{Y})}R_\alpha + G, \quad \|G\|_{L^\infty} \lesssim_\CalAZ \mathcal{A}^2_\f74.
\end{equation*}

Next, we compute the leading terms of the para-material derivatives of $R$.
\begin{lemma}
The leading term of the para-material derivative of $R$ is given by
\begin{equation}
T_{D_t}R =  iT_{J^{-\frac{3}{2}}(1-Y)^2}\p^4_\alpha\bw -\frac{15}{2}iT_{J^{-\f32}(1-Y)^3 \bw_\al} \p_\al^3 \bw - \f52iT_{J^{-\f52}(1-Y)\bar{\bw}_\al}\p_\al^3 \bw  +K, \label{RParaMat}
\end{equation}
and the error term $K$ satisfies the estimate
\begin{equation*}
 \| K\|_{L^2} \lesssim_{\CalAZ} \ASSharp.
\end{equation*}
\end{lemma}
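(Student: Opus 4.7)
The plan is to start from the second equation of \eqref{HF19}, solve for $R_t$, form $T_{D_t}R = R_t + T_b R_\al$, paralinearize every nonlinear piece, and collect the subprincipal remainders into $K$. The principal symbol in front of $\p_\al^4 \bw$ is easy to read off from the leading derivative structure; the delicate part is to identify the exact coefficients in front of the $\p_\al^3 \bw$ terms.

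First I would dispose of the manifestly lower-order pieces: the transport remainder $-(b - T_b)R_\al = -T_{R_\al}b - \Pi(b, R_\al)$, the frequency-shift contribution $-ia/(1+\bw)$ (which is quadratic in $(R, R_\al)$), and the cubic elastic corrections $\pm\tfrac{i}{2(1+\bw)}\nP[\bw_\al^3/(J^{3/2}(1+\bw)^3) - 3\bw_\al|\bw_\al|^2/(J^{5/2}(1+\bw))]_\al$ together with their $\bar\bw$ analog. After the outer $\p_\al$, these cubic terms contain at most $\bw_{\al\al}$ multiplied by two first-order derivative factors, so they are controlled in $L^2$ by $\ASSharp$ using the product estimates in Appendix \ref{s:Norms} and the auxiliary bounds in Appendix \ref{s:HydroWaveEst}, and absorbed into $K$.

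The main work is the principal elastic term $\tfrac{i}{1+\bw}\nP\{J^{-1/2}\p_\al[J^{-1/2}\p_\al(\bw_\al/(J^{1/2}(1+\bw)))]\}_\al$. I would expand by pointwise chain rule to the exact identity $i(1-Y)\,\nP\,\p_\al[B_3\bw_{\al\al\al} + B_2\bw_{\al\al} + B_1\bw_\al]$, where $B_3 = J^{-3/2}(1-Y)$ and $B_2, B_1$ are polynomials in $\bw_\al, \bar\bw_\al$ and $J^{-k}(1-Y)^\ell$ factors, obtained from the Jacobian identities $J_\al/J = (1-Y)\bw_\al + (1-\bar Y)\bar\bw_\al$ and $Y_\al = (1-Y)^2 \bw_\al$. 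Each product $B_k \p_\al^{k+1}\bw$ is then paralinearized via $fg = T_fg + T_gf + \Pi(f,g)$: the ``high-low'' piece $T_{B_k}\p_\al^{k+1}\bw$ supplies the explicit paradifferential terms, while the ``low-high'' piece $T_{\p_\al^{k+1}\bw}B_k$ is rewritten via the identity $T_{\p_\al f}g = \p_\al T_f g - T_f\p_\al g$ together with Bony's linearization of $B_k$ as a function of $(\bw, \bar\bw)$. The outer operators $T_{1-Y}$ and $\nP$ are pushed through via composition $T_\mu T_\nu = T_{\mu\nu} + \text{lower}$, each step contributing further subprincipal corrections. The $\bar\bw$-elastic term is treated in parallel, with the additional observation that its leading $\p_\al^4 \bar\bw$ contribution is suppressed by $\nP$, since $T_f \p_\al^4 \bar\bw$ has antiholomorphic frequency support at principal order; the remaining $T_{\p_\al^4 \bar\bw}f$ and $\Pi(f, \p_\al^4 \bar\bw)$ pieces, with $f = J^{-5/2}$ Bony-expanded into its $\bw$-dependent component, contribute further subprincipal terms. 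Summing all contributions yields the stated $-\f{15}{2}\,iT_{J^{-3/2}(1-Y)^3\bw_\al}\p_\al^3\bw - \f{5}{2}\,iT_{J^{-5/2}(1-Y)\bar\bw_\al}\p_\al^3\bw$.

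The main obstacle is exactly this combinatorial bookkeeping at the subprincipal level: the numerical constants $-\f{15}{2}$ and $-\f{5}{2}$ arise only after correctly summing the direct chain-rule contribution, the paraproduct ``rotations'' $T_{\p_\al f}g = \p_\al T_f g - T_f\p_\al g$, Bony's linearization of the nonlinear coefficients, the outer composition corrections, and the $\bar\bw$-side projection terms. Missing or double-counting any one source alters these constants and would break the delicate cancellation exploited later when constructing the modified energy. Once the explicit terms are pinned down, the $L^2$ bound $\|K\|_{L^2}\lesssim_\CalAZ \ASSharp$ follows routinely from the paradifferential calculus of Appendix \ref{s:Norms}, Bony-type remainder bounds, and the Sobolev embedding $W^{2+\epsilon',4}\hookrightarrow C^{7/4+\epsilon'}$ which controls $\bw_\al$ and $\bw_{\al\al}$ in $L^\infty$ via $\ASSharp$.
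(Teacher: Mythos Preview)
Your approach is correct and essentially matches the paper's. The paper simplifies the bookkeeping by expanding everything pointwise \emph{before} paralinearizing, so the subprincipal coefficients $-\tfrac{15}{2}$ and $-\tfrac{5}{2}$ arise purely from the chain rule (as $-\tfrac{5}{2}-5$ and $-\tfrac{3}{2}-1$, from $\p_\al[J^{-3/2}(1-Y)]$ hitting $\p_\al^3\bw$ together with the $-5J^{-3/2}(1-Y)^2\bw_\al\bw_{\al\al}$ and $-J^{-5/2}\bar\bw_\al\bw_{\al\al}$ terms); the paraproduct rotations, Bony linearizations of the $B_k$, outer composition corrections, and $\bar\bw$-side projection terms you list all land directly in $K$ and contribute nothing at subprincipal order.
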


\begin{proof}
The second equation of the system \eqref{HF19} can be rewritten as
\begin{equation} \label{RParaEqn}
T_{D_t}R = -i\nP[a(1-Y)] - \nP\Pi(R_\alpha, b) - \nP T_{R_\alpha}b + \text{elastic terms},  
\end{equation}
where the elastic terms are given by 
\begin{align*}
    &i(1-Y)\nP\p_\al\{J^{-\f12}\p_\al[J^{-\f12}\p_\al(J^{-\f12}(1-Y)\bw_\alpha)] \} \\
    -& i(1-Y)\nP\p_\al\{J^{-\f12}\p_\al[J^{-\f12}\p_\al(J^{-\f12}(1-\bar{Y})\bar{\bw}_\alpha)] \} \\
    +&\frac{i}{2}(1-Y)\nP\p_\al[J^{-\f32}(1-Y)^3 \bw_\alpha^3 -3J^{-\f52}(1-Y)\bw_\alpha |\bw_\al|^2]\\
    -&\frac{i}{2}(1-Y)\nP\p_\al[J^{-\f32}(1-\bar{Y})^3 \bar{\bw}_\alpha^3 -3J^{-\f52}(1-\bar{Y})\bar{\bw}_\alpha |\bw_\al|^2].
\end{align*}

For the non-elastic terms on the right-hand side of \eqref{RParaEqn}, we compute using \eqref{aEst}
\begin{align*}
 &\|a(1-Y) \|_{L^2} \lesssim (1+ \|Y\|_{L^\infty})\|a\|_{L^2} \lesssim_\CalAZ \ASSharp, \\
 &\|\nP\Pi(R_\alpha, b)\|_{L^2} + \|\nP T_{R_\alpha}b \|_{L^2} \lesssim \|R\|_{W^{\frac{1}{2}+\epsilon,4}} \|b\|_{W^{\f12,4}} \lesssim \ASSharp,
\end{align*}
so that these three terms can be absorbed into $K$.
It suffices to simplify these elastic terms.

We compute
\begin{equation*}
 J^{-\f12}\p_\al(J^{-\f12}(1-Y)\bw_\alpha) = J^{-1}(1-Y)\bw_{\al \al} -\f32 J^{-1}(1-Y)^2\bw^2_\al -\f12 J^{-2}|\bw_\al|^2,
\end{equation*}
so that we get
\begin{align*}
   &J^{-\f12}\p_\al[J^{-\f12}\p_\al(J^{-\f12}(1-Y)\bw_\alpha)] \\
   =&  J^{-\f32}(1-Y)\p_\al^3\bw -2J^{-\f32}(1-Y)^2\bw_\al \bw_{\al \al}-J^{-\f52}\bar{\bw}_\al \bw_{\al \al} \\
   &-3J^{-\f32}(1-Y)^2\bw_\al \bw_{\al \al} + \f92 J^{-\f32}(1-Y)^3\bw_\al^3+\f32 J^{-\f52}(1-Y)|\bw_\alpha |^2\bw_\al\\
   &- \f12 J^{-\f52}\bw_{\al \al}\bar{\bw}_\al - \f12 J^{-\f52}\bar{\bw}_{\al \al}\bw_\al +  J^{-\f52}(1-Y) |\bw_\alpha |^2\bw_\al \\
   &+ J^{-\f52}(1-\bar{Y}) |\bw_\alpha |^2\bar{\bw}_\al.
\end{align*}
We then use the above computation to expand the elastic terms,
\begin{align*}
 \text{Elastic terms} =& i(1-Y)\nP\p_\al [J^{-\f32}(1-Y)\p_\al^3\bw -5J^{-\f32}(1-Y)^2\bw_\al \bw_{\al \al}\\
 &-J^{-\f52}\bar{\bw}_\al \bw_{\al \al}  + 5 J^{-\f32}(1-Y)^3\bw_\al^3 -J^{-\f32}(1-\bar{Y})\p_\al^3\bar{\bw} \\
 & +5J^{-\f32}(1-\bar{Y})^2\bar{\bw}_\al \bar{\bw}_{\al \al}+ J^{-\f52}\bw_\al \bar{\bw}_{\al \al}-5 J^{-\f32}(1-\bar{Y})^3\bar{\bw}_\al^3].
\end{align*}
Putting perturbative terms into $K$, we get  
\begin{align*}
 \text{Elastic terms} =& i(1-Y)\Big[T_{J^{-\f32}(1-Y)}\p_\al^4 \bw -\f52T_{J^{-\f32}(1-Y)^2 \bw_\al} \p_\al^3 \bw - \f32T_{J^{-\f52}\bar{\bw}_\al}\p_\al^3 \bw \\
 &-5T_{J^{-\f32}(1-Y)^2 \bw_\al} \p_\al^3 \bw - T_{J^{-\f52}\bar{\bw}_\al}\p_\al^3 \bw\Big] + K \\
 =&iT_{1-Y}\Big[T_{J^{-\f32}(1-Y)}\p_\al^4 \bw -\frac{15}{2}T_{J^{-\f32}(1-Y)^2 \bw_\al} \p_\al^3 \bw - \f52T_{J^{-\f52}\bar{\bw}_\al}\p_\al^3 \bw \Big] + K \\
 =& iT_{J^{-\frac{3}{2}}(1-Y)^2}\p^4_\alpha\bw -\frac{15}{2}iT_{J^{-\f32}(1-Y)^3 \bw_\al} \p_\al^3 \bw - \f52iT_{J^{-\f52}(1-Y)\bar{\bw}_\al}\p_\al^3 \bw + K.
\end{align*}
Collecting the expressions of all terms in the equation \eqref{RParaEqn}, we obtain the principal part of $T_{D_t}R$ in \eqref{RParaMat}.
\end{proof}

We now turn to the computation of the time derivative of the weight $J^s$ for $s\neq 0$.
\begin{lemma} \label{t:JsParaMat}
The time derivative of $J^s$ satisfies 
\begin{equation*}
\partial_t J^s = -sJ^s b_\alpha +E=  -s T_{J^s(1-\bar{Y})}R_\alpha -s T_{J^s (1-Y)}\bar{R}_\alpha+E, \quad   \| E\|_{C_{*}^{\epsilon}} \lesssim_\CalAZ \mathcal{A}^2_{\f74}.
\end{equation*}
\end{lemma}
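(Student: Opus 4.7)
The plan is to differentiate $J^s$ via the chain rule, substitute the $\bw$ equation, and then convert the resulting products into paraproducts. I start from $J = (1+\bw)(1+\bar{\bw})$ so that
$$\partial_t J^s = sJ^{s-1}\bigl[\bw_t(1+\bar{\bw}) + \bar{\bw}_t(1+\bw)\bigr].$$
Substituting the first equation of \eqref{HF19} and its conjugate (using that $b$ and $M$ are real), then using $\bw_\alpha(1+\bar{\bw}) + \bar{\bw}_\alpha(1+\bw) = J_\alpha$ together with the algebraic identities $J^{s-1}(1+\bw) = J^s(1-\bar{Y})$ and $J^{s-1}(1+\bar{\bw}) = J^s(1-Y)$, I obtain
$$\partial_t J^s = -b(J^s)_\alpha - sJ^s(1-\bar{Y})R_\alpha - sJ^s(1-Y)\bar{R}_\alpha + 2sJ^s M.$$

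Next, I exploit the identity $b_\alpha = (1-\bar{Y})R_\alpha + (1-Y)\bar{R}_\alpha - M$ that is read off from \eqref{MDef} to rewrite the three middle terms as $-sJ^s b_\alpha + sJ^s M$. This yields the first form of the lemma, with the error term being precisely $-b(J^s)_\alpha + sJ^s M$. The second form then follows by expanding $-sJ^s(1-\bar{Y})R_\alpha$ and $-sJ^s(1-Y)\bar{R}_\alpha$ via Bony's decomposition, keeping $-sT_{J^s(1-\bar{Y})}R_\alpha$ and $-sT_{J^s(1-Y)}\bar{R}_\alpha$ as the leading paraproducts and absorbing the remainders $T_{R_\alpha}[J^s(1-\bar{Y})]$, $\Pi(J^s(1-\bar{Y}), R_\alpha)$, and their conjugate counterparts into $E$.

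For the $C^\epsilon_*$ bound on $E$, under $\CalAZ$ the quantities $J^{\pm s}$, $1-Y$ and $1-\bar{Y}$ are smooth functions of $\bw,\bar{\bw}$ with uniformly controlled denominators, so they inherit the Zygmund regularity of $\bw$. Under $\mathcal{A}_{\f74}$ one has $\bw \in C^{\f74+\epsilon}_*$ and $R \in C^{\f14+\epsilon}_*$, and by the auxiliary-function estimates from Appendix \ref{s:HydroWaveEst} the functions $b$ and $M$ satisfy the expected bounds; in particular $b \in C^{\f14+\epsilon}_*$ and $\|M\|_{C^\epsilon_*} \lesssim_\CalAZ \mathcal{A}_{\f74}^2$. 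The elementary product estimate then yields $\|b(J^s)_\alpha\|_{C^\epsilon_*} \lesssim_\CalAZ \mathcal{A}_{\f74}^2$, while the paraproduct and balanced-paraproduct estimates in Appendix \ref{s:Norms} handle the Bony remainders, giving $C^{1+\epsilon}_* \hookrightarrow C^\epsilon_*$ bounds at the same quadratic order.

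The algebra reduces to a few uses of the $\bw$ equation and the definitions of $b$, $M$, and $Y$, so the only delicate bookkeeping point is the estimate on $M$ itself: a naive product count of $R_\alpha$ against $Y$ loses derivatives, and one must instead exploit the balanced (projection-paired) structure in the second expression for $M$ in \eqref{MDef}. This is precisely why $M$ is treated as an auxiliary function in the appendix, and its estimate there is what enables the $\mathcal{A}_{\f74}^2$ bound on $E$ to close.
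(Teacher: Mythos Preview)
Your proof is correct and follows essentially the same approach as the paper: chain rule on $J^s$, substitution of the $\bw$ equation, the algebraic identities $J^{s-1}(1+\bar{\bw})=J^s(1-Y)$ and $(1-\bar{Y})R_\alpha+(1-Y)\bar{R}_\alpha=b_\alpha+M$, and Bony decomposition for the paraproduct form. The only cosmetic difference is that the paper absorbs $b\bw_\alpha$ and $(1+\bw)M$ into the error immediately when substituting $\bw_t$, whereas you carry them through explicitly as $-b(J^s)_\alpha+sJ^sM$; the resulting error classes coincide.
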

\begin{proof}
A direct computation yields
\begin{equation*}
\p_t J^s =sJ^{s}(1-Y)\p_t\bw + sJ^{s}(1-\bar{Y})\p_t\bar{\bw}.
\end{equation*}
The first equation of \eqref{HF19} can be rewritten as
\begin{align*}
\p_t\bw =& -(1+\bw)(1-\bar{Y})R_\alpha -b\bw_\al  +(1+\bw)M \\
=& -(1+\bw)(1-\bar{Y})R_\alpha +E,
\end{align*}
where, as in the derivation of \eqref{WParaMat}, the terms $b\bw_\al  +(1+\bw)M$ are absorbed into the error term $E$.
As a result of the identity $(1+\bw)(1-Y)=1$, we have 
\begin{equation*}
\p_tJ^s = -sJ^{s}[(1-\bar{Y})R_\alpha + (1-Y)\bar{R}_\alpha]+E = -s T_{J^s(1-\bar{Y})}R_\alpha -s T_{J^s (1-Y)}\bar{R}_\alpha + E.
\end{equation*}
Finally, invoking identity \eqref{MDef} together with estimate \eqref{MBound} for $M$, we obtain
\begin{equation*}
 \p_t J^s  =  -sJ^{s}(b_\alpha +M)+E = -sJ^{s}b_\alpha +E,
\end{equation*}
which completes the proof for $s\neq 0$.
\end{proof}

\subsection{The derivation of the linearized hydroelastic waves}
In this section, we compute the linearized hydroelastic wave system.
Following the framework in gravity  water waves \cite{MR3535894} and capillary water waves \cite{MR3667289}, we denote the solutions for the linearized hydroelastic waves around a solution $(W, Q)$ to the equation \eqref{HF14} by $(w,q)$, and perturbative terms $(G,K)$ for source terms that satisfy
\begin{equation*}
    \|(G,K)\|_{\H^0} \lesssim_\CalAZ \ASSharp \|(w,r)\|_{\H^0}.
\end{equation*}
In what follows, we will repeatedly employ the symbolic calculus \eqref{CompositionPara} to combine or exchange para-coefficients.
We will also use para-associativity Lemma \ref{t:Paraassociavity} to commute the para-coefficients and the balanced paraproducts.
These computations are performed without explicit exposition, and commutator errors are absorbed into the perturbative source terms $(G,K)$.

The linearizations of non-elastic terms are the same as in the capillary water waves \cite{MR3667289}.
To compute the linearizations of the elastic terms, we first introduce the auxiliary functions $\tilde{c}$ and $c$ by
\begin{equation*}
i\tilde{c}:=\f{\mathbf{W}_{\al}}{J^{\f12}(1+\mathbf{W})}
-\f{\bar{\mathbf{W}}_{\al}}{J^{\f12}(1+\bar{\mathbf{W}})}, \quad
ic:=iJ^{-\f12} \p_\al(J^{-\f12}\tilde{c}_\al)+\f12(i\tilde{c})^3.
\end{equation*}
A direct computation shows that the linearization of $J^{-\f12}$ is
\begin{equation*}
\delta(J^{-\f12})=-\frac{(1+\mathbf{W})\bar{w}_{\al}+(1+\bar{\mathbf{W}})w_{\al}}{2J^{\f32}}.
\end{equation*}
We then obtain the linearization of $ic$, which is $ i\delta c=p-\bar{p}$, where
\begin{align*}
p:=&\f{1}{J^{\f12}}\f{d}{d\al}\left[\f{1}{J^{\f12}}\f{d}{d\al}\left(\f{w_{\al\al}}{J^{\f12}(1+\mathbf{W})}-\left(\f{3\mathbf{W}_{\al}}{2J^{\f12}(1+\mathbf{W})^2}-\f{\bar{\mathbf{W}}_{\al}}{2J^{\f32}}\right)w_{\al}\right)\right]\\
&-\f{1}{J^{\f12}}\f{d}{d\al}\left[\frac{(1+\bar{\mathbf{W}})w_{\al}}{2J^{\f32}}\f{d}{d\al}\left(\f{\mathbf{W}_{\al}}{J^{\f12}(1+\mathbf{W})}-\f{\bar{\mathbf{W}}_{\al}}{J^{\f12}(1+\bar{\mathbf{W}})}
\right)\right]\\
&-\frac{(1+\bar{\mathbf{W}})w_{\al}}{2J^{\f32}}\f{d}{d\al}\left[\f{1}{J^{\f12}}\f{d}{d\al}\left(\f{\mathbf{W}_{\al}}{J^{\f12}(1+\mathbf{W})}-\f{\bar{\mathbf{W}}_{\al}}{J^{\f12}(1+\bar{\mathbf{W}})}
\right)\right]\\
&+\f32\left(\f{w_{\al\al}}{J^{\f12}(1+\mathbf{W})}-\left(\f{3\mathbf{W}_{\al}}{2J^{\f12}(1+\mathbf{W})^2}-\f{\bar{\mathbf{W}}_{\al}}{2J^{\f32}}\right)w_{\al}\right)\left[\f{\mathbf{W}_{\al}}{J^{\f12}(1+\mathbf{W})}
-\f{\bar{\mathbf{W}}_{\al}}{J^{\f12}(1+\bar{\mathbf{W}})}\right]^2.
\end{align*}

By combining the above computations with those in \cite{MR3667289}, the linearized hydroelastic waves are as follows:
\begin{equation} \label{wqLinearized}
\begin{cases}
w_t+Fw_{\al}+(1+\mathbf{W})\nP[m-\bar{m}]=0,\\
q_t+Fq_{\al}+Q_{\al}\nP[m-\bar{m}]+\nP[n+\bar{n}]-i\nP[p-\bar{p}]=0.
\end{cases}
\end{equation}
Here, we introduce the diagonal linearized variable $r:=q-Rw$, and define
\begin{align*}
&m:=\f{q_{\al}-Rw_{\al}}{J}+\f{\bar{R}w_{\al}}{(1+\mathbf{W})^2}=\f{r_{\al}+R_{\al}w}{J}+\f{\bar{R}w_{\al}}{(1+\mathbf{W})^2}, \\
&n:=\bar{R}\delta R=\f{\bar{R}(q_{\al}-Rw_{\al})}{1+\mathbf{W}}=\frac{\bar{R}(r_{\al}+R_{\al}w)}{1+\mathbf{W}}.
\end{align*}
Introducing the transport coefficient  $b=F+\frac{\bar{R}}{1+\mathbf{W}}$, and writing the equations in terms of the associated material derivative, the system \eqref{wqLinearized} may be rewritten as
\begin{equation} \label{linearizedeqn}
\begin{cases}
(\p_t+b\p_{\al})w+\f{1}{1+\bar{\mathbf{W}}}r_{\al}+\frac{R_{\al}}{1+\bar{\mathbf{W}}}w=\mathcal{G}_0(w,r),\\
(\p_t+b\p_{\al})r-i\frac{a}{1+\mathbf{W}}w-i\nP p-\frac{w}{1+\mathbf{W}}\p_{\al}\nP c=\mathcal{K}_0(w,r),
\end{cases}
\end{equation}
where on the right-hand side,
\begin{equation*}
\mathcal{G}_0(w,r)=(1+\mathbf{W})(\nP\bar{m}+\bar{\nP}m),\quad
\mathcal{K}_0(w,r)=\bar{\nP}n-\nP\bar{n}-i\nP\bar{p}.
\end{equation*}

In the remainder of the paper, we devote substantial effort to the analysis of the linearized system \eqref{linearizedeqn} and various variants of \eqref{linearizedeqn}.
In order to obtain a modified energy estimate for \eqref{linearizedeqn} at low regularity, we will first rewrite the system in paradifferential form.

The reduction of \eqref{linearizedeqn} to a paradifferential system follows closely the corresponding computation for gravity water waves \cite{AIT}.
The principal new difficulty lies in the elastic contribution $-i\nP p$, whose structure requires additional analysis.
To facilitate the computation, we first prove the following lemma.
\begin{lemma}
The term $(1+\mathbf{W})p$ admits the following decomposition:
\begin{equation} \label{pexpansion}
\begin{split}
(1+\mathbf{W})p=&\p_{\al}(J^{-\f12}\p_{\al}(J^{-\f12}\p_{\al}(J^{-\f12}w_{\al})))\\
&-i\left[\p_{\al}\left(J^{-\f12}\p_{\al}\left(\tilde{c}J^{-\f12}w_{\al}\right)\right)
+\p_{\al}\left(\tilde{c}J^{-\f12}\p_{\al}\left(J^{-\f12}w_{\al}\right)\right)\right]\\
&-\f{11}4\p_{\al}\left(\tilde{c}^2J^{-\f12}w_{\al}\right)
-icw_{\al} + K.
\end{split}    
\end{equation}
\end{lemma}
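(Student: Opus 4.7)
The proof is a direct algebraic identity verified term-by-term. I would start by observing the preliminary identity
\[
\f{w_{\al\al}}{J^{\f12}(1+\bw)} - \left(\f{3\bw_\al}{2J^{\f12}(1+\bw)^2} - \f{\bar{\bw}_\al}{2J^{\f32}}\right)w_\al = J^{-\f12}\p_\al\!\left(\f{w_\al}{1+\bw}\right) - \f{i\tilde c\, w_\al}{2(1+\bw)},
\]
obtained by expanding $J^{-\f12}\p_\al(\f{w_\al}{1+\bw}) = \f{w_{\al\al}}{J^{\f12}(1+\bw)} - \f{\bw_\al w_\al}{J^{\f12}(1+\bw)^2}$ and applying the definition $i\tilde c = J^{-\f12}[\f{\bw_\al}{1+\bw} - \f{\bar\bw_\al}{1+\bar\bw}]$ together with $(1+\bw)(1+\bar\bw) = J$. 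This rewrites the inner bracket in the first term of $p$ as a clean $\p_\al$-structure plus a manifestly $i\tilde c$ correction.

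Next, I would expand $(1+\bw) \cdot (\text{term 1 of } p)$ by commuting $(1+\bw)$ outward past the two outer operators $J^{-\f12}\p_\al$. Using $(1+\bw)J^{-\f12}\p_\al X = \p_\al[(1+\bw)J^{-\f12}X] - \bw_\al J^{-\f12}X$ and iterating, together with the preliminary identity, yields the leading piece $\p_\al(J^{-\f12}\p_\al(J^{-\f12}\p_\al(J^{-\f12}w_\al)))$, plus commutator remainders proportional to $\bw_\al$. These commutator pieces, combined with the explicit $-\f{i\tilde c w_\al}{2(1+\bw)}$ term from the preliminary identity, reassemble --- using the definition of $\tilde c$ once more --- into the two middle RHS contributions $-i[\p_\al(J^{-\f12}\p_\al(\tilde c J^{-\f12}w_\al)) + \p_\al(\tilde c J^{-\f12}\p_\al(J^{-\f12}w_\al))]$, modulo pieces collected in $K$.

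The remaining terms 2, 3, 4 of $p$ share the common building block $\f{\bw_\al}{J^{\f12}(1+\bw)} - \f{\bar\bw_\al}{J^{\f12}(1+\bar\bw)} = i\tilde c$, so the derivative factor in terms 2 and 3 is simply $i\tilde c_\al$, while term 4 carries $(i\tilde c)^2 = -\tilde c^2$. After multiplying by $(1+\bw)$ and simplifying, terms 2 and 3 collectively produce a contribution proportional to $w_\al \cdot J^{-\f12}\p_\al(J^{-\f12}\tilde c_\al)$. Combined with the $\tilde c^3$ piece from term 4 via the defining identity $c = J^{-\f12}\p_\al(J^{-\f12}\tilde c_\al) - \f12 \tilde c^3$, this yields precisely $-icw_\al$. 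The residual $\tilde c^2$ pieces from term 4, together with the $\tilde c^2$-type commutator contributions already generated in the previous step, combine to give the coefficient $-\f{11}{4}$ in front of $\p_\al(\tilde c^2 J^{-\f12}w_\al)$.

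The main obstacle is purely combinatorial: tracking the rational coefficients through the many commutator manipulations so that the final $\tilde c^2$ coefficient is exactly $-\f{11}{4}$. Each commutation of $\p_\al$ past $(1+\bw)^{\pm 1}$ contributes a specific fraction, and these must combine with the $\f32$ prefactor in term 4 and the $\f12$ prefactor in the cubic $\tilde c^3$ piece of $c$ to land on the stated value. Once the bookkeeping is settled, each remainder absorbed into $K$ is a pointwise product of strictly sub-leading derivatives of $\bw$ and $w$, and can be estimated with the paradifferential and Sobolev bounds collected in the appendices.
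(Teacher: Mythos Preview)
Your overall strategy—direct algebraic verification by commuting the $(1+\bw)$ factor through the nested $J^{-\frac12}\p_\al$ operators—is exactly the paper's approach. Two points of comparison are worth noting.

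First, your commutation identity $(1+\bw)J^{-\frac12}\p_\al X = \p_\al[(1+\bw)J^{-\frac12}X] - \bw_\al J^{-\frac12}X$ is not quite right as written: the right-hand side equals $(1+\bw)\p_\al(J^{-\frac12}X)$, which differs from the left-hand side by $(1+\bw)(\p_\al J^{-\frac12})X$. This missing term is precisely what combines with the $\bw_\al$ commutator to produce the $\tilde c$ structure, so you would eventually recover it, but the bookkeeping becomes harder.

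Second, the paper organizes the computation more efficiently. It rewrites your preliminary identity in the equivalent form $\frac{A_0}{1+\bw}$ with $A_0 = \p_\al(J^{-\frac12}w_\al) - i\tilde c\, w_\al$ (the $\frac{1}{1+\bw}$ is fully factored outside), and then uses the single clean commutation law
\[
J^{-\frac12}\p_\al\!\left(\frac{X}{1+\bw}\right) = \frac{\p_\al(J^{-\frac12}X) - \tfrac{i\tilde c}{2}X}{1+\bw}
\]
recursively, $A_0\mapsto A_1\mapsto A$. This keeps $\frac{1}{1+\bw}$ outside throughout, produces $\tilde c$ corrections directly at each step (rather than $\bw_\al$ terms that must be regrouped), and is cancelled in one stroke by the outer $(1+\bw)$. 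The paper then sorts the resulting expression by the number of $\al$-derivatives falling on $w$ (fourth through first order) and reads off the coefficients; the first-order residue $\frac{15}{4}\tilde c_\al\tilde c J^{-\frac12}w_\al - \frac{5i}{4}\tilde c^3 w_\al$ is what lands in $K$. Your route would arrive at the same place, but with the corrected commutator and considerably more regrouping to track the $-\frac{11}{4}$.
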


\begin{proof}
Recall that $p$ is given by
\begin{align*}
p:=&\f{1}{J^{\f12}}\f{d}{d\al}\left[\f{1}{J^{\f12}}\f{d}{d\al}\left(\f{w_{\al\al}}{J^{\f12}(1+\mathbf{W})}-\left(\f{3\mathbf{W}_{\al}}{2J^{\f12}(1+\mathbf{W})^2}-\f{\mathbf{W}_{\al}}{2J^{\f32}}\right)w_{\al}\right)\right]\\
&-\f{1}{J^{\f12}}\f{d}{d\al}\left[\frac{(1+\bar{\mathbf{W}})w_{\al}}{2J^{\f32}}\f{d}{d\al}\left(\f{\mathbf{W}_{\al}}{J^{\f12}(1+\mathbf{W})}-\f{\bar{\mathbf{W}}_{\al}}{J^{\f12}(1+\bar{\mathbf{W}})}
\right)\right]\\
&-\frac{(1+\bar{\mathbf{W}})w_{\al}}{2J^{\f32}}\f{d}{d\al}\left[\f{1}{J^{\f12}}\f{d}{d\al}\left(\f{\mathbf{W}_{\al}}{J^{\f12}(1+\mathbf{W})}-\f{\bar{\mathbf{W}}_{\al}}{J^{\f12}(1+\bar{\mathbf{W}})}
\right)\right]\\
&+\f32\left(\f{w_{\al\al}}{J^{\f12}(1+\mathbf{W})}-\left(\f{3\mathbf{W}_{\al}}{2J^{\f12}(1+\mathbf{W})^2}-\f{\bar{\mathbf{W}}_{\al}}{2J^{\f32}}\right)w_{\al}\right)\left[\f{\mathbf{W}_{\al}}{J^{\f12}(1+\mathbf{W})}
-\f{\bar{\mathbf{W}}_{\al}}{J^{\f12}(1+\bar{\mathbf{W}})}\right]^2,
\end{align*}
which we decompose as
\begin{equation*}
   p :=A-B-C+D.
\end{equation*}
A direct computation shows that
\begin{align*}
\f{w_{\al\al}}{J^{\f12}(1+\mathbf{W})}-\left(\f{3\mathbf{W}_{\al}}{2J^{\f12}(1+\mathbf{W})^2}-\f{\bar{\mathbf{W}}_{\al}}{2J^{\f32}}\right)w_{\al}=\f{\p_{\al}(J^{-\f12}w_{\al})-i\tilde{c}w_{\al}}{1+\mathbf{W}}=:\f{A_0}{1+\mathbf{W}}.
\end{align*}
A straightforward calculation gives that
\begin{align*}
&J^{-\f12}\p_{\al}\left(\frac{A_0}{1+\mathbf{W}}\right)=\p_{\al}\left(\frac{J^{-\f12}A_0}{1+\mathbf{W}}\right)
-\frac{\p_{\al}J^{-\f12}A_0}{1+\mathbf{W}}\\
=&\frac{\p_{\al}\left(J^{-\f12}A_0\right)}{1+\mathbf{W}}-\f{\mathbf{W}_{\al}J^{-\f12}A_0}{(1+\mathbf{W})^2}
-\frac{\p_{\al}J^{-\f12}A_0}{1+\mathbf{W}}
=\f{\p_{\al}(J^{-\f12}A_0)-\f{i\tilde{c}A_0}{2}}{1+\mathbf{W}}=:\f{A_1}{1+\mathbf{W}}.
\end{align*}
Replace $A_0$ by $A_1$ in the above equations, we then get
\begin{align*}
A=&J^{-\f12}\p_{\al}\left(\frac{A_1}{1+\mathbf{W}}\right)=\f{\p_{\al}(J^{-\f12}A_1)-\f{i\tilde{c}A_1}{2}}{1+\mathbf{W}}\\
=&\f{1}{1+\mathbf{W}}\left(\p_{\al}(J^{-\f12}\p_{\al}(J^{-\f12}A_0))-\f12\p_{\al}\left((i\tilde{c})J^{-\f12}A_0\right)-\f{i\tilde{c}\p_{\al}(J^{-\f12}A_0)}{2}+\f{(i\tilde{c})^2A_0}{4}\right).
\end{align*}
Similarly, $B$ is given by
\begin{align*}
B=\f12J^{-\f12}\p_{\al}\left(\f{J^{-\f12}(i\tilde{c})_{\al}w_{\al}}{1+\mathbf{W}}\right)=\f12\f{\p_{\al}(J^{-\f12}J^{-\f12}(i\tilde{c})_{\al}w_{\al})-\f{i\tilde{c}J^{-\f12}(i\tilde{c})_{\al}w_{\al}}{2}}{1+\mathbf{W}}.
\end{align*}
The term $C$ is
\begin{align*}
C=\f12\f{1}{1+\mathbf{W}}J^{-\f12}\p_{\al}\left(J^{-\f12}(ic)_{\al}\right)w_{\al}.
\end{align*}
For $D$, we have
\begin{align*}
D=\f32\f{A_0}{1+\mathbf{W}}(i\tilde{c})^2.
\end{align*}
In the following, we will simplify terms in $(1+\bw)p$ according to the number of derivatives that may act on $w$.

{\bf{Fourth-order term of $w$ in $(1+\mathbf{W})p$}:}
\begin{equation*}
\p_{\al}(J^{-\f12}\p_{\al}(J^{-\f12}\p_{\al}(J^{-\f12}w_{\al}))).   
\end{equation*}

{\bf{Third-order terms of $w$ in $(1+\mathbf{W})p$:}}
\begin{align*}
&-\p_{\al}(J^{-\f12}\p_{\al}(J^{-\f12}(i\tilde{c})w_{\al}))
-\f12\p_{\al}\left((i\tilde{c})J^{-\f12}\p_{\al}(J^{-\f12}w_{\al})\right)
-\f{i\tilde{c}\p_{\al}(J^{-\f12}\p_{\al}(J^{-\f12}w_{\al}))}{2}\\
=&-\p_{\al}(J^{-\f12}\p_{\al}(J^{-\f12}(i\tilde{c})w_{\al}))
-\p_{\al}\left((i\tilde{c})J^{-\f12}\p_{\al}(J^{-\f12}w_{\al})\right)
+\f{(i\tilde{c})_{\al}J^{-\f12}\p_{\al}(J^{-\f12}w_{\al})}{2}.
\end{align*}

{\bf{Second-order terms of $w$ in $(1+\mathbf{W})p$:}}
\begin{align*}
&\f12\p_{\al}\left((i\tilde{c})^2J^{-\f12}w_{\al}\right)+\f{i\tilde{c}\p_{\al}(J^{-\f12}(i\tilde{c})w_{\al})}{2}+\f{(i\tilde{c})^2\p_{\al}(J^{-\f12}w_{\al})}{4}\\
&+\f{3(i\tilde{c})^2\p_{\al}(J^{-\f12}w_{\al})}{2}+\f{(i\tilde{c})_{\al}J^{-\f12}\p_{\al}(J^{-\f12}w_{\al})}{2}-\f12\p_{\al}(J^{-\f12}J^{-\f12}(i\tilde{c})_{\al}w_{\al})\\
=&\f{11}4\p_{\al}\left((i\tilde{c})^2J^{-\f12}w_{\al}\right)-4(i\tilde{c})_{\al}(J^{-\f12}(i\tilde{c})w_{\al})
-\f12J^{-\f12}\p_{\al}(J^{-\f12}(i\tilde{c})_{\al})w_{\al}.
\end{align*}

{\bf{First-order terms of $w$ in $(1+\mathbf{W})p$:}}
\begin{align*}
&-4(i\tilde{c})_{\al}(J^{-\f12}(i\tilde{c})w_{\al})+\f14(i\tilde{c})_{\al}(J^{-\f12}(i\tilde{c})w_{\al}) -\f12J^{-\f12}\p_{\al}(J^{-\f12}(i\tilde{c})_{\al})w_{\al}
\\
&-\f12J^{-\f12}\p_{\al}(J^{-\f12}(i\tilde{c})_{\al})w_{\al}
-\f14(i\tilde{c})^3w_{\al}-\f32(i\tilde{c})^3w_{\al}\\
=&-icw_{\al}+\f{15}{4}\tilde{c}_{\al}\tilde{c}J^{-\f12}w_{\al}-\f{5i}{4}\tilde{c}^3w_{\al}.
\end{align*}
Note that 
\begin{equation*}
 \left\|\f{15}{4}\tilde{c}_{\al}\tilde{c}J^{-\f12}w_{\al}-\f{5i}{4}\tilde{c}^3w_{\al} \right\|_{L^2} \lesssim \ASSharp \| w\|_{H^\f32} ,
\end{equation*}
so that this term can be put into the perturbative source term $K$.
Collecting the contributions at each derivative order in $(1+\bw)p$, we obtain the expansion in \eqref{pexpansion}. 
\end{proof}

Motivated by the expansion of $(1+\bw)p$ in \eqref{pexpansion}, we introduce the operator
\begin{equation*}
\begin{split}
L:=&\p_{\al}(J^{-\f12}\p_{\al}(J^{-\f12}\p_{\al}(J^{-\f12}\p_{\al}))) -i\p_{\al}\left(J^{-\f12}\p_{\al}\left(\tilde{c}J^{-\f12}\p_{\al}\right)\right)\\
&
-i\p_{\al}\left(\tilde{c}J^{-\f12}\p_{\al}\left(J^{-\f12}\p_{\al}\right)\right)-\f{11}4\p_{\al}\left(\tilde{c}^2J^{-\f12}\p_{\al}\right)
-ic\p_{\al}-i{\nP}c_{\al}.
\end{split}
\end{equation*}
Then the system becomes 
\begin{equation*}
\begin{cases}
(\p_t+b\p_{\al})w+\f{1}{1+\bar{\mathbf{W}}}r_{\al}+\frac{R_{\al}}{1+\bar{\mathbf{W}}}w=\mathcal{G}_0(w,r),\\
(\p_t+b\p_{\al})r-i\frac{a}{1+\mathbf{W}}w-i\nP \left[\f{Lw}{1+\mathbf{W}}\right]=\mathcal{K}_0(w,r)+K.
\end{cases}
\end{equation*}

Applying the projection $\nP$ and using $\mathfrak{M}_b f := \nP[b f]$, the linearized hydroelastic waves can be rewritten as
\begin{equation*}
\begin{cases}
(\p_t+ \mathfrak{M}_{b}\p_{\al})w+\nP\left[\f{1}{1+\bar{\mathbf{W}}}r_{\al}\right]+\nP\left[\frac{R_{\al}}{1+\bar{\mathbf{W}}}w\right]=\nP\mathcal{G}_0,\\
(\p_t+\mathfrak{M}_b\p_{\al})r-i\nP\left[\frac{a}{1+\mathbf{W}}w\right]-i\nP\left[\f{Lw}{1+\mathbf{W}}\right]=\nP\mathcal{K}_0+K.
\end{cases}
\end{equation*}
In the following, we compute and simplify the term $-i\nP\left[\f{Lw}{1+\mathbf{W}}\right]$.
\begin{equation*}
\begin{split}
\nP[(1-Y)Lw]
=&\nP[(1-Y)J^{-\f32}\p_{\al}^4w]+\nP \left\{(1-Y)\left[2\p_{\al}J^{-\f32}-2i\tilde{c}J^{-1}\right]\p_{\al}^3w \right\}\\
+&\nP\left\{(1-Y)\left[2J^{-\f12}\p^2_{\al}J^{-1}+3J^{-\f12}(\p_{\al}J^{-\f12})^2-3\p_{\al}(i\tilde{c}J^{-1})-\f{11}{4}\tilde{c}^2J^{-\f12}\right]\p_{\al}^2w \right\}\\
+&\nP \left\{(1-Y)[J^{-1}\p_{\al}^3(J^{-\f12})-2iJ^{-1}\tilde{c}_{\al\al}]w_{\al} \right\}-i\nP\left[(1-Y)J^{-\f32}\p_{\al}^3\tilde{c}w\right]+K.
\end{split}
\end{equation*}
We now simplify each term in $\nP[(1-Y)Lw]$ successively. 
For the first term
\begin{equation*}
\nP[(1-Y)J^{-\f32}\p_{\al}^4w]=T_{(1-Y)J^{-\f32}}\p_{\al}^4w+T_{\p_{\al}^4w}\nP[(1-Y)J^{-\f32}-1]+\nP\Pi(\p^4_{\al}w,(1-Y)J^{-\f32}-1).
\end{equation*}
Using the paralinearization in Lemma \ref{t:Paralinear}, we can write
\begin{equation*}
 (1-Y)J^{-\f32}-1 = -\f52T_{(1-Y)^2J^{-\f32}} \bw -\f32 T_{J^{-\f52}}\bar{\bw} + err, \quad \|err\|_{C_*^\f72}  \lesssim \mathcal{A}^2_{\f74}.
\end{equation*}
Hence, we obtain
\begin{align*}
 \nP[(1-Y)J^{-\f32}\p_{\al}^4w]=&T_{(1-Y)J^{-\f32}}\p_{\al}^4w  -\f52T_{(1-Y)^2J^{-\f32}}T_{\p_{\al}^4w} \bw  \\
 -&\f52T_{(1-Y)^2J^{-\f32}}\Pi(\p_{\al}^4w, \bw)  -\f32  T_{J^{-\f52}}\nP \Pi(\p^4_{\al}w,\bar{\bw}) + K.
\end{align*}

For the second term in $\nP[(1-Y)Lw]$, 
\begin{equation*}
\begin{split}
&\nP\left\{(1-Y)\left[2\p_{\al}J^{-\f32}-2i\tilde{c}J^{-1}\right]\p_{\al}^3w\right\}\\
=&-\nP\left\{(1-Y)J^{-\f32}\left[5(1-Y)\mathbf{W}_{\al}+(1-\bar{Y})\bar{\mathbf{W}}_{\al}\right]\p_{\al}^3w \right\}\\
=&-5T_{(1-Y)^2J^{-\f32}\mathbf{W}_{\al}}\p_{\al}^3w-T_{J^{-\f52}\bar{\bw}_{\al}}\p_{\al}^3w-5T_{\p_{\al}^3w}\nP\left[(1-Y)^2J^{-\f32}\mathbf{W}_{\al}\right] \\
&-T_{\p_{\al}^3w}\nP \left[J^{-\f52}\bar{\bw}_{\al} \right]-5\nP\Pi(\p^3_{\al}w,(1-Y)^2J^{-\f32}\mathbf{W}_{\al})
-\nP\Pi(\p^3_{\al}w,J^{-\f52}\mathbf{W}_{\al})\, \\
=&-5T_{(1-Y)^2J^{-\f32}\mathbf{W}_{\al}}\p_{\al}^3w-T_{J^{-\f52}\bar{\bw}_{\al}}\p_{\al}^3w-5T_{(1-Y)^2J^{-\f32}}T_{\p_{\al}^3w}\mathbf{W}_{\al} \\
&-5T_{(1-Y)^2J^{-\f32}}\Pi(\p^3_{\al}w,\mathbf{W}_{\al})
-T_{J^{-\f52}}\nP\Pi(\p^3_{\al}w,\bar{\bw}_{\al}) + K\, .
\end{split}
\end{equation*}

For the third term  in $\nP[(1-Y)Lw]$, 
\begin{equation*}
\begin{split}
&\nP\left \{(1-Y)\left[2J^{-\f12}\p^2_{\al}J^{-1}+3J^{-\f12}(\p_{\al}J^{-\f12})^2-3\p_{\al}(i\tilde{c}J^{-1})-\f{11}{4}\tilde{c}^2J^{-\f12}\right]\p_{\al}^2w \right\}\\
=&\nP \left\{\f{(1-Y)}{J^{\f32}}\left[-\f{5\mathbf{W}_{\al\al}}{1+\mathbf{W}}+\f{\bar{\mathbf{W}}_{\al\al}}{1+\bar{\mathbf{W}}}+15\f{\mathbf{W}^2_{\al}}{(1+\mathbf{W})^2}\right]\p_{\al}^2w\right\}\\
=&-5T_{(1-Y)^2J^{-\f32}\mathbf{W}_{\al\al}}w_{\al\al}+T_{J^{-\f52}\bar{\mathbf{W}}_{\al\al}}w_{\al\al}+15T_{(1-Y)^3J^{-\f32}\mathbf{W}^2_{\al}}w_{\al\al}\\
&-5T_{w_{\al\al}} \nP\left[(1-Y)^2J^{-\f32}\mathbf{W}_{\al\al}\right]+T_{w_{\al\al}}\nP \left[J^{-\f52}\bar{\mathbf{W}}_{\al\al}\right]+15T_{w_{\al\al}} \nP\left[(1-Y)^3J^{-\f32}\mathbf{W}^2_{\al}\right]\\
&-5\nP\Pi(w_{\al\al},(1-Y)^2J^{-\f32}\mathbf{W}_{\al\al})+\nP\Pi(w_{\al\al},J^{-\f52}\bar{\mathbf{W}}_{\al\al})\\
&+15\nP\Pi(w_{\al\al},(1-Y)^3J^{-\f32}\mathbf{W}^2_{\al})\, \\
=& -5T_{(1-Y)^2J^{-\f32}\mathbf{W}_{\al\al}}w_{\al\al}+T_{J^{-\f52}\bar{\mathbf{W}}_{\al\al}}w_{\al\al}+15T_{(1-Y)^3J^{-\f32}\mathbf{W}^2_{\al}}w_{\al\al}\\
&-5T_{(1-Y)^2J^{-\f32}}T_{w_{\al\al}} \mathbf{W}_{\al\al} -5T_{(1-Y)^2 J^{-\f32}}\Pi(w_{\al\al},\mathbf{W}_{\al\al})+T_{J^{-\f52}}\nP\Pi(w_{\al\al},\bar{\mathbf{W}}_{\al\al}) + K.
\end{split}
\end{equation*}

Next, for the fourth term  in $\nP[(1-Y)Lw]$,
\begin{equation*}
\begin{split}
&\nP \left\{(1-Y)[J^{-1}\p_{\al}^3(J^{-\f12})-2iJ^{-1}\tilde{c}_{\al\al}]w_\alpha \right\}\\
=&\nP \left\{(1-Y)J^{-\f32}\left[-\f52\f{\p^3_{\al}\mathbf{W}}{1+\mathbf{W}}+\f32\f{\p^3_{\al}\bar{\mathbf{W}}}{1+\bar{\mathbf{W}}}\right]w_{\al}\, \right\}+K\\
=&-\f52T_{(1-Y)^2J^{-\f32}\p^3_{\al}\mathbf{W}}w_{\al}+\f32T_{J^{-\f52}\p^3_{\al}\bar{\mathbf{W}}}w_{\al} -\f52T_{w_{\al}} \nP \left[(1-Y)^2J^{-\f32}\p^3_{\al}\mathbf{W}\right]\\
&+\f32T_{w_{\al}}\nP \left[J^{-\f52}\p^3_{\al}\bar{\mathbf{W}}\right]-\f52\nP\Pi(w_{\al},(1-Y)^2J^{-\f32}\p^3_{\al}\mathbf{W})+\f32\nP\Pi(w_{\al},J^{-\f52}\p^3_{\al}\bar{\mathbf{W}})+K\, \\
=&-\f52T_{(1-Y)^2J^{-\f32}\p^3_{\al}\mathbf{W}}w_{\al}+\f32T_{J^{-\f52}\p^3_{\al}\bar{\mathbf{W}}}w_{\al} -\f52 T_{(1-Y)^2J^{-\f32}}T_{w_{\al}} \p^3_{\al}\mathbf{W}\\
&-\f52 T_{(1-Y)^2J^{-\f32}}\Pi(w_{\al},\p^3_{\al}\mathbf{W})+\f32T_{J^{-\f52}}\nP\Pi(w_{\al},\p^3_{\al}\bar{\mathbf{W}}) + K.
\end{split}
\end{equation*}
Finally, for the last term  in $\nP[(1-Y)Lw]$,
\begin{equation*}
\begin{split}
&-\nP \left[(1-Y)J^{-\f32}\p_\al^3\nP(i\tilde{c})w \right]\\
=&-T_{(1-Y)^2J^{-\f32}\p^4_{\al}\mathbf{W}}w-T_{w}\nP[(1-Y)^2J^{-\f32}\p^4_{\al}\mathbf{W}]\\
&-\nP\Pi \left(w,(1-Y)^2J^{-\f32}\p^4_{\al}\mathbf{W}\right)+\nP\Pi(w,J^{-\f52}\p^4_{\al}\bar{\mathbf{W}})+K\,\\
=&-T_{(1-Y)^2J^{-\f32}\p^4_{\al}\mathbf{W}}w -T_{(1-Y)^2J^{-\f32}}T_{w}\p^4_{\al}\mathbf{W} -T_{(1-Y)^2J^{-\f32}}\Pi \left(w,\p^4_{\al}\mathbf{W}\right)+K.
\end{split}
\end{equation*}

Gathering the computations for all terms for $-i\nP\left[\f{Lw}{1+\mathbf{W}}\right]$, one can rewrite the linearized system in the
paradifferential form as follows:
\begin{equation} \label{ParadifferentialLinearEqn}
\begin{cases}
T_{D_t}w+T_{1-\bar{Y}}r_{\al}+T_{(1-\bar{Y})R_{\al}}w=\mathcal{G}^{\sharp}(w,r)+G,\\
T_{D_t}r-i\mathcal{L}_{para}w=\mathcal{K}^{\sharp}(w,r)+K,
\end{cases}
\end{equation}
where the operator $\mathcal{L}_{para}$, which can be viewed as the paradifferential version of the operator $L$, is given by
\begin{equation} \label{LLinDef}
\begin{split}
\mathcal{L}_{para}w=&T_{(1-Y)J^{-\f32}}\p_{\al}^4w-5T_{(1-Y)^2J^{-\f32}\mathbf{W}_{\al}}\p_{\al}^3w-T_{J^{-\f52}\bar{\bw}_{\al}}\p_{\al}^3w -5T_{(1-Y)^2J^{-\f32}\mathbf{W}_{\al\al}}w_{\al\al}
\\
&+T_{J^{-\f52}\bar{\mathbf{W}}_{\al\al}}w_{\al\al}+15T_{(1-Y)^3J^{-\f32}\mathbf{W}^2_{\al}}w_{\al\al} -\f52T_{(1-Y)^2J^{-\f32}\p^3_{\al}\mathbf{W}}w_{\al}\\
&+\f32T_{J^{-\f52}\p^3_{\al}\bar{\mathbf{W}}}w_{\al}-T_{(1-Y)^2J^{-\f32}\p^4_{\al}\mathbf{W}}w,   
\end{split}
\end{equation}
and source terms $\mathcal{G}^{\sharp}=\nP(\mathcal{G}_0+\mathcal{G}_1)$, $\mathcal{K}^{\sharp}=\nP(\mathcal{K}_0+\mathcal{K}_1)$ with
\begin{equation*}
\begin{aligned}
\mathcal{G}_1=&\Pi(r_{\al},\bar{Y})-(T_{w_{\al}}b+\Pi(w_{\al},b))-T_{w}((1-\bar{Y})R_{\al})-\Pi(w,(1-\bar{Y})R_{\al}),\\
\mathcal{K}_1
=&-T_{r_{\al}}T_{1-\bar{Y}}R-\Pi(r_{\al},T_{1-\bar{Y}}R)-\Pi(r_{\al},T_{1-Y}\bar{R})\\
&-\f52iT_{(1-Y)^2J^{-\f32}}T_{\p_{\al}^4w} \bw-\f52iT_{(1-Y)^2J^{-\f32}}\Pi(\p_{\al}^4w, \bw)  -\f32i  T_{J^{-\f52}} \Pi(\p^4_{\al}w,\bar{\bw})\\
&-5iT_{(1-Y)^2J^{-\f32}}T_{\p_{\al}^3w}\mathbf{W}_{\al} 
-5iT_{(1-Y)^2J^{-\f32}}\Pi(\p^3_{\al}w,\mathbf{W}_{\al})
-iT_{J^{-\f52}}\Pi(\p^3_{\al}w,\bar{\bw}_{\al}) \\
&-5iT_{(1-Y)^2J^{-\f32}}T_{w_{\al\al}} \mathbf{W}_{\al\al}-5iT_{(1-Y)^2J^{-\f32}}\Pi(w_{\al\al},\mathbf{W}_{\al\al})+iT_{J^{-\f52}}\Pi(w_{\al\al},\bar{\mathbf{W}}_{\al\al})\\
&-\f52iT_{(1-Y)^2J^{-\f32}}T_{w_{\al}}\p^3_{\al}\mathbf{W}-\f52i T_{(1-Y)^2J^{-\f32}}\Pi(w_{\al},\p^3_{\al}\mathbf{W}) +\f32iT_{J^{-\f52}}\Pi(w_{\al},\p^3_{\al}\bar{\mathbf{W}})\\
&-iT_{(1-Y)^2J^{-\f32}}T_{w}\p^4_{\al}\mathbf{W}-iT_{(1-Y)^2J^{-\f32}}\Pi(w,\p^4_{\al}\mathbf{W}).
\end{aligned}
\end{equation*}

\subsection{Quadratic bounds for paradifferential source terms}
In this section, we compute the leading part of source terms $(\nP\mathcal{G}_0, \nP\mathcal{K}_0)$, $(\nP\mathcal{G}_1, \nP\mathcal{K}_1)$, and use the results to obtain the leading terms of $(T_{D_t}w, T_{D_t }r)$.

We first simplify source terms $(\nP\mathcal{G}_0, \nP\mathcal{K}_0)$, which are given by the following results.
\begin{lemma} \label{t:PGKZeroExp}
The source terms $(\nP\mathcal{G}_0, \nP\mathcal{K}_0)$ can be rewritten as
\begin{align*}
 \nP\mathcal{G}_0 =&  -T_{J^{-1}}T_{\bar{r}_\alpha}\bw +  T_{(1-\bar{Y})^2(1+\bw)}T_{\bar{w}_\alpha} R - T_{J^{-1}} \nP \Pi(\bar{r}_\alpha, \bw) +  T_{(1-\bar{Y})^2(1+\bw)} \nP \Pi(\bar{w}_\alpha, R) + G ,\\
  \nP\mathcal{K}_0 =& -T_{1-\bar{Y}}T_{\bar{r}_\alpha} R  +\f32 i T_{J^{-\f52}} T_{\p^4_{\al}\bar{w}}\bw + iT_{J^{-\f52}}T_{\p^3_{\al}\bar{w}}\bw_\al -iT_{J^{-\f52}} T_{\bar{w}_{\al\al}}\bw_{\al\al}\\
  &- \f32iT_{J^{-\f52}}T_{\bar{w}_{\al}} \p^3_{\al}\bw- T_{1-\bar{Y}}\nP\Pi(\bar{r}_\alpha, R)+\f32 iT_{J^{-\f52}} \nP \Pi(\p^4_{\al}\bar{w},\bw) \\
  &+ iT_{J^{-\f52}} \nP \Pi(\p^3_{\al}\bar{w}, \bw_\al)- iT_{J^{-\f52}}\nP \Pi(\bar{w}_{\al\al},\bw_{\al\al})- \f32 i T_{J^{-\f52}}\nP \Pi(\bar{w}_{\al},\p^3_{\al}\bw) + K,
 \end{align*}
where $(G,K)$ are perturbative source terms that satisfy 
\begin{equation} \label{HalfGK}
 \| (G,K)\|_{\H^0}\lesssim_{\CalAZ}  \ASSharp \| (w,r)\|_{\H^0}. 
\end{equation} 
\end{lemma}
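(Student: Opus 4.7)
The plan is to paralinearize each constituent of $\mathcal{G}_0$ and $\mathcal{K}_0$ using the Bony decomposition $fg = T_f g + T_g f + \Pi(f,g)$, track how the holomorphic projection $\nP$ interacts with the frequency support of each factor, and absorb all subleading contributions into the perturbative source terms $(G,K)$ using the paradifferential bounds from Appendix \ref{s:Norms}.

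I would first handle $\nP\mathcal{G}_0 = \nP\bigl[(1+\bw)(\nP\bar m + \bar\nP m)\bigr]$. Expanding
\[
m = \tfrac{r_\alpha + R_\alpha w}{J} + \tfrac{\bar R w_\alpha}{(1+\bw)^2},
\]
and applying the Bony decomposition to each factor, the key structural observation is that $r_\alpha$ and $w_\alpha$ are holomorphic, so the low-high paraproducts $T_{J^{-1}}r_\alpha$ and $T_{(1+\bw)^{-2}\bar R}w_\alpha$ have holomorphic spectral support and produce only a controllable tail after the $\bar\nP$ projection; dually, their conjugates $T_{J^{-1}}\bar r_\alpha$ and $T_{(1+\bw)^{-2}R}\bar w_\alpha$ are anti-holomorphic and are annihilated by $\nP$. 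What survives $\nP\bar m$ is therefore made of (i) paraproducts where the derivative falls on the coefficient, namely $T_{\bar r_\alpha}J^{-1}$ and $T_{\bar w_\alpha}(R(1+\bw)^{-2})$, and (ii) the balanced pieces $\Pi(\bar r_\alpha, J^{-1})$ and $\Pi(\bar w_\alpha, R(1+\bw)^{-2})$. Paralinearizing the rational factors (using Lemma \ref{t:Paralinear}-type expansions for $J^{-1}-1$ and $R(1+\bw)^{-2}-R$), multiplying by $T_{1+\bw}$ via the composition rule \eqref{CompositionPara}, and commuting $\nP$ through the outermost para-coefficients, one extracts exactly the four advertised terms, with the coefficient $(1-\bar Y)^2(1+\bw)$ arising from combining $(1+\bw)\cdot(1+\bar\bw)^{-2} = (1+\bw)(1-\bar Y)^2$.

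For $\nP\mathcal{K}_0 = \bar\nP n - \nP\bar n - i\nP\bar p$, the first two pieces involving $n = \bar R(r_\alpha + R_\alpha w)/(1+\bw)$ are handled exactly as above and yield the terms $-T_{1-\bar Y}T_{\bar r_\alpha}R - T_{1-\bar Y}\nP\Pi(\bar r_\alpha, R)$ and contributions absorbable into $K$. The main obstacle is the elastic term $-i\nP\bar p$. Conjugating the expansion \eqref{pexpansion} gives $(1+\bar\bw)\bar p$ as a sum of four derivative-homogeneous pieces, after which I paralinearize each term: the top-order piece $\partial_\alpha(J^{-1/2}\partial_\alpha(J^{-1/2}\partial_\alpha(J^{-1/2}\bar w_\alpha)))$ expands, via repeated Bony decomposition and symbolic composition \eqref{CompositionTwo}, into $T_{J^{-3/2}}\partial_\alpha^4\bar w + T_{\partial_\alpha^4\bar w}J^{-3/2} + \Pi(\partial_\alpha^4 \bar w, J^{-3/2})$, modulo terms with at most three derivatives on $\bar w$ that are paired with $\bw$-derivatives. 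After dividing by $1+\bar\bw$ (which produces the factor $J^{-5/2}$ when combined with the implicit $J^{-3/2}$ and the paralinearization of $(1+\bar\bw)^{-1}$), and applying $\nP$, the $T_{J^{-3/2}}\partial_\alpha^4\bar w$ piece disappears (it is anti-holomorphic), while the other two produce the terms $\tfrac{3}{2}iT_{J^{-5/2}}T_{\partial_\alpha^4\bar w}\bw$ and $\tfrac{3}{2}iT_{J^{-5/2}}\nP\Pi(\partial_\alpha^4\bar w, \bw)$; the numerical prefactor $3/2$ tracks the $\f{3}{2}$ in the paralinearization of $J^{-1/2}$. The remaining three expressions in $\bar p$ (involving $\tilde c$) are treated identically and yield the $\partial_\alpha^3\bar w\cdot\bw_\alpha$, $\bar w_{\alpha\alpha}\cdot\bw_{\alpha\alpha}$, and $\bar w_\alpha\cdot\partial_\alpha^3\bw$ pairings with their prescribed coefficients $1, -1, -\tfrac{3}{2}$.

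Finally, the error bound \eqref{HalfGK} is verified by controlling three classes of remainders: (i) symbolic composition errors from \eqref{CompositionPara}, which gain one derivative and therefore pair $W^{\sigma,4}$-norms of $(\bw, R)$ (bounded by $\ASSharp$) against $L^4$-based dual norms of $(w,r)$ estimated through Sobolev embedding from $\H^0$; (ii) Bony paralinearization errors for rational expressions in $\bw, \bar\bw$, which are quadratic in $\bw$ and estimated via \eqref{CCCEstimate} combined with \eqref{CsCmStar}; and (iii) low-frequency tails arising when $\nP$ meets an approximately holomorphic or anti-holomorphic factor. Each class is bounded by $\ASSharp\|(w,r)\|_{\H^0}$ as in the estimates following \eqref{WParaMat}. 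The hardest part of the argument is keeping exact track of the numerical prefactors in the elastic contribution, since they result from combining four different derivative-counting pieces of $p$; everything else is routine paraproduct calculus once the frequency-support observations are in place.
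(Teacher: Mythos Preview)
Your approach is correct and essentially identical to the paper's: both paralinearize $m$, $n$, and the conjugate of the expansion \eqref{pexpansion}, exploit the holomorphic/anti-holomorphic frequency support to kill leading pieces under $\nP$, and absorb composition and paralinearization remainders into $(G,K)$ via the estimates of Appendix \ref{s:Norms}. One small imprecision: the coefficient $\tfrac{3}{2}$ in the $\partial_\alpha^4\bar w$ term does not come from paralinearizing $J^{-1/2}$ but from paralinearizing the full factor $(1-\bar Y)J^{-3/2}$ with respect to $\bw$, which yields $-\tfrac{3}{2}J^{-5/2}$ directly; this is how the paper obtains it as well.
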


\begin{proof}
We first consider the estimate for the source term $\nP \mathcal{G}_0$.
The auxiliary linearized variables $m$ can be written as
\begin{equation*}
m = |1-Y|^2(r_\alpha +R_\alpha w)+(1-Y)^2\bar{R}w_\alpha,
\end{equation*}
so that by putting perturbative terms into $G$, we obtain
\begin{align*}
&\nP \bar{m} = -\nP[(1-\bar{Y})(\bar{r}_\alpha + \bar{R}_\alpha \bar{w})Y] + \nP[(1-\bar{Y})^2\bar{w}_\alpha R] \\
=& -T_{(1-\bar{Y})(\bar{r}_\alpha + \bar{R}_\alpha \bar{w})}Y -\nP \Pi((1-\bar{Y})(\bar{r}_\alpha + \bar{R}_\alpha \bar{w}), Y) + T_{(1-\bar{Y})^2\bar{w}_\alpha}R + \nP\Pi((1-\bar{Y})^2\bar{w}_\alpha, R)\\
=& - T_{1-\bar{Y}}T_{\bar{r}_\alpha}Y - T_{(1-\bar{Y})^2}T_{\bar{w}_\alpha}R - T_{1-\bar{Y}}\nP\Pi(\bar{w}_\alpha, R)+ T_{(1-\bar{Y})^2}\nP\Pi(\bar{w}_\alpha, R) + G.
\end{align*}
By using the paralinearization result of $Y$ \eqref{YWExpression},
\begin{align*}
    &\nP \mathcal{G}_0 = T_{1+\bw} \nP \bar{m} + G \\
    =& -T_{J^{-1}}T_{\bar{r}_\alpha}\bw +  T_{(1-\bar{Y})^2(1+\bw)}T_{\bar{w}_\alpha} R - T_{J^{-1}} \nP \Pi(\bar{r}_\alpha, \bw) +  T_{(1-\bar{Y})^2(1+\bw)} \nP \Pi(\bar{w}_\alpha, R) + G.
\end{align*}

Next, we consider the source term  $\nP \mathcal{K}_0$.
The auxiliary linearized function is $n = (1-Y)\bar{R}(r_\alpha + R_\alpha w)$, which gives
\begin{align*}
    \nP \bar{n} &= \nP[(1-\bar{Y})\bar{r}_\alpha R] + \nP[(1-\bar{Y})\bar{R}_\alpha \bar{w}R] = \nP[T_{1-\bar{Y}}\bar{r}_\alpha R] + K \\
    &=  T_{1-\bar{Y}}T_{\bar{r}_\alpha} R + T_{1-\bar{Y}}\nP\Pi(\bar{r}_\alpha, R) + K.
\end{align*}
It remains  to simplify the $\nP \bar{p}$ term. 
According to the computation in \eqref{pexpansion},
\begin{align*}
\bar{p}=&(1-\bar{Y})\p_{\al}(J^{-\f12}\p_{\al}(J^{-\f12}\p_{\al}(J^{-\f12}\bar{w}_{\al})))\\
&+i\left[\p_{\al}\left(J^{-\f12}\p_{\al}\left(\tilde{c}J^{-\f12}\bar{w}_{\al}\right)\right)
+\p_{\al}\left(\tilde{c}J^{-\f12}\p_{\al}\left(J^{-\f12}\bar{w}_{\al}\right)\right)\right]\\
&-\f{11}4\p_{\al}\left(\tilde{c}^2J^{-\f12}\bar{w}_{\al}\right)
+ic\bar{w}_{\al} + K\\
=&(1-\bar{Y})J^{-\f32}\p_{\al}^4\bar{w}
-(1-\bar{Y})J^{-\f32}\left[5(1-\bar{Y})\bar{\mathbf{W}}_{\al}+(1-Y)\mathbf{W}_{\al}\right]\p_{\al}^3\bar{w}\\
&+\f{(1-\bar{Y})}{J^{\f32}}\left[-\f{5\bar{\mathbf{W}}_{\al\al}}{1+\bar{\mathbf{W}}}+\f{\mathbf{W}_{\al\al}}{1+\mathbf{W}}+15\f{\bar{\mathbf{W}}^2_{\al}}{(1+\bar{\mathbf{W}})^2}\right]\p_{\al}^2\bar{w}\\
&+(1-\bar{Y})J^{-\f32}\left[-\f52\f{\p^3_{\al}\bar{\mathbf{W}}}{1+\bar{\mathbf{W}}}+\f32\f{\p^3_{\al}\mathbf{W}}{1+\mathbf{W}}\right]\bar{w}_{\al}+K\\
=:&\,\bar{p}_1+\bar{p}_2+\bar{p}_3+\bar{p}_4+K.
\end{align*}
For $\nP\bar{p_1}$, we have $\nP\bar{p}_1= \nP[((1-\bar{Y})J^{-\f32}-1)\p_{\al}^4\bar{w}].$
Using the paralinearization in Lemma \ref{t:Paralinear}, we can write
\begin{equation*}
 (1-\bar{Y})J^{-\f32}-1 = -\f52T_{(1-\bar{Y})^2J^{-\f32}} \bar{\bw} -\f32 T_{J^{-\f52}}\bw + err, \quad \|err\|_{C_*^\f72}  \lesssim \mathcal{A}^2_{\f74}.
\end{equation*}
Hence, we obtain
\begin{align*}
\nP\bar{p}_1=-\f32 T_{J^{-\f52}} \nP(\bw \p^4_{\al}\bar{w})+K = -\f32 T_{J^{-\f52}} T_{\p^4_{\al}\bar{w}}\bw -\f32 T_{J^{-\f52}} \nP \Pi(\p^4_{\al}\bar{w},\bw) +K .
\end{align*}
For $\nP\bar{p}_2$, we have
\begin{equation*}
\begin{split}
\nP\bar{p}_2=&-\nP\left\{(1-\bar{Y})J^{-\f32}\left[5(1-\bar{Y})\bar{\mathbf{W}}_{\al}+(1-Y)\mathbf{W}_{\al}\right]\p_{\al}^3\bar{w} \right\}\\
=&-5\nP[J^{-\f32}(1-\bar{Y})^2\bar{\bw}_{\al}\p^3_{\al}\bar{w}] - T_{J^{-\f52}}\nP(\bw_{\al}\p^3_{\al}\bar{w}).
\end{split}
\end{equation*}
Applying the commutator estimate, the first term of $\nP\bar{p}_2$ is perturbative:
\begin{align*}
&\|\nP[J^{-\f32}(1-\bar{Y})^2\bar{\bw}_{\al}\p^3_{\al}\bar{w}]\|_{L^2}=\|[\nP,J^{-\f32}(1-\bar{Y})^2\bar{\bw}_{\al}]\p^3_{\al}\bar{w}\|_{L^2}\\
\lesssim&\|\nP J^{-\f32}(1-\bar{Y})^2\bar{\bw}_{\al}\|_{C^{\f32}_*}\|w\|_{H^{\f32}}\\
\leq&\left(\|T_{(1-\bar{Y})^2\bar{\bw}_{\al}}(J^{-\f32}-1)\|_{C^{\f32}_*}+\|\nP\Pi(T_{(1-\bar{Y})^2\bar{\bw}_{\al}},J^{-\f32}-1)\|_{C^{\f32}_*}\right)\|w\|_{H^{\f32}}\\
\lesssim&_{\CalAZ}  \ASSharp \| w\|_{H^\f32}.
\end{align*}
Therefore, we obtain
\begin{equation*}
\begin{split}
\nP\bar{p}_2
=&- T_{J^{-\f52}} \nP(\bw_{\al}\p^3_{\al}\bar{w})+K = - T_{J^{-\f52}}T_{\p^3_{\al}\bar{w}}\bw_\al - T_{J^{-\f52}} \nP \Pi(\p^3_{\al}\bar{w}, \bw_\al) + K.
\end{split}
\end{equation*}
Similarly, for the rest two terms, by absorbing perturbative terms into $K$,
\begin{equation*}
\begin{split}
\nP\bar{p}_3=&\nP \left\{\f{(1-\bar{Y})}{J^{\f32}}\left[-\f{5\bar{\mathbf{W}}_{\al\al}}{1+\bar{\mathbf{W}}}+\f{\mathbf{W}_{\al\al}}{1+\mathbf{W}}+15\f{\bar{\mathbf{W}}^2_{\al}}{(1+\bar{\mathbf{W}})^2}\right]\p_{\al}^2\bar{w}\right\}\\
=& T_{J^{-\f52}}\nP(\bw_{\al\al}\bar{w}_{\al\al})+K = T_{J^{-\f52}} T_{\bar{w}_{\al\al}}\bw_{\al\al} + T_{J^{-\f52}}\nP \Pi(\bar{w}_{\al\al},\bw_{\al\al}) + K,\\
\nP\bar{p}_4=&\nP \left\{(1-\bar{Y})J^{-\f32}\left[-\f52\f{\p^3_{\al}\bar{\mathbf{W}}}{1+\bar{\mathbf{W}}}+\f32\f{\p^3_{\al}\mathbf{W}}{1+\mathbf{W}}\right]\bar{w}_{\al}\, \right\}\\
=&\f32T_{J^{-\f52}} \nP (\p^3_{\al}\bw \bar{w}_{\al})+K = \f32T_{J^{-\f52}}T_{\bar{w}_{\al}} \p^3_{\al}\bw + \f32T_{J^{-\f52}}\nP \Pi(\bar{w}_{\al},\p^3_{\al}\bw) +K.
\end{split}
\end{equation*}
Collecting the above, we finally obtain
\begin{align*}
i\nP\bar{p}=& -\f32 i T_{J^{-\f52}} T_{\p^4_{\al}\bar{w}}\bw - iT_{J^{-\f52}}T_{\p^3_{\al}\bar{w}}\bw_\al +iT_{J^{-\f52}} T_{\bar{w}_{\al\al}}\bw_{\al\al}+ \f32iT_{J^{-\f52}}T_{\bar{w}_{\al}} \p^3_{\al}\bw\\
&-\f32 iT_{J^{-\f52}} \nP \Pi(\p^4_{\al}\bar{w},\bw) - iT_{J^{-\f52}} \nP \Pi(\p^3_{\al}\bar{w}, \bw_\al)+ iT_{J^{-\f52}}\nP \Pi(\bar{w}_{\al\al},\bw_{\al\al})\\
&+ \f32 i T_{J^{-\f52}}\nP \Pi(\bar{w}_{\al},\p^3_{\al}\bw) +K.
\end{align*}
Putting each term in $\nP \mathcal{K}_0$ yields the expression for its leading part.
\end{proof}

Then we simplify source terms $(\nP\mathcal{G}_1, \nP\mathcal{K}_1)$.
By putting the perturbative terms into $(G,K)$ and commuting para-coefficients, we obtain the following lemma.
\begin{lemma} \label{t:PGKOneExp}
The source terms $(\nP\mathcal{G}_1, \nP\mathcal{K}_1)$ can be rewritten as
\begin{align*}
 \nP\mathcal{G}_1 &= -T_{1-\bar{Y}}\p_\al T_w R - T_{1-\bar{Y}}\p_\al\Pi(w_\al, R)- T_{1-Y}\nP\Pi(w_\al, \bar{R}) + T_{(1-\bar{Y})^2}\nP \Pi(r_\al, \bar{\bw}) +G,\\
  \nP\mathcal{K}_1 &= -T_{1-\bar{Y}}T_{r_{\al}}R -iT_{(1-Y)^2J^{-\f32}}T_{w}\p^4_{\al}\mathbf{W} -\f52iT_{(1-Y)^2J^{-\f32}}T_{w_{\al}}\p^3_{\al}\mathbf{W} -5iT_{(1-Y)^2J^{-\f32}}T_{w_{\al\al}} \mathbf{W}_{\al\al} \\
  -&5iT_{(1-Y)^2J^{-\f32}}T_{\p_{\al}^3w}\mathbf{W}_{\al} -\f52iT_{(1-Y)^2J^{-\f32}}T_{\p_{\al}^4w} \bw -T_{1-\bar{Y}}\Pi(r_{\al},R) -\f52iT_{(1-Y)^2J^{-\f32}}\Pi(\p_{\al}^4w, \bw) \\
  -&5iT_{(1-Y)^2J^{-\f32}}\Pi(\p^3_{\al}w,\mathbf{W}_{\al}) -5iT_{(1-Y)^2J^{-\f32}}\Pi(w_{\al\al},\mathbf{W}_{\al\al}) -\f52i T_{(1-Y)^2J^{-\f32}}\Pi(w_{\al},\p^3_{\al}\mathbf{W})\\
  -&iT_{(1-Y)^2J^{-\f32}}\Pi(w,\p^4_{\al}\mathbf{W}) -T_{1-Y}\nP\Pi(r_{\al},\bar{R})-\f32i  T_{J^{-\f52}} \nP\Pi(\p^4_{\al}w,\bar{\bw})-iT_{J^{-\f52}}\nP\Pi(\p^3_{\al}w,\bar{\bw}_{\al})\\
  +&iT_{J^{-\f52}}\nP\Pi(w_{\al\al},\bar{\mathbf{W}}_{\al\al})+\f32iT_{J^{-\f52}}\nP\Pi(w_{\al},\p^3_{\al}\bar{\mathbf{W}}) + K,
 \end{align*}
where $(G,K)$ are perturbative source terms that satisfy \eqref{HalfGK}.
\end{lemma}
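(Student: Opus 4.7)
The proof follows the template of Lemma \ref{t:PGKZeroExp}: paralinearize the remaining nonlinear coefficients, apply the symbolic calculus rules \eqref{CompositionPara} and \eqref{CompositionTwo} together with the para-associativity Lemma \ref{t:Paraassociavity} to merge or exchange para-coefficients, and absorb every high--high remainder and commutator error into the perturbative source $(G,K)$ using the paradifferential estimates in Appendix \ref{s:Norms}. A contribution is treated as perturbative precisely when its $\H^0$ norm can be bounded by $\ASSharp\|(w,r)\|_{\H^0}$, matching \eqref{HalfGK}.

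For $\nP\mathcal{G}_1$, I begin from the raw definition. Paralinearizing $\bar{Y}$ via \eqref{YWExpression} reduces $\nP\Pi(r_\al,\bar{Y})$ to $T_{(1-\bar{Y})^2}\nP\Pi(r_\al,\bar{\bw})$ modulo perturbative terms. For the transport piece I substitute $b=\nP[R(1-\bar{Y})]+\bar{\nP}[\bar{R}(1-Y)]$ from \eqref{HF17}, split into holomorphic and anti-holomorphic halves, and commute $T_{1-\bar{Y}}$ and $T_{1-Y}$ across via \eqref{CompositionPara}. The para-Leibniz identity $\p_\al T_w R=T_{w_\al}R+T_w R_\al$ then collapses the holomorphic piece into $-T_{1-\bar{Y}}\p_\al T_w R-T_{1-\bar{Y}}\p_\al\Pi(w_\al,R)$, while the anti-holomorphic half gives $-T_{1-Y}\nP\Pi(w_\al,\bar{R})$. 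The two remaining summands $T_w((1-\bar{Y})R_\al)$ and $\Pi(w,(1-\bar{Y})R_\al)$ exactly account for the extra $T_{1-\bar{Y}}T_w R_\al$ contribution spawned by the Leibniz step, up to commutator errors collected in $G$. For $\nP\mathcal{K}_1$, the listed representation already contains every para-coefficient appearing in the conclusion; the task reduces to normalizing the order of composition, e.g.\ swapping $T_{r_\al}T_{1-\bar{Y}}R$ into $T_{1-\bar{Y}}T_{r_\al}R$ and $T_{\p_{\al}^4w}\bw$ into $T_{(1-Y)^2J^{-3/2}}T_{\p_{\al}^4w}\bw$ via \eqref{CompositionPara}. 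Every such swap produces a symbol of lower order that is placed in $K$, and the outer $\nP$ passes freely through holomorphic products while anti-holomorphic residues contribute commutators $[\nP,T_\cdot]$ controlled by \eqref{BCOneStar}.

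The principal difficulty lies in the highest-order balanced paraproducts on the $\mathcal{K}_1$ side, namely $\Pi(\p_\al^4 w,\bw)$, $\Pi(\p_\al^3 w,\bw_\al)$, $\Pi(w_{\al\al},\bw_{\al\al})$ and $\Pi(w,\p_\al^4\bw)$. Each carries four derivatives in total, and closing the $L^2$ bound using only the sharp norm $\ASSharp$, rather than a stronger control norm such as $\mathcal{A}_{\sharp,5/2}$, forces a tight distribution of regularity between the two factors. The Sobolev index $\frac{7}{4}$ in the definition of $\ASSharp$ is exactly what allows the high--high frequency cancellation in the balanced paraproduct to absorb all four derivatives while leaving $\|w\|_{H^{3/2}}$ available from the remaining factor; this is the precise point at which the sharpness of the control norm in Theorem \ref{t:MainEnergyEst} is felt.
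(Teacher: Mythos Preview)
Your approach is essentially the same as the paper's, which gives only a one-line justification: ``By putting the perturbative terms into $(G,K)$ and commuting para-coefficients, we obtain the following lemma.'' Your expansion of that line into a sequence of paralinearization, para-Leibniz, and para-associativity steps is sound and correctly identifies the mechanics.

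However, your final paragraph misidentifies the difficulty. The balanced paraproducts $\Pi(\partial_\alpha^4 w,\bw)$, $\Pi(\partial_\alpha^3 w,\bw_\alpha)$, $\Pi(w_{\alpha\alpha},\bw_{\alpha\alpha})$, $\Pi(w,\partial_\alpha^4\bw)$ are \emph{retained} as explicit terms in the stated expression for $\nP\mathcal{K}_1$; they are not absorbed into $K$. So there is no need to bound them in $L^2$ at this stage, and no ``sharpness'' of $\ASSharp$ is being tested here. Indeed, comparing the raw definition of $\mathcal{K}_1$ with the conclusion, these balanced terms already carry the para-coefficient $T_{(1-Y)^2J^{-3/2}}$ on the outside and require no commutation at all. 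The only genuine work in $\mathcal{K}_1$ is the swap $T_{r_\alpha}T_{1-\bar{Y}}R \to T_{1-\bar{Y}}T_{r_\alpha}R$, the para-associativity $\Pi(r_\alpha,T_{1-\bar{Y}}R)\to T_{1-\bar{Y}}\Pi(r_\alpha,R)$ and $\Pi(r_\alpha,T_{1-Y}\bar{R})\to T_{1-Y}\nP\Pi(r_\alpha,\bar{R})$, and the commutation of $\nP$ past the real-valued para-coefficient $T_{J^{-5/2}}$ on the mixed $\Pi(\cdot,\bar{\bw}_{\cdot})$ terms. All of these are low-order commutators handled by \eqref{CompositionPara}, \eqref{CompositionTwo}, and Lemma \ref{t:Paraassociavity}, well within the $\ASSharp$ budget. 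The place where the four-derivative balanced terms actually have to be controlled is later, in the normal form analysis of Section \ref{s:NormalOne}, not in this lemma.
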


Finally, after obtaining the source terms $(\nP\mathcal{G}_0, \nP\mathcal{K}_0)$ and $(\nP\mathcal{G}_1, \nP\mathcal{K}_1)$, we compute the leading terms of para-material derivatives of $(w,r)$.
\begin{lemma}
Suppose  $(w,r)$ solve the paradifferential linearized equations \eqref{ParadifferentialLinearEqn}, then we have the expressions for $(T_{D_t}w, T_{D_t}r)$. 
\begin{equation} \label{wrParaMaterial}
\begin{aligned}
T_{D_t}w  =& -T_{1-\bar{Y}}(r_\alpha + T_w R_\alpha + T_{w_\al}R)+ \tilde{G} +G,\\
T_{D_t}r  =& iT_{J^{-\frac{3}{2}}(1-Y)}\big(\p_\al^4w -5T_{1-Y}T_{\mathbf{W}_{\al}}\p_{\al}^3w-T_{1-\bar{Y}}T_{\bar{\bw}_\al}\p_{\al}^3w \\
&-T_{1-Y}T_w \p_\al^4\bw -\f52 T_{1-Y}T_{w_\al} \p_\al^3\bw -\f32 T_{1-\bar{Y}}T_{\bar{w}_\al}\p_\al^3 \bw\big)+\tilde{K}+K, 
\end{aligned}
\end{equation}
where $(G,K)$ are perturbative source terms that satisfy \eqref{HalfGK}, and $(\tilde{G}, \tilde{K})$ satisfy the estimate
\begin{equation*}
    \| (\tilde{G}, \tilde{K})\|_{\mathcal{H}^0} \lesssim_{\CalAZ} \mathcal{A}_{\sharp, \f74}  \| (w,r)\|_{\mathcal{H}^{\f34}}.
\end{equation*}
\end{lemma}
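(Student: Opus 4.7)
The plan is to start from the paradifferential linearized system \eqref{ParadifferentialLinearEqn}, solve for $T_{D_t}w$ and $T_{D_t}r$ on the left, and then substitute the explicit expansions of $\nP\mathcal{G}_0+\nP\mathcal{G}_1$ and $\nP\mathcal{K}_0+\nP\mathcal{K}_1$ furnished by Lemmas \ref{t:PGKZeroExp} and \ref{t:PGKOneExp}. After substitution, the proof reduces to an algebraic regrouping guided by the paradifferential symbolic calculus (composition formulas \eqref{CompositionPara}, \eqref{CompositionTwo}, and para-associativity), with each error generated by a composition or a discarded balanced paraproduct classified either as a $(G,K)$-error, controlled by $\mathcal{A}_{\sharp,\f74}\|(w,r)\|_{\H^0}$, or as a $(\tilde G,\tilde K)$-error, controlled by $\mathcal{A}_{\sharp,\f74}\|(w,r)\|_{\H^{\f34}}$.

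For the $w$-equation I first apply para-Leibniz to the term $-T_{1-\bar Y}\p_\al T_wR$ coming from $\nP\mathcal{G}_1$, writing it as $-T_{1-\bar Y}(T_{w_\al}R+T_wR_\al)$; together with the already isolated $-T_{1-\bar Y}r_\al$, this produces the claimed principal expression $-T_{1-\bar Y}(r_\al+T_wR_\al+T_{w_\al}R)$. The leftover pieces---namely $-T_{(1-\bar Y)R_\al}w$ from the equation itself, together with $-T_{J^{-1}}T_{\bar r_\al}\bw$, $T_{(1-\bar Y)^2(1+\bw)}T_{\bar w_\al}R$, and the balanced paraproducts from $\nP\mathcal{G}_0$ and $\nP\mathcal{G}_1$---are all low--high or balanced quadratic interactions; using the Sobolev embeddings $\bw\in W^{2+\epsilon',4}\hookrightarrow C^{\f74+\epsilon'}_*$ and $R\in W^{\f12+\epsilon',4}\hookrightarrow C^{\f14+\epsilon'}_*$, each is absorbed into $G$ or $\tilde G$.

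The $r$-equation is analogous but bulkier. The term $iT_{J^{-\f32}(1-Y)}\p_\al^4w$ comes directly from the first term of $\mathcal{L}_{para}$. The third-order contributions $-5iT_{(1-Y)^2J^{-\f32}\bw_\al}\p_\al^3w$ and $-iT_{J^{-\f52}\bar\bw_\al}\p_\al^3w$ in $\mathcal{L}_{para}$ are refactored via \eqref{CompositionPara} and the identity $(1-Y)(1-\bar Y)=J^{-1}$ into $-5iT_{J^{-\f32}(1-Y)}T_{1-Y}T_{\bw_\al}\p_\al^3w$ and $-iT_{J^{-\f32}(1-Y)}T_{1-\bar Y}T_{\bar\bw_\al}\p_\al^3w$, yielding the $\p_\al^3w$ part of the principal bracket. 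The remaining entries of $\mathcal{L}_{para}$ involving $\p_\al^4\bw$, $\p_\al^3\bw$, $\bw_{\al\al}$ pair with the corresponding low--high terms in $\nP\mathcal{K}_0+\nP\mathcal{K}_1$: for instance $-iT_{(1-Y)^2J^{-\f32}}T_w\p_\al^4\bw$ from $\nP\mathcal{K}_1$ factors as $iT_{J^{-\f32}(1-Y)}(-T_{1-Y}T_w\p_\al^4\bw)$, contributing the $T_w\p_\al^4\bw$ term; parallel pairings produce the $T_{w_\al}\p_\al^3\bw$ and $T_{\bar w_\al}\p_\al^3\bw$ terms with the coefficients $-\f52$ and $-\f32$.

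The main obstacle is the bookkeeping: every composition error $T_aT_b-T_{ab}$ and every leftover balanced paraproduct has to be placed in the correct error class, depending on whether the derivative loss falls on $(\bw,R)$ or on $(w,r)$. A representative $(\tilde G,\tilde K)$-bound is $\|T_{(1-\bar Y)R_\al}w\|_{H^{\f32}}\lesssim_\CalAZ \|R\|_{W^{\f12+\epsilon',4}}\|w\|_{H^{\f94}}$, which follows from $R_\al\in C^{-\f34+\epsilon'}_*$; analogous Sobolev--Zygmund arguments handle all remaining interaction terms, including the high--high pieces requiring the stronger norm $\H^{\f34}$. Once each error is assigned, the leading paradifferential structures in \eqref{wrParaMaterial} emerge directly.
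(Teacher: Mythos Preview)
Your proposal is correct and follows essentially the same approach as the paper's proof: start from \eqref{ParadifferentialLinearEqn}, insert the expansions of $\nP\mathcal{G}_0,\nP\mathcal{G}_1,\nP\mathcal{K}_0,\nP\mathcal{K}_1$ from Lemmas \ref{t:PGKZeroExp} and \ref{t:PGKOneExp}, retain only the leading paraproducts, and absorb everything else into $(G,K)$ or $(\tilde G,\tilde K)$ via symbolic calculus and the appropriate Sobolev--Zygmund bounds. The paper's proof is terser but structurally identical; your more explicit bookkeeping of the para-Leibniz step and the representative error estimate is a faithful unpacking of what the paper compresses into ``a direct computation.''
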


\begin{proof}
A direct computation for each part of the paradifferential equation \eqref{ParadifferentialLinearEqn} yields
\begin{align*}
 &T_{(1-\bar{Y})R_{\al}}w = T_{T_{1-\bar{Y}}R_\al}w + G = \tilde{G} + G, \\
 &\nP \mathcal{G}_0 = \tilde{G} + G, \quad \nP \mathcal{G}_1 = -T_{1-\bar{Y}}\p_\al T_w R + \tilde{G} + G, \\
 & i\mathcal{L}_{para}w = iT_{(1-Y)J^{-\f32}}\p_{\al}^4w-5iT_{(1-Y)^2J^{-\f32}\mathbf{W}_{\al}}\p_{\al}^3w-iT_{J^{-\f52}\bar{\bw}_{\al}}\p_{\al}^3w  + \tilde{K} + K,\\
 &\nP \mathcal{K}_0 = - \f32iT_{J^{-\f52}}T_{\bar{w}_{\al}} \p^3_{\al}\bw+\tilde{K} + K, \\
 & \nP \mathcal{K}_1 = -iT_{(1-Y)^2J^{-\f32}}T_{w}\p^4_{\al}\mathbf{W} -\f52iT_{(1-Y)^2J^{-\f32}}T_{w_{\al}}\p^3_{\al}\mathbf{W}+\tilde{K} + K.
\end{align*}
By adding each non-perturbative part of the paradifferential equation \eqref{ParadifferentialLinearEqn} together and rewriting para-coefficients, we obtain the leading terms of $(T_{D_t}w, T_{D_t}r)$.
\end{proof}

\section{Energy estimates for the linearized hydroelastic waves} \label{s:EstLin}
This section is dedicated to constructing the modified energy and establishing the modified energy estimate for the linearized hydroelastic system \eqref{linearizedeqn}. 
Specifically, we prove the following theorem.
\begin{theorem} \label{t:LinearizedWellposed}
Assume that $\mathcal{A}_\f32 \lesssim 1$ and $\mathcal{A}_{\sharp, \f74} \in L^2_t([0,T])$.
Then the linearized hydroelastic system \eqref{linearizedeqn} is well-posed in $\H^0$ on $[0,T]$.
Furthermore, there exists an energy functional $E^{0}_{lin}(w,r)$ satisfying the following properties on $[0,T]$:
\begin{enumerate}
    \item Norm equivalence:
    \begin{equation*}
        E^{0}_{lin}(w,r) = \left(1+O\left(\mathcal{A}_\f32 \right) \right) \| (w,r)\|^2_{\H^{0}}.
    \end{equation*}
    \item The time derivative of $E_{lin}^{0}(w,r)$ is bounded by:
    \begin{equation*}
        \frac{d}{dt} E_{lin}^{0}(w,r) \lesssim_{\CalAZ}  \ASSharp E^{0}_{lin}(w,r).
    \end{equation*}
\end{enumerate} 
\end{theorem}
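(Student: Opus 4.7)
The plan is to construct a paradifferential modified energy adapted to the linearized hydroelastic system, prove its equivalence to the $\H^0$ norm, compute its time derivative using the para-material formulas \eqref{wrParaMaterial} and Lemma \ref{t:JsParaMat}, and exhibit the precise cancellation driven by the specific exponents $-\f54$ and $\f14$ that reduces every remaining term to a perturbative one. Gronwall then closes the a priori estimate, and well-posedness follows by a standard regularization argument.

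For the energy I take the functional announced in the introduction,
\begin{equation*}
E^{0}_{lin}(w,r) := \int_{\R} \Im\big(T_{J^{-\f54}}w_\al\cdot \bar{w}_{\al\al}\big)\,d\al + \int_{\R}\Re\big(r\cdot T_{J^{\f14}}\bar{r}\big)\,d\al,
\end{equation*}
possibly augmented by a bounded bilinear correction removing non-resonant quadratic interactions; existence of such a correction is guaranteed by the non-resonance analysis of Appendix \ref{s:Discuss}. Norm equivalence is straightforward: writing $J^s = 1+(J^s-1)$ with $\|J^s-1\|_{L^\infty}\lesssim_\CalAZ \mathcal{A}_\f32$, paraproduct bounds give $E^0_{lin} = \|w\|_{\dot H^{\f32}}^2 + \|r\|_{L^2}^2 + O(\mathcal{A}_\f32)\|(w,r)\|_{\H^0}^2$, and the principal $\dot H^{\f32}$-piece is genuine thanks to holomorphy, since $\nP$ kills positive frequencies and so $\int \Im(w_\al \bar{w}_{\al\al})\,d\al$ is Plancherel-equivalent to $\int|\xi|^3|\widehat{w}(\xi)|^2\,d\xi$.

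The heart of the argument is the computation of $\f{d}{dt}E^0_{lin}$. Writing $\p_t = T_{D_t}-T_b\p_\al$, the transport contribution reduces via symmetry of $T_b$ and the commutator $[\p_\al,T_b]=T_{b_\al}$ to terms absorbed into $\ASSharp E^0_{lin}$. For the $T_{D_t}$ piece I substitute \eqref{wrParaMaterial} together with Lemma \ref{t:JsParaMat}. The leading quartic dispersive interactions, coming from pairing the $iT_{J^{-\f32}(1-Y)}\p_\al^4 w$ part of $T_{D_t}r$ against $T_{J^{\f14}}\bar{r}$ and from pairing the $-T_{1-\bar{Y}}r_\al$ part of $T_{D_t}w$ against $T_{J^{-\f54}}\bar{w}_{\al\al}$, cancel after two integrations by parts; this is where the identity $J^{\f14}\cdot J^{-\f32} = J^{-\f54}$ enters. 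At the cubic level three groups of residues remain non-perturbative in isolation: the elastic cubic tails $T_{\bw_\al}\p_\al^3\bw$ and $T_{\bar{\bw}_\al}\p_\al^3\bw$ inside $T_{D_t}r$; the paralinearization residues $T_wR_\al$ and $T_{w_\al}R$ inside $T_{D_t}w$; and the contributions of $\p_t J^{-\f54}$ and $\p_t J^{\f14}$, which by Lemma \ref{t:JsParaMat} each produce a factor of the form $T_{(1-\bar{Y})R_\al + (1-Y)\bar{R}_\al}$ acting on the two bilinear forms in the energy. The exponents $-\f54$ and $\f14$ are dictated precisely by the requirement that the coefficients of these three groups combine to zero simultaneously. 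Once this cancellation is carried out, every remaining term carries at least one factor controlled by $\mathcal{A}_{\sharp,\f74}$, and the paradifferential estimates of Appendix \ref{s:Norms} deliver $\f{d}{dt}E^0_{lin}\lesssim_\CalAZ \ASSharp E^0_{lin}$.

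Well-posedness is then standard. Gronwall with $\ASSharp\in L^1_t([0,T])$ (a consequence of the $L^2_t$ assumption on a finite interval) gives the uniform $\H^0$ bound; uniqueness follows by the same estimate applied to the difference of two solutions; existence is obtained by a frequency-truncated regularization of the paradifferential system \eqref{ParadifferentialLinearEqn}, propagating the uniform bound and extracting a weak limit; continuous dependence follows by the same device applied to differences of data. The principal obstacle throughout is the cubic algebraic cancellation in Step 3: in contrast with gravity or capillary water waves, the fifth-order dispersion generates many more cubic interaction terms involving $\p_\al^4\bw$, $\p_\al^3\bw\cdot\bw_\al$, $\bw_{\al\al}^2$ and their conjugates, and the simultaneous annihilation of every such term by the single pair of exponents $(-\f54,\f14)$ is what singles out these values and constitutes the technical core of the argument.
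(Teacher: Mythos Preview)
Your proposal has a genuine gap: the choice of exponents $(-\tfrac54,\tfrac14)$ does \emph{not} annihilate all cubic interactions in $\tfrac{d}{dt}E_{lin}$. As the paper explains just after \eqref{ElinDef}, these exponents serve two purposes: (i) the leading quartic term of type $\Re\int ir\cdot\p_\al^4\bar w\,d\al$ cancels via $J^{\f14}\cdot J^{-\f32}(1-\bar Y)=J^{-\f54}(1-\bar Y)$, and (ii) the two sub-leading cubic integrals $\Re\int ir\cdot T_{\bw_\al}\p_\al^3\bar w$ and $\Re\int i\bar r\cdot T_{\bw_\al}\p_\al^3 w$ acquire \emph{equal coefficients}, which only makes the subsequent cubic correction tractable. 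After the full computation the paper finds more than a dozen surviving non-perturbative cubic integrals and several quartic ones; removing them requires a cubic correction $E^3_{cor}$ whose symbols solve an explicit $4\times4$ algebraic system, plus two quartic corrections $E^{4,1}_{cor}$ and $E^{4,2}_{cor}$ handling the non-resonant and possibly-resonant quartic remainders separately. Your vague reference to ``a bounded bilinear correction'' does not capture this, and your claim that ``every such term'' is annihilated by the exponents alone is simply false.

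A second and independent gap is that you work from \eqref{wrParaMaterial}, whose error $(\tilde G,\tilde K)$ satisfies $\|(\tilde G,\tilde K)\|_{\H^0}\lesssim_{\CalAZ}\mathcal{A}_{\sharp,\f74}\|(w,r)\|_{\H^{\f34}}$---a loss of $\tfrac34$ derivatives that cannot be absorbed into the $\H^0$ estimate. The paper instead works with the finer paradifferential form \eqref{ParadifferentialLinearEqn}, whose source terms $(\mathcal{G}^\sharp,\mathcal{K}^\sharp)$ are not perturbative either; this is why the proof proceeds in two steps. First Proposition~\ref{t:wellposedflow} handles the homogeneous flow \eqref{ParadifferentialFlow} via the energy-correction machinery above. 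Then a separate, extensive normal-form construction (Sections~\ref{s:NormalZero}--\ref{s:NormalOne}) produces variables $(w_{NF},r_{NF})$ eliminating $(\nP\mathcal{G}_0,\nP\mathcal{K}_0)$ and $(\nP\mathcal{G}_1,\nP\mathcal{K}_1)$, each through low-high, balanced, and cubic sub-corrections. The final energy is $E^0_{lin}(w,r)=E^{0,para}_{lin}(w+w_{NF},r+r_{NF})$. Your proposal addresses neither of these layers.
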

The proof of Theorem \ref{t:LinearizedWellposed} is divided into two main steps.
First, we consider the linear part of paradifferential linearized hydroelastic waves \eqref{ParadifferentialLinearEqn} with perturbative source terms, namely, the system: 
\begin{equation}  \label{ParadifferentialFlow}
\begin{cases}
T_{D_t}w+T_{1-\bar{Y}}r_{\al}+T_{T_{1-\bar{Y}}R_{\al}}w=G,\\
T_{D_t}r-i\mathcal{L}_{para}w=K,
\end{cases}
\end{equation}
where the operator $\mathcal{L}_{para}$ is defined in \eqref{LLinDef}, and $(G,K)$ are perturbative source terms that satisfy \eqref{HalfGK}.
We establish the following modified energy estimate for this system.
\begin{proposition} \label{t:wellposedflow}
Assume that $\mathcal{A}_0
\lesssim 1$ and $\mathcal{A}_{\sharp,\f74}  \in L^2_t([0,T])$ for some time $T>0$, then if $(w,r)$ solve the homogeneous paradifferential system \eqref{ParadifferentialFlow} on $[0,T]$, there exists an  energy functional $E^{0,para}_{lin}(w, r)$ such that on $[0,T]$, we have the following two properties:
\begin{enumerate}
\item Norm equivalence:
\begin{equation*}
    E^{0,para}_{lin}(w, r) = (1+O(\CalAZ)) \|(w,r)\|_{\H^0}^2.
\end{equation*}
\item The time derivative of $E^{0,para}_{lin}(w, r)$ is bounded by
\begin{equation*}
    \frac{d}{dt}  E^{0,para}_{lin}(w, r) \lesssim_{\CalAZ} \ASSharp \|(w,r)\|_{\H^0}^2.
\end{equation*}
\end{enumerate}  
\end{proposition}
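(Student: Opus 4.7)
The plan is to take the modified energy precisely of the form anticipated in the introduction, namely
\begin{equation*}
E^{0,para}_{lin}(w,r) := \int -i\, T_{J^{-\f54}}w_{\al}\cdot \bar{w}_{\al\al}\,d\al + \int r\cdot T_{J^{\f14}}\bar{r}\,d\al,
\end{equation*}
so that at the principal (unit-weight) level it reproduces $\|w\|^2_{\dot H^{3/2}}+\|r\|^2_{L^2}$. The norm equivalence statement (1) is then immediate: since $\CalAZ$ controls $\|\bw\|_{L^\infty}$, we have $\|J^s-1\|_{L^\infty}\lesssim\CalAZ$, and standard paradifferential calculus shows that $T_{J^{-5/4}}$ and $T_{J^{1/4}}$ act on $L^2$ as the identity plus an $O(\CalAZ)$ perturbation.

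The bulk of the work is (2). Differentiating in time produces three classes of contributions. The first comes from $\partial_t$ hitting the weights $T_{J^{-5/4}}$, $T_{J^{1/4}}$, which by Lemma \ref{t:JsParaMat} is expressible through $T_{J^s(1-\bar Y)}R_\al$ and $T_{J^s(1-Y)}\bar R_\al$ modulo a $C^\epsilon_*$ remainder that is harmless. The second consists of the transport pieces of $T_{D_t}$ acting on $w,r$; after integration by parts these produce commutators $[T_b\partial_\al,T_{J^s}]$ whose symbols are of order zero and which yield bounded bilinear remainders. The third, and essential, class consists of the dispersive and advective interactions from $T_{1-\bar Y}r_\al$ in the first equation and $i\mathcal L_{para}w$ in the second.

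The crux is the cancellation in the third class. Pairing the term $-T_{1-\bar Y}r_\al$ against $-i T_{J^{-5/4}}\bar{w}_{\al\al}$ (after integration by parts) reproduces the same leading symbol as pairing $T_{J^{1/4}}\bar{r}$ against $iT_{(1-Y)J^{-3/2}}\partial^4_\al w$ from the second equation, with opposite signs. This is precisely why the powers $-\f54$ and $\f14$ are forced: they differ by $\f32$, matching the order gap between $r$ and $w$ in $\H^0$, and their symmetric/skew combination makes the leading $\Re$-paired quartic symbol drop to order $-1$. A parallel cancellation occurs between the subleading contributions $-5iT_{(1-Y)^2J^{-3/2}\bw_\al}\partial^3_\al w$ and $-iT_{J^{-5/2}\bar{\bw}_\al}\partial^3_\al w$ from $\mathcal L_{para}$ against the rewriting of $\partial_t T_{J^{-5/4}}$ produced by step one: the coefficients $-\f54$ and $\f14$ are chosen so that, after symmetrization under integration by parts, the transport-of-weight terms cancel exactly the non-self-adjoint part of the third-order operator.

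After these cancellations, what is left is of two types. The source terms $(G,K)$ are handled by Cauchy-Schwarz, giving $|\langle G,w\rangle_{\dot H^{3/2}}|+|\langle K,r\rangle_{L^2}|\lesssim_\CalAZ \ASSharp\|(w,r)\|_{\H^0}^2$ directly from the hypothesis \eqref{HalfGK}. The paradifferential commutator remainders from symbol composition (see \eqref{CompositionPara}, \eqref{CompositionTwo}) pair two low-regularity coefficient factors against one $H^0$-level factor of $(w,r)$; these are estimated by the $W^{s+1/4+\epsilon',4}\times W^{s-5/4+\epsilon',4}\to L^2$ type paraproduct bound, producing the $\ASSharp$ factor rather than $\CalAZ^2$, and the remaining $H^{3/2}\times L^2$ factor gives the required $\|(w,r)\|^2_{\H^0}$. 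The main obstacle is the bookkeeping in the third step: every one of the fourth-, third-, and second-order terms in $\mathcal L_{para}w$ must have a matching partner from either $\partial_t J^s$ or the transport commutator, and one must verify that the residual symbols genuinely fall to an order for which the $\ASSharp$ bound applies. Once this is done, Gr\"onwall's inequality yields (2) and well-posedness follows by a standard regularization/frequency-envelope argument.
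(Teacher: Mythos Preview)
There is a genuine gap: the quadratic energy you propose is not sufficient by itself. After you differentiate $E_{lin}$ in time and perform the top-order cancellation (the one driven by $J^{-5/4}(1-\bar Y)=J^{1/4}\cdot J^{-3/2}(1-\bar Y)$, which kills the $\Re\int ir\cdot\partial_\alpha^4\bar w$ cross term), you are left with many \emph{cubic} non-perturbative integrals of the schematic form
\[
\Re\int ir\cdot T_{\bw_\al}\,\partial_\alpha^3\bar w\,d\alpha,\qquad
\Re\int i\,T_{R_\al}w_\al\cdot\bar w_{\al\al}\,d\alpha,\qquad
\Re\int T_{R_\al}r\cdot\bar r\,d\alpha,
\]
together with several quartic integrals. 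These are \emph{not} bounded by $\ASSharp\|(w,r)\|_{\H^0}^2$: for instance, to estimate $\Re\int ir\cdot T_{\bw_\al}\partial_\alpha^3\bar w\,d\alpha$ directly you would need $\|\bw_\al\|_{C^{3/2}_*}$, i.e.\ an $\mathcal A_{5/2}$-type control, not $\mathcal A_{\sharp,7/4}$. Your assertion that ``every one of the fourth-, third-, and second-order terms in $\mathcal L_{para}w$ must have a matching partner from either $\partial_tJ^s$ or the transport commutator'' is precisely what fails; the paper's explicit computation of $\frac{d}{dt}E_{lin}$ shows more than a dozen such surviving terms.

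What the paper actually does---and what your outline is missing---is a normal-form step at the energy level. A cubic correction $E^3_{cor}$ is built as a sum of low-high trilinear forms whose symbols $(\mathfrak a,\mathfrak b,\mathfrak c,\mathfrak d)$ solve a $4\times4$ algebraic system coming from matching the surviving cubic integrals; the non-resonance of the dispersion relation $\tau^2+\xi^5=0$ guarantees the denominators are elliptic. The choice of exponents $-\tfrac54,\tfrac14$ is not to make the cubic terms vanish, but to ensure that two of the remaining cubic integrals carry equal coefficients, so the resulting correction symbols stay bounded (this is the paper's ``Reason~2''). After $E^3_{cor}$ one still has non-perturbative quartic integrals, handled by further corrections $E^{4,1}_{cor}$ (non-resonant case) and $E^{4,2}_{cor}$ (a direct quartic energy for the resonant $|\bw_\al|^2$ term). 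The final energy is $E^{0,para}_{lin}=E_{lin}+E^3_{cor}+E^{4,1}_{cor}+E^{4,2}_{cor}$, and only this sum satisfies~(2). Your argument closes the top-order symbol balance but omits the entire correction hierarchy, without which the estimate does not close at the $\ASSharp$ level.
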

To prove Proposition \ref{t:wellposedflow}, we first construct a quadratic paradifferential linearized energy $E_{lin}(w,r)$ in \eqref{ElinDef}.
This energy is designed to cancel the leading terms of $(T_{D_t}w, T_{D_t}r)$ in the integral.
The time derivative $\f{d}{dt}E_{lin}$ also contains non-perturbative cubic energy, which we eliminate by constructing a cubic energy correction $E^3_{cor}$.
The remaining non-perturbative terms are subsequently removed by  quartic energy corrections, so that $ E^{0,para}_{lin}$ defined in \eqref{E0ParaLinDef} is exactly the energy that we need in Proposition \ref{t:wellposedflow}.

In the second step, we address the non-perturbative source terms $(\mathcal{G}^\sharp, \mathcal{K}^\sharp)$.
Since $(\mathcal{G}^\sharp, \mathcal{K}^\sharp)$ does not satisfy \eqref{HalfGK}, Theorem \ref{t:LinearizedWellposed} cannot follow  directly from  Proposition \ref{t:wellposedflow}.
We will construct linearized normal form variables 
\begin{equation*}
    (w_{NF}, r_{NF}) :=(w_{NF}^0, r_{NF}^0) + (w_{NF}^1, r_{NF}^1)
\end{equation*}
 to eliminate $(\mathcal{G}^\sharp, \mathcal{K}^\sharp)$.
For each part of the normal form variables, we will perform the normal form analysis for the low-high quadratic portion, balanced quadratic portion and cubic portion of $(\nP\mathcal{G}_0, \nP\mathcal{K}_0)$ or $(\nP\mathcal{G}_1, \nP\mathcal{K}_1)$.
The normal form variables satisfy the bound
\begin{equation} \label{NFVariableBound}
    \| (w_{NF}, r_{NF})\|_{\H^0} \lesssim_\CalAZ \mathcal{A}_\f32 \| (w,r)\|_{\H^0},
\end{equation}
and the pair $(w+w_{NF}, r+r_{NF})$ solves the paradifferential linearized flow \eqref{ParadifferentialFlow} with perturbative source terms.
Choosing the modified energy
\begin{equation*}
 E^{0}_{lin}(w,r) := E^{0,para}_{lin} (w+w_{NF}, r+r_{NF}),
\end{equation*}
and applying Proposition \ref{t:wellposedflow} and \eqref{NFVariableBound},
the modified energy $E^{0}_{lin}(w,r)$ satisfies both the norm equivalence and cubic energy estimate in Theorem \ref{t:LinearizedWellposed}.
The well-posedness of the linearized hydroelastic waves \eqref{linearizedeqn} follows from a standard fixed-point argument using the modified energy estimate.
This concludes the proof of Theorem \ref{t:LinearizedWellposed}.

The remainder of this section focuses on proving the modified energy estimate for \eqref{ParadifferentialFlow} and constructing normal form variables $(w_{NF}, r_{NF})$.
In Section \ref{s:HomoFlow}, we prove Proposition \ref{t:wellposedflow}.
Then, in Section \ref{s:NormalZero}, we compute $(w_{NF}^0, r_{NF}^0)$ to eliminate $(\nP\mathcal{G}_0, \nP\mathcal{K}_0)$.
Finally, in Section \ref{s:NormalOne}, we compute $(w_{NF}^1, r_{NF}^1)$ to eliminate $(\nP\mathcal{G}_1, \nP\mathcal{K}_1)$.

\subsection{$\H^0$ energy estimate of the homogeneous paradifferential flow} \label{s:HomoFlow}
In this subsection, we construct the energy $E^{0,para}_{lin}(w,r)$ in Proposition \ref{t:wellposedflow}, i.e. the $\H^0$ modified energy estimate of the linear paradifferential flow \eqref{ParadifferentialFlow}.
We begin with a paradifferential linearized quadratic energy $E_{lin}(w,r) = (1+O(\CalAZ)) \|(w,r)\|_{\H^0}^2$, and then construct cubic and quartic energy correction $E_{cor}(w,r)$ such that its time derivative eliminates the non-perturbative part of $\frac{d}{dt}E_{lin}(w,r)$.

We define the paradifferential linearized energy $E_{lin}(w,r)$ as
\begin{equation} \label{ElinDef}
    E_{lin}(w,r) = \int \Im \left(T_{J^{-\f54}}w_{\al}\cdot \bar{w}_{\al \al} \right) + \Re \left(r\cdot T_{J^{\f14}}\bar{r}\right) + \Re(w\cdot \bar{w})\,d\alpha.
\end{equation}
The para-coefficients $T_{J^{-\f54}}$, $T_{J^{\f14}}$ are selected for two reasons:
\begin{enumerate}
\item Cancellation of Leading Terms: The identity $J^{-\f54}(1-\bar{Y}) = J^\f14\cdot J^{-\f32}(1-\bar{Y})$ ensures that integral terms of the type $\Re \int ir \cdot \partial_\al^4 \bar{w}\,d\al$ in $\f{d}{dt}E_{lin}(w,r)$ vanish due to the cancellation of para-coefficients.
\item Matching Coefficients for sub-leading terms: The exponents $-\f54$ and $\f14$ are chosen so that in $\f{d}{dt}E_{lin}(w,r)$, integrals of the type $\Re \int ir \cdot T_{\bw_\al} \p_\al^3 \bar{w} \,d\al$ and $\Re \int i\bar{r} \cdot T_{\bw_\al}\p_\al^3 w \,d\al$ have the same coefficient. This leads to a cancellation when computing the symbols of the paradifferential cubic forms for the energy correction $E^3_{cor}(w,r)$, leaving only lower-order integrals.
\end{enumerate}

This choice of energy satisfies the norm equivalence
\begin{equation*}
  E_{lin}(w,r) =  (1+O(\CalAZ)) \|(w,r)\|_{\H^0}^2. 
\end{equation*}
We proceed to compute its time derivative:
\begin{align*}
\frac{d}{dt} E_{lin} =  2\Im \int \bar{w}_{\al \al} \cdot T_{J^{-\f54}}w_{\al t} \,d\alpha  +\Im \int \bar{w}_{\al \al} \cdot T_{\partial_tJ^{-\f54}}w_{\al} \,d\alpha + 2\Re \int r\cdot T_{J^{\f14}}\bar{r}_t \, d\alpha  + \Re \int T_{\partial_t J^{\f14}}r\cdot \bar{r}\,d\alpha.
\end{align*} 
By taking the $\alpha$-derivative on the first equation of \eqref{ParadifferentialFlow} and using the definition of $b_\al$, we obtain:
\begin{equation} \label{wAlphaParaMat}
T_{D_t}w_\al = -T_{1-\bar{Y}}r_{\al\al} + T_{(1-\bar{Y})^2\bar{\bw}_\al}r_\al - 2T_{T_{1-\bar{Y}}R_{\al}}w_\al - T_{T_{1-Y}\bar{R}_\al}w_\al - T_{T_{1-\bar{Y}}R_{\al\al}}w + G_1,
\end{equation}
where the error term $G_1$ satisfies
\begin{equation*}
\|G_1\|_{H^\f12} \lesssim_{\CalAZ} \mathcal{A}_{\sharp, \f74}  \| (w,r)\|_{\mathcal{H}^{\f34}}.
\end{equation*}

To utilize the expressions of $(T_{D_t}w_\al, T_{D_t}r)$, we integrate by parts:
\begin{equation*}
2\Re \int r\cdot T_{J^{\f14}}T_b\bar{r}_\alpha \, d\alpha = -2\Re \int T_{\big(J^{\f14}b\big)_\alpha}r\cdot \bar{r}\, d\alpha - 2\Re \int T_{J^{\f14}}T_{b}r_\alpha \cdot \bar{r}\, d\alpha  + O\left(\ASSharp\right) \|r\|_{L^2}^2,
\end{equation*}
so that using the fact that $J^s$ and $b$ are real-valued functions,
\begin{equation*}
2\Im \int \bar{w}_{\al \al}\cdot T_{J^{-\f54}}T_b w_{\al \al} \,d\alpha  =0,\quad 2\Re \int r\cdot T_{J^{\f14}}T_b\bar{r}_\alpha \, d\alpha = -\Re \int T_{\big(J^{\f14}b\big)_\alpha}r\cdot \bar{r}\, d\alpha  + O\left(\ASSharp\right) \|r\|_{L^2}^2.
\end{equation*}

Using above identities, we plug in para-material derivatives in \eqref{ParadifferentialFlow} and \eqref{wAlphaParaMat} to compute
\begin{align*}
&\frac{d}{dt} E_{lin}(w,r)\\
=& 2\Im \int \bar{w}_{\al \al} \cdot T_{J^{-\f54}}T_{D_t}w_\al \,d\alpha   
+  \Im \int \bar{w}_{\al \al} \cdot T_{\partial_tJ^{-\f54}}w_{\al} \,d\alpha + 2\Re \int r\cdot T_{J^{\f14}}T_{D_t}\bar{r} \, d\alpha \\
&+ \Re \int T_{\partial_t J^{\f14}}r\cdot \bar{r}\,d\alpha 
 + \Re \int T_{\big(J^{\f14}b\big)_\alpha}r\cdot \bar{r}\, d\alpha + \Re \int w\cdot \bar{w}_t \,d\al  + O\left(\ASSharp\right) \|(w,r)\|_{\H^0}^2 \\
=&2\Re \int iT_{J^{-\f54}(1-\bar{Y})}r_{\al \al}\cdot \bar{w}_{\al \al} \,d\alpha -2\Re \int iT_{J^{-\f54}(1-\bar{Y})^2 \bar{\bw}_\al}r_\al \cdot \bar{w}_{\al \al} \,d\al + 4\Re \int i T_{T_{J^{-\f54}(1-\bar{Y})}R_{\al}}w_\al \cdot \bar{w}_{\al \al} \,d\al \\
&+ 2\Re \int i T_{T_{J^{-\f54}(1-Y)}\bar{R}_{\al}}w_\al \cdot \bar{w}_{\al \al} \,d\al + 2\Re \int i T_{T_{J^{-\f54}(1-\bar{Y})}R_{\al\al}}w \cdot \bar{w}_{\al \al} \,d\al - 2\Re \int i r\cdot T_{J^{-\f54}(1-\bar{Y})} \p_\al^4 \bar{w} \,d\al \\
&+10\Re \int ir\cdot T_{J^{-\f54}(1-\bar{Y})^2 \bar{\bw}_\al} \p_\al^3 \bar{w} \,d\al +2\Re \int ir \cdot T_{J^{-\f94}\bw_\al}  \p_\al^3 \bar{w} \,d\al + 10\Re \int ir\cdot T_{J^{-\f54}(1-\bar{Y})^2\bar{\bw}_{\al \al}}\bar{w}_{\al \al}\,d\al \\
&- 2\Re \int ir \cdot T_{J^{-\f94}\bw_{\al \al}} \bar{w}_{\al \al} \,d\al + 5 \Re \int ir \cdot T_{J^{-\f54}(1-\bar{Y})^2 \p_\al^3 \bar{\bw}}\bar{w}_\al \,d\al - 3\Re \int ir \cdot T_{J^{-\f94}\p_\al^3 \bw} \bar{w}_{\al} \,d\al \\
&+2 \Re \int ir \cdot T_{J^{-\f54}(1-\bar{Y})^2 \p_\al^4 \bar{\bw}}\bar{w} \,d\al -15\Re \int ir \cdot T_{J^{-\f54}(1-\bar{Y})^3 \bar{\bw}_\al^2} \bar{w}_{\al \al} \,d\al + \Re \int T_{\partial_t J^{\f14} + J^{\f14} b_\al}r\cdot \bar{r}\,d\alpha\\
&-\Re \int iT_{\p_t J^{-\f54}}w_{\al}\cdot \bar{w}_{\al \al} \,d\alpha-2\Re \int i T_{J^{-\f54}}G_1 \cdot \bar{w}_{\al \al} \,d\al
+2 \Re \int r \cdot T_{J^{\f14}} \bar{K} \,d\al +  O\left(\ASSharp \right) \|(w,r)\|_{\H^0}^2.
\end{align*}
For the first two terms in $\frac{d}{dt} E_{lin}(w,r)$, we integrate by parts to shift $\alpha$-derivatives from $r$ to $\bar{w}$ and the para-coefficients,
\begin{align*}
 &2\Re \int iT_{J^{-\f54}(1-\bar{Y})}r_{\al \al}\cdot \bar{w}_{\al \al} \,d\alpha -2\Re \int iT_{J^{-\f54}(1-\bar{Y})^2 \bar{\bw}_\al}r_\al \cdot \bar{w}_{\al \al} \,d\al \\
 =& \f52\Re  \int iT_{J^{-\f94}\bw_{\al}}r_{\al }\cdot \bar{w}_{\al \al } \,d\al +\f52\Re \int iT_{J^{-\f54}(1-\bar{Y})^2 \bar{\bw}_\al}r_\al \cdot \bar{w}_{\al \al} \,d\al  -2\Re \int iT_{J^{-\f54}(1-\bar{Y})}r_{\al }\cdot \p_\al^3\bar{w} \,d\alpha \\
 =&- \f52\Re  \int iT_{J^{-\f94}\bw_{\al}}r\cdot \p_\al^3\bar{w} \,d\al -\f52\Re \int iT_{J^{-\f54}(1-\bar{Y})^2 \bar{\bw}_\al}r \cdot \p_\al^3\bar{w} \,d\al + 2\Re \int iT_{J^{-\f54}(1-\bar{Y})}r\cdot \p_\al^4\bar{w} \,d\alpha \\
 &-\f52\Re  \int iT_{J^{-\f94}\bw_{\al \al}}r\cdot \bar{w}_{\al \al } \,d\al - \f52\Re \int iT_{J^{-\f54}(1-\bar{Y})^2 \bar{\bw}_{\al\al}}r \cdot \bar{w}_{\al \al} \,d\al -\f52\Re  \int iT_{J^{-\f94}\bw_{\al}}r\cdot \p_\al^3\bar{w} \,d\al\\
 &-\f92\Re \int iT_{J^{-\f54}(1-\bar{Y})^2 \bar{\bw}_\al}r \cdot \p_\al^3\bar{w} \,d\al + \frac{45}{8}\Re \int i T_{J^{-\f94}(1-Y)\bw_\al^2}r \cdot \bar{w}_{\al \al} \,d\al + \frac{45}{8}\Re \int i T_{J^{-\f94}(1-\bar{Y})|\bw_\al|^2}r \cdot \bar{w}_{\al \al} \,d\al\\
& +  \f{25}8\Re \int i T_{J^{-\f94}(1-\bar{Y})|\bw_\al|^2}r \cdot \bar{w}_{\al \al} \,d\al + \frac{65}{8} \Re \int iT_{J^{-\f54}(1-\bar{Y})^3 \bar{\bw}_\al^2}r \cdot \bar{w}_{\al \al} \,d\al \\
  =&  2\Re \int iT_{J^{-\f54}(1-\bar{Y})}r\cdot \p_\al^4\bar{w} \,d\alpha - 5\Re  \int iT_{J^{-\f94}\bw_{\al}}r\cdot \p_\al^3\bar{w} \,d\al -7\Re \int iT_{J^{-\f54}(1-\bar{Y})^2 \bar{\bw}_\al}r \cdot \p_\al^3\bar{w} \,d\al \\
  &- \f52\Re  \int iT_{J^{-\f94}\bw_{\al \al}}r\cdot \bar{w}_{\al \al } \,d\al - \f52\Re \int iT_{J^{-\f54}(1-\bar{Y})^2 \bar{\bw}_{\al\al}}r \cdot \bar{w}_{\al \al} \,d\al
  + \frac{45}{8}\Re \int i T_{J^{-\f94}(1-Y)\bw_\al^2}r \cdot \bar{w}_{\al \al} \,d\al\\
  &+ \frac{65}{8}\Re \int iT_{J^{-\f54}(1-\bar{Y})^3 \bar{\bw}_\al^2}r \cdot \bar{w}_{\al \al} \,d\al 
  + \frac{35}{4}\Re \int i T_{J^{-\f94}(1-\bar{Y})|\bw_\al|^2}r \cdot \bar{w}_{\al \al} \,d\al.
\end{align*}
For the $w\cdot \bar{w}$ type integrals in  $\frac{d}{dt} E_{lin}(w,r)$, we again integrate by parts and use Lemma \ref{t:JsParaMat},
\begin{align*}
&4\Re \int i T_{T_{J^{-\f54}(1-\bar{Y})}R_{\al}}w_\al \cdot \bar{w}_{\al \al} \,d\al 
+2\Re \int i T_{T_{J^{-\f54}(1-\bar{Y})}\bar{R}_{\al}}w_\al \cdot \bar{w}_{\al \al} \,d\al \\
&+ 2\Re \int i T_{T_{J^{-\f54}(1-\bar{Y})}R_{\al\al}}w \cdot \bar{w}_{\al \al} \,d\al-\Re \int iT_{\partial_tJ^{-\f54}}w_{\al}\cdot \bar{w}_{\al \al} \,d\alpha \\
=&\frac{11}{4}\Re \int i T_{T_{J^{-\f54}(1-\bar{Y})}R_{\al}}w_\al \cdot \bar{w}_{\al \al} \,d\al 
+\f34\Re \int i T_{T_{J^{-\f54}(1-\bar{Y})}\bar{R}_{\al}}w_\al \cdot \bar{w}_{\al \al} \,d\al+ 2\Re \int i T_{T_{J^{-\f54}(1-\bar{Y})}R_{\al\al}}w \cdot \bar{w}_{\al \al} \,d\al\\
=& \frac{11}{4}\Re \int i T_{T_{J^{-\f54}(1-\bar{Y})}R_{\al}}w_\al \cdot \bar{w}_{\al \al} \,d\al  + 2\Re \int i T_{T_{J^{-\f54}(1-\bar{Y})}R_{\al\al}}w \cdot \bar{w}_{\al \al} \,d\al \\
&- \f34\Re \int i T_{T_{J^{-\f54}(1-Y)}\bar{R}_{\al}}w_{\al\al} \cdot \bar{w}_{\al} \,d\al - \f34\Re \int i T_{T_{J^{-\f54}(1-Y)}\bar{R}_{\al \al}}w_\al \cdot \bar{w}_{\al } \,d\al+ O\left(\ASSharp \right) \| w\|^2_{H^\f32} \\
=& \frac{7}{2}\Re \int i T_{T_{J^{-\f54}(1-\bar{Y})}R_{\al}}w_\al \cdot \bar{w}_{\al \al} \,d\al + \f54\Re \int i T_{T_{J^{-\f54}(1-\bar{Y})}R_{\al\al}}w \cdot \bar{w}_{\al \al} \,d\al \\
&- \f34\Re \int i T_{T_{J^{-\f54}(1-\bar{Y})}\partial_\al^3 R}w \cdot \bar{w}_{\al } \,d\al+ O\left(\ASSharp \right) \| w\|^2_{H^\f32}.
\end{align*}

Combining these results with Lemma \ref{t:JsParaMat}, we compute:
\begin{align*}
&\frac{d}{dt} E_{lin}(w,r)=\frac{7}{2}\Re \int ir\cdot T_{J^{-\f54}(1-\bar{Y})^2 \bar{\bw}_\al} \p_\al^3 \bar{w} \,d\al-\Re \int ir \cdot T_{J^{-\f94}\bw_\al}  \p_\al^3 \bar{w} \,d\al \\
+& \frac{15}{2}\Re \int ir\cdot T_{J^{-\f54}(1-\bar{Y})^2\bar{\bw}_{\al \al}}\bar{w}_{\al \al}\,d\al-\f92\Re \int ir \cdot T_{J^{-\f94}\bw_{\al \al}} \bar{w}_{\al \al} \,d\al +5 \Re \int ir \cdot T_{J^{-\f54}(1-\bar{Y})^2 \p_\al^3 \bar{\bw}}\bar{w}_\al \,d\al\\
-& 3\Re \int ir \cdot T_{J^{-\f94}\p_\al^3 \bw} \bar{w}_{\al} \,d\al +2 \Re \int ir \cdot T_{J^{-\f54}(1-\bar{Y})^2 \p_\al^4 \bar{\bw}}\bar{w} \,d\al  + 3\Re \int i T_{T_{J^{-\f54}(1-\bar{Y})}R_{\al}}w_\al \cdot \bar{w}_{\al \al} \,d\al \\
+& \f54\Re \int i T_{T_{J^{-\f54}(1-\bar{Y})}R_{\al\al}}w \cdot \bar{w}_{\al \al} \,d\al-\f34\Re \int i T_{T_{J^{-\f54}(1-\bar{Y})}\partial_\al^3 R}w \cdot \bar{w}_{\al } \,d\al  + \f32\Re \int T_{T_{(1-\bar{Y})J^{\f14}}R_\al} r\cdot \bar{r} \,d\al\\
-& \f{55}{8}\Re \int ir \cdot T_{J^{-\f54}(1-\bar{Y})^3 \bar{\bw}_\al^2} \bar{w}_{\al \al} \,d\al  +\f{45}{8}\Re \int ir \cdot T_{J^{-\f94}(1-Y)\bw_\al^2} \bar{w}_{\al \al} \,d\al\\
 +& \frac{35}{4}\Re \int i T_{J^{-\f94}(1-\bar{Y})|\bw_\al|^2}r \cdot \bar{w}_{\al \al}+  O\left(\ASSharp \right) \|(w,r)\|_{\H^0}^2\\
 =&\frac{7}{2}\Re \int i T_{T_{J^{-\f54}(1-\bar{Y})}R_{\al}}w_\al \cdot \bar{w}_{\al \al} \,d\al +\f54 \Re \int i T_{T_{J^{-\f54}(1-\bar{Y})}R_{\al\al}}w \cdot \bar{w}_{\al \al} \,d\al-\f34\Re \int i T_{T_{J^{-\f54}(1-\bar{Y})}\partial_\al^3 R}w \cdot \bar{w}_{\al } \,d\al\\
 -&3\Re \int ir \cdot T_{J^{-\f94}\bw_\al}  \p_\al^3 \bar{w} \,d\al-\f92\Re \int ir \cdot T_{J^{-\f94}\bw_{\al \al}} \bar{w}_{\al \al} \,d\al- 3\Re \int ir \cdot T_{J^{-\f94}\p_\al^3 \bw} \bar{w}_{\al} \,d\al\\
-&3\Re \int i\bar{r}\cdot T_{J^{-\f54}(1-Y)^2 \bw_\al} \p_\al^3 w \,d\al -\frac{15}{2}\Re \int i\bar{r}\cdot T_{J^{-\f54}(1-Y)^2\bw_{\al \al}}w_{\al \al}\,d\al- 5 \Re \int i\bar{r} \cdot T_{J^{-\f54}(1-Y)^2 \p_\al^3 \bw}w_\al \,d\al \\
-&2 \Re \int i\bar{r} \cdot T_{J^{-\f54}(1-Y)^2 \p_\al^4\bw}w\,d\al +\f32\Re \int T_{T_{(1-\bar{Y})J^{\f14}}R_\al} r\cdot \bar{r} \,d\al - \f{55}{8}\Re \int ir \cdot T_{J^{-\f54}(1-\bar{Y})^3 \bar{\bw}_\al^2} \bar{w}_{\al \al} \,d\al  \\
 +&\f{45}{8}\Re \int ir \cdot T_{J^{-\f94}(1-Y)\bw_\al^2} \bar{w}_{\al \al} \,d\al
 + \frac{35}{4}\Re \int i T_{J^{-\f94}(1-\bar{Y})|\bw_\al|^2}r \cdot \bar{w}_{\al \al}\,d\al+  O\left(\ASSharp \right) \|(w,r)\|_{\H^0}^2.
\end{align*}
The expression for $\frac{d}{dt}E_{lin}(w,r)$ contains both non-perturbative cubic and quartic integral terms, which we eliminate by constructing cubic and quartic energy corrections.
For the cubic energy corrections, we seek a cubic energy $E_{cor}^3(w,r) = O(\CalAZ)\|(w,r)\|_{\H^0}^2$ such that its time derivative eliminates the cubic integral terms of $\frac{d}{dt}E_{lin}(w,r)$.
In other words, the time derivative of $E_{cor}^3(w,r)$ needs to satisfy 
\begin{align*}
&\frac{d}{dt} E_{cor}^3(w,r)
=  -\frac{7}{2}\Re \int i T_{T_{J^{-\f54}(1-\bar{Y})}R_{\al}}w_\al \cdot \bar{w}_{\al \al} \,d\al -\f54 \Re \int i T_{T_{J^{-\f54}(1-\bar{Y})}R_{\al\al}}w \cdot \bar{w}_{\al \al} \,d\al\\
+&\f34\Re \int i T_{T_{J^{-\f54}(1-\bar{Y})}\partial_\al^3 R}w \cdot \bar{w}_{\al } \,d\al+3\Re \int ir \cdot T_{J^{-\f94}\bw_\al}  \p_\al^3 \bar{w} \,d\al+\f92\Re \int ir \cdot T_{J^{-\f94}\bw_{\al \al}} \bar{w}_{\al \al} \,d\al\\
+& 3\Re \int ir \cdot T_{J^{-\f94}\p_\al^3 \bw} \bar{w}_{\al} \,d\al+3\Re \int i\bar{r}\cdot T_{J^{-\f54}(1-Y)^2 \bw_\al} \p_\al^3 w \,d\al +\frac{15}{2}\Re \int i\bar{r}\cdot T_{J^{-\f54}(1-Y)^2\bw_{\al \al}}w_{\al \al}\,d\al\\
+& 5 \Re \int i\bar{r} \cdot T_{J^{-\f54}(1-Y)^2 \p_\al^3 \bw}w_\al \,d\al +2 \Re \int i\bar{r} \cdot T_{J^{-\f54}(1-Y)^2 \p_\al^4\bw}w\,d\al -\f32\Re \int T_{T_{(1-\bar{Y})J^{\f14}}R_\al} r\cdot \bar{r} \,d\al\\
+&  O\left(\ASSharp \right) \|(w,r)\|_{\H^0}^2.
\end{align*}

To achieve this, we employ a cubic energy correction of the form 
\begin{align*}
    E^3_{cor}(w,r) = \Re &\int A_{lhh}\left(R, T_{J^{\f14}}w, \bar{r}\right) + B_{lhh}\left(R, T_{J^{\f14}(1-\bar{Y})^2}r, \bar{w}\right) \\
    &+ C_{lhh}\left( \bw, T_{J^{-\f54}(1-Y)}w, \bar{w}\right) + D_{lhh}\left( \bw, T_{J^{\f14}(1-Y)}r, \bar{r}\right) \, d\alpha,
\end{align*}
where $A_{lhh},B_{lhh},C_{lhh},D_{lhh}$ are paradifferential cubic forms where the first variable is at low frequency compared to the other two variables. 
The para-coefficients such as $T_{J^{\f14}}$ are added to the cubic forms in order to match the para-coefficients in cubic integral terms of $\frac{d}{dt}E_{lin}(w,r)$.
We compute the time derivative of $E^3_{cor}(w,r)$.
It is given by
\begin{align*}
&\frac{d}{dt}  E^3_{cor}(w,r) = \text{quartic and higher integral terms} +  O\left(\ASSharp \right) \|(w,r)\|_{\H^0}^2 \\
+ \Re\int& -A\left( T_{J^{-\f54}(1-\bar{Y})}R, w, i\p_\al^4\bar{w}\right) + B\left(T_{J^{-\f54}(1-\bar{Y})}R, i\p_\al^4 w, \bar{w}\right)-C\left(T_{J^{-\f54}(1-\bar{Y})}R_\alpha, w, \bar{w}\right)\,d\al  \\
+\Re \int&  B\left(i \p_\al^4\bw, T_{J^{-\f94}}r, \bar{w}\right) - C\left(\bw, T_{J^{-\f94}}r_{\alpha}, \bar{w}\right)- D\left(\bw, T_{J^{-\f94}}r, i\p_\al^4\bar{w}\right)\,d\al  \\
+\Re \int&  A\left(i \p_\al^4\bw, T_{J^{-\f54}(1-Y)^2}w, \bar{r}\right) - C\left(\bw, T_{J^{-\f54}(1-Y)^2}w, \bar{r}_\alpha\right)+ D\left(\bw, iT_{J^{-\f54}(1-Y)^2}\p_\al^4 w, \bar{r}\right)\,d\al \\
-\Re \int&A\left(T_{J^{\f14}(1-\bar{Y})}R, r_\alpha, \bar{r}\right) + B\left(T_{J^{\f14}(1-\bar{Y})}R, r, \bar{r}_\alpha\right)+ D\left(T_{J^{\f14}(1-\bar{Y})}R_\alpha, r, \bar{r}\right) \,d\alpha.
\end{align*}
Let $\mathfrak{a}(\xi, \eta, \zeta)$ denote the symbol of $A\left(R, T_{J^{\f14}}w, \bar{r}\right)$.
Other three symbols are defined in the same way.
We take the Fourier transform of the integral and compare symbols for each cubic term.
Using integration by parts, a derivative on the third factor is equal to the minus of the sum of  derivatives of the first two factors  in the integral, which shows the symbolic relation $\zeta = \xi + \eta$.
We get that symbols  $\mathfrak{a}, \mathfrak{b}, \mathfrak{c}, \mathfrak{d}$ solve the following algebraic linear system.
\begin{equation*}
\begin{cases}
\zeta^4\mathfrak{a}-\eta^4\mathfrak{b}+\xi\mathfrak{c}=\left(\frac{7}{2}\xi\eta\zeta^2+\f54\xi^2\zeta^2+\f34\xi^3\zeta\right)\chi_{1}(\xi,\eta),\\
\xi^4\mathfrak{b}-\eta\mathfrak{c}-\zeta^4\mathfrak{d}=(-3\xi\zeta^3+\f92\xi^2\zeta^2-3\xi^3\zeta)\chi_{1}(\xi,\eta),\\
\xi^4\mathfrak{a}+\zeta\mathfrak{c}+\eta^4\mathfrak{d}=(3\xi\eta^3+\frac{15}{2}\xi^2\eta^2+5\xi^3\eta+2\xi^4)\chi_{1}(\xi,\eta),\\
\eta \mathfrak{a}-\zeta\mathfrak{b}+\xi\mathfrak{d}=\f32\xi\chi_{1}(\xi,\eta),
\end{cases}
\end{equation*}
where the $\chi_1(\theta_1, \theta_2)$ is a non-negative smooth bump function defined in \eqref{ChiOnelh} that selects low-high frequency portion of a product. 
This algebraic system has the solution
\begin{align*}
\mathfrak{a}=&\f{5(15\xi^6+60\xi^5\eta+125\xi^4\eta^2+157\xi^3\eta^3+120\xi^2\eta^4+51\xi\eta^5+8\eta^6)}{4\eta(25\xi^6+100\xi^5\eta+200\xi^4\eta^2+246\xi^3\eta^3+200\xi^2\eta^4+100\xi\eta^5+25\eta^6)}\chi_{1}(\xi,\eta),\\
\mathfrak{b}=&-\f{5(14\xi^6+49\xi^5\eta+77\xi^4\eta^2+60\xi^3\eta^3+13\xi^2\eta^4-13\xi\eta^5-8\eta^6)}
{4\eta(25\xi^6+100\xi^5\eta+200\xi^4\eta^2+246\xi^3\eta^3+200\xi^2\eta^4+100\xi\eta^5+25\eta^6)}\chi_{1}(\xi,\eta),\\
\mathfrak{c}=&\f{125\xi^9+875\xi^8\eta+2850\xi^7\eta^2+5633\xi^6\eta^3+7407\xi^5\eta^4}
{4(25\xi^6+100\xi^5\eta+200\xi^4\eta^2+246\xi^3\eta^3+200\xi^2\eta^4+100\xi\eta^5+25\eta^6)}\chi_{1}(\xi,\eta)\\
&+\f{6708\xi^4\eta^5+4134\xi^3\eta^6+1600\xi^2\eta^7+300\xi\eta^8}
{4(25\xi^6+100\xi^5\eta+200\xi^4\eta^2+246\xi^3\eta^3+200\xi^2\eta^4+100\xi\eta^5+25\eta^6)}\chi_{1}(\xi,\eta),\\
\mathfrak{d}=&\f{40\xi^6+105\xi^5\eta+135\xi^4\eta^2+83\xi^3\eta^3+25\xi^2\eta^5}
{2(25\xi^6+100\xi^5\eta+200\xi^4\eta^2+246\xi^3\eta^3+200\xi^2\eta^4+100\xi\eta^5+25\eta^6)}\chi_{1}(\xi,\eta).
\end{align*}

By the choice of the frequency cutoff $\chi_1$, $|\xi|\ll |\eta| \approx |\zeta|$.
According to our discussion in Appendix \ref{s:Discuss}, the denominators of above symbols are elliptic such that they cannot be zero.
The leading part of the numerators and denominators are powers of $\eta$.
Hence, at the leading order
\begin{equation*}
\mathfrak{a} \approx \f25\chi_1(\xi, \eta), \quad \mathfrak{b} \approx  \f25\chi_1(\xi, \eta), \quad \mathfrak{c} \approx 3\xi\eta^2 \chi_1(\xi, \eta), \quad \mathfrak{d} \approx \f{1}{2}\xi^2\eta^{-2}\chi_1(\xi, \eta),
\end{equation*}
so that we have
\begin{align*}
 E^3_{cor}(w,r) &= \f25\Re \int  T_R T_{J^\f14}w\cdot\bar{r} + T_R T_{J^{\frac{1}{4}}(1-\bar{Y})^2}r\cdot \bar{w} \,d\al -3\Re \int iT_{\bw_\al}T_{J^{-\f54}(1-Y)}w_\al \cdot \bar{w}_{\al }\,d\al\\
 & + \f12 \Re \int T_{\bw_{\al\al}} T_{J^\f14(1-Y)}\p_\al^{-1}r\cdot \p_\al^{-1}\bar{r} \,d\al + \text{lower order integrals}.
\end{align*}
As a consequence, 
\begin{equation*}
    |E^3_{cor}(w,r)| \lesssim \CalAZ \|(w,r)\|_{\H^0}.
\end{equation*}

According to our computation of the paradifferential quadratic normal forms, $\frac{d}{dt}E^3_{cor}(w,r)$ eliminates the cubic energy produced by $\frac{d}{dt}E_{lin}(w,r)$.
When the time derivative acts on para-coefficients such as $T_{J^\f14}$, it produces perturbative terms.
For instance, 
\begin{equation*}
    \Re \int  T_R T_{\p_t J^\f14}w\cdot\bar{r} \,d\al = -\f14 \Re \int  T_R T_{ J^\f14 b_\al}w\cdot\bar{r} \,d\al + \text{lower order integrals} =  O\left(\ASSharp \right) \|(w,r)\|_{\H^0}^2.
\end{equation*}
The quintic integral terms produced by $\frac{d}{dt}E^3_{cor}(w,r)$ are all perturbative.
However, $\frac{d}{dt}E^3_{cor}(w,r)$ also produces extra non-perturbative quartic integral terms.
We get that
\begin{equation*} 
 \frac{d}{dt}E^3_{cor}(w,r) +\frac{d}{dt}E_{lin}(w,r)= \text{non-perturbative quartic integrals} +  O\left(\ASSharp \right) \|(w,r)\|_{\H^0}^2,
\end{equation*}
where the non-perturbative quartic integral terms can be classified as one of following two types:
\begin{enumerate}
    \item The non-perturbative quartic integral terms in $\frac{d}{dt}E_{lin}(w,r)$, namely 
    \begin{align*}
    &- \f{55}{8}\Re \int ir \cdot T_{J^{-\f54}(1-\bar{Y})^3 \bar{\bw}_\al^2} \bar{w}_{\al \al} \,d\al   +\f{45}{8}\Re \int ir \cdot T_{J^{-\f94}(1-Y)\bw_\al^2} \bar{w}_{\al \al} \,d\al \\
 &+ \frac{35}{4}\Re \int i r \cdot T_{J^{-\f94}(1-\bar{Y})|\bw_\al|^2}\bar{w}_{\al \al}\,d\al.
    \end{align*}
    Note that for first two quartic integral terms of this type, the frequencies of two variables at low frequency have the same sign. 
    \item The non-perturbative quartic integral terms in $ \frac{d}{dt}E^3_{cor}(w,r)$.
    They are produced by sub-leading terms of $(w_t, r_t)$.
    These quartic integrals are given by 
    \begin{equation*}
     3\Re \int ir \cdot T_{J^{-\f94}(1-\bar{Y})|\bw_\al|^2}\bar{w}_{\al\al}\,d\al -3\Re \int ir \cdot T_{J^{-\f94}(1-Y)\bw_\al^2}\bar{w}_{\al \al }\,d\al.
    \end{equation*}
   \end{enumerate}
 These non-perturbative quartic integrals $I_{non}^4$ satisfy 
$|I_{non}^4| = O(\mathcal{A}_1^2) \| (w,r)\|_{\H^\f14}^2$.
The linearized variables have $\f12$ more derivative compared to $\| (w,r)\|_{\H^0}^2$.

To eliminate these non-perturbative quartic integrals, we consider two scenarios, the non-resonant quartic integrals, and the quartic integrals that may have four-wave resonances.
\begin{enumerate}
\item For non-perturbative quartic integrals 
\begin{equation*}
 - \f{55}{8}\Re \int ir \cdot T_{J^{-\f54}(1-\bar{Y})^3 \bar{\bw}_\al^2} \bar{w}_{\al \al} \,d\al   +\f{21}{8}\Re \int ir \cdot T_{J^{-\f94}(1-Y)\bw_\al^2} \bar{w}_{\al \al} \,d\al,
\end{equation*}
two variables at low frequency are either $\bw_\al^2$ or $\bar{\bw}_\al^2$, so that their frequencies have the same sign.
According to the discussion in Appendix \ref{s:Discuss}, four-wave resonances cannot happen.
Therefore, one can construct a quartic integral correction $E^{4,1}_{cor}(w,r) = O(\mathcal{A}_0^2)\|(w,r) \|^2_{\H^0}$ such that 
\begin{align*}
    &\frac{d}{dt} E^{4,1}_{cor}(w,r) = \f{55}{8}\Re \int ir \cdot T_{J^{-\f54}(1-\bar{Y})^3 \bar{\bw}_\al^2} \bar{w}_{\al \al} \,d\al   \\
    -&\f{21}{8}\Re \int ir \cdot T_{J^{-\f94}(1-Y)\bw_\al^2} \bar{w}_{\al \al} \,d\al+\text{quintic and higher integrals}.
\end{align*}
These quintic and higher integrals have one or more lower order compare to these non-perturbative quartic integrals, so that these terms are perturbative.

\item Non-perturbative quartic integrals 
 \begin{equation*}
        \f{47}{4}\Re \int i r \cdot T_{J^{-\f94}(1-\bar{Y})|\bw_\al|^2}\bar{w}_{\al \al}\,d\al
    \end{equation*}
may have resonances.
We cannot construct quartic integral corrections to eliminate these terms without producing extra quartic integrals.
However, we can choose the correction 
\begin{equation*}
    E^{4,2}_{cor}(w,r) = -\frac{47}{8}\Re \int \p_\al^{-1}r \cdot T_{J^{-\f34}|\bw_\al|^2}\p_\al^{-1} \bar{r} \,d\al. 
\end{equation*}
This energy satisfies the norm equivalence $E^{4,2}_{cor}(w,r) = O(\mathcal{A}_0^2)\|(w,r) \|^2_{\H^0}$, and its time derivative equals
\begin{align*}
    E^{4,2}_{cor}(w,r) =& -\f{47}{4}\Re \int i r \cdot T_{J^{-\f94}(1-\bar{Y})|\bw_\al|^2}\bar{w}_{\al \al}\,d\al  \\
    &-\frac{47}{8}\Re \int \p_\al^{-1}r \cdot T_{\partial_t(J^{-\f34}|\bw_\al|^2)}\p_\al^{-1} \bar{r} \,d\al + \text{ quintic and higher integrals} \\
    =& -\f{47}{4}\Re \int i r \cdot T_{J^{-\f94}(1-\bar{Y})|\bw_\al|^2}\bar{w}_{\al \al}\,d\al  +  O\left(\ASSharp \right) \|(w,r)\|_{\H^0}^2.
\end{align*}
\end{enumerate}
To conclude this section, by choosing the modified energy
\begin{equation} \label{E0ParaLinDef}
E^{0,para}_{lin}(w, r) := E_{lin}(w,r) + E^{3}_{cor}(w,r) + E^{4,1}_{cor}(w,r) + E^{4,2}_{cor}(w,r),
\end{equation}
it satisfies the norm equivalence, and its time derivative is perturbative.
Hence, we prove Proposition \ref{t:wellposedflow}.

\subsection{Normal form analysis for $(\nP\mathcal{G}_0, \nP\mathcal{K}_0)$} \label{s:NormalZero}
In this section, we compute normal form variables $(w_{NF}^0, r_{NF}^0)$ such that
\begin{equation} \label{wrNFHZeroBound}
    \|(w_{NF}^0, r_{NF}^0)\|_{\H^0} \lesssim_\CalAZ \mathcal{A}_{\f32} \|(w,r)\|_{\H^0},
\end{equation}
and they solve the system 
\begin{equation}  \label{wZeroNFPGKZero}
\begin{cases}
T_{D_t}w_{NF}^0+T_{1-\bar{Y}}\p_\al r_{NF}^0+T_{T_{1-\bar{Y}}R_{\al}}w_{NF}^0= -\nP \mathcal{G}_0 + G,\\
T_{D_t}r_{NF}^0-i\mathcal{L}_{para}w_{NF}^0=- \nP \mathcal{K}_0 +K,
\end{cases}
\end{equation}
where $(G,K)$ are perturbative source terms that satisfy \eqref{HalfGK}.

Recall that in Lemma \ref{t:PGKZeroExp}, source terms $(\nP\mathcal{G}_0, \nP\mathcal{K}_0)$ are rewritten as the sum of low-high and balanced paraproducts in Lemma \ref{t:PGKZeroExp}.
We will construct $(w_{NF}^0, r_{NF}^0)$ as the sum of low-high quadratic normal form variables,  balanced quadratic normal form variables, and cubic normal form variables
\begin{equation} \label{wrNFZeroDef}
 (w_{NF}^0, r_{NF}^0) := (w^0_{lh}, r^0_{lh}) + (w_{bal}^0, r_{bal}^0) + (w^0_c, r^0_{c}).   
\end{equation}
Normal form variables $(w^0_{lh}, r^0_{lh})$ and $(w_{bal}^0, r_{bal}^0)$ eliminate the low-high and balanced part of $(\nP\mathcal{G}_0, \nP\mathcal{K}_0)$,  and $(w^0_c, r^0_{c})$ eliminates the extra non-perturbative cubic part of source terms produced by quadratic normal form variables.

\subsubsection{Low-high quadratic normal form analysis for $(\nP\mathcal{G}_0, \nP\mathcal{K}_0)$} 
We begin by computing low-high quadratic normal form variables $(w^0_{lh}, r^0_{lh})$ to eliminate the low-high portion of the $(\nP\mathcal{G}_0, \nP\mathcal{K}_0)$.
That is, we seek $(w^0_{lh}, r^0_{lh})$ such that
\begin{align*}
 &\p_tw^0_{lh}+T_{1-\bar{Y}}\p_\al r^0_{lh} + \text{cubic and higher terms}\\
 =&  T_{J^{-1}}T_{\bar{r}_\alpha}\bw -  T_{(1-\bar{Y})^2(1+\bw)}T_{\bar{w}_\alpha} R  + G,\\
&\p_tr^0_{lh}-iT_{J^{-\frac{3}{2}}(1-Y)}\p_\al^4w^0_{lh} + \text{cubic and higher terms}\\
=&T_{1-\bar{Y}}T_{\bar{r}_\alpha} R  -\f32 i T_{J^{-\f52}} T_{\p^4_{\al}\bar{w}}\bw - iT_{J^{-\f52}}T_{\p^3_{\al}\bar{w}}\bw_\al +iT_{J^{-\f52}} T_{\bar{w}_{\al\al}}\bw_{\al\al}+ \f32iT_{J^{-\f52}}T_{\bar{w}_{\al}} \p^3_{\al}\bw +K.   
\end{align*}
We consider low-high normal form transformations as the sum of low-high paradifferential bilinear forms of the following type:
\begin{equation*} %\label{wrlhZero}
\begin{aligned}
w^0_{lh} &= B^0_{lh}\left(\bar{w}, T_{1-\bar{Y}}\bw \right) + C^0_{lh}\left(\bar{r}, T_{J^{\frac{1}{2}}(1+\bw)^2}R \right),\\
r^0_{lh} &= A^0_{lh} \left(\bar{r}, T_{1-Y}\bw \right) + D^0_{lh}\left(\bar{w}, T_{(1-\bar{Y})(1+\bw)}R\right).
\end{aligned}
\end{equation*}
Using these bilinear forms, we compute
\begin{align*}
&\partial_t w^0_{lh}+T_{1-\bar{Y}} \partial_\alpha r^0_{lh} + \text{cubic and higher terms} \\
=& T_{J^{-1}}\partial_\alpha A^0_{lh}(\bar{r}, \bw) -T_{J^{-1}}B^0_{lh}(\bar{r}_\alpha, \bw) +  T_{J^{-1}}C^0_{lh}(\bar{r}, i\p_\al^4\bw) \\
 &-T_{(1-\bar{Y})^2(1+\bw)}B^0_{lh}(\bar{w}, R_\alpha)- T_{(1-\bar{Y})^2(1+\bw)}C^0_{lh}(i\p_\al^4\bar{w}, R) + T_{(1-\bar{Y})^2(1+\bw)}\partial_\alpha D^0_{lh}(\bar{w}, R),\\
&\partial_t r^0_{lh}  -iT_{J^{-\frac{3}{2}}(1-Y)}\p_\al^4 w^0_{lh}+ \text{cubic and higher  terms} \\
=& -T_{1-\bar{Y}}A^0_{lh}(\bar{r}, R_\alpha) - iT_{1-\bar{Y}}\partial_\alpha^4 C^0_{lh}(\bar{r}, R) - T_{1-\bar{Y}}D^0_{lh}(\bar{r}_\alpha, R)\\
&- iT_{J^{-\frac{5}{2}}}A^0_{lh}(\p_\al^4\bar{w}, \bw) -iT_{J^{-\frac{5}{2}}}\partial_\alpha^4 B^0_{lh}(\bar{w}, \bw) +iT_{J^{-\frac{5}{2}}}D^0_{lh}(\bar{w}, \p_\al^4\bw).
\end{align*}
We write $\mathfrak{a}^0_{lh}(\eta, \zeta)$ for the symbol of $A^0_{lh}(\bar{r}, T_{1-Y}\bw)$, and similarly for other low-high bilinear forms.
To match the low-high part of paradifferential source terms in $(\nP\mathcal{G}_0, \nP\mathcal{K}_0)$, the paradifferential symbols must satisfy the following algebraic system:
\begin{equation*}
\begin{cases}
(\zeta-\eta)\mathfrak{a}^0_{lh}+\eta\mathfrak{b}^0_{lh}+\zeta^4\mathfrak{c}^0_{lh}=-\eta\chi_{1}(\eta, \zeta),\\
\zeta\mathfrak{b}^0_{lh}+\eta^4\mathfrak{c}^0_{lh}-(\zeta-\eta)\mathfrak{d}^0_{lh}=-\eta\chi_{1}(\eta, \zeta),\\
\zeta\mathfrak{a}^0_{lh}+(\zeta-\eta)^4\mathfrak{c}^0_{lh}-\eta\mathfrak{d}^0_{lh}=\eta\chi_{1}(\eta, \zeta),\\
\eta^4 \mathfrak{a}^0_{lh}+(\zeta-\eta)^4\mathfrak{b}^0_{lh}-\zeta^4\mathfrak{d}^0_{lh}=\left(\f32\eta^4-\eta^3\zeta-\eta^2\zeta^2+\f32\eta\zeta^3\right)\chi_{1}(\eta, \zeta),
\end{cases}
\end{equation*}
where  the symbol $\chi_{1}(\eta, \zeta)$ is defined in \eqref{ChiOnelh} to select the low-high frequencies.
The expressions for low-high paradifferential symbols are given by
\begin{align*}
\mathfrak{a}^0_{lh}=&\f{5(2\eta^6-5\eta^5\zeta+12\eta^4\zeta-13\eta^3\zeta^3+12\eta^2\zeta^4-5\eta\zeta^5+2\zeta^6)}{2(4\eta^6-12\eta^5\zeta+37\eta^4\zeta-54\eta^3\zeta^3+75\eta^2\zeta^4-50\eta\zeta^5+25\zeta^6)}\chi_{1}(\eta, \zeta),\\
\mathfrak{b}^0_{lh}=&\f{2\eta^6-11\eta^5\zeta+11\eta^4\zeta-17\eta^3\zeta^3-25\eta^2\zeta^4+25\eta\zeta^5-25\zeta^6}{2(4\eta^6-12\eta^5\zeta+37\eta^4\zeta-54\eta^3\zeta^3+75\eta^2\zeta^4-50\eta\zeta^5+25\zeta^6)}\chi_{1}(\eta, \zeta),\\
\mathfrak{c}^0_{lh}=&-\f{5\zeta(\eta^2-\eta\zeta+\zeta^2)}{4\eta^6-12\eta^5\zeta+37\eta^4\zeta-54\eta^3\zeta^3+75\eta^2\zeta^4-50\eta\zeta^5+25\zeta^6}\chi_{1}(\eta, \zeta),\\
\mathfrak{d}^0_{lh}=&-\f{8\eta^6-24\eta^5\zeta+49\eta^4\zeta-58\eta^3\zeta^3+75\eta^2\zeta^4-50\eta\zeta^5+25\zeta^6}{2(4\eta^6-12\eta^5\zeta+37\eta^4\zeta-54\eta^3\zeta^3+75\eta^2\zeta^4-50\eta\zeta^5+25\zeta^6)}\chi_{1}(\eta, \zeta).
\end{align*}
The leading terms of these symbols are
\begin{equation*}
\mathfrak{a}^0_{lh} \approx\f15\chi_{1}(\eta, \zeta), \quad \mathfrak{b}^0_{lh} \approx -\f12\chi_{1}(\eta, \zeta),  \quad \mathfrak{c}^0_{lh} \approx -\f15\zeta^{-3}\chi_{1}(\eta, \zeta), \quad \mathfrak{d}^0_{lh} \approx -\f12\chi_{1}(\eta, \zeta).   
\end{equation*}
Consequently, we obtain
\begin{align*}
    &w^0_{lh} = -\f12T_{\bar{w}}T_{1-Y}\bw +\f{i}{5} T_{\bar{r}}  T_{J^{\frac{1}{2}}(1+\bw)^2} \p_\al^{-3} R + \text{lower order terms},\\
    &r^0_{lh} = \f15 T_{\bar{r}}T_{1-Y}\bw -\f12T_{\bar{w}_{\al}}  T_{(1-\bar{Y})(1+\bw)}  \p^{-1}_{\al}R + \text{lower order terms},
\end{align*}
which satisfy the estimate
\begin{equation*}
    \|(w^0_{lh}, r^0_{lh})\|_{\H^0} \lesssim \mathcal{A}_{\f32}\|(w,r)\|_{\H^0}.
\end{equation*}

\subsubsection{Balanced quadratic normal form analysis for $(\nP\mathcal{G}_0, \nP\mathcal{K}_0)$} 
Next, we compute balanced quadratic normal form variables $(w^0_{bal}, r^0_{bal})$ to eliminate the balanced portion of the $(\nP\mathcal{G}_0, \nP\mathcal{K}_0)$.
In other words, we seek $(w^0_{bal}, r^0_{bal})$ such that
\begin{align*}
 &\p_tw^0_{bal}+T_{1-\bar{Y}}\p_\al r^0_{bal} + \text{cubic and higher terms}\\
 =&  T_{J^{-1}} \nP \Pi(\bar{r}_\alpha, \bw) -  T_{(1-\bar{Y})^2(1+\bw)} \nP \Pi(\bar{w}_\alpha, R)  + G,\\
&\p_tr^0_{bal}-iT_{J^{-\frac{3}{2}}(1-Y)}\p_\al^4w^0_{bal} + \text{cubic and higher terms}\\
=&  T_{1-\bar{Y}}\nP\Pi(\bar{r}_\alpha, R)-\f32 iT_{J^{-\f52}} \nP \Pi(\p^4_{\al}\bar{w},\bw) - iT_{J^{-\f52}} \nP \Pi(\p^3_{\al}\bar{w}, \bw_\al) \\
  &+ iT_{J^{-\f52}}\nP \Pi(\bar{w}_{\al\al},\bw_{\al\al})+ \f32 i T_{J^{-\f52}}\nP \Pi(\bar{w}_{\al},\p^3_{\al}\bw)+K.   
\end{align*}
We consider balanced normal form transformations as the sum of balanced paradifferential bilinear forms of the following type:
\begin{equation*} %\label{wrbalZero}
\begin{aligned}
w^0_{bal} &= B^0_{bal}\left(\bar{w}, T_{1-\bar{Y}}\bw \right) + C^0_{bal}\left(\bar{r}, T_{J^{\frac{1}{2}}(1+\bw)^2}R \right),\\
r^0_{bal} &= A^0_{bal} \left(\bar{r}, T_{1-Y}\bw \right) + D^0_{bal}\left(\bar{w}, T_{(1-\bar{Y})(1+\bw)}R\right).
\end{aligned}
\end{equation*}
For these bilinear forms, we compute
\begin{align*}
&\partial_t w^0_{bal}+T_{1-\bar{Y}} \partial_\alpha r^0_{bal} + \text{cubic and higher terms} \\
=& T_{J^{-1}}\partial_\alpha A^0_{bal}(\bar{r}, \bw) -T_{J^{-1}}B^0_{bal}(\bar{r}_\alpha, \bw) +  T_{J^{-1}}C^0_{bal}(\bar{r}, i\p_\al^4\bw) \\
 &-T_{(1-\bar{Y})^2(1+\bw)}B^0_{bal}(\bar{w}, R_\alpha)- T_{(1-\bar{Y})^2(1+\bw)}C^0_{bal}(i\p_\al^4\bar{w}, R) + T_{(1-\bar{Y})^2(1+\bw)}\partial_\alpha D^0_{bal}(\bar{w}, R),\\
&\partial_t r^0_{bal}  -iT_{J^{-\frac{3}{2}}(1-Y)}\p_\al^4 w^0_{bal}+ \text{cubic and higher  terms} \\
=& -T_{1-\bar{Y}}A^0_{bal}(\bar{r}, R_\alpha) - iT_{1-\bar{Y}}\partial_\alpha^4 C^0_{bal}(\bar{r}, R) - T_{1-\bar{Y}}D^0_{bal}(\bar{r}_\alpha, R)\\
&- iT_{J^{-\frac{5}{2}}}A^0_{bal}(\p_\al^4\bar{w}, \bw) -iT_{J^{-\frac{5}{2}}}\partial_\alpha^4 B^0_{bal}(\bar{w}, \bw) +iT_{J^{-\frac{5}{2}}}D^0_{bal}(\bar{w}, \p_\al^4\bw).
\end{align*}
We write $\mathfrak{a}^0_{bal}(\eta, \zeta)$ for the symbol of $A^0_{bal}(\bar{r}, T_{1-Y}\bw)$, and similarly for other balanced bilinear forms.
To match the balanced part of the paradifferential source terms in $(\nP\mathcal{G}_0, \nP\mathcal{K}_0)$, the paradifferential symbols must solve the following algebraic system: 
\begin{equation*}
\begin{cases}
(\zeta-\eta)\mathfrak{a}^{0}_{bal}+\eta\mathfrak{b}^{0}_{bal}+\zeta^4\mathfrak{c}^{0}_{bal}=-\eta\chi_{2}(\eta,\zeta)1_{\zeta<\eta},\\
\zeta\mathfrak{b}^{0}_{bal}+\eta^4\mathfrak{c}^{0}_{bal}-(\zeta-\eta)\mathfrak{d}^{0}_{bal}=-\eta\chi_{2}(\eta,\zeta)1_{\zeta<\eta},\\
\zeta\mathfrak{a}^{0}_{bal}+(\zeta-\eta)^4\mathfrak{c}^{0}_{bal}-\eta\mathfrak{d}^{0}_{bal}=\eta\chi_{2}(\eta,\zeta)1_{\zeta<\eta},\\
\eta^4 \mathfrak{a}^{0}_{bal}+(\zeta-\eta)^4\mathfrak{b}^{0}_{bal}-\zeta^4\mathfrak{d}^{0}_{bal}=\left(\f32\eta^4-\eta^3\zeta-\eta^2\zeta^2+\f32\eta\zeta^3\right)\chi_{2}(\eta,\zeta)1_{\zeta<\eta},
\end{cases}
\end{equation*}
where  the symbol $\chi_{2}(\eta, \zeta)$ is defined in \eqref{ChiTwohh} to select the balanced frequencies, and the indicator function $1_{\zeta<\eta}$ represents the holomorphic projection.
The expressions for the balanced paradifferential symbols are:
\begin{align*}
\mathfrak{a}^0_{bal}=&\f{5(2\eta^6-5\eta^5\zeta+12\eta^4\zeta^2-13\eta^3\zeta^3+12\eta^2\zeta^4-5\eta\zeta^5+2\zeta^6)}{2(4\eta^6-12\eta^5\zeta+37\eta^4\zeta-54\eta^3\zeta^3+75\eta^2\zeta^4-50\eta\zeta^5+25\zeta^6)}\chi_{2}(\eta, \zeta)1_{\zeta<\eta},\\
\mathfrak{b}^0_{bal}=&\f{2\eta^6-11\eta^5\zeta+11\eta^4\zeta^2-17\eta^3\zeta^3-25\eta^2\zeta^4+25\eta\zeta^5-25\zeta^6}{2(4\eta^6-12\eta^5\zeta+37\eta^4\zeta-54\eta^3\zeta^3+75\eta^2\zeta^4-50\eta\zeta^5+25\zeta^6)}\chi_{2}(\eta, \zeta)1_{\zeta<\eta},\\
\mathfrak{c}^0_{bal}=&-\f{5\zeta(\eta^2-\eta\zeta+\zeta^2)}{4\eta^6-12\eta^5\zeta+37\eta^4\zeta-54\eta^3\zeta^3+75\eta^2\zeta^4-50\eta\zeta^5+25\zeta^6}\chi_{2}(\eta, \zeta)1_{\zeta<\eta},\\
\mathfrak{d}^0_{bal}=&-\f{8\eta^6-24\eta^5\zeta+49\eta^4\zeta^2-58\eta^3\zeta^3+75\eta^2\zeta^4-50\eta\zeta^5+25\zeta^6}{2(4\eta^6-12\eta^5\zeta+37\eta^4\zeta-54\eta^3\zeta^3+75\eta^2\zeta^4-50\eta\zeta^5+25\zeta^6)}\chi_{2}(\eta, \zeta)1_{\zeta<\eta}.
\end{align*}
The leading terms of these symbols are
\begin{align*}
&\mathfrak{a}^0_{bal} \approx \f15\chi_{2}(\eta, \zeta)1_{\zeta<\eta}, \quad \mathfrak{b}^0_{bal} \approx -\f45\eta\zeta^{-1}\chi_{2}(\eta, \zeta)1_{\zeta<\eta},\\ &\mathfrak{c}^0_{bal} \approx -\f15\zeta^{-3}\chi_{2}(\eta, \zeta)1_{\zeta<\eta}, \quad \mathfrak{d}^0_{bal} \approx -\f12\eta\zeta^{-1}\chi_{2}(\eta, \zeta)1_{\zeta<\eta}.   
\end{align*}
Hence, we obtain the estimate
\begin{equation*}
    \|(w^0_{bal}, r^0_{bal})\|_{\H^0} \lesssim \mathcal{A}_{0}\|(w,r)\|_{\H^0}.
\end{equation*}
Moreover, the cubic and higher-order terms produced by the balanced normal form variables $(w^0{bal}, r^0_{bal})$ are perturbative.

\subsubsection{Cubic normal form analysis for $(\nP\mathcal{G}_0, \nP\mathcal{K}_0)$}
Finally, we construct cubic normal form variables $(w^0_c, r^0_c)$ so that $(w{NF}^0, r{NF}^0)$ satisfy \eqref{wZeroNFPGKZero}. Recall that the quadratic normal form variables $(w^0_{lh}+w^0_{bal}, r^0_{lh}+ r^0_{bal})$ satisfy the system
\begin{equation*} 
\begin{cases}
T_{D_t}(w^0_{lh}+w^0_{bal})+T_{1-\bar{Y}}\p_\al (r^0_{lh}+r^0_{bal})+T_{T_{1-\bar{Y}}R_{\al}}(w^0_{lh}+w^0_{bal})= -\nP \mathcal{G}_0 + \mathcal{G}_0^{[3]} + G,\\
T_{D_t}(r^0_{lh}+r^0_{bal})-i\mathcal{L}_{para}(w^0_{lh}+w^0_{bal})=- \nP \mathcal{K}_0 + \mathcal{K}_0^{[3]} +K,
\end{cases}
\end{equation*}
where the non-perturbative cubic terms $(\mathcal{G}0^{[3]}, \mathcal{K}0^{[3]})$ are given by
\begin{align*}
 &\mathcal{G}_0^{[3]} = \f12 T_{\bar{w}}T_{\bw_\al}T_{J^{-1}}R - \f12 T_{\bar{w}}T_{\bar{\bw}_\al}T_{(1-\bar{Y})^2}R + \f12 T_{\bar{w}} T_R T_{J^{-1}}\bw_\al + \f12 T_{\bar{w}} T_{\bar{R}} T_{(1-Y)^2}\bw_\al,\\
  &\mathcal{K}_0^{[3]} = -\f52i T_{\bar{w}} T_{\bw_\al} T_{J^{-\f32}(1-Y)^3} \p_\al^3 \bw -\f12i T_{\bar{w}} T_{\bar{\bw}_\al} T_{J^{-\f52}(1-Y)}  \p_\al^3 \bw.
\end{align*}
To eliminate non-resonant terms in $(\mathcal{G}_0^{[3]}, \mathcal{K}_0^{[3]})$
\begin{equation*}
\Big(-\f12 T_{\bar{w}}T_{\bar{\bw}_\al}T_{(1-\bar{Y})^2}R + \f12 T_{\bar{w}} T_{\bar{R}} T_{(1-Y)^2}\bw_\al, -\f12i T_{\bar{w}} T_{\bar{\bw}_\al} T_{J^{-\f52}(1-Y)}  \p_\al^3 \bw \Big), 
\end{equation*} 
we consider the system of equations for auxiliary normal form transformations $(\bw^{0,1}_c, R^{0,1}_c)$:
\begin{equation*}  
\begin{cases}
\p_t \bw^{0,1}_c+T_{1-\bar{Y}}\p_\al R^{0,1}_c=\f12 T_{\bar{\bw}_\al}T_{(1-\bar{Y})^2}R -\f12  T_{\bar{R}} T_{(1-Y)^2}\bw_\al + \text{cubic and higher terms},\\
\p_t R^{0,1}_c-iT_{J^{-\f32}(1-Y)}\p_\al^4\bw^{0,1}_c= \f12iT_{\bar{\bw}_\al} T_{J^{-\f52}(1-Y)}  \p_\al^3 \bw  + \text{cubic and higher terms}.
\end{cases}
\end{equation*}
 Auxiliary normal form transformations $(\bw^{0,1}_c, R^{0,1}_c)$ can be chosen as the sum of low-high paradifferential bilinear forms of the following type:
\begin{align*}
\bw^{0,1}_{c} =& B^{0,1}_{c}\left(\bar{\bw}, T_{J^{-1}}\bw \right) + C^{0,1}_{c}\left(\bar{R}, T_{J^{\f32}} R\right),\\
R^{0,1}_{c} =& A^{0,1}_{c} \left(\bar{R}, T_{(1-Y)^2(1+\bar{\bw})}\bw \right) + D^{0,1}_{c}\left(\bar{\bw}, T_{(1-\bar{Y})}R\right).    
\end{align*} 
A direct computation for $(\bw^{0,1}_c, R^{0,1}_c)$ yields
\begin{align*}
 &\p_t \bw^{0,1}_c+T_{1-\bar{Y}}\p_\al R^{0,1}_c  + \text{cubic and higher terms} \\
=& T_{(1-Y)^2}\partial_\alpha A^{0,1}_c(\bar{R}, \bw) -T_{(1-Y)^2}B^{0,1}_{c}(\bar{R}_\alpha, \bw) +  T_{(1-Y)^2}C^{0,1}_{c}(\bar{R}, i\p_\al^4\bw) \\
 &-T_{(1-\bar{Y})^2}B^{0,1}_{c}(\bar{\bw}, R_\alpha)-T_{(1-\bar{Y})^2}C^{0,1}_{c}(i\p_\al^4\bar{\bw}, R) + T_{(1-\bar{Y})^2}\partial_\alpha D^{0,1}_{c}(\bar{\bw}, R),\\
&\p_t R^0_c-iT_{J^{-\f32}(1-Y)}\p_\al^4\bw^0_c+ \text{cubic and higher  terms} \\
=& -T_{1-Y}A^0_{c}(\bar{R}, R_\alpha) - iT_{1-Y}\partial_\alpha^4 C^0_{c}(\bar{R}, R) - T_{1-Y}D^0_{c}(\bar{R}_\alpha, R)\\
&- iT_{J^{-\f52}(1-Y)}A^0_{c}(\p_\al^4\bar{\bw}, \bw) -iT_{J^{-\f52}(1-Y)}\partial_\alpha^4 B^0_{c}(\bar{\bw}, \bw) +iT_{J^{-\f52}(1-Y)}D^0_{c}(\bar{\bw}, \p_\al^4\bw).
\end{align*}
Let $\mathfrak{a}^{0,1}_{c}(\eta, \zeta)$ denote the symbol of $A^{0,1}_{c}(\bar{R}, T_{(1-Y)^2(1+\bar{\bw})}\bw)$, and similarly for other low-high bilinear forms. To match the low-high part of the paradifferential source terms, the paradifferential symbols must satisfy the following algebraic system:
\begin{equation*}
\begin{cases}
(\zeta - \eta)\mathfrak{a}^{0,1}_{c}+\eta\mathfrak{b}^{0,1}_{c}+\zeta^4\mathfrak{c}^{0,1}_{c}= -\f12 \zeta\chi_1(\eta, \zeta),\\
\zeta\mathfrak{b}^{0,1}_{c}+\eta^4\mathfrak{c}^{0,1}_{c}-(\zeta-\eta)\mathfrak{d}^{0,1}_{c}=\f12 \eta\chi_{1}(\eta,\zeta),\\
\zeta\mathfrak{a}^0_{c}+(\zeta-\eta)^4\mathfrak{c}^0_{c}-\eta\mathfrak{d}^0_{c}=0,\\
\eta^4 \mathfrak{a}^0_{c}+(\zeta-\eta)^4\mathfrak{b}^0_{c}-\zeta^4\mathfrak{d}^0_{c}=\f12\eta\zeta^3\chi_{1}(\xi,\eta),
\end{cases}
\end{equation*}
where  the symbol $\chi_{1}(\xi, \eta)$ is defined in \eqref{ChiOnelh} to select the low-high frequencies.
The solution to this system is
\begin{align*}
\mathfrak{a}^{0,1}_{c}=&\f{\zeta(-2\eta^7+11\eta^6\zeta-33\eta^5\zeta^2+58\eta^4\zeta^3-71\eta^3\zeta^4+52\eta^2\zeta^5-25\eta \zeta^6+5\zeta^7)}{2\eta(\zeta-\eta)(4\eta^6-12\eta^5\zeta+37\eta^4\zeta^2-54\eta^3\eta^3+75\eta^2\zeta^4-50\eta\zeta^5+25\zeta^6)}\chi_{1}(\eta,\zeta),\\
\mathfrak{b}^{0,1}_{c}=&\f{\eta(\eta^2-\eta\zeta+\zeta^2)(2\eta^3\zeta-\eta^2\zeta^2+2\eta\zeta^3+5\zeta^4)}{2(\zeta-\eta)(4\eta^6-12\eta^5\zeta+37\eta^4\zeta^2-54\eta^3\eta^3+75\eta^2\zeta^4-50\eta\zeta^5+25\zeta^6)}\chi_{1}(\eta,\zeta),\\
\mathfrak{c}^{0,1}_{c}=&\f{-2\eta^5+3\eta^4\zeta-7\eta^3\zeta^2-2\eta^2\zeta^3+5\eta\zeta^4+5\zeta^5}{2\eta(\zeta-\eta)(4\eta^6-12\eta^5\zeta+37\eta^4\zeta^2-54\eta^3\eta^3+75\eta^2\zeta^4-50\eta\zeta^5+25\zeta^6)}\chi_{1}(\eta,\zeta),\\
\mathfrak{d}^{0,1}_{c}=&\f{-2\eta^7+11\eta^6\zeta-33\eta^5\zeta^2+63\eta^4\zeta^3-76\eta^3\zeta^4+52\eta^2\zeta^5-20\eta \zeta^6}{2(\zeta-\eta)(4\eta^6-12\eta^5\zeta+37\eta^4\zeta^2-54\eta^3\eta^3+75\eta^2\zeta^4-50\eta\zeta^5+25\zeta^6)}\chi_{1}(\eta,\zeta).
\end{align*}
Therefore, we obtain
\begin{align*}
&w^{0,1}_c = \f1{10}T_{\bar{\bw}_{\al}}T_{J^{-1}}\p^{-1}_{\al}\bw+\f{i}{10} T_{\p^{-1}_{\al}\bar{R}}  T_{J^{\frac{3}{2}}}  \p_\al^{-2}R + \text{lower order terms},\\
&r^{0,1}_c = -\f1{10} T_{\p^{-1}_{\al}\bar{R}}T_{(1-Y)^2(1+\bar{\bw})}\bw_{\al}+\f25 T_{\bar{\bw}_{\al}}T_{1-\bar{Y}}  \p^{-1}_{\al}R + \text{lower order terms}.
\end{align*}
We set $(w^{0,1}_c, r^{0,1}_c) = \big(T_{\bar{w}}\bw^0_c, T_{\bar{w}}R^0_c \big)$. Then
\begin{equation*}
\|(w^{0,1}_c, r^{0,1}_c)\|_{\H^0}  \lesssim\mathcal{A}_0 \mathcal{A}_\f32\|(w,r)\|^2_{\H^0},
\end{equation*}
and this pair satisfies the system 
\begin{align*} 
&T_{D_t}w^{0,1}_c+T_{1-\bar{Y}}\p_\al r^{0,1}_c+T_{T_{1-\bar{Y}}R_{\al}}w^{0,1}_c\\
=& \f12 T_{\bar{w}}T_{\bar{\bw}_\al}T_{(1-\bar{Y})^2}R -\f12  T_{\bar{w}}T_{\bar{R}} T_{(1-Y)^2}\bw_\al+\f1{10}\bar{w}_tT_{\bar{\bw}_{\al}}T_{\bar{\bw}_{\al}}T_{J^{-1}}\p^{-1}_{\al}\bw\\
&+\f{i}{10} T_{\p^{-1}_{\al}\bar{R}}  T_{J^{\frac{3}{2}}}  \p_\al^{-2}R + G + \text{quartic and higher terms}\\
=& \f12 T_{\bar{w}}T_{\bar{\bw}_\al}T_{(1-\bar{Y})^2}R -\f12  T_{\bar{w}}T_{\bar{R}} T_{(1-Y)^2}\bw_\al + G,\\
&T_{D_t}r^{0,1}_c-i\mathcal{L}_{para}w^{0,1}_c \\
=&\f12iT_{\bar{w}}T_{\bar{\bw}_\al} T_{J^{-\f52}(1-Y)}  \p_\al^3 \bw-\f1{10} T_{\bar{w}_t}T_{\p^{-1}_{\al}\bar{R}}T_{(1-Y)^2(1+\bar{\bw})}\bw_{\al}\\
&+\f25 T_{\bar{w}_t}T_{\bar{\bw}_{\al}}T_{1-\bar{Y}}  \p^{-1}_{\al}R+K+ \text{quartic and higher terms}\\
=& \f12iT_{\bar{w}}T_{\bar{\bw}_\al} T_{J^{-\f52}(1-Y)}  \p_\al^3 \bw  + \f12  T_wT_{\bar{R}} T_{1-Y}R_\al + K.
\end{align*}

The remaining terms in $(\mathcal{G}_0^{[3]}, \mathcal{K}_0^{[3]})$ may have resonances. 
To eliminate these terms, as above, we consider the system  of equations for auxiliary normal form transformations $(\bw^{0,2}_c, R^{0,2}_c)$,
\begin{equation*}  
\begin{cases}
\p_t \bw^{0,2}_c+T_{1-\bar{Y}}\p_\al R^{0,2}_c= -\f12 T_{\bw_\al}T_{J^{-1}}R -\f12  T_R T_{J^{-1}}\bw_\al  + \text{cubic and higher terms},\\
\p_t R^{0,2}_c-iT_{J^{-\f32}(1-Y)}\p_\al^4\bw^{0,2}_c= \f52i  T_{\bw_\al} T_{J^{-\f32}(1-Y)^2} \p_\al^3 \bw  + \text{cubic and higher terms}.
\end{cases}
\end{equation*}
We consider auxiliary normal form transformations as the sum of low-high paradifferential bilinear forms of the following type:
\begin{equation*} 
\bw^{0,2}_{c} = B^{0,2}_{c}\left(\bw, T_{(1-Y)^2}\bw \right) + C^{0,2}_{c}\left(R, T_{J^\f12(1+\bw)^2}R \right),\quad R^{0,2}_{c} = A^{0,2}_{c} \left(R, T_{1-Y}\bw \right) + D^{0,2}_{c}\left(\bw, T_{1-Y}R\right).
\end{equation*}
Then, we find $(\bw^{0,2}_{c}, R^{0,2}_{c})$ such that
\begin{align*}
 &\p_t \bw^{0,2}_c+T_{1-\bar{Y}}\p_\al R^{0,2}_c  + \text{cubic and higher terms} \\
=& T_{J^{-1}}\partial_\alpha A^{0,2}_c(R, \bw) -T_{J^{-1}}B^{0,2}_{c}(R_\alpha, \bw) +  T_{J^{-1}}C^{0,2}_{c}(R, i\p_\al^4\bw) \\
&-T_{J^{-1}}B^{0,2}_{c}(\bw, R_\alpha)+ T_{J^{-1}}C^{0,2}_{c}(i\p_\al^4\bw, R) + T_{J^{-1}}\partial_\alpha D^{0,2}_{c}(\bw, R),\\
&\p_t R^{0,2}_c-iT_{J^{-\f32}(1-Y)}\p_\al^4\bw^{0,2}_c+ \text{cubic and higher  terms} \\
=& -T_{1-\bar{Y}}A^{0,2}_{c}(R, R_\alpha) - iT_{1-\bar{Y}}\partial_\alpha^4 C^{0,2}_{c}(R, R) - T_{1-\bar{Y}}D^{0,2}_{c}(R_\alpha, R)\\
&+ iT_{J^{-\f32}(1-Y)^2}A^{0,2}_{c}(\p_\al^4\bw, \bw) -iT_{J^{-\f32}(1-Y)^2}\partial_\alpha^4 B^{0,2}_{c}(\bw, \bw) +iT_{J^{-\f32}(1-Y)^2}D^{0,2}_{c}(\bw, \p_\al^4\bw).
\end{align*}
Here, we write $\mathfrak{a}^{0,2}_{c}(\xi, \eta)$ for the symbol of $A^{0,2}_{c}(R, T_{1-Y}\bw)$, and similarly for other low-high bilinear forms.
To match the low-high part of paradifferential source terms, paradifferential symbols solve the following algebraic system: 
\begin{equation*}
\begin{cases}
(\xi + \eta)\mathfrak{a}^{0,2}_{c}-\xi\mathfrak{b}^{0,2}_{c}+\eta^4\mathfrak{c}^{0,2}_{c}= -\f12 \eta\chi_1(\xi, \eta),\\
\eta\mathfrak{b}^{0,2}_{c}-\xi^4\mathfrak{c}^{0,2}_{c}-(\xi+\eta)\mathfrak{d}^{0,2}_{c}=\f12 \xi\chi_{1}(\xi,\eta),\\
\eta\mathfrak{a}^{0,2}_{c}+(\xi+\eta)^4\mathfrak{c}^{0,2}_{c}+\xi\mathfrak{d}^{0,2}_{c}=0,\\
\xi^4 \mathfrak{a}^{0,2}_{c}-(\xi+\eta)^4\mathfrak{b}^{0,2}_{c}+\eta^4\mathfrak{d}^{0,2}_{c}=\f52\xi\eta^3\chi_{1}(\xi,\eta),
\end{cases}
\end{equation*}
where  the symbol $\chi_{1}(\xi, \eta)$ is defined in \eqref{ChiOnelh} to select the low-high frequencies.
The solution of above system is
\begin{align*}
\mathfrak{a}^{0,2}_{c}=&\f{-5\xi^7-30\xi^6\eta-107\xi^5\eta^2-183\xi^4\eta^3-187\xi^3\eta^4-133\xi^2\eta^5-42\xi \eta^6-5\eta^7}{2\xi(25\xi^6+100\xi^5\eta+200\xi^4\eta^2+246\xi^3\eta^3+200\xi^2\eta^4+100\xi\eta^5+25\eta^6)}\chi_{1}(\xi,\eta),\\
\mathfrak{b}^{0,2}_{c}=&\f{-5\xi^6-10\xi^5\eta -37\xi^4\eta^2-85\xi^3\eta^3-112\xi^2\eta^4-85\xi\eta^5-30\eta^6}
{2(25\xi^6+100\xi^5\eta+200\xi^4\eta^2+246\xi^3\eta^3+200\xi^2\eta^4+100\xi\eta^5+25\eta^6)}\chi_{1}(\xi,\eta),\\
\mathfrak{c}^{0,2}_{c}=&\f{5\xi^5+12\xi^4\eta+15\xi^3\eta^2+25\xi^2\eta^3+22\xi\eta^4+5\eta^5}
{2\xi\eta(25\xi^6+100\xi^5\eta+200\xi^4\eta^2+246\xi^3\eta^3+200\xi^2\eta^4+100\xi\eta^5+25\eta^6)}\chi_{1}(\xi,\eta),\\
\mathfrak{d}^{0,2}_{c}=&-\f{5\xi^7+32\xi^6\eta + 88\xi^5\eta^2+147\xi^4\eta^3+158\xi^3\eta^4+132\xi^2\eta^5+80\xi\eta^6+30\eta^7}
{2\eta(25\xi^6+100\xi^5\eta+200\xi^4\eta^2+246\xi^3\eta^3+200\xi^2\eta^4+100\xi\eta^5+25\eta^6)}\chi_{1}(\xi,\eta).
\end{align*}
Therefore, we obtain
\begin{align*}
    &w^{0,2}_c = -\f35 T_{\bw}T_{(1-Y)^2}\bw - \f{i}{10} T_{\p_\al^{-1}R}  T_{J^{\frac{1}{2}}(1+\bw)^2}  \p_\al^{-2}R + \text{lower order terms},\\
    &r^{0,2}_c = -\f{1}{10} T_{\p_\al^{-1}R}T_{1-Y}\p_\al\bw-\f35 T_{\bw}T_{1-Y}  R + \text{lower order terms}.
\end{align*}
Choose $(w^{0,2}_c, r^{0,2}_c) = \big(T_{\bar{w}}\bw^0_c, T_{\bar{w}}T_{1-Y}R^0_c \big)$. It then follows that
\begin{equation*}
\|(w^{0,2}_c, r^{0,2}_c)\|_{\H^0}  \lesssim\mathcal{A}_0 \mathcal{A}_\f32\|(w,r)\|^2_{\H^0}.
\end{equation*}
This pair satisfies 
\begin{align*} 
&T_{D_t}w^{0,2}_c+T_{1-\bar{Y}}\p_\al r^{0,2}_c+T_{T_{1-\bar{Y}}R_{\al}}w^{0,2}_c\\
=& -\f12 T_{\bar{w}}T_{\bw_\al}T_{J^{-1}}R -\f12  T_{\bar{w}}T_R T_{J^{-1}}\bw_\al -\f35 T_{\bar{w}_t}T_{\bw}T_{(1-Y)^2}\bw \\
&- \f{i}{10} T_{\bar{w}_t}T_{\p_\al^{-1}R}  T_{J^{\frac{1}{2}}(1+\bw)^2}  \p_\al^{-2}R  + \text{quartic and higher terms} + G\\
=& -\f12 T_{\bar{w}}T_{\bw_\al}T_{J^{-1}}R -\f12  T_{\bar{w}}T_R T_{J^{-1}}\bw_\al + G,\\
&T_{D_t}r^{0,2}_c-i\mathcal{L}_{para}w^{0,2}_c  \\
=& \f52i T_{\bar{w}} T_{\bw_\al} T_{J^{-\f32}(1-Y)^3} \p_\al^3 \bw  -\f{1}{10} T_{\bar{w}_t}T_{\p_\al^{-1}R}T_{(1-Y)^2}\p_\al\bw-\f35 T_{\bar{w}_t}T_{\bw}T_{(1-Y)^2}  R \\
&+\text{quartic and higher terms} + K\\
=& \f52i T_{\bar{w}} T_{\bw_\al} T_{J^{-\f32}(1-Y)^3} \p_\al^3 \bw + K.
\end{align*}
As a consequence of above computations, the cubic normal form variables $(w^0_c, r^0_c): = (w^{0,1}_c, r^{0,1}_c) + (w^{0,2}_c, r^{0,2}_c)$ solve the system 
\begin{equation*} 
\begin{cases}
T_{D_t}w^0_c+T_{1-\bar{Y}}\p_\al r^0_c+T_{T_{1-\bar{Y}}R_{\al}}w^0_c= - \mathcal{G}_0^{[3]} + G,\\
T_{D_t}r^0_c-i\mathcal{L}_{para}w^0_c=- \mathcal{K}_0^{[3]} +K.
\end{cases}
\end{equation*}

To conclude this subsection, by choosing normal form variables $(w_{NF}^0, r_{NF}^0)$ as in \eqref{wrNFZeroDef}, they satisfy \eqref{wrNFHZeroBound} and solve the system \eqref{wZeroNFPGKZero}.

\subsection{Normal form analysis for $(\nP\mathcal{G}_1, \nP\mathcal{K}_1)$} \label{s:NormalOne}
In this section, we construct normal form variables $(w_{NF}^1, r_{NF}^1)$ satisfying
\begin{equation} \label{wrNFHOneBound}
    \|(w_{NF}^1, r_{NF}^1)\|_{\H^0} \lesssim_\CalAZ \mathcal{A}_{\f32} \|(w,r)\|_{\H^0},
\end{equation}
such that they solve the system
\begin{equation}  \label{wOneNFPGKOne}
\begin{cases}
T_{D_t}w_{NF}^1+T_{1-\bar{Y}}\p_\al r_{NF}^1+T_{T_{1-\bar{Y}}R_{\al}}w_{NF}^1= -\nP \mathcal{G}_1 + G,\\
T_{D_t}r_{NF}^1-i\mathcal{L}_{para}w_{NF}^1=- \nP \mathcal{K}_1 +K,
\end{cases}
\end{equation}
where $(G,K)$ are perturbative source terms that satisfy \eqref{HalfGK}.

Recall that source terms $(\nP\mathcal{G}_1, \nP\mathcal{K}_1)$ are rewritten as the sum of low-high and balanced paraproducts in Lemma \ref{t:PGKOneExp}.
We will construct $(w_{NF}^1, r_{NF}^1)$ as the sum of low-high quadratic normal form variables, balanced quadratic normal form variables, and cubic normal form variables
\begin{equation} \label{wrNFOneDef}
 (w_{NF}^1, r_{NF}^1) := (w^1_{lh}, r^1_{lh}) + (w_{bal}^1, r_{bal}^1) + (w^1_c, r^1_{c}).   
\end{equation}
Normal form variables $(w^1_{lh}, r^1_{lh})$ and $(w_{bal}^1, r_{bal}^1)$ eliminate the low-high and balanced part of $(\nP\mathcal{G}_1, \nP\mathcal{K}_1)$, and $(w^1_c, r^1_{c})$ eliminates the extra non-perturbative cubic part of source terms produced by quadratic normal form variables.

\subsubsection{Low-high quadratic normal form analysis for $(\nP\mathcal{G}_1, \nP\mathcal{K}_1)$} 
We first construct the low‑high quadratic normal form variables $(w^1_{lh}, r^1_{lh})$ to eliminate the low-high portion of the $(\nP\mathcal{G}_1, \nP\mathcal{K}_1)$.
That is, we seek $(w^1_{lh}, r^1_{lh})$ satisfying
\begin{align*}
 &\p_tw^1_{lh}+T_{1-\bar{Y}}\p_\al r^1_{lh} + \text{cubic and higher terms}\\
 =&  -T_{1-\bar{Y}} T_w R_{\al}-T_{w_{\al}}T_{1-\bar{Y}}R + G,\\
&\p_tr^1_{lh}-iT_{J^{-\frac{3}{2}}(1-Y)}\p_\al^4w^1_{lh} + \text{cubic and higher terms}\\
=& -T_{1-\bar{Y}}T_{r_{\al}}R -iT_{(1-Y)^2J^{-\f32}}T_{w}\p^4_{\al}\mathbf{W} -\f52iT_{(1-Y)^2J^{-\f32}}T_{w_{\al}}\p^3_{\al}\mathbf{W} -5iT_{(1-Y)^2J^{-\f32}}T_{w_{\al\al}} \mathbf{W}_{\al\al} \\
  &-5iT_{(1-Y)^2J^{-\f32}}T_{\p_{\al}^3w}\mathbf{W}_{\al} -\f52iT_{(1-Y)^2J^{-\f32}}T_{\p_{\al}^4w} \bw+K.   
\end{align*}
We consider low-high normal form transformations as the sum of low-high paradifferential bilinear forms of the following type:
\begin{equation*} %\label{wrlh1}
\begin{aligned}
w^1_{lh} &= B^1_{lh}\left(w, T_{1-Y}\bw \right) + C^1_{lh}\left(r, T_{J^{\frac{1}{2}}(1+\bw)^2}R \right),\\
r^1_{lh} &= A^1_{lh} \left(r, T_{1-Y}\bw \right) + D^1_{lh}\left(w, R\right).
\end{aligned}
\end{equation*}
We then compute using the above bilinear forms,
\begin{align*}
&\partial_t w^1_{lh}+T_{1-\bar{Y}} \partial_\alpha r^1_{lh} + \text{cubic and higher terms} \\
=& T_{J^{-1}}\partial_\alpha A^1_{lh}(r, \bw) -T_{J^{-1}}B^1_{lh}(r_\alpha, \bw) +  T_{J^{-1}}C^1_{lh}(r, i\p_\al^4\bw) \\
 &-T_{1-\bar{Y}}B^1_{lh}(w, R_\alpha)+T_{1-\bar{Y}}C^1_{lh}(i\p_\al^4w, R) + T_{1-\bar{Y}}\partial_\alpha D^1_{lh}(w, R),\\
&\partial_t r^1_{lh}  -iT_{J^{-\frac{3}{2}}(1-Y)}\p_\al^4 w^1_{lh}+ \text{cubic and higher  terms} \\
=& -T_{1-\bar{Y}}A^1_{lh}(r, R_\alpha) - iT_{1-\bar{Y}}\partial_\alpha^4 C^1_{lh}(r, R) - T_{1-\bar{Y}}D^1_{lh}(r_\alpha, R)\\
&+ iT_{J^{-\f32}(1-Y)^2}A^1_{lh}(\p_\al^4w, \bw) -iT_{J^{-\f32}(1-Y)^2}\partial_\alpha^4 B^1_{lh}(w, \bw) +iT_{J^{-\f32}(1-Y)^2}D^1_{lh}(w, \p_\al^4\bw).
\end{align*}
We write $\mathfrak{a}^1_{lh}(\xi, \eta)$ for the symbol of $A^1_{lh}(r, T_{1-Y}\bw)$, and similarly for other low-high bilinear forms.
To match the low-high part of paradifferential source terms in $(\nP\mathcal{G}_1, \nP\mathcal{K}_1)$, paradifferential symbols solve the following algebraic system: 
\begin{equation*}
\begin{cases}
(\xi + \eta)\mathfrak{a}^1_{lh}-\xi\mathfrak{b}^1_{lh}+\eta^4\mathfrak{c}^1_{lh}=0,\\
\eta\mathfrak{b}^1_{lh}-\xi^4\mathfrak{c}^1_{lh}-(\xi+\eta)\mathfrak{d}^1_{lh}=(\xi+\eta)\chi_{1}(\xi,\eta),\\
\eta\mathfrak{a}^1_{lh}+(\xi+\eta)^4\mathfrak{c}^1_{lh}+\xi\mathfrak{d}^1_{lh}=\xi\chi_{1}(\xi,\eta),\\
\xi^4 \mathfrak{a}^1_{lh}-(\xi+\eta)^4\mathfrak{b}^1_{lh}+\eta^4\mathfrak{d}^1_{lh}=-\left(\eta^4+\f52\xi\eta^3+5\xi^2\eta^2+5\xi^3\eta+\xi^4\right)\chi_{1}(\xi,\eta),
\end{cases}
\end{equation*}
where  the symbol $\chi_{1}(\xi, \eta)$ is defined in \eqref{ChiOnelh} to select the low-high frequencies. The expressions for the low-high paradifferential symbols are provided below.
\begin{align*}
\mathfrak{a}^1_{lh}=&\f{10\xi^7+70\xi^6\eta+170\xi^5\eta^2+227\xi^4\eta^3+184\xi^3\eta^4+80\xi^2\eta^5+5\xi\eta^6+10\eta^7}{2\eta(25\xi^6+100\xi^5\eta+200\xi^4\eta^2+246\xi^3\eta^3+200\xi^2\eta^4+100\xi\eta^5+25\eta^6)}\chi_{1}(\xi,\eta),\\
\mathfrak{b}^1_{lh}=&\f{10\xi^7+80\xi^6\eta+240\xi^5\eta^2+397\xi^4\eta^3+427\xi^3\eta^4+300\xi^2\eta^5+125\xi\eta^6+25\eta^7}
{2\eta(25\xi^6+100\xi^5\eta+200\xi^4\eta^2+246\xi^3\eta^3+200\xi^2\eta^4+100\xi\eta^5+25\eta^6)}\chi_{1}(\xi,\eta),\\
\mathfrak{c}^1_{lh}=&\f{8\xi^4+18\xi^3\eta+20\xi^2\eta^2+15\xi\eta^3+5\eta^4}
{\eta(25\xi^6+100\xi^5\eta+200\xi^4\eta^2+246\xi^3\eta^3+200\xi^2\eta^4+100\xi\eta^5+25\eta^6)}\chi_{1}(\xi,\eta),\\
\mathfrak{d}^1_{lh}=&-\f{(8\xi^3+10\xi^2\eta+10\xi\eta^2+5\eta^3)(2\xi^4+5\xi^3\eta+10\xi^2\eta^2+10\xi\eta^3+5\eta^4)}
{2\eta(25\xi^6+100\xi^5\eta+200\xi^4\eta^2+246\xi^3\eta^3+200\xi^2\eta^4+100\xi\eta^5+25\eta^6)}\chi_{1}(\xi,\eta).
\end{align*}
The leading terms of these symbols are
\begin{equation*}
\mathfrak{a}^1_{lh} \approx\f15\chi_{1}(\xi, \eta), \quad \mathfrak{b}^1_{lh} \approx \f12\chi_{1}(\xi, \eta),  \quad \mathfrak{c}^1_{lh} \approx \f15\eta^{-3}\chi_{1}(\xi, \eta), \quad \mathfrak{d}^1_{lh} \approx -\f12\chi_{1}(\xi, \eta).   
\end{equation*}
Hence, we obtain
\begin{align*}
    &w^1_{lh} = \f12 T_{w}T_{1-Y}\bw - \f{i}{5} T_{r}  T_{J^{\frac{1}{2}}(1+\bw)^2} \p_\al^{-3} R + \text{lower order terms},\\
    &r^1_{lh} = \f15 T_{r}T_{1-Y}\bw-\f12 T_{w}  R + \text{lower order terms},
\end{align*}
so that they satisfy the estimate
\begin{equation*}
    \|(w^1_{lh}, r^1_{lh})\|_{\H^0} \lesssim \mathcal{A}_{\f32}\|(w,r)\|_{\H^0}.
\end{equation*}

\subsubsection{Balanced quadratic normal form analysis for $(\nP\mathcal{G}_1, \nP\mathcal{K}_1)$} 
Next, we compute balanced quadratic normal form variables $(w^1_{bal}, r^1_{bal})$ to eliminate the balanced portion of the $(\nP\mathcal{G}_1, \nP\mathcal{K}_1)$. That is, we seek $(w^1_{bal}, r^1_{bal})$ such that
\begin{align*}
 &\p_tw^1_{bal}+T_{1-\bar{Y}}\p_\al r^1_{bal} + \text{cubic and higher terms}\\
 =&  - T_{1-\bar{Y}}\p_\al\Pi(w_\al, R)- T_{1-Y}\nP\Pi(w_\al, \bar{R}) + T_{(1-\bar{Y})^2}\nP \Pi(r_\al, \bar{\bw}) +G,\\
&\p_tr^1_{bal}-iT_{J^{-\frac{3}{2}}(1-Y)}\p_\al^4w^1_{bal} + \text{cubic and higher terms}\\
=& -T_{1-\bar{Y}}\Pi(r_{\al},R) -\f52iT_{(1-Y)^2J^{-\f32}}\Pi(\p_{\al}^4w, \bw) -5iT_{(1-Y)^2J^{-\f32}}\Pi(\p^3_{\al}w,\mathbf{W}_{\al}) \\
&-5iT_{(1-Y)^2J^{-\f32}}\Pi(w_{\al\al},\mathbf{W}_{\al\al}) -\f52i T_{(1-Y)^2J^{-\f32}}\Pi(w_{\al},\p^3_{\al}\mathbf{W}) -iT_{(1-Y)^2J^{-\f32}}\Pi(w,\p^4_{\al}\mathbf{W})\\
&-T_{1-Y}\nP\Pi(r_{\al},\bar{R})-\f32i  T_{J^{-\f52}} \nP\Pi(\p^4_{\al}w,\bar{\bw})-iT_{J^{-\f52}}\nP\Pi(\p^3_{\al}w,\bar{\bw}_{\al})\\
&+iT_{J^{-\f52}}\nP\Pi(w_{\al\al},\bar{\mathbf{W}}_{\al\al}) +\f32iT_{J^{-\f52}}\nP\Pi(w_{\al},\p^3_{\al}\bar{\mathbf{W}}) + K.   
\end{align*}

We consider balanced normal form transformations as the sum of balanced paradifferential bilinear forms of the following type:
\begin{equation*} %\label{wrbal1}
\begin{aligned}
w^1_{bal} &=B^{1,h}_{bal}\left(T_{1-Y}\bw ,w\right) + C^{1,h}_{bal}\left(T_{J^{\frac{1}{2}}(1+\bw)^2}R ,r\right)+ B^{1,a}_{bal}\left(T_{1-\bar{Y}}\bar{\bw},w\right) + C^{1,a}_{bal}\left(T_{J^{\frac{3}{2}}}\bar{R} ,r\right),\\
r^1_{bal} &=  A^{1,h}_{bal} \left( T_{1-Y}\bw,r\right) + D^{1,h}_{bal}\left(R,w\right)+A^{1,a}_{bal} \left(T_{1-\bar{Y}}\bar{\bw},r\right) + D^{1,a}_{bal}\left( T_{(1-Y)(1+\bar{\bw})}\bar{R},w\right).
\end{aligned}
\end{equation*}
For above bilinear forms, we compute
\begin{align*}
&\partial_t w^1_{bal}+T_{1-\bar{Y}} \partial_\alpha r^1_{bal} + \text{cubic and higher terms} \\
=& T_{J^{-1}}\partial_\alpha A^{1,h}_{bal}(\bw,r) -T_{J^{-1}}B^{1,h}_{bal}(\bw,r_{\al}) +  T_{J^{-1}}C^{1,h}_{bal}(i\p^4_{\al}\bw,r) \\
&-T_{1-\bar{Y}}B^{1,h}_{bal}( R_\alpha,w)+T_{1-\bar{Y}}C^{1,h}_{bal}(R,i\p_\al^4w) + T_{1-\bar{Y}}\partial_\alpha D^{1,h}_{bal}(R,w)\\
&+T_{(1-\bar{Y})^2}\p_{\al}A^{1,a}_{bal}(\bar{\bw},r)-T_{(1-\bar{Y})^2}B^{1,a}_{bal}(\bar{\bw},r_{\al})-T_{(1-\bar{Y})^2}C^{1,a}_{bal}(i\p^4_{\al}\bar{\bw},r)\\
 &-T_{1-Y}B^{1,a}_{bal}(\bar{R}_\alpha,w)+T_{1-Y}C^{1,a}_{bal}(\bar{R},i\p_\al^4w) + T_{1-Y}\partial_\alpha D^{1,a}_{bal}(\bar{R},w),\\
&\partial_t r^1_{bal}  -iT_{J^{-\frac{3}{2}}(1-Y)}\p_\al^4 w^1_{bal}+ \text{cubic and higher  terms} \\
=& -T_{1-\bar{Y}}A^{1,h}_{bal}( R_\alpha,r) - iT_{1-\bar{Y}}\partial_\alpha^4 C^{1,h}_{bal}(R,r) - T_{1-\bar{Y}}D^{1,h}_{bal}(R,r_\alpha)\\
&+ iT_{J^{-\frac{3}{2}}(1-Y)^2}A^{1,h}_{bal}(\bw,\p_\al^4w) -iT_{J^{-\frac{3}{2}}(1-Y)^2}\partial_\alpha^4 B^{1,h}_{bal}(\bw,w) +iT_{J^{-\frac{3}{2}}(1-Y)^2}D^{1,h}_{bal}(\p_\al^4\bw,w)\\
 &-T_{1-Y}A^{1,a}_{bal}(\bar{R}_\alpha,r) - iT_{1-Y}\partial_\alpha^4 C^{1,a}_{bal}( \bar{R},r) - T_{1-Y}D^{1,a}_{bal}(\bar{R},r_\alpha)\\
&+ iT_{J^{-\f52}}A^{1,a}_{bal}(\bar{\bw},\p_\al^4w) -iT_{J^{-\f52}}\partial_\alpha^4 B^{1,a}_{bal}(\bar{\bw},w) -iT_{J^{-\f52}}D^{1,h}_{bal}(\p_\al^4\bar{\bw},w).
\end{align*}
Here, we write $\mathfrak{a}^{1,h}_{bal}(\xi, \eta)$ for the symbol of $A^{1,h}_{bal}(T_{1-Y}\bw,r)$, $\mathfrak{a}^{1,a}_{bal}(\eta, \zeta)$ for the symbol of $A^{1,a}_{bal}(T_{1-\bar{Y}}\bar{\bw},r)$ and similarly for other balanced bilinear forms.
To match the balanced part of paradifferential source terms in $(\nP\mathcal{G}_1, \nP\mathcal{K}_1)$, paradifferential symbols of the holomorphic type solve the following algebraic systems
\begin{equation*}
\begin{cases}
(\xi + \eta)\mathfrak{a}^{1,h}_{bal}-\eta\mathfrak{b}^{1,h}_{bal}+\xi^4\mathfrak{c}^{1,h}_{bal}=0,\\
\xi\mathfrak{b}^{1,h}_{bal}-\eta^4\mathfrak{c}^{1,h}_{bal}-(\xi+\eta)\mathfrak{d}^{1,h}_{bal}=(\xi+\eta)\chi_{2}(\xi,\eta),\\
\xi\mathfrak{a}^{1,h}_{bal}+(\xi+\eta)^4\mathfrak{c}^{1,h}_{bal}+\eta\mathfrak{d}^{1,h}_{bal}=\eta\chi_{2}(\xi,\eta),\\
\eta^4 \mathfrak{a}^{1,h}_{bal}-(\xi+\eta)^4\mathfrak{b}^{1,h}_{bal}+\xi^4\mathfrak{d}^{1,h}_{bal}=-\left(\f52\eta^4+5\eta^3\xi+5\eta^2\xi^2+\f52\eta\xi^3+\xi^4\right)\chi_{2}(\xi,\eta).
\end{cases}
\end{equation*}
And for paradifferential symbols of the mixed type,
\begin{equation*}
\begin{cases}
(\zeta-\eta)\mathfrak{a}^{1,a}_{bal}-\zeta\mathfrak{b}^{1,a}_{bal}-\eta^4\mathfrak{c}^{1,a}_{bal}=\zeta\chi_{2}(\eta,\zeta)1_{\zeta<\eta},\\
\eta\mathfrak{b}^{1,a}_{bal}+\zeta^4\mathfrak{c}^{1,a}_{bal}+(\zeta-\eta)\mathfrak{d}^{1,a}_{bal}=-\zeta\chi_{2}(\eta,\zeta)1_{\zeta<\eta},\\
\eta\mathfrak{a}^{1,a}_{bal}-(\zeta-\eta)^4\mathfrak{c}^{1,a}_{bal}-\zeta\mathfrak{d}^{1,a}_{bal}=-\zeta\chi_{2}(\eta,\zeta)1_{\zeta<\eta},\\
\zeta^4 \mathfrak{a}^{1,a}_{bal}-(\zeta-\eta)^4\mathfrak{b}^{1,a}_{bal}-\eta^4\mathfrak{d}^{1,a}_{bal}=\left(-\f32\zeta^4+\zeta^3\eta+\eta^2\zeta^2-\f32\zeta\eta^3\right)\chi_{2}(\eta,\zeta)1_{\zeta<\eta}.
\end{cases}
\end{equation*}
The expressions for the balanced symbols of holomorphic type are given by
\begin{align*}
\mathfrak{a}^{1,h}_{bal}=&\f{25\eta^7+100\eta^6\xi+200\eta^5\xi^2+242\eta^4\xi^3+190\eta^3\xi^4+80\eta^2\xi^5+5\xi^6+10\xi^7}{2\xi(25\xi^6+100\xi^5\eta+200\xi^4\eta^2+246\xi^3\eta^3+200\xi^2\eta^4+100\xi\eta^5+25\eta^6)}\chi_{2}(\xi,\eta),\\
\mathfrak{b}^{1,h}_{bal}=&\f{25\eta^7+125\eta^6\xi+300\eta^5\xi^2+442\eta^4\xi^3+442\eta^3\xi^4+300\eta^2\xi^5+125\eta\xi^6+25\xi^7}
{2\xi(25\xi^6+100\xi^5\eta+200\xi^4\eta^2+246\xi^3\eta^3+200\xi^2\eta^4+100\xi\eta^5+25\eta^6)}\chi_{2}(\xi,\eta),\\
\mathfrak{c}^{1,h}_{bal}=&\f{5(\eta^4+3\eta^3\xi+4\eta^2\xi^2+3\xi^3\eta+\xi^4)}
{\xi(25\xi^6+100\xi^5\eta+200\xi^4\eta^2+246\xi^3\eta^3+200\xi^2\eta^4+100\xi\eta^5+25\eta^6)}\chi_{2}(\xi,\eta),\\
\mathfrak{d}^{1,h}_{bal}=&-\f{5(\eta^3+2\eta^2\xi+2\eta\xi^2+\xi^3)(2\eta^4+5\eta^3\xi+10\eta^2\xi^2+10\eta\xi^3+5\xi^4)}
{2\xi(25\xi^6+100\xi^5\eta+200\xi^4\eta^2+246\xi^3\eta^3+200\xi^2\eta^4+100\xi\eta^5+25\eta^6)}\chi_{2}(\xi,\eta).
\end{align*}
The leading terms of these balanced symbols of holomorphic type are
\begin{equation*}
\mathfrak{a}^{1,h}_{bal} \approx\f{213}{448}\chi_{2}(\xi, \eta), \quad \mathfrak{b}^{1,h}_{bal} \approx \f{223}{224}\chi_{2}(\xi, \eta),  \quad \mathfrak{c}^{1,h}_{bal} \approx \f{15}{224}\eta^{-3}\chi_{2}(\xi, \eta), \quad \mathfrak{d}^{1,h}_{bal} \approx -\f{15}{28}\chi_{2}(\xi, \eta).   
\end{equation*}
The expressions for the balanced symbols of mixed type are given by
\begin{align*}
\mathfrak{a}^{1,a}_{bal}=&-\f{8\eta^6\zeta-24\eta^5\zeta^2+49\eta^4\zeta^3-58\eta^3\zeta^4+75\eta^2\zeta^5-50\eta\zeta^6+25\zeta^7}{2\eta(4\eta^6-12\eta^5\zeta+37\eta^4\zeta-54\eta^3\zeta^3+75\eta^2\zeta^4-50\eta\zeta^5+25\zeta^6)}\chi_{2}(\eta, \zeta)1_{\zeta<\eta},\\
\mathfrak{b}^{1,a}_{bal}=&-\f{-2\eta^6\zeta+11\eta^5\zeta^2-11\eta^4\zeta^3+17\eta^3\zeta^4+25\eta^2\zeta^5-25\eta\zeta^6+25\zeta^7}{2\eta(4\eta^6-12\eta^5\zeta+37\eta^4\zeta-54\eta^3\zeta^3+75\eta^2\zeta^4-50\eta\zeta^5+25\zeta^6)}\chi_{2}(\eta, \zeta)1_{\zeta<\eta},\\
\mathfrak{c}^{1,a}_{bal}=&-\f{5(\eta^2\zeta^2-\eta\zeta^3+\zeta^4)}{\eta(4\eta^6-12\eta^5\zeta+37\eta^4\zeta-54\eta^3\zeta^3+75\eta^2\zeta^4-50\eta\zeta^5+25\zeta^6)}\chi_{2}(\eta, \zeta)1_{\zeta<\eta},\\
\mathfrak{d}^{1,a}_{bal}=&\f{5(\eta^2\zeta-\eta\zeta^2+\zeta^3)(2\eta^4-3\eta^3\zeta+7\zeta^2\zeta^2-3\eta\zeta^3+2\zeta^4)}{2\eta(4\eta^6-12\eta^5\zeta+37\eta^4\zeta-54\eta^3\zeta^3+75\eta^2\zeta^4-50\eta\zeta^5+25\zeta^6)}\chi_{2}(\eta, \zeta)1_{\zeta<\eta}.   
\end{align*}
The leading terms of these balanced symbols of mixed type are
\begin{equation*}
\mathfrak{a}^{1,a}_{bal} \approx -\f12\chi_{2}(\eta, \zeta)1_{\zeta<\eta}, \quad \mathfrak{b}^{1,a}_{bal} \approx -\f45\chi_{2}(\eta, \zeta)1_{\zeta<\eta},  \quad \mathfrak{c}^{1,a}_{bal} \approx -\f{\eta^{-3}}{5}\chi_{2}(\eta, \zeta)1_{\zeta<\eta}, \quad \mathfrak{d}^{1,a}_{bal} \approx \f12\chi_{2}(\eta, \zeta)1_{\zeta<\eta}.   
\end{equation*}

Hence, we obtain that balanced normal form corrections $(w^1_{bal}, r^1_{bal})$ satisfy the estimate
\begin{equation*}
    \|(w^1_{bal}, r^1_{bal})\|_{\H^0} \lesssim \mathcal{A}_{0}\|(w,r)\|_{\H^0}.
\end{equation*}
Moreover, the cubic and higher terms produced by balanced normal form variables  $(w^1_{bal}, r^1_{bal})$ are perturbative.

\subsubsection{Cubic normal form analysis for $(\nP\mathcal{G}_1, \nP\mathcal{K}_1)$}
Finally, we construct cubic normal form variables $(w^1_c, r^1_c)$ so that $(w_{NF}^1, r_{NF}^1)$ solve \eqref{wOneNFPGKOne}.
Recall that quadratic normal form variables $(w^1_{lh}+w^1_{bal}, r^1_{lh}+ r^1_{bal})$ solve the system
\begin{equation*} 
\begin{cases}
T_{D_t}(w^1_{lh}+w^1_{bal})+T_{1-\bar{Y}}\p_\al (r^1_{lh}+r^1_{bal})+T_{T_{1-\bar{Y}}R_{\al}}(w^1_{lh}+w^1_{bal})= -\nP \mathcal{G}_1 + \mathcal{G}_1^{[3]} + G,\\
T_{D_t}(r^1_{lh}+r^1_{bal})-i\mathcal{L}_{para}(w^1_{lh}+w^1_{bal})=- \nP \mathcal{K}_1 + \mathcal{K}_1^{[3]} +K,
\end{cases}
\end{equation*}
where non-perturbative cubic terms $( \mathcal{G}_1^{[3]},  \mathcal{K}_1^{[3]})$ are given by
\begin{align*}
 &\mathcal{G}_1^{[3]} = - \f12 T_{w}T_{\bw_\al}T_{J^{-1}}R + \f12 T_{w}T_{\bar{\bw}_\al}T_{(1-\bar{Y})^2}R - \f12 T_w T_R T_{J^{-1}}\bw_\al - \f12 T_w T_{\bar{R}} T_{(1-Y)^2}\bw_\al,\\
  &\mathcal{K}_1^{[3]} = \f{25}{4}i T_{w} T_{\bw_\al} T_{J^{-\f32}(1-Y)^3} \p_\al^3 \bw +3i T_{w} T_{\bar{\bw}_\al} T_{J^{-\f52}(1-Y)}  \p_\al^3 \bw - \f12 T_w T_R T_{1-\bar{Y}}R_\al - \f12 T_w T_{\bar{R}} T_{1-Y}R_\al.
\end{align*}
The terms
\begin{equation*}
    \left(- \f12 T_{w}T_{\bw_\al}T_{J^{-1}}R- \f12 T_w T_R T_{J^{-1}}\bw_\al,\,  \f{25}{4}i T_{w} T_{\bw_\al} T_{J^{-\f32}(1-Y)^3} \p_\al^3 \bw - \f12 T_w T_R T_{1-\bar{Y}}R_\al  \right)
\end{equation*}
are non-resonant. To eliminate these terms, we consider the system of of equations for auxiliary normal form transformations $(\bw^{1,1}_c, R^{1,1}_c)$,
\begin{equation*}  
\begin{cases}
\p_t \bw^{1,1}_c+T_{1-\bar{Y}}\p_\al R^{1,1}_c=\f12 T_{\bw_\al}T_{J^{-1}}R+\f12 T_R T_{J^{-1}}\bw_\al  + \text{cubic and higher terms},\\
\p_t R^{1,1}_c-iT_{J^{-\f32}(1-Y)}\p_\al^4\bw^{1,1}_c= -\f{25}{4}i  T_{\bw_\al} T_{J^{-\f32}(1-Y)^3} \p_\al^3 \bw +\f12  T_R T_{1-\bar{Y}}R_\al + \text{cubic and higher terms}.
\end{cases}
\end{equation*}
Auxiliary normal form transformations $(\bw^{1,1}_c, R^{1,1}_c)$ are chosen as the sum of low-high paradifferential bilinear forms of the following type:
\begin{align*} 
\bw^{1,1}_{c} = &B^{1,1}_{c}\left(\bw, T_{(1-Y)^2}\bw \right) + C^{1,1}_{c}\left(R, T_{J^\f12(1+\bw)^2}R \right),\\
R^{1,1}_{c} =& A^{1,1}_{c} \left(R, T_{1-Y}\bw \right) + D^{1,1}_{c}\left(\bw, T_{1-Y}R\right).
\end{align*}

One can verify that
$(\bw^{1,1}_{c}, R^{1,1}_{c})$ satisfies
\begin{align*}
 &\p_t \bw^{1,1}_c+T_{1-\bar{Y}}\p_\al R^{1,1}_c  + \text{cubic and higher terms} \\
=& T_{J^{-1}}\partial_\alpha A^{1,1}_c(R, \bw) -T_{J^{-1}}B^{1,1}_{c}(R_\alpha, \bw) +  T_{J^{-1}}C^{1,1}_{c}(R, i\p_\al^4\bw) \\
 &-T_{J^{-1}}B^{1,1}_{c}(\bw, R_\alpha)+ T_{J^{-1}}C^{1,1}_{c}(i\p_\al^4\bw, R) + T_{J^{-1}}\partial_\alpha D^{1,1}_{c}(\bw, R),\\
&\p_t R^{1,1}_c-iT_{J^{-\f32}(1-Y)}\p_\al^4\bw^{1,1}_c+ \text{cubic and higher  terms} \\
=& -T_{1-\bar{Y}}A^{1,1}_{c}(R, R_\alpha) - iT_{1-\bar{Y}}\partial_\alpha^4 C^{1,1}_{c}(R, R) - T_{1-\bar{Y}}D^{1,1}_{c}(R_\alpha, R)\\
&+iT_{J^{-\f32}(1-Y)^2}A^{1,1}_{c}(\p_\al^4\bw, \bw) -iT_{J^{-\f32}(1-Y)^2}\partial_\alpha^4 B^{1,1}_{c}(\bw, \bw) +iT_{J^{-\f32}(1-Y)^2}D^{1,1}_{c}(\bw, \p_\al^4\bw).
\end{align*}
Here, we write $\mathfrak{a}^{1,1}_{c}(\xi, \eta)$ for the symbol of $A^{1,1}_{c}(R, T_{1-Y}\bw)$, and similarly for other low-high bilinear forms.
To match the low-high part of paradifferential source terms, paradifferential symbols solve the following algebraic system: 
\begin{equation*}
\begin{cases}
(\xi + \eta)\mathfrak{a}^{1,1}_{c}-\xi\mathfrak{b}^{1,1}_{c}+\eta^4\mathfrak{c}^{1,1}_{c}= \f12 \eta\chi_1(\xi, \eta),\\
\eta\mathfrak{b}^{1,1}_{c}-\xi^4\mathfrak{c}^{1,1}_{c}-(\xi+\eta)\mathfrak{d}^{1,1}_{c}=-\f12 \xi\chi_{1}(\xi,\eta),\\
\eta\mathfrak{a}^{1,1}_{c}+(\xi+\eta)^4\mathfrak{c}^{1,1}_{c}+\xi\mathfrak{d}^{1,1}_{c}=-\f12\eta\chi_{1}(\xi,\eta),\\
\xi^4 \mathfrak{a}^{1,1}_{c}-(\xi+\eta)^4\mathfrak{b}^{1,1}_{c}+\eta^4\mathfrak{d}^{1,1}_{c}=-\f{25}{4}\xi\eta^3\chi_{1}(\xi,\eta),
\end{cases}
\end{equation*}
where  the symbol $\chi_{1}(\xi, \eta)$ is defined in \eqref{ChiOnelh} to select the low-high frequencies.
The solution of above system is 
\begin{align*}
\mathfrak{a}^{1,1}_{c}=&\f{10\xi^7+60\xi^6\eta+289\xi^5\eta^2+520\xi^4\eta^3+534\xi^3\eta^4+321\xi^2\eta^5+134\xi \eta^6+20\eta^7}{4\xi(25\xi^6+100\xi^5\eta+200\xi^4\eta^2+246\xi^3\eta^3+200\xi^2\eta^4+100\xi\eta^5+25\eta^6)}\chi_{1}(\xi,\eta),\\
\mathfrak{b}^{1,1}_{c}=&\f{10\xi^6+20\xi^5\eta+149\xi^4\eta^2+399\xi^3\eta^3+528\xi^2\eta^4+395\xi\eta^5+135\eta^6}
{4(25\xi^6+100\xi^5\eta+200\xi^4\eta^2+246\xi^3\eta^3+200\xi^2\eta^4+100\xi\eta^5+25\eta^6)}\chi_{1}(\xi,\eta),\\
\mathfrak{c}^{1,1}_{c}=&-\f{5\xi^5+17\xi^4\eta+30\xi^3\eta^2+60\xi^2\eta^3+52\xi\eta^4+10\eta^5}
{2\xi\eta(25\xi^6+100\xi^5\eta+200\xi^4\eta^2+246\xi^3\eta^3+200\xi^2\eta^4+100\xi\eta^5+25\eta^6)}\chi_{1}(\xi,\eta),\\
\mathfrak{d}^{1,1}_{c}=&\f{10\xi^7+74\xi^6\eta + 196\xi^5\eta^2+344\xi^4\eta^3+401\xi^3\eta^4+418\xi^2\eta^5+310\xi\eta^6+135\eta^7}
{4\eta(25\xi^6+100\xi^5\eta+200\xi^4\eta^2+246\xi^3\eta^3+200\xi^2\eta^4+100\xi\eta^5+25\eta^6)}\chi_{1}(\xi,\eta).
\end{align*}
Therefore, we obtain
\begin{align*}
    &w^{1,1}_c = \f{27}{20}T_{\bw}T_{(1-Y)^2}\bw +\f{i}{5} T_{\p_\al^{-1}R}  T_{J^{\frac{1}{2}}(1+\bw)^2}  \p_\al^{-2}R + \text{lower order terms},\\
    &r^{1,1}_c = \f{1}{5} T_{\p_\al^{-1}R}T_{1-Y}\p_\al\bw+\f{27}{20} T_{\bw}T_{1-Y}  R + \text{lower order terms}.
\end{align*}
We set $(w^{1,1}_c, r^{1,1}_c) = \big(T_{w}\bw^{1,1}_c, T_{w}T_{1-Y}R^{1,1}_c \big)$. Then
\begin{equation*}
\|(w^{1,1}_c, r^{1,1}_c)\|_{\H^0}  \lesssim\mathcal{A}_0 \mathcal{A}_\f32\|(w,r)\|^2_{\H^0},
\end{equation*}
and this pair solves the system
\begin{align*} 
&T_{D_t}w^{1,1}_c+T_{1-\bar{Y}}\p_\al r^{1,1}_c+T_{T_{1-\bar{Y}}R_{\al}}w^{1,1}_c\\
=& \f12 T_{\bw_\al}T_{J^{-1}}R+\f12 T_R T_{J^{-1}}\bw_\al+\f{27}{20}T_{w_t}T_{\bw}T_{(1-Y)^2}\bw \\
&+\f{i}{5} T_{w_t}T_{\p_\al^{-1}R}  T_{J^{\frac{1}{2}}(1+\bw)^2}  \p_\al^{-2}R  + \text{quartic and higher terms} + G\\
=& \f12 T_{\bw_\al}T_{J^{-1}}R+\f12 T_R T_{J^{-1}}\bw_\al + G,\\
&T_{D_t}r^{0,2}_c-i\mathcal{L}_{para}w^{0,2}_c  \\
=& -\f{25}{4}i  T_{\bw_\al} T_{J^{-\f32}(1-Y)^3} \p_\al^3 \bw +\f12  T_R T_{1-\bar{Y}}R_\al+\f{1}{5} T_{w_t}T_{\p_\al^{-1}R}T_{(1-Y)^2}\p_\al\bw+\f{27}{20} T_{w_t}T_{\bw}T_{(1-Y)^2}  R \\
&+\text{quartic and higher terms} + K\\
=& -\f{25}{4}i  T_{\bw_\al} T_{J^{-\f32}(1-Y)^3} \p_\al^3 \bw +\f12  T_R T_{1-\bar{Y}}R_\al + K.
\end{align*}

To eliminate other terms in $(\mathcal{G}_1^{[3]}, \mathcal{K}_1^{[3]})$ that may have resonances, we consider the system of of equations for auxiliary normal form transformations $(\bw^{1,2}_c, R^{1,2}_c)$,
\begin{equation*}  
\begin{cases}
\p_t \bw^{1,2}_c+T_{1-\bar{Y}}\p_\al R^{1,2}_c=  -\f12 T_{\bar{\bw}_\al}T_{(1-\bar{Y})^2}R+ \f12  T_{\bar{R}} T_{(1-Y)^2}\bw_\al + \text{cubic and higher terms},\\
\p_t R^{1,2}_c-iT_{J^{-\f32}(1-Y)}\p_\al^4\bw^{1,2}_c= -3i  T_{\bar{\bw}_\al} T_{J^{-\f52}(1-Y)}  \p_\al^3 \bw  + \f12  T_{\bar{R}} T_{1-Y}R_\al  + \text{cubic and higher terms}.
\end{cases}
\end{equation*}
As above, the auxiliary normal form transformations $(\bw^1_c, R^1_c)$  are chosen as the sum of low-high paradifferential bilinear forms of the following type:
\begin{equation*} 
\bw^{1,2}_{c} = B^{1,2}_{c}\left(\bar{\bw}, T_{J^{-1}}\bw \right) + C^{1,2}_{c}\left(\bar{R}, T_{J^{\f32}}R \right),\quad R^{1,2}_{c} = A^{1,2}_{c} \left(\bar{R}, T_{(1-Y)^2(1+\bar{\bw})}\bw \right) + D^{1,2}_{c}\left(\bar{\bw}, T_{1-\bar{Y}}R\right).
\end{equation*}

For there bilinear forms, we compute
\begin{align*}
&\partial_t \bw^{1,2}_c+T_{1-\bar{Y}} \partial_\alpha R^{1,2}_c + \text{cubic and higher terms} \\
=& T_{(1-Y)^2}\partial_\alpha A^{1,2}_c(\bar{R}, \bw) -T_{(1-Y)^2}B^{1,2}_c(\bar{R}_\alpha, \bw) +  T_{(1-Y)^2}C^1_c(\bar{R}, i\p_\al^4\bw) \\
 &-T_{(1-\bar{Y})^2}B^{1,2}_c(\bar{\bw}, R_\alpha)- T_{(1-\bar{Y})^2}C^{1,2}_c(i\p_\al^4\bar{\bw}, R) + T_{(1-\bar{Y})^2}\partial_\alpha D^{1,2}_c(\bar{\bw}, R),\\
&\partial_t R^{1,2}_c  -iT_{J^{-\frac{3}{2}}(1-Y)}\p_\al^4 \bw^1_c+ \text{cubic and higher  terms} \\
=& -T_{1-Y}A^{1,2}_c(\bar{R}, R_\alpha) - iT_{1-Y}\partial_\alpha^4 C^{1,2}_c(\bar{R}, R) - T_{1-Y}D^{1,2}_c(\bar{R}_\alpha, R)\\
&- iT_{J^{-\frac{5}{2}}(1-Y)}A^{1,2}_c(\p_\al^4\bar{\bw}, \bw) -iT_{J^{-\frac{5}{2}}(1-Y)}\partial_\alpha^4 B^{1,2}_c(\bar{\bw}, \bw) +iT_{J^{-\frac{5}{2}}(1-Y)}D^{1,2}_c(\bar{\bw}, \p_\al^4\bw).
\end{align*}
Here, as above, we write $\mathfrak{a}^{1,2}_c(\eta, \zeta)$ for the symbol of  $A^{1,2}_{c} \left(\bar{R}, T_{(1-Y)^2(1+\bar{\bw})}\bw \right)$ and similarly for other low-high bilinear forms.
To match the low-high part of paradifferential source terms, paradifferential symbols solve the following algebraic system: 
\begin{equation*}
\begin{cases}
(\zeta-\eta)\mathfrak{a}^{1,2}_c+\eta\mathfrak{b}^1_c+\zeta^4\mathfrak{c}^{1,2}_c=\f12\zeta\chi_{1}(\eta,\zeta),\\
\zeta\mathfrak{b}^{1,2}_c+\eta^4\mathfrak{c}^{1,2}_c-(\zeta-\eta)\mathfrak{d}^{1,2}_c=-\f12\eta\chi_{1}(\eta,\zeta),\\
\zeta\mathfrak{a}^{1,2}_c+(\zeta-\eta)^4\mathfrak{c}^{1,2}_c-\eta\mathfrak{d}^{1,2}_c= -\f12\zeta\chi_{1}(\eta,\zeta),\\
\eta^4 \mathfrak{a}^{1,2}_c+(\zeta-\eta)^4\mathfrak{b}^{1,2}_c-\zeta^4\mathfrak{d}^{1,2}_c= -3\eta \zeta^3\chi_{1}(\eta,\zeta).
\end{cases}
\end{equation*}
The solutions of the above system are given by
\begin{align*}
\mathfrak{a}^{1,2}_c=&\f{-2\eta^7\zeta + 11\eta^6\zeta^2-43\eta^5\zeta^3+73\eta^4\zeta^4-106\eta^3\zeta^5+72\eta^2\zeta^6-40\eta\zeta^7+10\zeta^8}{\eta(\eta - \zeta) (8\eta^6-24\eta^5\zeta+74\eta^4\zeta^2-108\eta^3\zeta^3+150\eta^2\zeta^4-100\eta\zeta^5+50\zeta^6)}\chi_{1}(\eta, \zeta),\\
\mathfrak{b}^{1,2}_c=&\f{2\eta^5\zeta -\eta^4\zeta^2-6\eta^3\zeta^3+21\eta^2\zeta^4-30\eta\zeta^5+25\zeta^6}{8\eta^6-24\eta^5\zeta+74\eta^4\zeta^2-108\eta^3\zeta^3+150\eta^2\zeta^4-100\eta\zeta^5+50\zeta^6}\chi_{1}(\eta, \zeta),\\
\mathfrak{c}^{1,2}_c=&\f{-2\eta^4 - \eta^3\zeta-3\eta^2\zeta^2-15\eta\zeta^3+10\zeta^4}{\eta(8\eta^6-24\eta^5\zeta+74\eta^4\zeta^2-108\eta^3\zeta^3+150\eta^2\zeta^4-100\eta\zeta^5+50\zeta^6)}\chi_{1}(\eta, \zeta),\\
\mathfrak{d}^{1,2}_c=&-\f{2\eta^7-13\eta^6\zeta+36\eta^5\zeta^2-70\eta^4\zeta^3+79\eta^3\zeta^4-29\eta^2\zeta^5-5\eta\zeta^6+25\zeta^7}{(\eta - \zeta)(8\eta^6-24\eta^5\zeta+74\eta^4\zeta^2-108\eta^3\zeta^3+150\eta^2\zeta^4-100\eta\zeta^5+50\zeta^6)}\chi_{1}(\eta, \zeta).
\end{align*}
Therefore, we have
\begin{align*}
    &w^{1,2}_c = \f12 T_{\bar{\bw}}T_{J^{-1}}\bw + \f{i}{5} T_{\p_\al^{-1}\bar{R}}  T_{J^{\frac{3}{2}}}  \p_\al^{-2}R + \text{lower order terms},\\
    &r^{1,2}_c = -\f15 T_{\p_\al^{-1}\bar{R}}T_{(1-Y)^2(1+\bar{\bw})}\p_\al\bw+\f12 T_{\bar{\bw}}T_{1-\bar{Y}}  R + \text{lower order terms}.
\end{align*}
Setting cubic  normal form variables $(w^{1,2}_c, r^{1,2}_c) =(T_ww^1_c, T_w r^1_c)$, we obtain
\begin{equation*}
\|(w^{1,2}_c, r^{1,2}_c)\|_{\H^0}  \lesssim\mathcal{A}_0 \mathcal{A}_\f32\|(w,r)\|^2_{\H^0}.
\end{equation*}
This pair solves the system 
\begin{align*} 
&T_{D_t}w^{1,2}_c+T_{1-\bar{Y}}\p_\al r^{1,2}_c+T_{T_{1-\bar{Y}}R_{\al}}w^{1,2}_c\\
=& -\f12 T_wT_{\bar{\bw}_\al}T_{(1-\bar{Y})^2}R+ \f12  T_wT_{\bar{R}} T_{(1-Y)^2}\bw_\al + \f12 T_{w_t}T_{\bar{\bw}}T_{J^{-1}}\bw \\
&+ \f{i}{5} T_{w_t}T_{\p_\al^{-1}\bar{R}}  T_{J^{\frac{3}{2}}}  \p_\al^{-2}R + G + \text{quartic and higher terms}\\
=& -\f12 T_wT_{\bar{\bw}_\al}T_{(1-\bar{Y})^2}R+ \f12  T_wT_{\bar{R}} T_{(1-Y)^2}\bw_\al + G,\\
&T_{D_t}r^{1,2}_c-i\mathcal{L}_{para}w^{1,2}_c \\
=&-3i  T_wT_{\bar{\bw}_\al} T_{J^{-\f52}(1-Y)}  \p_\al^3 \bw  + \f12  T_wT_{\bar{R}} T_{1-Y}R_\al -\f15 T_{w_t}T_{\p_\al^{-1}\bar{R}}T_{(1-Y)^2(1+\bar{\bw})}\p_\al\bw\\
&+T_{w_t}\f12 T_{\bar{\bw}}T_{1-\bar{Y}}  R +K+ \text{quartic and higher terms}\\
=& -3i  T_wT_{\bar{\bw}_\al} T_{J^{-\f52}(1-Y)}  \p_\al^3 \bw  + \f12  T_wT_{\bar{R}} T_{1-Y}R_\al + K.
\end{align*}

As a consequence of above computations,  the cubic normal form variables $(w^1_c, r^1_c): = (w^{1,1}_c, r^{1,1}_c) + (w^{1,2}_c, r^{1,2}_c)$ solve the system 
\begin{equation*} 
\begin{cases}
T_{D_t}w^1_c+T_{1-\bar{Y}}\p_\al r^1_c+T_{T_{1-\bar{Y}}R_{\al}}w^1_c= - \mathcal{G}_1^{[3]} + G,\\
T_{D_t}r^1_c-i\mathcal{L}_{para}w^1_c=- \mathcal{K}_1^{[3]} +K.
\end{cases}
\end{equation*}

To conclude this subsection, by choosing normal form variables $(w_{NF}^1, r_{NF}^1)$ as in \eqref{wrNFOneDef}, they satisfy \eqref{wrNFHOneBound} and solve the system \eqref{wOneNFPGKOne}.

\section{Energy estimate for the hydroelastic waves}  \label{s:Energy}
In this section, we derive the modified energy estimate for the differentiated two-dimensional hydroelastic wave system \eqref{HF19}.
Rather than constructing the modified energy $E_s(\mathbf{W},R)$ directly, we proceed in several steps.
First, we rewrite \eqref{HF19} in the paradifferential form in Section \ref{s:ParaReductionHydro}, moving non-perturbative balanced source terms to the right-hand side.
Then, in Section \ref{s:NormalHydroWaves}, we construct balanced normal form variables $(\bw_{NF}, R_{NF})$ so that the paradifferential equations of $(\bw_{NF}, R_{NF})$ no longer have balanced perturbative source terms.
Next, we demonstrate how to reduce Theorem \ref{t:MainEnergyEst} to  the modified energy estimate of the homogeneous paradifferential system \eqref{ParadifferentialFlow} in Section \ref{s:ParaFlowReduct}.
Finally, in Section \ref{s:HsHomogeneousSys}, we  prove the modified energy estimate of  \eqref{ParadifferentialFlow} by partially exploiting Proposition \ref{t:wellposedflow}, which ultimately yields the modified energy estimate for $(\bw, R)$. 

Throughout this section, we denote by $(G, K)$ perturbative source terms satisfying
\begin{equation}  \label{GKHsPerturb}
    \|(G,K)\|_{\H^s} \lesssim_{\mathcal{A}_0} \ASSharp \|(\bw, R)\|_{\H^s}, \quad s > 0,
\end{equation}
or $(G_s, K_s)$ that satisfy \eqref{GsHsSource}.

\subsection{Paradifferential reduction of hydroelastic waves} \label{s:ParaReductionHydro}
First, we rewrite the differentiated two-dimensional hydroelastic wave system \eqref{HF19} as a system of paradifferential equations, placing balanced paraproducts and perturbative terms on the right-hand side.

Recall that $(\bw, R)$ solve the system 
 \begin{equation*}
 \begin{cases}
&D_t\mathbf{W}+(1-\bar{Y})(1+\mathbf{W})R_{\al}=(1+\mathbf{W})M,\\
&D_tR+ia(1-Y) -i(1-Y) \nP \p_\al\left\{J^{-\f12}\p_\al\left[J^{-\f12}\p_\al\left(\mathbf{W}_{\al}J^{-\f12}(1-Y)\right)\right]\right\}\\
&
-\f12i(1-Y) \nP \p_\al\left[\mathbf{W}^3_{\al}J^{-\f32}(1-Y)^3-3\mathbf{W}_{\al}|\mathbf{W}_{\al}|^2 J^{-\f52}(1-Y)\right]\\
&=-i(1-Y)\nP \p_{\al}\left\{J^{-\f12}\p_\al\left[J^{-\f12}\p_\al\left(\bar{\mathbf{W}}_{\al}
J^{-\f12}(1-\bar{Y})\right)\right]\right\}\\
&-\f12i(1-Y) \nP \p_{\al}\left[\bar{\mathbf{W}}^3_{\al}J^{-\f32}(1-\bar{Y})^3-3\bar{\mathbf{W}}_{\al}|\mathbf{W}_{\al}|^2 J^{-\f52}(1-\bar{Y})\right].
\end{cases}
\end{equation*}

Following Section 6 in \cite{AIT}, $\bw$ solves the paradifferential equation
\begin{equation} \label{WParaEqn}
\begin{aligned}
&T_{D_t}\bw + T_{b_\alpha}\bw + \partial_\alpha T_{1-\bar{Y}}T_{1+\bw} R \\
=& - T_{1- \bar{Y}}\p_\al \Pi(\bw, R)- T_{1-Y}\partial_\alpha \nP \Pi( \bar{R}, \bw) -T_{(1-\bar{Y})^2(1+\bw)}\partial_\alpha \nP\Pi(\bar{\bw}, R) + G.
\end{aligned}    
\end{equation}
For the first two terms in the second equation of the hydroelastic waves, we apply the projection $\nP$ and write
\begin{align*}
 &\nP D_tR + \nP(ia(1-Y))\\
 =&   T_{D_t} R + T_{R_\alpha} \nP b + \nP\Pi(R_\alpha, b)+ iT_{1-Y} \nP a -iT_a Y -i\nP \Pi(a,Y)\\
          =&    T_{D_t} R + T_{R_\alpha} T_{1-\bar{Y}}R +\nP\Pi(R_\alpha, T_{1-\bar{Y}}R + T_{1-Y}\bar{R})+ T_{1-Y}T_{\bar{R}_\alpha}R +T_{1-Y}\nP\Pi(\bar{R}_\alpha, R) +K\\
 =&  T_{D_t} R + T_{b_\al} R + T_{1-\bar{Y}}\Pi(R_\alpha, R) + T_{1-Y}\nP \Pi( R_\al, \bar{R}) +T_{1-Y}\nP\Pi(\bar{R}_\alpha, R) + K.
\end{align*}

It remains to consider the elastic terms in the equation of $R$.
For a fixed real constant $\beta$,
\begin{align*}
\p_{\al}J^{-\beta}
=&-\beta J^{-\beta}(1-Y)\bw_{\al}-\beta J^{-\beta}(1-\bar{Y})\bar{\bw}_{\al},\quad \p_{\al}Y=(1-Y)^2\bw_{\al},\\
\p_{\al}(J^{-\beta}(1-Y))
=&-(\beta+1)J^{-\beta}(1-Y)^2\bw_{\al}-\beta J^{-\beta-1}
\bar{\bw}_{\al},\\
\p^2_{\al}(J^{-\beta}(1-Y))
=&-(\beta+1)J^{-\beta}(1-Y)^2\bw_{\al\al}-\beta J^{-\beta-1}
\bar{\bw}_{\al\al}\\
&+(\beta+2)(\beta+1)J^{-\beta}(1-Y)^3\bw_{\al}^2+2\beta(\beta+1)J^{-\beta-1}(1-Y)|\bw_{\al}|^2\\
&+\beta(\beta+1)J^{-\beta-1}(1-\bar{Y})\bar{\bw}_{\al}^2.
\end{align*}
From these, we derive the following identities:
\begin{align*}
&J^{-1}\p_{\al}(J^{-\f12}(1-Y))+J^{-\f12}\p_{\al}(J^{-1}(1-Y))+\p_{\al}(J^{-\f32}(1-Y))\\ =&-6J^{-\f32}(1-Y)^2\bw_{\al} -3J^{-\f52}
\bar{\bw}_{\al}, \\
&J^{-\f12}\p_{\al}\left(J^{-\f12}\p_{\al}\left(J^{-\f12}(1-Y)\right)\right)\\
=&-\f32 J^{-\f32}(1-Y)^2\bw_{\al\al}-\f12 J^{-\f52}
\bar{\bw}_{\al\al}+\f{9}{2}J^{-\f32}(1-Y)^3\bw_{\al}^2\\
&+J^{-\f52}(1-\bar{Y})\bar{\bw}_{\al}^2
+\f52J^{-\f52}(1-Y)|\bar{\bw}_{\al}|^2,\\
&\p_{\al}(J^{-1}\p_{\al}(J^{-\f12}(1-Y)))\\
=&-\f32 J^{-\f32}(1-Y)^2\bw_{\al\al}-\f12 J^{-\f52}
\bar{\bw}_{\al\al}+\f{21}{4}J^{-\f32}(1-Y)^3\bw_{\al}^2\\
&+\f54J^{-\f52}(1-\bar{Y})\bar{\bw}_{\al}^2
+\f{7}{2}J^{-\f52}(1-Y)|\bar{\bw}_{\al}|^2,\\
&\p_{\al}(J^{-\f12}\p_{\al}(J^{-1}(1-Y)))\\
=&-2 J^{-\f32}(1-Y)^2\bw_{\al\al}- J^{-\f52}
\bar{\bw}_{\al\al}+7J^{-\f32}(1-Y)^3\bw_{\al}^2\\
&+\f52J^{-\f52}(1-\bar{Y})\bar{\bw}_{\al}^2
+\f{11}{2}J^{-\f52}(1-Y)|\bar{\bw}_{\al}|^2.
\end{align*}
Then, we have
\begin{align*}
&J^{-\f12}\p_{\al}(J^{-\f12}\p_{\al}(J^{-\f12}(1-Y)))+\p_{\al}(J^{-1}\p_{\al}(J^{-\f12}(1-Y)))+\p_{\al}(J^{-\f12}\p_{\al}(J^{-1}(1-Y)))\\ 
=&-5J^{-\f32}(1-Y)^2\bw_{\al\al}-2 J^{-\f52}
\bar{\bw}_{\al\al}+\f{67}{4}J^{-\f32}(1-Y)^3\bw_{\al}^2\\
&+\f{19}{4}J^{-\f52}(1-\bar{Y})\bar{\bw}_{\al}^2
+\f{23}{2}J^{-\f52}(1-Y)|\bar{\bw}_{\al}|^2.
\end{align*}
Direct computation yields
\begin{align*}
&J^{-\f12}\p_{\al}\left(J^{-\f12}\p_{\al}\left(J^{-\f12}(1-Y)\right)\right)\\
=&-\f32 J^{-\f32}(1-Y)^2\bw_{\al\al}-\f12 J^{-\f52}
\bar{\bw}_{\al\al}+\f92J^{-\f32}(1-Y)^3\bw_{\al}^2\\
&+J^{-\f52}(1-\bar{Y})\bar{\bw}_{\al}^2
+\f52J^{-\f52}(1-Y)|\bar{\bw}_{\al}|^2,
\end{align*}
we then have
\begin{align*}
&\p_{\al}\left(J^{-\f12}\p_{\al}\left(J^{-\f12}\p_{\al}\left(J^{-\f12}(1-Y)\right)\right)\right)\\
=&-\f32 J^{-\f32}(1-Y)^2\p^3_{\al}\bw-\f12 J^{-\f52}
\p^3_{\al}\bar{\bw}+\f{57}{4}J^{-\f32}(1-Y)^3\bw_{\al}\bw_{\al\al} +\f{13}{4}J^{-\f52}(1-\bar{Y})\bar{\bw}_{\al}\bar{\bw}_{\al\al}\\
&+\f{19}{4}J^{-\f52}(1-Y)\bar{\bw}_{\al}\bw_{\al\al}+\f{15}{4}\bw_\al\bar{\bw}_{\al\al}-\f{81}{4}J^{-\f32}(1-Y)^4\bw^3_{\al}\\
&-\f{31}{2}J^{-\f52}(1-Y)^2\bw_{\al}|\bw_{\al}|^2-\f{35}{4}J^{-\f72}\bar{\bw}_{\al}|\bw_{\al}|^2-\f{7}{2}J^{-\f52}(1-\bar{Y})^2\bar{\bw}^3_{\al}.
\end{align*} 
As a consequence,
\begin{align*}
&\p_\al\left\{J^{-\f12}\p_\al\left[J^{-\f12}\p_\al\left(\mathbf{W}_{\al}J^{-\f12}(1-Y)\right)\right]\right\}\\
=&J^{-\f32}(1-Y)\p^4_{\al}\bw+\left(J^{-1}\p_{\al}(J^{-\f12}(1-Y))+J^{-\f12}\p_{\al}(J^{-1}(1-Y))+\p_{\al}(J^{-\f32}(1-Y))\right)\p^3_{\al}\bw\\
&+\left(J^{-\f12}\p_{\al}(J^{-\f12}\p_{\al}(J^{-\f12}(1-Y)))+\p_{\al}(J^{-1}\p_{\al}(J^{-\f12}(1-Y)))+\p_{\al}(J^{-\f12}\p_{\al}(J^{-1}(1-Y)))\right)\bw_{\al\al}\\
&+\p_{\al}\left(J^{-\f12}\p_{\al}\left(J^{-\f12}\p_{\al}\left(J^{-\f12}(1-Y)\right)\right)\right)\bw_{\al}\\
=&J^{-\f32}(1-Y)\p^4_{\al}\bw-6J^{-\f32}(1-Y)^2\bw_{\al}\p^3_{\al}\bw-3 J^{-\f52}\bar{\bw}_{\al}\p^3_{\al}\bw\\
&-5J^{-\f32}(1-Y)^2\bw^2_{\al\al}-2 J^{-\f52}
|\bw_{\al\al}|^2+\f{67}{4}J^{-\f32}(1-Y)^3\bw_{\al}^2\bw_{\al\al}\\
&+\f{19}{4}J^{-\f52}(1-\bar{Y})\bar{\bw}_{\al}^2\bw_{\al\al}
+\f{23}{2}J^{-\f52}(1-Y)|\bar{\bw}_{\al}|^2\bw_{\al\al}-\f32 J^{-\f32}(1-Y)^2\bw_{\al}\p^3_{\al}\bw\\
&-\f12 J^{-\f52}\bw_{\al}\p^3_{\al}\bar{\bw}+\f{57}{4}J^{-\f32}(1-Y)^3\bw^2_{\al}\bw_{\al\al}+\f{13}{4}J^{-\f52}(1-\bar{Y})|\bw_{\al}|^2\bar{\bw}_{\al\al}\\
&+\f{19}{4}J^{-\f52}(1-Y)|\bw_{\al}|^2\bw_{\al\al}+\f{15}{4}J^{-\f52}(1-Y)\bw^2_\al\bar{\bw}_{\al\al}+K\\
=&J^{-\f32}(1-Y)\p^4_{\al}\bw-\f{15}{2}J^{-\f32}(1-Y)^2\bw_{\al}\p^3_{\al}\bw-3 J^{-\f52}\bar{\bw}_{\al}\p^3_{\al}\bw-\f12 J^{-\f52}
\bw_{\al}\p^3_{\al}\bar{\bw}\\
&-5J^{-\f32}(1-Y)^2\bw^2_{\al\al}-2 J^{-\f52}
|\bw_{\al\al}|^2+31J^{-\f32}(1-Y)^3\bw_{\al}^2\bw_{\al\al}+\f{13}{4}J^{-\f52}(1-\bar{Y})|\bw_{\al}|^2\bar{\bw}_{\al\al}\\
&+\f{65}{4}J^{-\f52}(1-Y)|\bw_{\al}|^2\bw_{\al\al}+\f{15}{4}J^{-\f52}(1-Y)\bw^2_\al\bar{\bw}_{\al\al}+\f{19}{4}J^{-\f52}(1-\bar{Y})\bar{\bw}_{\al}^2\bw_{\al\al}+K.
\end{align*}
We derive
\begin{align*}
&\p_\al\left[\mathbf{W}^3_{\al}J^{-\f32}(1-Y)^3-3\mathbf{W}_{\al}|\mathbf{W}_{\al}|^2 J^{-\f52}(1-Y)\right]\\
=&3J^{-\f32}(1-Y)^3\bw_{\al}^2\bw_{\al\al}-6J^{-\f52}(1-Y)|\bw_{\al}|^2\bw_{\al\al}-3J^{-\f52}(1-Y)\bw_{\al}^2\bar{\bw}_{\al\al}+K.
\end{align*}
Therefore, we obtain the expression for quantity $A$ defined below
\begin{align*}
A:=&\p_\al\left\{J^{-\f12}\p_\al\left[J^{-\f12}\p_\al\left(\mathbf{W}_{\al}J^{-\f12}(1-Y)\right)\right]\right\}+\f12\p_\al\left[\mathbf{W}^3_{\al}J^{-\f32}(1-Y)^3-3\mathbf{W}_{\al}|\mathbf{W}_{\al}|^2 J^{-\f52}(1-Y)\right]\\
=&J^{-\f32}(1-Y)\p^4_{\al}\bw-\f{15}{2}J^{-\f32}(1-Y)^2\bw_{\al}\p^3_{\al}\bw-3 J^{-\f52}\bar{\bw}_{\al}\p^3_{\al}\bw-\f12 J^{-\f52}
\bw_{\al}\p^3_{\al}\bar{\bw}\\
&-5J^{-\f32}(1-Y)^2\bw^2_{\al\al}-2 J^{-\f52}
|\bw_{\al\al}|^2+\f{65}{2}J^{-\f32}(1-Y)^3\bw_{\al}^2\bw_{\al\al}+\f{13}{4}J^{-\f52}(1-\bar{Y})|\bw_{\al}|^2\bar{\bw}_{\al\al}\\
&+\f{53}{4}J^{-\f52}(1-Y)|\bw_{\al}|^2\bw_{\al\al}+\f{9}{4}J^{-\f52}(1-Y)\bw^2_\al\bar{\bw}_{\al\al}+\f{19}{4}J^{-\f52}(1-\bar{Y})\bar{\bw}_{\al}^2\bw_{\al\al}+K.
\end{align*}

Using the notation of $A$, the elastic terms are simply
\begin{equation*}
    -i(1-Y)\nP A + i(1-Y)\nP\bar{A}.
\end{equation*}

We first compute $\nP A$. We use the paralinearization in Lemma \ref{t:Paralinear} to write
\begin{align*}
\nP [J^{-\f32}(1-Y)\p^4_{\al}\bw]
    &= T_{J^{-\f32}(1-Y)}\p^4_{\al}\bw+\nP T_{\p^4_{\al}\bw}(J^{-\f32}(1-Y)-1)+\nP\Pi(\p^4_{\al}\bw,J^{-\f32}(1-Y)-1) \\
    &= T_{J^{-\f32}(1-Y)}\p^4_{\al}\bw- \f52 T_{(1-Y)^2J^{-\f32}}T_{\p^4_{\al}\bw} \bw -\f52 T_{(1-Y)^2J^{-\f32}}\Pi(\p^4_{\al}\bw,\bw) \\
    &\quad -\f32 T_{J^{-\f52}}\nP\Pi(\p^4_{\al}\bw, \bar{\bw})+K,\\
-\f{15}{2}\nP[J^{-\f32}(1-Y)^2\bw_{\al}\p^3_{\al}\bw] 
    &= -\f{15}{2} T_{J^{-\f32}(1-Y)^2\bw_{\al}}\p^3_{\al}\bw - \f{15}{2} T_{\p^3_{\al}\bw} \nP [J^{-\f32}(1-Y)^2\bw_{\al}] \\
    &\quad - \f{15}{2} \nP \Pi(\p^3_{\al}\bw, J^{-\f32}(1-Y)^2\bw_{\al}) \\
    &= -\f{15}{2} T_{J^{-\f32}(1-Y)^2}T_{\bw_{\al}}\p^3_{\al}\bw -\f{15}{2} T_{J^{-\f32}(1-Y)^2}T_{\p^3_{\al}\bw}\bw_{\al} \\
    &\quad - \f{15}{2} T_{J^{-\f32}(1-Y)^2}\nP \Pi(\p^3_{\al}\bw, \bw_{\al}) + K, \\
-3\nP[ J^{-\f52}\bar{\bw}_{\al}\p^3_{\al}\bw]
    &= -3 T_{J^{-\f52}\bar{\bw}_{\al}}\p^3_{\al}\bw - 3 T_{\p^3_{\al}\bw} \nP (J^{-\f52}\bar{\bw}_{\al}) - 3\nP\Pi(\p^3_{\al}\bw, J^{-\f52}\bar{\bw}_{\al}) \\
    &= -3 T_{J^{-\f52}\bar{\bw}_{\al}}\p^3_{\al}\bw - 3T_{J^{-\f52}}\nP\Pi(\p^3_{\al}\bw, \bar{\bw}_{\al}) +K, \\
-\f12 \nP[J^{-\f52}\bw_{\al}\p^3_{\al}\bar{\bw}] 
    &= -\f12 T_{J^{-\f52}\p^3_{\al}\bar{\bw}} \bw_{\al} -\f12 \nP \Pi(J^{-\f52}\p^3_{\al}\bar{\bw}, \bw_{\al}) \\
    &= -\f12 T_{J^{-\f52}\p^3_{\al}\bar{\bw}} \bw_{\al} -\f12 T_{J^{-\f52}}\nP \Pi(\p^3_{\al}\bar{\bw}, \bw_{\al}) + K,\\
-5\nP [J^{-\f32}(1-Y)^2\bw^2_{\al\al}] 
    &= -5 T_{J^{-\f32}(1-Y)^2\bw_{\al\al}} \bw_{\al \al} - 5 T_{\bw_{\al \al}} \nP(J^{-\f32}(1-Y)^2\bw_{\al\al}) \\
    &\quad -5 \nP \Pi(\bw_{\al \al}, J^{-\f32}(1-Y)^2\bw_{\al\al}) \\
    &= -10 T_{J^{-\f32}(1-Y)^2\bw_{\al\al}} \bw_{\al \al} -5  T_{J^{-\f32}(1-Y)^2}\Pi(\bw_{\al \al}, \bw_{\al\al}) + K,\\
-2 \nP [J^{-\f52}|\bw_{\al\al}|^2] 
    &= -2 T_{J^{-\f52}\bar{\bw}_{\al \al}}\bw_{\al \al} -2 T_{J^{-\f52}} \nP \Pi(\bar{\bw}_{\al \al}, \bw_{\al \al}) + K ,\\
\f{65}{2}\nP[J^{-\f32}(1-Y)^3\bw_{\al}^2\bw_{\al\al}] 
    &= \f{65}{2}T_{J^{-\f32}(1-Y)^3\bw_{\al}^2}\bw_{\al\al} + K,\\
\f{13}{4}\nP[J^{-\f52}(1-\bar{Y})|\bw_{\al}|^2\bar{\bw}_{\al\al}] 
    &= K ,\\
\f{53}{4} \nP[J^{-\f52}(1-Y)|\bw_{\al}|^2\bw_{\al\al}] 
    &= \f{53}{4}T_{J^{-\f52}(1-Y)|\bw_{\al}|^2}\bw_{\al\al} + K,\\
\f{9}{4}\nP [J^{-\f52}(1-Y)\bw^2_\al\bar{\bw}_{\al\al}] 
    &= K,\\
\f{19}{4} \nP [J^{-\f52}(1-\bar{Y})\bar{\bw}_{\al}^2\bw_{\al\al}] 
    &= \f{19}{4}T_{J^{-\f52}(1-\bar{Y})\bar{\bw}_{\al}^2}\bw_{\al\al} +K.
\end{align*}

Collecting all above terms, we obtain the expression for $PA$:
\begin{align*}
\nP A
&=T_{J^{-\f32}(1-Y)}\p^4_{\al}\bw-\f{15}{2}T_{J^{-\f32}(1-Y)^2}T_{\bw_{\al}}\p^{3}_{\al}\bw -10T_{J^{-\f32}(1-Y)^2}T_{\bw_{\al\al}}\bw_{\al\al}\\
&-\f{15}{2}T_{J^{-\f32}(1-Y)^2}T_{\p^3_{\al}\bw}\bw_{\al} -\f52 T_{(1-Y)^2J^{-\f32}}T_{\p^4_{\al}\bw} \bw-3 T_{J^{-\f52}}T_{\bar{\bw}_{\al}}\p^3_{\al}\bw \\
&-2 T_{J^{-\f52}}T_{\bar{\bw}_{\al\al}}\bw_{\al\al}-\f12 T_{J^{-\f52}}T_{\p^3_{\al}\bar{\bw}}
\bw_{\al} +\f{65}{2}T_{J^{-\f32}(1-Y)^3\bw_{\al}^2}\bw_{\al\al}\\
&+\f{53}{4}T_{J^{-\f52}(1-Y)|\bw_{\al}|^2}\bw_{\al\al}+\f{19}{4}T_{J^{-\f52}(1-\bar{Y})\bar{\bw}_{\al}^2}\bw_{\al\al} -\f52 T_{(1-Y)^2J^{-\f32}}\Pi(\p^4_{\al}\bw,\bw)\\
& -\f{15}{2}T_{J^{-\f32}(1-Y)^2}\Pi(\p^3_{\al}\bw,\bw_{\al})-5 T_{J^{-\f32}(1-Y)^2}\Pi(\bw_{\al\al},\bw_{\al\al}) -\f12 T_{J^{-\f52}}\nP\Pi(\p^3_{\al}\bar{\bw},
\bw_{\al}) \\
&-2 T_{J^{-\f52}}\nP\Pi(\bar{\bw}_{\al\al},
\bw_{\al\al}) -3 T_{J^{-\f52}}\nP\Pi(\bar{\bw}_{\al}, \p^3_{\al}\bw)-\f32 T_{J^{-\f52}}\nP\Pi(\bar{\bw}, \p^4_{\al}\bw)+K.
\end{align*}
Using the expression of $\nP A$, we derive the expression for $(1-Y)\nP A$:
\begin{align*}
&(1-Y)\nP A=T_{1-Y}\nP A-T_{\nP A}Y-\Pi(\nP A,Y)\\
=&T_{1-Y}\nP A-T_{\nP A}T_{(1-Y)^2}\bw-\Pi(\nP A,T_{(1-Y)^2}\bw)+K\\
=&T_{J^{-\f32}(1-Y)^2}\p^4_{\al}\bw-\f{15}{2}T_{J^{-\f32}(1-Y)^3\bw_{\al}}\p^{3}_{\al}\bw-3 T_{J^{-\f52}(1-Y)\bar{\bw}_{\al}}\p^3_{\al}\bw\\
&-10T_{J^{-\f32}(1-Y)^3\bw_{\al\al}}\bw_{\al\al}-2 T_{J^{-\f52}(1-Y)\bar{\bw}_{\al\al}}
\bw_{\al\al}+\f{65}{2}T_{J^{-\f32}(1-Y)^4\bw_{\al}^2}\bw_{\al\al}\\
&+\f{53}{4}T_{J^{-\f52}(1-Y)^2|\bw_{\al}|^2}\bw_{\al\al}+\f{19}{4}T_{J^{-\f72})\bar{\bw}_{\al}^2}\bw_{\al\al}-\f52T_{(1-Y)^3J^{-\f32}\p^4_{\al}\bw}\bw\\
&-\f{15}{2}T_{\p^3_{\al}\bw}T_{J^{-\f32}(1-Y)^3}\bw_{\al}-\f12 T_{J^{-\f52}(1-Y)\p^3_{\al}\bar{\bw}}
\bw_{\al}\\
&-\f52\nP\Pi(\p^4_{\al}\bw,T_{(1-Y)^3J^{-\f32}}\bw)-\f32\nP\Pi(\p^4_{\al}\bw, T_{J^{-\f52}(1-Y)}\bar{\bw}) -\f{15}{2}\nP\Pi(\p^3_{\al}\bw,T_{J^{-\f32}(1-Y)^3}\bw_{\al})\\
&-3 \nP\Pi(\p^3_{\al}\bw,T_{J^{-\f52}(1-Y)}\bar{\bw}_{\al})-\f12 \nP\Pi(\p^3_{\al}\bar{\bw},T_{J^{-\f52}(1-Y)}
\bw_{\al})-5\nP\Pi(\bw_{\al\al},T_{J^{-\f32}(1-Y)^3}\bw_{\al\al})\\
&-2 \nP\Pi(\bar{\bw}_{\al\al},T_{J^{-\f52}(1-Y)}
\bw_{\al\al})-T_{J^{-\f32}(1-Y)^3\p^4_{\al}\bw}\bw-\nP\Pi(\p^4_{\al}\bw,T_{(1-Y)^3J^{-\f32}}\bw)+K\\
=&T_{J^{-\f32}(1-Y)^2}\p^4_{\al}\bw-\f{15}{2}T_{J^{-\f32}(1-Y)^3}T_{\bw_{\al}}\p^{3}_{\al}\bw-3 T_{J^{-\f52}(1-Y)}T_{\bar{\bw}_{\al}}\p^3_{\al}\bw\\
&-10T_{J^{-\f32}(1-Y)^3}T_{\bw_{\al\al}}\bw_{\al\al}-2 T_{J^{-\f52}(1-Y)}T_{\bar{\bw}_{\al\al}}
\bw_{\al\al}+\f{65}{2}T_{J^{-\f32}(1-Y)^4\bw_{\al}^2}\bw_{\al\al}\\
&+\f{53}{4}T_{J^{-\f52}(1-Y)^2|\bw_{\al}|^2}\bw_{\al\al}+\f{19}{4}T_{J^{-\f72}\bar{\bw}_{\al}^2}\bw_{\al\al}-\f72T_{(1-Y)^3J^{-\f32}}T_{\p^4_{\al}\bw}\bw\\
&-\f{15}{2}T_{J^{-\f32}(1-Y)^3}T_{\p^3_{\al}\bw}\bw_{\al}-\f12 T_{J^{-\f52}(1-Y)}T_{\p^3_{\al}\bar{\bw}}
\bw_{\al}\\
&-\f72 T_{(1-Y)^3J^{-\f32}}\Pi(\p^4_{\al}\bw,\bw) -\f{15}{2}T_{J^{-\f32}(1-Y)^3}\Pi(\p^3_{\al}\bw,\bw_{\al}) -5T_{J^{-\f32}(1-Y)^3}\Pi(\bw_{\al\al},\bw_{\al\al})\\
&-\f32 T_{J^{-\f52}(1-Y)}\nP\Pi(\bar{\bw}, \p^4_{\al}\bw)-3 T_{J^{-\f52}(1-Y)}\nP\Pi(\bar{\bw}_{\al}, \p^3_{\al}\bw)-2 T_{J^{-\f52}(1-Y)}\nP\Pi(\bar{\bw}_{\al\al}, \bw_{\al\al})\\
&-\f12 T_{J^{-\f52}(1-Y)}\nP\Pi(\p^3_{\al}\bar{\bw}, \bw_{\al}) + K.
\end{align*}

Similarly, we compute $\nP \bar{A}$.
\begin{align*}
\nP [J^{-\f{3}{2}}(1-\bar{Y})\p^4_{\al}\bar{\bw}] 
    &= T_{\p^4_{\al}\bar{\bw}}\nP(J^{-\f{3}{2}}(1-\bar{Y})-1)+\nP\Pi(\p^4_{\al}\bar{\bw},J^{-\f{3}{2}}(1-\bar{Y})-1) \\
    &= - \f{3}{2} T_{J^{-\f{5}{2}}}T_{\p^4_{\al}\bar{\bw}} \bw -\f{3}{2} T_{J^{-\f{5}{2}}}\nP\Pi(\p^4_{\al}\bar{\bw}, \bw)+K,\\
-3\nP[ J^{-\f{5}{2}}\bw_{\al}\p^3_{\al}\bar{\bw}] 
    &= - 3 T_{\p^3_{\al}\bar{\bw}} \nP (J^{-\f{5}{2}}\bw_{\al}) - 3\nP\Pi(\p^3_{\al}\bar{\bw}, J^{-\f{5}{2}}\bw_{\al}) \\
    &= -3 T_{J^{-\f{5}{2}}}T_{\p^3_{\al}\bar{\bw}}\bw_{\al} - 3T_{J^{-\f{5}{2}}}\nP\Pi(\p^3_{\al}\bar{\bw}, \bw_{\al}) +K, \\
-\f{1}{2} \nP[J^{-\f{5}{2}}\bar{\bw}_{\al}\p^3_{\al}\bw] 
    &= -\f{1}{2} T_{J^{-\f{5}{2}}\bar{\bw}_{\al}} \p^3_{\al}\bw -\f{1}{2} \nP \Pi(J^{-\f{5}{2}}\p^3_{\al}\bw, \bar{\bw}_{\al}) \\
    &= -\f{1}{2} T_{J^{-\f{5}{2}}}T_{\bar{\bw}_{\al}} \p^3_{\al}\bw -\f{1}{2} T_{J^{-\f{5}{2}}}\nP \Pi(\p^3_{\al}\bw, \bar{\bw}_{\al}) + K, \\
-2 \nP [J^{-\f{5}{2}}|\bw_{\al\al}|^2] 
    &= -2 T_{J^{-\f{5}{2}}\bar{\bw}_{\al \al}}\bw_{\al \al} -2 T_{J^{-\f{5}{2}}} \nP \Pi(\bar{\bw}_{\al \al}, \bw_{\al \al}) + K , \\
\f{13}{4}\nP[J^{-\f{5}{2}}(1-Y)|\bw_{\al}|^2\bw_{\al\al}] 
    &= \f{13}{4}T_{J^{-\f{5}{2}}(1-Y)|\bw_{\al}|^2}\bw_{\al\al}+K , \\
\f{53}{4} \nP[J^{-\f{5}{2}}(1-\bar{Y})|\bw_{\al}|^2\bar{\bw}_{\al\al}] 
    &= K, \\
\f{9}{4}\nP [J^{-\f{5}{2}}(1-\bar{Y})\bar{\bw}^2_{\al}\bw_{\al\al}] 
    &= \f{9}{4} T_{J^{-\f{5}{2}}(1-\bar{Y})\bar{\bw}^2_{\al}}\bw_{\al\al}+K, \\
\f{19}{4} \nP [J^{-\f{5}{2}}(1-Y)\bw_{\al}^2\bar{\bw}_{\al\al}] 
    &= K.
\end{align*}
Therefore, we obtain
\begin{align*}
\nP\bar{A}=&- \f32 T_{J^{-\f52}}T_{\p^4_{\al}\bar{\bw}} \bw 
-\f12 T_{J^{-\f52}}T_{\bar{\bw}_{\al}} \p^3_{\al}\bw 
-2 T_{J^{-\f52}}T_{\bar{\bw}_{\al \al}}\bw_{\al \al} -3 T_{J^{-\f52}\p^3_{\al}\bar{\bw}}\bw_{\al}
\\
&+\f{13}{4}T_{J^{-\f52}(1-Y)|\bw_{\al}|^2}\bw_{\al\al}+\f94 T_{J^{-\f52}(1-\bar{Y})\bar{\bw}^2_{\al}}\bw_{\al\al}-\f32 T_{J^{-\f52}}\nP\Pi(\p^4_{\al}\bar{\bw}, \bw)\\
&- 3T_{J^{-\f52}}\nP\Pi(\p^3_{\al}\bar{\bw}, \bw_{\al}) -2 T_{J^{-\f52}} \nP \Pi(\bar{\bw}_{\al \al}, \bw_{\al \al}) -\f12 T_{J^{-\f52}}\nP \Pi(\bar{\bw}_{\al},
\p^3_{\al}\bw) + K.
\end{align*}
Using the expression of $\nP \bar{A}$, we get the equation for $(1-Y)\nP \bar{A}$:
\begin{align*}
(1-Y)\nP \bar{A}
=&T_{1-Y}\nP \bar{A}-T_{\nP \bar{A}}T_{(1-Y)^2}\bw-\Pi(\nP \bar{A},T_{(1-Y)^2}\bw)+K \\
=& - \f32 T_{J^{-\f52}(1-Y)}T_{\p^4_{\al}\bar{\bw}} \bw 
-\f12 T_{J^{-\f52}(1-Y)}T_{\bar{\bw}_{\al}} \p^3_{\al}\bw 
-2 T_{J^{-\f52}(1-Y)}T_{\bar{\bw}_{\al \al}}\bw_{\al \al} \\
&-3 T_{J^{-\f52}(1-Y)}T_{\p^3_{\al}\bar{\bw}}\bw_{\al}+\f{13}{4}T_{J^{-\f52}(1-Y)^2|\bw_{\al}|^2}\bw_{\al\al}+\f94 T_{J^{-\f72}\bar{\bw}^2_{\al}}\bw_{\al\al} \\
&-\f32 T_{J^{-\f52}(1-Y)}\nP\Pi(\p^4_{\al}\bar{\bw}, \bw)- 3T_{J^{-\f52}(1-Y)}\nP\Pi(\p^3_{\al}\bar{\bw}, \bw_{\al}) \\
&-2 T_{J^{-\f52}(1-Y)} \nP \Pi(\bar{\bw}_{\al \al}, \bw_{\al \al}) -\f12 T_{J^{-\f52}(1-Y)}\nP \Pi(\bar{\bw}_{\al},
\p^3_{\al}\bw) + K.
\end{align*}
Combining the expressions for $(1-Y)\nP A$ and $(1-Y)\nP\bar{A}$, the elastic terms can be written as:
\begin{align*}
    &\text{Elastic terms} =  -i(1-Y)\nP A + i(1-Y)\nP\bar{A}\\
    =&-iT_{J^{-\f32}(1-Y)^2}\p^4_{\al}\bw+\f{15}{2}iT_{J^{-\f32}(1-Y)^3}T_{\bw_{\al}}\p^{3}_{\al}\bw+3 iT_{J^{-\f52}(1-Y)}T_{\bar{\bw}_{\al}}\p^3_{\al}\bw\\
&+10iT_{J^{-\f32}(1-Y)^3}T_{\bw_{\al\al}}\bw_{\al\al}+2i T_{J^{-\f52}(1-Y)}T_{\bar{\bw}_{\al\al}}
\bw_{\al\al}-\f{65}{2}iT_{J^{-\f32}(1-Y)^4\bw_{\al}^2}\bw_{\al\al}\\
&-\f{53}{4}iT_{J^{-\f52}(1-Y)^2|\bw_{\al}|^2}\bw_{\al\al}-\f{19}{4}iT_{J^{-\f72}\bar{\bw}_{\al}^2}\bw_{\al\al}+\f72iT_{(1-Y)^3J^{-\f32}}T_{\p^4_{\al}\bw}\bw\\
&+\f{15}{2}iT_{J^{-\f32}(1-Y)^3}T_{\p^3_{\al}\bw}\bw_{\al}+\f12 iT_{J^{-\f52}(1-Y)}T_{\p^3_{\al}\bar{\bw}}
\bw_{\al}\\
&+\f72i T_{(1-Y)^3J^{-\f32}}\Pi(\p^4_{\al}\bw,\bw) +\f{15}{2}iT_{J^{-\f32}(1-Y)^3}\Pi(\p^3_{\al}\bw,\bw_{\al}) +5iT_{J^{-\f32}(1-Y)^3}\Pi(\bw_{\al\al},\bw_{\al\al})\\
&+\f32i T_{J^{-\f52}(1-Y)}\nP\Pi(\bar{\bw}, \p^4_{\al}\bw)+3i T_{J^{-\f52}(1-Y)}\nP\Pi(\bar{\bw}_{\al}, \p^3_{\al}\bw)+2i T_{J^{-\f52}(1-Y)}\nP\Pi(\bar{\bw}_{\al\al}, \bw_{\al\al})\\
&+\f12i T_{J^{-\f52}(1-Y)}\nP\Pi(\p^3_{\al}\bar{\bw}, \bw_{\al})  - \f32i T_{J^{-\f52}(1-Y)}T_{\p^4_{\al}\bar{\bw}} \bw 
-\f12i T_{J^{-\f52}(1-Y)}T_{\bar{\bw}_{\al}} \p^3_{\al}\bw 
\\
&-2i T_{J^{-\f52}(1-Y)}T_{\bar{\bw}_{\al \al}}\bw_{\al \al} 
-3i T_{J^{-\f52}(1-Y)}T_{\p^3_{\al}\bar{\bw}}\bw_{\al}+\f{13}{4}iT_{J^{-\f52}(1-Y)^2|\bw_{\al}|^2}\bw_{\al\al}\\
&+\f94i T_{J^{-\f72}\bar{\bw}^2_{\al}}\bw_{\al\al} -\f32i T_{J^{-\f52}(1-Y)}\nP\Pi(\p^4_{\al}\bar{\bw}, \bw)- 3iT_{J^{-\f52}(1-Y)}\nP\Pi(\p^3_{\al}\bar{\bw}, \bw_{\al}) \\
&-2i T_{J^{-\f52}(1-Y)} \nP \Pi(\bar{\bw}_{\al \al}, \bw_{\al \al}) -\f12 iT_{J^{-\f52}(1-Y)}\nP \Pi(\bar{\bw}_{\al},
\p^3_{\al}\bw) +K\\
=&-iT_{J^{-\f32}(1-Y)^2}\p^4_{\al}\bw+\f{15}{2}iT_{J^{-\f32}(1-Y)^3}T_{\bw_{\al}}\p^{3}_{\al}\bw+\f52iT_{J^{-\f52}(1-Y)}T_{\bar{\bw}_{\al}}\p^3_{\al}\bw\\
&+10iT_{J^{-\f32}(1-Y)^3}T_{\bw_{\al\al}}\bw_{\al\al}+\f{15}{2}iT_{J^{-\f32}(1-Y)^3}T_{\p^3_{\al}\bw}\bw_{\al}-\f52 iT_{J^{-\f52}(1-Y)}T_{\p^3_{\al}\bar{\bw}}
\bw_{\al} \\
&+\f72iT_{(1-Y)^3J^{-\f32}}T_{\p^4_{\al}\bw}\bw- \f32i T_{J^{-\f52}(1-Y)}T_{\p^4_{\al}\bar{\bw}} \bw-\f{65}{2}iT_{J^{-\f32}(1-Y)^4\bw_{\al}^2}\bw_{\al\al}\\
&-10iT_{J^{-\f52}(1-Y)^2|\bw_{\al}|^2}\bw_{\al\al}-\f{5}{2}iT_{J^{-\f72}\bar{\bw}_{\al}^2}\bw_{\al\al}+\f72i T_{(1-Y)^3J^{-\f32}}\Pi(\p^4_{\al}\bw,\bw)\\
&+\f{15}{2}iT_{J^{-\f32}(1-Y)^3}\Pi(\p^3_{\al}\bw,\bw_{\al}) +5iT_{J^{-\f32}(1-Y)^3}\Pi(\bw_{\al\al},\bw_{\al\al})\\
&+\f32i T_{J^{-\f52}(1-Y)}\nP\Pi(\bar{\bw}, \p^4_{\al}\bw)+\f52i T_{J^{-\f52}(1-Y)}\nP\Pi(\bar{\bw}_{\al}, \p^3_{\al}\bw)\\
&-\f52i T_{J^{-\f52}(1-Y)}\nP\Pi(\p^3_{\al}\bar{\bw}, \bw_{\al}) -\f32i T_{J^{-\f52}(1-Y)}\nP\Pi(\p^4_{\al}\bar{\bw}, \bw) +K.
\end{align*}

Therefore, by moving balanced and perturbative terms to the right-hand side, $R$ solves the paradifferential equation
\begin{align*}
&T_{D_t}R + T_{b_\alpha}R -iT_{J^{-\f32}(1-Y)^2}\p^4_{\al}\bw+\f{15}{2}iT_{J^{-\f32}(1-Y)^3}T_{\bw_{\al}}\p^{3}_{\al}\bw\\
&+\f52iT_{J^{-\f52}(1-Y)}T_{\bar{\bw}_{\al}}\p^3_{\al}\bw+10iT_{J^{-\f32}(1-Y)^3}T_{\bw_{\al\al}}\bw_{\al\al}+\f{15}{2}iT_{J^{-\f32}(1-Y)^3}T_{\p^3_{\al}\bw}\bw_{\al}\\
&-\f52 iT_{J^{-\f52}(1-Y)}T_{\p^3_{\al}\bar{\bw}}
\bw_{\al} +\f72iT_{(1-Y)^3J^{-\f32}}T_{\p^4_{\al}\bw}\bw - \f32i T_{J^{-\f52}(1-Y)}T_{\p^4_{\al}\bar{\bw}} \bw\\
&-\f{65}{2}iT_{J^{-\f32}(1-Y)^4\bw_{\al}^2}\bw_{\al\al}-10iT_{J^{-\f52}(1-Y)^2|\bw_{\al}|^2}\bw_{\al\al}-\f{5}{2}iT_{J^{-\f72}\bar{\bw}_{\al}^2}\bw_{\al\al} \\
=&-T_{1-\bar{Y}}\Pi(R_\alpha, R) - T_{1-Y}\nP \p_\al\Pi(\bar{R}, R) -\f72i T_{(1-Y)^3J^{-\f32}}\Pi(\p^4_{\al}\bw,\bw) \\
&-\f{15}{2}iT_{J^{-\f32}(1-Y)^3}\Pi(\p^3_{\al}\bw,\bw_{\al}) -5iT_{J^{-\f32}(1-Y)^3}\Pi(\bw_{\al\al},\bw_{\al\al})\\
&-\f32i T_{J^{-\f52}(1-Y)}\nP\Pi(\bar{\bw}, \p^4_{\al}\bw)-\f52i T_{J^{-\f52}(1-Y)}\nP\Pi(\bar{\bw}_{\al}, \p^3_{\al}\bw)\\
&+\f52i T_{J^{-\f52}(1-Y)}\nP\Pi(\p^3_{\al}\bar{\bw}, \bw_{\al}) +\f32i T_{J^{-\f52}(1-Y)}\nP\Pi(\p^4_{\al}\bar{\bw}, \bw) + K.
\end{align*}

\subsection{Normal form transformation of the hydroelastic waves} \label{s:NormalHydroWaves}
In this section, we demonstrate the existence of balanced normal form corrections $(\tilde{\bw}, \tilde{R})$ capable of removing the non-perturbative balanced source terms in the paradifferential equations for $\bw$ and $R$.
Specifically, we seek quadratic normal form corrections of balanced type $(\tilde{\bw}, \tilde{R})$ such that
 \begin{align*}
&\p_t\tilde{\mathbf{W}} +  T_{1-\bar{Y}}T_{1+\bw} \tilde{R}_\al   + \text{cubic and higher terms}\\
=& T_{1- \bar{Y}}\p_\al \Pi(\bw, R) +T_{1-Y}\partial_\alpha \nP \Pi( \bar{R}, \bw) +T_{(1-\bar{Y})^2(1+\bw)}\partial_\alpha \nP\Pi(\bar{\bw}, R) , \\
&\p_t \tilde{R} -iT_{J^{-\f32}(1-Y)^2}\p^4_{\al}\tilde{\bw}  + \text{cubic and higher terms} \\
=&T_{1-\bar{Y}}\Pi(R_\alpha, R) + T_{1-Y}\nP \p_\al\Pi(\bar{R}, R) +\f72i T_{(1-Y)^3J^{-\f32}}\Pi(\p^4_{\al}\bw,\bw) \\
&+\f{15}{2}iT_{J^{-\f32}(1-Y)^3}\Pi(\p^3_{\al}\bw,\bw_{\al}) +5iT_{J^{-\f32}(1-Y)^3}\Pi(\bw_{\al\al},\bw_{\al\al})\\
&+\f32i T_{J^{-\f52}(1-Y)}\nP\Pi(\bar{\bw}, \p^4_{\al}\bw)+\f52i T_{J^{-\f52}(1-Y)}\nP\Pi(\bar{\bw}_{\al}, \p^3_{\al}\bw)\\
&-\f52i T_{J^{-\f52}(1-Y)}\nP\Pi(\p^3_{\al}\bar{\bw}, \bw_{\al}) -\f32i T_{J^{-\f52}(1-Y)}\nP\Pi(\p^4_{\al}\bar{\bw}, \bw) .
\end{align*}
We consider balanced quadratic normal form corrections as the sum of the holomorphic type and the mixed type:
\begin{align*}
\tilde{\mathbf{W}}   &= T_{1-Y}B^h_{bal}(\bw, \bw) +T_{J^\f12(1+\bw)^3} C^h_{bal}(R, R) + B^a_{bal}(\bar{\bw}, T_{1-\bar{Y}} \bw) + C^a_{bal}\Big(\bar{R}, T_{J^\f32(1+\bw)}R\Big) \\
\tilde{R} &= A^h_{bal}(\bw, T_{1-Y}R)  + A^a_{bal}\Big(\bar{R}, T_{(1-Y)^2(1+\bar{\bw})}\bw\Big) + D^a_{bal}(\bar{\bw}, T_{1-\bar{Y}} R).
\end{align*}

For above bilinear forms, we compute
\begin{align*}
&\partial_t \tilde{\mathbf{W}}+T_{1-\bar{Y}} \partial_\alpha \tilde{\mathbf{R}} + \text{cubic and higher terms} \\
=& T_{1-\bar{Y}}\partial_\alpha A^{h}_{bal}(\bw, R) -2T_{1-\bar{Y}}B^{h}_{bal}(\bw,R_{\al}) + 2 T_{1-\bar{Y}}C^{h}_{bal}(i\p^4_{\al}\bw,R) \\
&+T_{(1-\bar{Y})^2(1+\bw)}\p_{\al}A^{a}_{bal}(\bar{\bw},R)-T_{(1-\bar{Y})^2(1+\bw)}B^{a}_{bal}(\bar{\bw},R_{\al})-T_{(1-\bar{Y})^2(1+\bw)}C^{a}_{bal}(i\p^4_{\al}\bar{\bw},R)\\
 &-T_{1-Y}B^{a}_{bal}(\bar{R}_\alpha,\bw)+T_{1-Y}C^{a}_{bal}(\bar{R},i\p_\al^4\bw) + T_{1-Y}\partial_\alpha D^{a}_{bal}(\bar{R}, \bw),\\
&\partial_t \tilde{\mathbf{R}}  -iT_{J^{-\frac{3}{2}}(1-Y)}\p_\al^4 \tilde{\mathbf{W}}+ \text{cubic and higher  terms} \\
=& -T_{1-\bar{Y}}A^{h}_{bal}(R_\alpha ,R) - iT_{1-\bar{Y}}\partial_\alpha^4 C^{h}_{bal}(R,R)\\
&+iT_{J^{-\frac{3}{2}}(1-Y)^3}A^{h}_{bal}(\bw, \p_\al^4\bw) -iT_{J^{-\frac{3}{2}}(1-Y)^3}\partial_\alpha^4 B^{h}_{bal}(\bw,\bw) \\
 &-T_{1-Y}A^{a}_{bal}(\bar{R}_\alpha,R) - iT_{1-Y}\partial_\alpha^4 C^{a}_{bal}( \bar{R},R) - T_{1-Y}D^{a}_{bal}(\bar{R},R_\alpha)\\
&+ iT_{J^{-\f52}(1-Y)}A^{a}_{bal}(\bar{\bw},\p_\al^4 \bw) -iT_{J^{-\f52}(1-Y)}\partial_\alpha^4 B^{a}_{bal}(\bar{\bw}, \bw) -iT_{J^{-\f52}(1-Y)}D^{h}_{bal}(\p_\al^4\bar{\bw},\bw).
\end{align*}
Here, we denote by $\mathfrak{a}^{h}_{bal}(\xi, \eta)$  the symbol of $A^{h}_{bal}(R,T_{1-Y}\bw)$, and by
$\mathfrak{a}^{a}_{bal}(\eta, \zeta)$  the symbol of $A^{a}_{bal}\Big(\bar{R}, T_{(1-Y)^2(1+\bar{\bw})}\bw\Big)$. 
This is similarly done for other balanced bilinear forms of holomorphic or mixed type.
Note that for $\mathfrak{b}^{h}_{bal}$ and  $\mathfrak{c}^{h}_{bal}$, their symbols are symmetric with respect to $\xi$ and $\eta$.
To match the balanced paradifferential source terms of the holomorphic type, paradifferential symbols of the holomorphic type solve the following system:
\begin{equation} \label{SymmetrizedSys}
\begin{cases}
(\xi + \eta)\mathfrak{a}^{h}_{bal}-2\eta\mathfrak{b}^{h}_{bal}+2\xi^4\mathfrak{c}^{h}_{bal}= (\xi+\eta)\chi_2(\xi,\eta),\\
(\xi\mathfrak{a}^{h}_{bal})_{sym}+(\xi+\eta)^4\mathfrak{c}^{h}_{bal}= -\f12(\xi+\eta)\chi_{2}(\xi,\eta),\\
(\eta^4 \mathfrak{a}^{h}_{bal})_{sym}-(\xi+\eta)^4\mathfrak{b}^{h}_{bal}=\left(\f72\xi^4+\f{15}{2}\xi^3\eta+5\xi^2\eta^2\right)_{sys}\chi_{2}(\xi,\eta),
\end{cases}
\end{equation}
where $m_{sym}$ stands for the symmetrization of symbol $m$, and $\chi_{2}(\xi, \eta)$ is defined in \eqref{ChiTwohh} to select the balanced frequencies.

From the first equation of the system \eqref{SymmetrizedSys}, we get that
\begin{equation} \label{ahbalsym}
   \mathfrak{a}^{h}_{bal}(\xi, \eta) = \f{2\eta}{\xi + \eta} \mathfrak{b}^{h}_{bal} -  \f{2\xi^4}{\xi + \eta} \mathfrak{c}^{h}_{bal} + \chi_2(\xi,\eta).
\end{equation}
Then we compute two symmetrized symbols
\begin{align*}
&(\eta\mathfrak{a}^{h}_{bal})_{sym}= \f{2\xi\eta}{\xi + \eta} \mathfrak{b}^{h}_{bal} - \f{\xi^5+\eta^5}{\xi+\eta}\mathfrak{c}^{h}_{bal}+ \f12(\xi+\eta)\chi_2(\xi, \eta),\\
&(\xi^4 \mathfrak{a}^{h}_{bal})_{sym}= \f{\xi^5+\eta^5}{\xi + \eta} \mathfrak{b}^{h}_{bal} - \f{2\xi^4\eta^4}{\xi+\eta}\mathfrak{c}^{h}_{bal}+ \f12(\xi^4+\eta^4)\chi_2(\xi, \eta).
\end{align*}
Substituting these identities into the second and the third equation of the system \eqref{SymmetrizedSys}, $\mathfrak{b}^{h}_{bal}$ and $\mathfrak{c}^{h}_{bal}$ solve the system
\begin{equation*}
\begin{cases}
\f{2\xi\eta}{\xi + \eta} \mathfrak{b}^{h}_{bal} +\f{(\xi+\eta)^5-(\xi^5+\eta^5)}{\xi+\eta}\mathfrak{c}^{h}_{bal}= -(\xi+\eta)\chi_{2}(\xi,\eta),\\
\f{(\xi^5+\eta^5)-(\xi+\eta)^5}{\xi + \eta} \mathfrak{b}^{h}_{bal} - \f{2\xi^4\eta^4}{\xi+\eta}\mathfrak{c}^{h}_{bal}=\left(\f54\xi^4+\f{15}{4}\xi^3\eta+5\xi^2 \eta^2 + \f{15}{4}\xi\eta^3 + \f54 \eta^4\right)\chi_{2}(\xi,\eta).
\end{cases}
\end{equation*}
Hence, the solutions of \eqref{SymmetrizedSys} are
\begin{align*}
\mathfrak{b}^{h}_{bal}=&-\f{(\xi+\eta)^2(25\eta^6+100\eta^5\xi+200\eta^4\xi^2+242\eta^3\xi^3+200\eta^2\xi^4+100\eta^2\xi^5+25\xi^6)}{4\xi\eta(25\xi^6+100\xi^5\eta+200\xi^4\eta^2+246\xi^3\eta^3+200\xi^2\eta^4+100\xi\eta^5+25\eta^6)}\chi_{2}(\xi,\eta),\\
\mathfrak{c}^{h}_{bal}=&-\f{5(\xi+\eta)^3(\xi^2+\xi\eta+\eta^2)}{2\xi\eta(25\xi^6+100\xi^5\eta+200\xi^4\eta^2+246\xi^3\eta^3+200\xi^2\eta^4+100\xi\eta^5+25\eta^6)}\chi_{2}(\xi,\eta),
\end{align*}
and $\mathfrak{a}^{h}_{bal}$ is given by \eqref{ahbalsym}.

To match the balanced paradifferential source terms of the mixed type, paradifferential symbols of the mixed type solve the following  system:
\begin{equation*}
\begin{cases}
(\zeta-\eta)\mathfrak{a}^{a}_{bal}-\zeta\mathfrak{b}^{a}_{bal}-\eta^4\mathfrak{c}^{a}_{bal}=(\zeta-\eta)\chi_{2}(\eta,\zeta)1_{\zeta<\eta},\\
\eta\mathfrak{b}^{a}_{bal}+\zeta^4\mathfrak{c}^{a}_{bal}+(\zeta-\eta)\mathfrak{d}^{a}_{bal}=(\zeta-\eta)\chi_{2}(\eta,\zeta)1_{\zeta<\eta},\\
\eta\mathfrak{a}^{a}_{bal}-(\zeta-\eta)^4\mathfrak{c}^{a}_{bal}-\zeta\mathfrak{d}^{a}_{bal}=(\zeta-\eta)\chi_{2}(\eta,\zeta)1_{\zeta<\eta},\\
\zeta^4 \mathfrak{a}^{a}_{bal}-(\zeta-\eta)^4\mathfrak{b}^{a}_{bal}-\eta^4\mathfrak{d}^{a}_{bal}=\left(\f32\zeta^4 -\f52\zeta^3\eta +\f52\zeta \eta^3-\f32\eta^4\right)\chi_{2}(\eta,\zeta)1_{\zeta<\eta}.
\end{cases}
\end{equation*}
The solutions of the above system are given by
\begin{align*}
\mathfrak{a}^a_{bal}=&\f{\zeta(-12\eta^6+51\eta^5\zeta-121\eta^4\zeta^2+147\eta^3\zeta^3-95\eta^2\zeta^4+25\eta\zeta^5+5\zeta^6)}{2\eta(4\eta^6-12\eta^5\zeta+37\eta^4\zeta-54\eta^3\zeta^3+75\eta^2\zeta^4-50\eta\zeta^5+25\zeta^6)}\chi_{2}(\eta, \zeta)1_{\zeta<\eta},\\
\mathfrak{b}^a_{bal}=&\f{2\eta^7-13\eta^6\zeta+22\eta^5\zeta^2-28\eta^4\zeta^3-8\eta^3\zeta^4+30\eta^2\zeta^5-30\eta\zeta^6+5\zeta^7}{2\eta(4\eta^6-12\eta^5\zeta+37\eta^4\zeta-54\eta^3\zeta^3+75\eta^2\zeta^4-50\eta\zeta^5+25\zeta^6)}\chi_{2}(\eta, \zeta)1_{\zeta<\eta},\\
\mathfrak{c}^a_{bal}=&\f{4\eta^4-11\eta^2\zeta+24\eta^2\zeta^2-16\eta\zeta^3+9\zeta^4}{4\eta^6-12\eta^5\zeta+37\eta^4\zeta-54\eta^3\zeta^3+75\eta^2\zeta^4-50\eta\zeta^5+25\zeta^6}\chi_{2}(\eta, \zeta)1_{\zeta<\eta},\\
\mathfrak{d}^a_{bal}=&-\f{10\eta^7-35\eta^6\zeta+85\eta^5\zeta^2-125\eta^4\zeta^3+133\eta^3\zeta^4-109\eta^2\zeta^5+59\eta\zeta^6-18\zeta^7}{2(4\eta^6-12\eta^5\zeta+37\eta^4\zeta-54\eta^3\zeta^3+75\eta^2\zeta^4-50\eta\zeta^5+25\zeta^6)}\chi_{2}(\eta, \zeta)1_{\zeta<\eta}.
\end{align*}
The resulting expressions for the symbols confirm that the normal form corrections $(\tilde{\bw}, \tilde{R})$ satisfy the estimate:
\begin{equation*}
    \|(\tilde{\bw}, \tilde{R})\|_{\H^s} \lesssim \mathcal{A}_{0}\|(\bw,R)\|_{\H^s}.
\end{equation*}
Moreover, $(\tilde{\bw}, \tilde{R})$ solve equations 
\begin{align*}
&T_{D_t}\tilde{\bw} + T_{b_\alpha}\tilde{\bw} + \partial_\alpha T_{1-\bar{Y}}T_{1+\bw} \tilde{R} \\
=&  T_{1- \bar{Y}}\p_\al \Pi(\bw, R)+ T_{1-Y}\partial_\alpha \nP \Pi( \bar{R}, \bw) +T_{(1-\bar{Y})^2(1+\bw)}\partial_\alpha \nP\Pi(\bar{\bw}, R) + G,
\end{align*}    
and 
\begin{align*}
&T_{D_t}\tilde{R}+ T_{b_\alpha}\tilde{R} -iT_{J^{-\f32}(1-Y)^2}\p^4_{\al}\tilde{\bw}+\f{15}{2}iT_{J^{-\f32}(1-Y)^3}T_{\bw_{\al}}\p^{3}_{\al}\tilde{\bw}\\
&+\f52iT_{J^{-\f52}(1-Y)}T_{\bar{\bw}_{\al}}\p^3_{\al}\tilde{\bw}+10iT_{J^{-\f32}(1-Y)^3}T_{\bw_{\al\al}}\tilde{\bw}_{\al\al}+\f{15}{2}iT_{J^{-\f32}(1-Y)^3}T_{\p^3_{\al}\bw}\tilde{\bw}_{\al}\\
&-\f52 iT_{J^{-\f52}(1-Y)}T_{\p^3_{\al}\bar{\bw}}
\tilde{\bw}_{\al} +\f72iT_{(1-Y)^3J^{-\f32}}T_{\p^4_{\al}\bw}\tilde{\bw}- \f32i T_{J^{-\f52}(1-Y)}T_{\p^4_{\al}\bar{\bw}} \tilde{\bw} \\
&-\f{65}{2}iT_{J^{-\f32}(1-Y)^4\bw_{\al}^2}\tilde{\bw}_{\al\al}-10iT_{J^{-\f52}(1-Y)^2|\bw_{\al}|^2}\tilde{\bw}_{\al\al}-\f{5}{2}iT_{J^{-\f72}\bar{\bw}_{\al}^2}\tilde{\bw}_{\al\al} \\
=&T_{1-\bar{Y}}\Pi(R_\alpha, R) + T_{1-Y}\nP \p_\al\Pi(\bar{R}, R) +\f72i T_{(1-Y)^3J^{-\f32}}\Pi(\p^4_{\al}\bw,\bw) \\
&+\f{15}{2}iT_{J^{-\f32}(1-Y)^3}\Pi(\p^3_{\al}\bw,\bw_{\al}) +5iT_{J^{-\f32}(1-Y)^3}\Pi(\bw_{\al\al},\bw_{\al\al})\\
&+\f32i T_{J^{-\f52}(1-Y)}\nP\Pi(\bar{\bw}, \p^4_{\al}\bw)+\f52i T_{J^{-\f52}(1-Y)}\nP\Pi(\bar{\bw}_{\al}, \p^3_{\al}\bw)\\
&-\f52i T_{J^{-\f52}(1-Y)}\nP\Pi(\p^3_{\al}\bar{\bw}, \bw_{\al}) -\f32i T_{J^{-\f52}(1-Y)}\nP\Pi(\p^4_{\al}\bar{\bw}, \bw)   + K.
\end{align*}    

Thus, we define $(\bw_{NF}, R_{NF}): = (\bw, R)+(\tilde{\bw}, \tilde{R})$, which satisfy the norm equivalence
\begin{equation} \label{DiffNfBound}
    \|(\bw_{NF}-\bw , R_{NF}-R) \|_{\H^s} \lesssim \CalAZ \|(\bw, R)\|_{\H^s}, \quad s>0.
\end{equation}
Moreover, the pair $(\bw_{NF}, R_{NF})$ solves the following linear paradifferential system with perturbative source terms.
\begin{equation} \label{LinearHatEqn}
 \begin{cases}
&T_{D_t}\hat{w} + T_{b_\alpha}\hat{w} + \partial_\alpha T_{1-\bar{Y}}T_{1+\bw} \hat{r}  = G,\\
&T_{D_t}\hat{r} + T_{b_\alpha}\hat{r} -iT_{J^{-\f32}(1-Y)^2}\p^4_{\al}\hat{w}+\f{15}{2}iT_{J^{-\f32}(1-Y)^3}T_{\bw_{\al}}\p^{3}_{\al}\hat{w}\\
&+\f52iT_{J^{-\f52}(1-Y)}T_{\bar{\bw}_{\al}}\p^3_{\al}\hat{w}+10iT_{J^{-\f32}(1-Y)^3}T_{\bw_{\al\al}}\hat{w}_{\al\al}+\f{15}{2}iT_{J^{-\f32}(1-Y)^3}T_{\p^3_{\al}\bw}\hat{w}_{\al}\\
&-\f52 iT_{J^{-\f52}(1-Y)}T_{\p^3_{\al}\bar{\bw}}
\hat{w}_{\al} +\f72iT_{(1-Y)^3J^{-\f32}}T_{\p^4_{\al}\bw}\hat{w}- \f32i T_{J^{-\f52}(1-Y)}T_{\p^4_{\al}\bar{\bw}} \hat{w}\\
&-\f{65}{2}iT_{J^{-\f32}(1-Y)^4\bw_{\al}^2}\hat{w}_{\al\al}-10iT_{J^{-\f52}(1-Y)^2|\bw_{\al}|^2}\hat{w}_{\al\al}-\f{5}{2}iT_{J^{-\f72}\bar{\bw}_{\al}^2}\hat{w}_{\al\al} = K.
\end{cases}
\end{equation}

\subsection{Further reduction to the paradifferential homogeneous linearized flow} \label{s:ParaFlowReduct}
Having reduced the differentiated hydroelastic waves \eqref{HF19} to the system \eqref{LinearHatEqn}, we now proceed to the derivation of the modified energy estimate for \eqref{LinearHatEqn}.
Rather than analyzing  \eqref{LinearHatEqn} directly, we reduce it to the paradifferential homogeneous linearized flow \eqref{ParadifferentialFlow} with perturbative source terms, and obtain the $\H^s$ modified energy estimate for \eqref{ParadifferentialFlow}. 

Let the pair $(\hat{w}, \hat{r})$ be a solution of the system \eqref{LinearHatEqn},  we consider
\begin{equation} \label{wrHatRelation}
    (w,r): = (\partial_\alpha^{-1}\hat{w}, \partial_\alpha^{-1}T_{1+\bw}\hat{r} - \partial_\alpha^{-1}T_{R_\alpha}\partial_\alpha^{-1}\hat{w}),
\end{equation}
so that $(w_\alpha, r_\alpha) = (\hat{w}, T_{1+\bw}\hat{r} - T_{R_\alpha}\partial_\alpha^{-1}\hat{w})$, and
\begin{equation} \label{wrInvertible}
   \|(w_\alpha, r_\alpha) -(\hat{w}, \hat{r})\|_{\H^s} \lesssim \CalAZ \| (\hat{w}, \hat{r})\|_{\H^s}, \quad \forall s\in \mathbb{R}.
\end{equation}
We then differentiate the left-hand side of the paradifferential flow \eqref{ParadifferentialFlow}, and get for the first equation
\begin{equation*}
    \partial_\alpha(T_{D_t}w+T_{1-\bar{Y}} r_\alpha + T_{(1-\bar{Y})R_\alpha}w) = T_{D_t}\hat{w} +T_{b_\alpha} \hat{w} + \partial_\alpha T_{1-\bar{Y}}T_{1+\bw} \hat{r} = \tilde{G}.
\end{equation*}
As for the second equation of \eqref{ParadifferentialFlow}, we again take the $\alpha$-derivative and use the material derivative of $\bw$ and $R$ to write
\begin{align*}
&\quad\p_\al(T_{D_t}r-i\mathcal{L}_{para}w) \\
=&  T_{D_t}(T_{1+\bw}\hat{r} - T_{R_\alpha}\partial_\alpha^{-1}\hat{w}) + T_{b_\alpha}(T_{1+\bw}\hat{r} - T_{R_\alpha}\partial_\alpha^{-1}\hat{w}) -i\p_\al T_{(1-Y)J^{-\f32}}\p_{\al}^3 \hat{w}\\
&+5i\p_\al T_{(1-Y)^2J^{-\f32}\mathbf{W}_{\al}} \hat{w}_{\al \al}+i\p_\al T_{J^{-\f52}\bar{\bw}_{\al}} \hat{w}_{\al \al}+5i \p_\al T_{(1-Y)^2J^{-\f32}\mathbf{W}_{\al\al}}\hat{w}_{\al}\\
&-i\p_\al T_{J^{-\f52}\bar{\mathbf{W}}_{\al\al}}\hat{w}_{\al}-15i\p_\al T_{(1-Y)^3J^{-\f32}\mathbf{W}^2_{\al}}\hat{w}_{\al} +\f52 i\p_\al T_{(1-Y)^2J^{-\f32}\p^3_{\al}\mathbf{W}}\hat{w}\\
&-\f32 i\p_\al T_{J^{-\f52}\p^3_{\al}\bar{\mathbf{W}}}\hat{w}+ i\p_\al T_{(1-Y)^2J^{-\f32}\p^4_{\al}\mathbf{W}}\p_\al^{-1} \hat{w} \\
=& T_{1+\bw}(T_{D_t}+T_{b_\alpha})\hat{r} - T_{R_\alpha}\partial_\alpha^{-1}(T_{D_t} + T_{b_\alpha})\hat{w} + T_{D_t \bw}\hat{r} - T_{\partial_\alpha D_t R}\partial_\alpha^{-1} \hat{w} \\
&-i T_{(1-Y)J^{-\f32}}\p_{\al}^4 \hat{w} + \f52  i T_{(1-Y)^2J^{-\f32}\bw_\al}\p_{\al}^3 \hat{w}  + \f32  i T_{J^{-\f52}\bar{\bw}_\al}\p_{\al}^3 \hat{w} +5i T_{(1-Y)^2J^{-\f32}\mathbf{W}_{\al}} \p_\al^3\hat{w} \\
&+5i T_{(1-Y)^2J^{-\f32}\mathbf{W}_{\al\al}} \hat{w}_{\al \al} -\f{35}{2}iT_{(1-Y)^3J^{-\f32}\bw_\al^2} \hat{w}_{\al \al} - \f{15}{2}iT_{(1-Y)J^{-\f52}|\bw_\al|^2}\hat{w}_{\al \al} \\
&+i T_{J^{-\f52}\bar{\bw}_{\al}} \p_\al^3\hat{w} +i T_{J^{-\f52}\bar{\bw}_{\al \al}} \hat{w}_{\al \al} - \f52i T_{J^{-\f52}(1-Y) |\bw_{\al}|^2} \hat{w}_{\al \al} - \f52i T_{J^{-\f52}(1-\bar{Y}) \bar{\bw}_{\al}^2} \hat{w}_{\al \al} \\
&+5i  T_{(1-Y)^2J^{-\f32}\mathbf{W}_{\al\al}}\hat{w}_{\al \al} +5i  T_{(1-Y)^2J^{-\f32}\p_\al^3\mathbf{W}}\hat{w}_{\al} -i T_{J^{-\f52}\bar{\mathbf{W}}_{\al\al}}\hat{w}_{\al\al} -i T_{J^{-\f52}\p_\al^3\bar{\mathbf{W}}}\hat{w}_{\al} \\
& -15i T_{(1-Y)^3J^{-\f32}\mathbf{W}^2_{\al}}\hat{w}_{\al \al} +\f52 i T_{(1-Y)^2J^{-\f32}\p^3_{\al}\mathbf{W}}\hat{w}_\al +\f52 i T_{(1-Y)^2J^{-\f32}\p^4_{\al}\mathbf{W}}\hat{w} \\
& -\f32 i  T_{J^{-\f52}\p^3_{\al}\bar{\mathbf{W}}}\hat{w}_\al -\f32 i T_{J^{-\f52}\p^4_{\al}\bar{\mathbf{W}}}\hat{w} + i T_{(1-Y)^2J^{-\f32}\p^4_{\al}\mathbf{W}} \hat{w} + i T_{((1-Y)^2J^{-\f32}\p^4_{\al}\mathbf{W})_\al}\p_\al^{-1} \hat{w} + \tilde{K} \\
=& T_{R_\al} T_{1-\bar{Y}}T_{1+\bw} \hat{r} - T_{(1+\bw)(1-\bar{Y})R_\al-(1+\bw)M} \hat{r}  +iT_{1+\bw}T_{J^{-\f32}(1-Y)^2}\p^4_{\al}\hat{w}\\
&-\f{15}{2}iT_{1+\bw}T_{J^{-\f32}(1-Y)^3}T_{\bw_{\al}}\p^{3}_{\al}\hat{w}-\f52iT_{1+\bw}T_{J^{-\f52}(1-Y)}T_{\bar{\bw}_{\al}}\p^3_{\al}\hat{w}-10iT_{1+\bw}T_{J^{-\f32}(1-Y)^3}T_{\bw_{\al\al}}\hat{w}_{\al\al}\\
&-\f{15}{2}iT_{1+\bw}T_{J^{-\f32}(1-Y)^3}T_{\p^3_{\al}\bw}\hat{w}_{\al}+\f52 iT_{1+\bw}T_{J^{-\f52}(1-Y)}T_{\p^3_{\al}\bar{\bw}}
\hat{w}_{\al} -\f72iT_{1+\bw}T_{(1-Y)^3J^{-\f32}}T_{\p^4_{\al}\bw}\hat{w}\\
&+ \f32i T_{1+\bw}T_{J^{-\f52}(1-Y)}T_{\p^4_{\al}\bar{\bw}} \hat{w}+\f{65}{2}iT_{1+\bw}T_{J^{-\f32}(1-Y)^4\bw_{\al}^2}\hat{w}_{\al\al}+10iT_{1+\bw}T_{J^{-\f52}(1-Y)^2|\bw_{\al}|^2}\hat{w}_{\al\al}\\
&+\f{5}{2}iT_{1+\bw}T_{J^{-\f72}\bar{\bw}_{\al}^2}\hat{w}_{\al\al}-i T_{(1-Y)J^{-\f32}}\p_{\al}^4 \hat{w} + \f{15}{2}  i T_{(1-Y)^2J^{-\f32}\bw_\al}\p_{\al}^3 \hat{w}  + \f32  i T_{J^{-\f52}\bar{\bw}_\al}\p_{\al}^3 \hat{w}  \\
&+10i T_{(1-Y)^2J^{-\f32}\mathbf{W}_{\al\al}} \hat{w}_{\al \al} -\f{65}{2}iT_{(1-Y)^3J^{-\f32}\bw_\al^2} \hat{w}_{\al \al} - 10iT_{(1-Y)J^{-\f52}|\bw_\al|^2}\hat{w}_{\al \al} +i T_{J^{-\f52}\bar{\bw}_{\al}} \p_\al^3\hat{w} \\
&- \f52i T_{J^{-\f52}(1-\bar{Y}) \bar{\bw}_{\al}^2} \hat{w}_{\al \al} +5i  T_{(1-Y)^2J^{-\f32}\p_\al^3\mathbf{W}}\hat{w}_{\al}  -i\f52 T_{J^{-\f52}\p_\al^3\bar{\mathbf{W}}}\hat{w}_{\al}   +\f52 i T_{(1-Y)^2J^{-\f32}\p^3_{\al}\mathbf{W}}\hat{w}_\al \\
&+\f72 i T_{(1-Y)^2J^{-\f32}\p^4_{\al}\mathbf{W}}\hat{w}  -\f32 i T_{J^{-\f52}\p^4_{\al}\bar{\mathbf{W}}}\hat{w} +  T_{(i(1-Y)^2J^{-\f32}\p^4_{\al}\mathbf{W} -D_t R)_\al}\p_\al^{-1} \hat{w} + \tilde{K} \\
=& \tilde{K} ,
\end{align*}
where $(\tilde{G}, \tilde{K})$ belong to perturbative terms that satisfy 
\begin{equation*}
 \|(\tilde{G}, \tilde{K})\|_{\H^{s}} \lesssim_\CalAZ \ASSharp \|(w,r)\|_{\H^{s+1}}.
\end{equation*}

Therefore, we have shown the following result that connects variables $(w,r)$ and $(\hat{w}, \hat{r})$.

\begin{proposition} \label{t:HatwrHsWellposed}
Given $(\hat{w}, \hat{r})$ that solve the system of equation \eqref{LinearHatEqn}, then $(w,r)$ defined in \eqref{wrHatRelation} 
 solve the system of paradifferential equation \eqref{ParadifferentialFlow} for any $s\in \mathbb{R}$.
Moreover, the transformation is invertible in the sense of \eqref{wrInvertible}.
\end{proposition}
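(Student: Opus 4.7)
The plan is to treat the statement in two independent parts: first the invertibility of the map $(\hat w,\hat r)\mapsto(w,r)$ in \eqref{wrHatRelation}, and then the verification that $(w,r)$ solves \eqref{ParadifferentialFlow} modulo perturbative source terms. Once these two pieces are in place, the proposition follows directly.

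For invertibility, the estimate \eqref{wrInvertible} is essentially a statement about the boundedness of the lower-order pieces $T_{\bw}\hat r$ and $T_{R_\al}\p_\al^{-1}\hat w$. By definition of $(w_\al,r_\al)$, the differences $w_\al-\hat w=0$ and $r_\al-\hat r = T_\bw\hat r - T_{R_\al}\p_\al^{-1}\hat w$. Applying the standard paradifferential $\H^s$-boundedness estimates in Appendix \ref{s:Norms}, together with the bounds $\|\bw\|_{L^\infty}+\|R_\al\|_{L^\infty}\lesssim \CalAZ$ (which are part of the control norm), gives
\begin{equation*}
\|T_\bw\hat r-T_{R_\al}\p_\al^{-1}\hat w\|_{\H^s} \lesssim \CalAZ\|(\hat w,\hat r)\|_{\H^s},
\end{equation*}
which is exactly \eqref{wrInvertible}. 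Invertibility for small $\CalAZ$ then follows from a Neumann series argument after rewriting $(\hat w,\hat r)$ in terms of $(w_\al,r_\al)$.

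For the main verification, the strategy is to differentiate the left-hand side of each equation in \eqref{ParadifferentialFlow} in $\al$, and then match against \eqref{LinearHatEqn}. For the first equation this is immediate: applying $\p_\al$ to $T_{D_t}w+T_{1-\bar Y}r_\al+T_{(1-\bar Y)R_\al}w$ produces the left-hand side of the first equation of \eqref{LinearHatEqn} in the $\hat w,\hat r$ variables, up to commutator errors of the form $[T_{D_t},\p_\al]w$, $[\p_\al,T_{1-\bar Y}]r_\al$, and $T_{(1-\bar Y)R_\al}\hat w$, each of which is controlled in $\H^s$ by $\ASSharp\|(w,r)\|_{\H^s}$ using \eqref{CompositionPara} and \eqref{MBound}. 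For the second equation, the computation performed in the paragraph immediately preceding the statement already carries out the key cancellations: after expanding $\p_\al(T_{D_t}r-i\mathcal L_{para}w)$ using the expressions for $r_\al$ and $\mathcal L_{para}$, one replaces the time derivatives $T_{D_t\bw}\hat r$ and $T_{\p_\al D_tR}\p_\al^{-1}\hat w$ by their explicit expressions from \eqref{WParaMat} and \eqref{RParaMat}. The elastic fourth-order contribution $iT_{(1-Y)^2J^{-\f32}\p^4_\al\bw}\hat w$ arising from $T_{\p_\al D_tR}\p_\al^{-1}\hat w$ is exactly the one needed to cancel the elliptic pieces generated by the action of $\p_\al$ on the paradifferential coefficients in $\mathcal L_{para}$; the remaining first- and second-order contributions then match the sub-principal paradifferential terms in the second equation of \eqref{LinearHatEqn}, and everything else is absorbed into $\tilde K$.

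The main technical obstacle is bookkeeping: checking that each of the many commutators generated by the $\p_\al$ derivative, and each composition $T_aT_b - T_{ab}$, is indeed of perturbative $\H^s$ size $\lesssim_\CalAZ \ASSharp\|(w,r)\|_{\H^{s+1}}$. This is where the symbolic calculus rules \eqref{CompositionPara}, \eqref{CompositionTwo}, and the paralinearization bounds of Appendix \ref{s:Norms} do all the work; there is no new analytic difficulty, but the number of terms is large, and one must confirm case by case that every commutator either falls on a low-frequency coefficient (hence gains a derivative) or hits a high-frequency factor whose Zygmund/Sobolev norm is controlled by $\CalAZ$ and $\mathcal A_{\sharp,\f74}$. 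Once this bookkeeping is complete, dividing by $\p_\al$ on both sides (which is harmless at the level of paradifferential sources by \eqref{GKHsPerturb}) yields that $(w,r)$ solves \eqref{ParadifferentialFlow} with source terms $(G,K)$ obeying \eqref{HalfGK}, which completes the proof.
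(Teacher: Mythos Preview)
Your approach is correct and essentially identical to the paper's: differentiate the left-hand side of \eqref{ParadifferentialFlow} in $\alpha$, substitute the para-material derivatives of $\bw$ and $R$, and verify the result matches \eqref{LinearHatEqn} up to perturbative errors; the invertibility is a direct paraproduct bound.

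One small correction in the invertibility step: the claim $\|R_\al\|_{L^\infty}\lesssim\CalAZ$ is false, since $\CalAZ$ only controls $\|R\|_{C_*^{-3/2+\epsilon}}$, hence $\|R_\al\|_{C_*^{-5/2+\epsilon}}$. The bound on $T_{R_\al}\p_\al^{-1}\hat w$ still holds, but you must use \eqref{HsCmStar} with the negative-order Zygmund norm of $R_\al$ rather than \eqref{HsLinfty}: since $\p_\al^{-1}\hat w\in H^{s+5/2}$, the loss of $5/2-\epsilon$ derivatives from the rough symbol $R_\al$ still lands you in $H^{s+\epsilon}\subset H^s$. This is a bookkeeping slip, not a structural gap.
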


Hence, according to the previous Proposition, the $\H^{s+1}$ modified energy estimate for \eqref{ParadifferentialFlow} implies the $\H^s$ modified energy estimate of \eqref{LinearHatEqn} for $s> -1$.

\subsection{$\H^s$ energy estimate of the homogeneous paradifferential system} \label{s:HsHomogeneousSys}
In this section, we generalize the result in Proposition \ref{t:wellposedflow} to $s \geq 0$.
In other words, we prove the following modified energy estimate of \eqref{ParadifferentialFlow} for $s\geq 0$.
\begin{proposition} \label{t:Hswellposedflow}
Assume that $\mathcal{A}_0
\lesssim 1$ and $\mathcal{A}_{\sharp,\f74}  \in L^2_t([0,T])$ for some time $T>0$, then if $(w,r)$ solve the homogeneous paradifferential system \eqref{ParadifferentialFlow} on $[0,T]$, there exists an  energy functional $E^{s,para}_{lin}(w, r)$ such that on $[0,T]$ for $s\geq 0$:
\begin{enumerate}
\item Norm equivalence:
\begin{equation*}
    E^{s,para}_{lin}(w, r) = (1+O(\CalAZ)) \|(w,r)\|_{\H^s}^2.
\end{equation*}
\item The time derivative of $E^{s,para}_{lin}(w, r)$ is bounded by
\begin{equation*}
    \frac{d}{dt}  E^{s,para}_{lin}(w, r) \lesssim_{\CalAZ} \ASSharp \|(w,r)\|_{\H^s}^2.
\end{equation*}
\end{enumerate}  
\end{proposition}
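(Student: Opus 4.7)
The plan is to reduce the general case $s \geq 0$ to the already established $s=0$ case in Proposition \ref{t:wellposedflow} by conjugating the system via carefully chosen paradifferential weights. Concretely, I would introduce higher-regularity variables of the form
\begin{equation*}
w^{(s)} := T_{J^{\sigma_1(s)}}\,\partial_\alpha^s w, \qquad r^{(s)} := T_{J^{\sigma_2(s)}}\,\partial_\alpha^s r,
\end{equation*}
where the exponents $\sigma_1(s), \sigma_2(s)$ are selected so that $(w^{(s)}, r^{(s)})$ again solves a paradifferential flow of the form \eqref{ParadifferentialFlow} up to perturbative source terms $(G^{(s)}, K^{(s)})$ obeying \eqref{HalfGK}. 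The correct choice of exponents is dictated by the dispersion relation and the principal weights $J^{-5/4}, J^{1/4}$ already identified in $E_{lin}$: the exponents should be arranged so that the contributions from $\partial_t J^{\sigma_i(s)}$ (which, by Lemma \ref{t:JsParaMat}, produce precisely $b_\alpha$-type terms at leading order) cancel the commutator between $\partial_\alpha^s$ and the sub-principal terms of $\mathcal{L}_{para}$, mirroring the cancellation that singled out $-5/4, 1/4$ in Section \ref{s:HomoFlow}.

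Assuming this conjugation is carried out, I would then simply set
\begin{equation*}
E^{s,para}_{lin}(w,r) := E^{0,para}_{lin}\bigl(w^{(s)}, r^{(s)}\bigr).
\end{equation*}
Norm equivalence follows from the invertibility of the paraproduct weights $T_{J^{\sigma_i(s)}}$ (modulo lower-order errors controlled by $\CalAZ$), combined with the $s=0$ norm equivalence: $E^{s,para}_{lin}(w,r) \approx \|(w^{(s)}, r^{(s)})\|_{\H^0}^2 \approx \|(w,r)\|_{\H^s}^2$. The time-derivative bound then reduces to applying Proposition \ref{t:wellposedflow} to $(w^{(s)}, r^{(s)})$, whose source terms are perturbative by construction. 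The core computations rely on the paradifferential composition estimates \eqref{CompositionPara}--\eqref{CompositionTwo} and the para-material derivative identity in Lemma \ref{t:JsParaMat}.

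The main obstacle is the bookkeeping of commutators for the fourth-order dispersive operator $\mathcal{L}_{para}$, whose definition in \eqref{LLinDef} contains numerous sub-principal terms of different homogeneities. Pushing $T_{J^{\sigma_1(s)}}\partial_\alpha^s$ past each of these generates remainders of varying orders, and one must verify that the leading-order residues cancel precisely against the time derivative of the weight via Lemma \ref{t:JsParaMat}, leaving only perturbative terms bounded by $\ASSharp \|(w,r)\|_{\H^s}$. A secondary, more technical difficulty is ensuring that the transport-type term $T_{T_{1-\bar Y}R_\alpha}$ and the lower-order dispersive corrections in $\mathcal{L}_{para}$ commute with the conjugation up to symbols controlled in the Zygmund space $C^{1/4+\epsilon}_*$ rather than in $L^\infty$, so that the resulting error enters the source term through the Sobolev bound rather than through a loss of derivatives. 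Once the exponents $\sigma_i(s)$ are pinned down by the cancellation requirement and the perturbative system for $(w^{(s)}, r^{(s)})$ is established, the proof concludes with a direct invocation of Proposition \ref{t:wellposedflow}.
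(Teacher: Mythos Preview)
Your overall strategy --- conjugate by a paradifferential weight of the form $T_{J^{\sigma(s)}}\langle D\rangle^s$ and then invoke the $s=0$ result --- is indeed the first step of the paper's proof (the paper uses $\sigma_1=\sigma_2=-s/2$, and $\langle D\rangle^s$ rather than $\partial_\alpha^s$, which is needed for noninteger $s$). However, there is a genuine gap in the proposal: you assume that after choosing the exponents correctly, the conjugated variables $(w^{(s)},r^{(s)})$ will solve \eqref{ParadifferentialFlow} with \emph{perturbative} sources, and this is false.

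The paper carries out the conjugation in detail and shows that the conjugated pair $(\tilde w^s,\tilde r^s)=T_{J^{-s/2}}\langle D\rangle^s(w,r)$ satisfies \eqref{ParadifferentialFlow} with residual sources $(\mathcal{G}^s_1,\mathcal{K}^s_1)$ that are \emph{not} perturbative: they contain quadratic low-high paraproduct terms such as $2si\,T_{(1-Y)^2J^{-3/2}\bw_\alpha}\partial_\alpha^3\tilde w^s$ and commutator terms $L(\cdot,\cdot)$ of type \eqref{LDef} which individually fail the bound \eqref{HalfGK}. The time derivative of the weight (Lemma \ref{t:JsParaMat}) does absorb some of these, but the cancellation you anticipate is only partial; a full page of non-perturbative sources remains. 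To close, the paper must then perform an additional round of quadratic normal form corrections $(w^s_1,r^s_1)$ (with symbols solved from a $4\times 4$ linear system for each of the holomorphic and mixed types), followed by cubic normal form corrections $(w^s_2,r^s_2)$ for the remaining non-resonant cubic terms, and finally a quartic energy correction $E^s_{4,cor}$ to handle the resonant cubic pieces of the form $T_{|\bw_\alpha|^2}$ that cannot be removed by normal forms. The final energy is $E^{0,para}_{lin}(w^s_{NF},r^s_{NF})+E^s_{4,cor}$, not simply $E^{0,para}_{lin}(w^{(s)},r^{(s)})$. Your proposal is missing this entire normal-form layer, which is where most of the work lies.
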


We apply the operator $\langle D \rangle^s$ to the homogeneous paradifferential system \eqref{ParadifferentialFlow}.
Then $(w^s, r^s): = \langle D\rangle^s(w,r)$ solve the paradifferential equations
\begin{equation}  \label{HsFlowSys}
\begin{cases}
T_{D_t}w^s +T_{1-\bar{Y}}r_{\al}^s+T_{T_{1-\bar{Y}}R_{\al}}w^s= \mathcal{G}^s_0 +G_s,\\
T_{D_t}r^s-i\mathcal{L}_{para}w^s = \mathcal{K}^s_0 +K_s,
\end{cases}
\end{equation}
where the $(G_s, K_s)$ are perturbative source terms that satisfy
\begin{equation}
    \| (G_s, K_s)\|_{\mathcal{H}^0} \lesssim_\CalAZ \ASSharp\|(w,r)\|_{\H^s}, \label{GsHsSource}
\end{equation}
and source terms $(\mathcal{G}^s_0, \mathcal{K}^s_0)$ are given by the sum of paradifferential commutator terms:
\begin{align*}
    \mathcal{G}^s_0 =& L(b_\alpha, w^s)-L(\bar{Y}_\alpha, r^s) + L( (T_{1-\bar{Y}}\bar{R}_\alpha)_\alpha,\partial_\alpha^{-1}w^s) ,\\
    \mathcal{K}^s_0 =& L(b_\alpha, r^s) -iL( [(1-Y)J^{-\f32}]_\al, \p_\al^3 w^s) +5iL([(1-Y)^2J^{-\f32}\bw_\al]_\al, \p_\al^2 w^s)\\
    &+iL((J^{-\f52}\bar{\bw}_\al)_\al, \p_\al^2 w^s) +5iL([(1-Y)^2J^{-\f32}\bw_{\al\al}]_\al,  w^s_\al)- iL((J^{-\f52}\bar{\bw}_{\al\al})_\al, w^s_\al) \\
    &-15i L([(1-Y)^3J^{-\f32}\bw_\al^2]_\al,  w^s_\al) + \f52i L([(1-Y)^2J^{-\f32}\p_\al^3\bw]_\al,  w^s) \\
    &-\f32iL([J^{-\f52}\p_\al^3 \bar{\bw}]_\al, w^s) + iL([(1-Y)^2J^{-\f32}\p_\al^4\bw]_\al,  \p_\al^{-1}w^s).
\end{align*}
Here, $L$ denotes the zero order paradifferential commutator
\begin{equation} \label{LDef}
L(f_\alpha, u) = -[\langle D\rangle^s, T_f]\partial_\alpha \langle D\rangle^{-s}u \approx -sT_{f_\alpha} u + \text{lower order terms}.
\end{equation}

To have a clear view of $(\mathcal{G}^s_0, \mathcal{K}^s_0)$, we rewrite them as the sum of non-perturbative terms plus perturbative source terms $(G_s, K_s)$.
\begin{align*}
\mathcal{G}^s_0 &= L(T_{1-\bar{Y}}R_\alpha +T_{1-Y}\bar{R}_\alpha, w^s)-L((1-\bar{Y})^2\bar{\bw}_\alpha, r^s)+ L(T_{1-\bar{Y}}R_{\alpha \alpha}, \partial_\alpha^{-1}w^s)+G_s ,\\
 \mathcal{K}^s_0 &= L(T_{1-\bar{Y}}R_\alpha +T_{1-Y}\bar{R}_\alpha, r^s) +\f52iL( (1-Y)^2J^{-\f32} \bw_\al, \p_\al^3 w^s) +\f32iL(J^{-\f52} \bar{\bw}_\al, \p_\al^3 w^s)\\
 & +5iL((1-Y)^2J^{-\f32}\bw_{\al\al}, \p_\al^2 w^s) +iL(J^{-\f52}\bar{\bw}_{\al\al}, \p_\al^2 w^s)- \f{35}{2}iL((1-Y)^3J^{-\f32}\bw_{\al}^2, \p_\al^2 w^s) \\
 &-\f52iL((1-\bar{Y})J^{-\f52}\bar{\bw}_{\al}^2, \p_\al^2 w^s)- 10iL((1-Y)J^{-\f52}|\bw_{\al}|^2, \p_\al^2 w^s) \\
 &+5iL((1-Y)^2J^{-\f32}\p_\al^3\bw,  w^s_\al)- iL(J^{-\f52}\p_\al^3\bar{\bw}, w^s_\al)  + \f52i L((1-Y)^2J^{-\f32}\p_\al^4\bw,  w^s) \\
 &-\f32iL(J^{-\f52}\p_\al^4 \bar{\bw}, w^s) + iL((1-Y)^2J^{-\f32}\p_\al^5\bw,  \p_\al^{-1}w^s)+K_s.
\end{align*}

We further apply the paradifferential conjugation  $T_{J^{-\f{s}{2}}}$ to the system \eqref{HsTildeFlowSys}, and set $(\tilde{w}^s, \tilde{r}^s):  = T_{J^{-\f{s}{2}}}(w^s,r^s) = T_{J^{-\f{s}{2}}}\langle D\rangle^s(w,r)$.
One can think of \eqref{ParadifferentialFlow} as the quasilinear dispersive equation
\begin{equation*}
    \Big(\p_t + i T_{J^{-\f54}}|D|^\f52 \Big)w = \text{lower order nonlinear terms}. 
\end{equation*}
Since the leading part of the elliptic operator $\Big(T_{J^{-\f54}}|D|^\f52 \Big)^{\f{2s}{5}} \approx T_{J^{-\f{s}{2}}}|D|^s$,  the paradifferential conjugation $T_{J^{-\f{s}{2}}}$ is added to cancel some of the lower order non-perturbative terms.

We first compute each derivatives of $w^s$, writing them as the sum of derivatives of $\tilde{w}_s$.
\begin{align*}
&\p_{\al}w^s=\p_{\al}T_{J^{\f{s}{2}}}\tilde{w}^s+K_s=T_{\f{s}{2}}\tilde{w}^s_{\al}+T_{\p_{\al}J^{\f{s}{2}}}\tilde{w}^s+K_s,\\
&\p^2_{\al}w^s=\p^2_{\al}T_{J^{\f{s}{2}}}\tilde{w}^s+K_s=T_{J^{\f{s}{2}}}\tilde{w}^s_{\al\al}+2T_{\p_{\al}J^{\f{s}{2}}}\tilde{w}^s_{\al}+T_{\p^2_{\al}J^{\f{s}{2}}}\tilde{w}^s+K_s,\\
&\p^3_{\al}w^s=\p^3_{\al}T_{J^{\f{s}{2}}}\tilde{w}^s+K_s=T_{J^{\f{s}{2}}}\p^3_{\al}\tilde{w}^s+3T_{\p_{\al}J^{\f{s}{2}}}\tilde{w}^s_{\al\al}+3T_{\p^2_{\al} J^{\f{s}{2}}}\tilde{w}^s_{\al}+T_{\p^3_{\al}J^{\f{s}{2}}}\tilde{w}^s+K_s,\\
&\p^4_{\al}w^s=\p^4_{\al}T_{J^{\f{s}{2}}}\tilde{w}^s+K_s=T_{J^{\f{s}{2}}}\p^4_{\al}\tilde{w}^s+4T_{\p_{\al}J^{\f{s}{2}}}\p^3_{\al}\tilde{w}^s+6T_{\p^2_{\al}J^{\f{s}{2}}}\tilde{w}^s_{\al\al}+4T_{\p^3_{\al}J^{\f{s}{2}}}\tilde{w}^s_{\al}+T_{\p^4_{\al}J^{\f{s}{2}}}\tilde{w}^s_{\al}+K_s.
\end{align*}
One can also check that
\begin{align*}
\p_{\al}J^{\f{s}{2}}=&\f{s}{2} J^{\f{s}{2}}((1-Y)\bw_{\al}+(1-\bar{Y})\bar{\bw}_{\al}),\\
\p^2_{\al}J^{\f{s}{2}}=&\f{s}{2} J^{\f{s}{2}}((1-Y)\bw_{\al\al}+(1-\bar{Y})\bar{\bw}_{\al\al}) +\f{s}{2}\left(\f{s}{2}-1\right)J^{\f{s}{2}}(1-Y)^2\bw^2_{\al}\\
&+\f{s^2}{2} J^{\f{s}{2}-1}|\bw_{\al}|^2+\f{s}{2}\left(\f{s}{2}-1 \right)J^{\f{s}{2}}(1-\bar{Y})^2\bar{\bw}^2_{\al},\\
\p^3_{\al}J^{\f{s}{2}}=&\f{s}{2} J^{\f{s}{2}}((1-Y)\p^3_{\al}\bw+(1-\bar{Y})\p^3_{\al}\bar{\bw})+\f{3s}{2}\left(\f{s}{2}-1 \right)J^{\f{s}{2}}(1-Y)^2\bw_{\al}\bw_{\al\al}\\
&+ \f{3s^2}{4} J^{\f{s}{2}-1}\bar{\bw}_{\al}\bw_{\al\al}+\f{3s^2}{4} J^{\f{s}{2}-1}(1-\bar{Y})\bw_{\al}\bar{\bw}_{\al\al}+\f{3s}{2}\left(\f{s}{2}-1 \right)J^{\f{s}{2}}(1-\bar{Y})^2\bar{\bw}_{\al}\bar{\bw}_{\al\al} \\
&+\text{cubic terms with at most one derivative on $\bw$ or $\bar{\bw}$},\\
\p^4_{\al}J^{\f{s}{2}}=&\f{s}{2} J^{\f{s}{2}}((1-Y)\p^4_{\al}\bw+(1-\bar{Y})\p^4_{\al}\bar{\bw})+\left(s^2-2s\right)J^{\f{s}{2}}(1-Y)^2\bw_{\al}\p^3_{\al}\bw\\
&+s^2 J^{\f{s}{2}-1}\bar{\bw}_{\al}\p^3_{\al}\bw+s^2 J^{\f{s}{2}-1}(1-\bar{Y})\bw_{\al}\p^3_{\al}\bar{\bw} +(s^2 -2s)J^{\f{s}{2}}(1-\bar{Y})^2\bar{\bw}_{\al}\p^3_{\al}\bar{\bw}\\
&+ \text{cubic terms with at most two derivatives on $\bw$ or $\bar{\bw}$}.
\end{align*}
For non-elastic terms, following Section 5 in \cite{AIT}, we have
\begin{align*}
T_{J^{-\f{s}{2}}}T_{D_t}w^s &=T_{D_t}\tilde{w}^s-\f{s}{2} T_{b_{\al}}\tilde{w}^s+G_s,\\
T_{J^{-\f{s}{2}}}T_{D_t}r^s&=T_{D_t}\tilde{r}^s-\f{s}{2} T_{b_{\al}}\tilde{r}^s+K_s,\\
T_{J^{-\f{s}{2}}}T_{1-\bar{Y}}\p_{\al}r^s&=T_{1-\bar{Y}}\p_{\al}\tilde{r}^s+\f{s}{2}(T_{(1-\bar{Y})}T_{(1+\bw)Y_{\al}}+T_{\bar{Y}_{\al}})\tilde{r}^s+G_s\\
&= T_{1-\bar{Y}}\p_{\al}\tilde{r}^s+  \f{s}{2} T_{J^{-1}\bw_{\al}} \tilde{r}^s+\f{s}{2} T_{(1-\bar{Y})^2\bar{\bw}_{\al}}\tilde{r}^s +G_s ,\\
T_{J^{-\f{s}{2}}}T_{(1-\bar{Y})R_{\al}}w^s&=T_{(1-\bar{Y})R_{\al}}\tilde{w}^s+G_s.
\end{align*}
As for linearized elastic terms,
\begin{align*}
&T_{J^{-\f{s}{2}}}\mathcal{L}_{para}w^s\\
=&T_{(1-Y)J^{-\f32}}\p_{\al}^4\tilde{w}^s+4T_{(1-Y)J^{-\f32}J^{-\f{s}{2}}\p_{\al}J^{\f{s}{2}}}\p^3_{\al}\tilde{w}^s+6T_{(1-Y)J^{-\f32}J^{-\f{s}{2}}\p^2_{\al}J^{\f{s}{2}}}\tilde{w}^s_{\al\al}\\
&+4T_{(1-Y)J^{-\f32}J^{-\f{s}{2}}\p^3_{\al}J^{\f{s}{2}}}\tilde{w}^s_{\al}+T_{(1-Y)J^{-\f32}J^{-\f{s}{2}}\p^4_{\al}J^{\f{s}{2}}}\tilde{w}^s-5T_{(1-Y)^2J^{-\f32}\mathbf{W}_{\al}}\p^3_{\al}\tilde{w}^s\\
&-15T_{(1-Y)^2J^{-\f32}\mathbf{W}_{\al}J^{-\f{s}{2}}\p_{\al}J^{\f{s}{2}}}\tilde{w}^s_{\al\al}-15T_{(1-Y)^2J^{-\f32}\mathbf{W}_{\al}J^{-\f{s}{2}}\p^2_{\al}J^{\f{s}{2}}}\tilde{w}^s_{\al}-5T_{(1-Y)^2J^{-\f32}\mathbf{W}_{\al}J^{-\f{s}{2}}\p^3_{\al}J^{\f{s}{2}}}\tilde{w}^s\\
&-T_{J^{-\f52}\bar{\bw}_{\al}}\p^3_{\al}\tilde{w}^s-3T_{J^{-\f52}\bar{\bw}_{\al}J^{-\f{s}{2}}\p_{\al}J^{\f{s}{2}}}\tilde{w}^s_{\al\al}-3T_{J^{-\f52}\bar{\bw}_{\al}J^{-\f{s}{2}}\p^2_{\al}J^{\f{s}{2}}}\tilde{w}^s_{\al}-T_{J^{-\f52}\bar{\bw}_{\al}J^{-\f{s}{2}}\p^3_{\al}J^{\f{s}{2}}}\tilde{w}^s\\
&-5T_{(1-Y)^2J^{-\f32}\mathbf{W}_{\al\al}}\tilde{w}^s_{\al\al}-10T_{(1-Y)^2J^{-\f32}\mathbf{W}_{\al\al}J^{-\f{s}{2}}\p_{\al}J^{\f{s}{2}}}\tilde{w}^s_{\al}-5T_{(1-Y)^2J^{-\f32}\mathbf{W}_{\al\al}J^{-\f{s}{2}}\p^2_{\al}J^{\f{s}{2}}}\tilde{w}^s
\\
&+T_{J^{-\f52}\bar{\mathbf{W}}_{\al\al}}\tilde{w}^s_{\al\al}+2T_{J^{-\f52}\bar{\mathbf{W}}_{\al\al}J^{-\f{s}{2}}\p_{\al}J^{\f{s}{2}}}\tilde{w}^s_{\al}+T_{J^{-\f52}\bar{\mathbf{W}}_{\al\al}J^{-\f{s}{2}}\p^2_{\al}J^{\f{s}{2}}}\tilde{w}^s +15T_{J^{-\f52}\bar{\mathbf{W}}_{\al\al}}\tilde{w}^s_{\al\al} \\
&+30T_{(1-Y)^3J^{-\f32}\mathbf{W}^2_{\al}J^{-\f{s}{2}}\p_{\al}J^{\f{s}{2}}}\tilde{w}^s_{\al}+15T_{(1-Y)^3J^{-\f32}\mathbf{W}^2_{\al}J^{-\f{s}{2}}\p^2_{\al}J^{\f{s}{2}}}\tilde{w}^s-\f52T_{(1-Y)^2J^{-\f32}\p^3_{\al}\mathbf{W}}\tilde{w}^s_{\al}\\
&-\f52T_{(1-Y)^2J^{-\f32}\p^3_{\al}\mathbf{W}J^{-\f{s}{2}}\p_{\al}J^{\f{s}{2}}}\tilde{w}^s+\f32T_{J^{-\f52}\p^3_{\al}\bar{\mathbf{W}}}\tilde{w}^s_{\al}+\f32T_{J^{-\f52}\p^3_{\al}\bar{\mathbf{W}}J^{-\f{s}{2}}\p_{\al}J^{\f{s}{2}}}\tilde{w}^s\\
&-T_{(1-Y)^2J^{-\f32}\p^4_{\al}\mathbf{W}}\tilde{w}^s+K_s\\
=&T_{(1-Y)J^{-\f32}}\p_{\al}^4\tilde{w}^s+2s T_{(1-Y)^2J^{-\f32}\bw_{\al}}\p^3_{\al}\tilde{w}^s+2s T_{J^{-\f52}\bar{\bw}_{\al}}\p^3_{\al}\tilde{w}^s+3s T_{(1-Y)^2J^{-\f32}\p^2_{\al}\bw}\tilde{w}^s_{\al\al}\\
&+3s T_{J^{-\f52}\p^2_{\al}\bar{\bw}}\tilde{w}^s_{\al\al}+3s\left(\f{s}{2}-1\right)T_{(1-Y)^3J^{-\f32}\bw^2_{\al}}\tilde{w}^s_{\al\al}+3s^2T_{(1-Y)J^{-\f52}|\bw_{\al}|^2}\tilde{w}^s_{\al\al} \\
&+3s\left(\f{s}{2}-1\right)T_{(1-\bar{Y})J^{-\f52}\bar{\bw}_{\al}^2}\tilde{w}^s_{\al\al}+2s T_{(1-Y)^2J^{-\f32}\p^3_{\al}\bw}\tilde{w}^s_{\al}+2s T_{J^{-\f52}\p^3_{\al}\bar{\bw}}\tilde{w}^s_{\al}\\
&+\f{s}{2} T_{(1-Y)^2J^{-\f32}\p^4_{\al}\bw}\tilde{w}^s +\f{s}{2} T_{J^{-\f52}\p^4_{\al}\bar{\bw}}\tilde{w}^s-5T_{(1-Y)^2J^{-\f32}\mathbf{W}_{\al}}\p^3_{\al}\tilde{w}^s-\f{15}{2}s T_{(1-Y)^3J^{-\f32}\mathbf{W}^2_{\al}}\tilde{w}^s_{\al\al} \\
&-\f{15}{2}s T_{(1-Y)J^{-\f52}|\mathbf{W}_{\al}|^2}\tilde{w}^s_{\al\al}-T_{J^{-\f52}\bar{\bw}_{\al}}\p^3_{\al}\tilde{w}^s-\f{3s}{2} T_{(1-Y)J^{-\f52}|\bw_{\al}|^2}\tilde{w}^s_{\al\al}-\f{3s}{2} T_{(1-\bar{Y})J^{-\f52}\bar{\bw}_{\al}^2}\tilde{w}^s_{\al\al} \\
&-5T_{(1-Y)^2J^{-\f32}\mathbf{W}_{\al\al}}\tilde{w}^s_{\al\al} +T_{J^{-\f52}\bar{\mathbf{W}}_{\al\al}}\tilde{w}^s_{\al\al} +15T_{J^{-\f52}\bar{\mathbf{W}}_{\al\al}}\tilde{w}^s_{\al\al}-\f52T_{(1-Y)^2J^{-\f32}\p^3_{\al}\mathbf{W}}\tilde{w}^s_{\al}\\
&+\f32T_{J^{-\f52}\p^3_{\al}\bar{\mathbf{W}}}\tilde{w}^s_{\al}-T_{(1-Y)^2J^{-\f32}\p^4_{\al}\mathbf{W}}\tilde{w}^s+K_s.
\end{align*}

For the action of $T_{J^{-\f{s}{2}}}$ to source terms $(\mathcal{G}^s_0, \mathcal{K}^s_0)$, we compute
\begin{align*}
T_{J^{-\f{s}{2}}} \mathcal{G}^s_0 =& L(b_\alpha, \tilde{w}^s)-L(\bar{Y}_\alpha, \tilde{r}^s) + L( (T_{1-\bar{Y}}\bar{R}_\alpha)_\alpha,\partial_\alpha^{-1}\tilde{w}^s) ,\\
T_{J^{-\f{s}{2}}} \mathcal{K}^s_0 =& L(b_\alpha, \tilde{r}^s) -iL( [(1-Y)J^{-\f32}]_\al,\p^3_{\al}\tilde{w}^s+3T_{J^{-\f{s}{2}}\p_{\al}J^{\f{s}{2}}}\tilde{w}^s_{\al\al}) +5iL([(1-Y)^2J^{-\f32}\bw_\al]_\al, \tilde{w}^s_{\al\al})\\
&+iL((J^{-\f52}\bar{\bw}_\al)_\al, \tilde{w}^s_{\al\al}) +5iL([(1-Y)^2J^{-\f32}\bw_{\al\al}]_\al,  \tilde{w}^s_\al)- iL((J^{-\f52}\bar{\bw}_{\al\al})_\al, \tilde{w}^s_\al) \\
    &-15i L([(1-Y)^3J^{-\f32}\bw_\al^2]_\al,  \tilde{w}^s_\al) + \f52i L([(1-Y)^2J^{-\f32}\p_\al^3\bw]_\al,  \tilde{w}^s_{\al}) \\
    &-\f32iL([J^{-\f52}\p_\al^3 \bar{\bw}]_\al, \tilde{w}^s_{\al}) + iL([(1-Y)^2J^{-\f32}\p_\al^4\bw]_\al,  \p_\al^{-1}\tilde{w}^s) + K_s.
\end{align*}

Collecting all the terms above and simplifying, we get that $(\tilde{w}^s, \tilde{r}^s)$ solve the system of paradifferential equations 
\begin{equation}  
\begin{cases}
T_{D_t}\tilde{w}^s +T_{1-\bar{Y}}\partial_{\al} \tilde{r}^s+T_{T_{1-\bar{Y}}R_{\al}}\tilde{w}^s= \mathcal{G}^s_1+ G_s,\\
T_{D_t}\tilde{r}^s-i\mathcal{L}_{para}\tilde{w}^s = \mathcal{K}^s_1+ K_s,
\end{cases}
\end{equation}
where non-perturbative source terms $(\mathcal{G}^s_1, \mathcal{K}^s_1)$ are given by
\begin{align*}
\mathcal{G}^s_1 =& L(T_{1-\bar{Y}}R_\alpha +T_{1-Y}\bar{R}_\alpha, \tilde{w}^s)-L((1-\bar{Y})^2\bar{\bw}_\alpha, \tilde{r}^s)+ L(T_{1-\bar{Y}}R_{\alpha \alpha}, \partial_\alpha^{-1}\tilde{w}^s)\\
&+\f{s}{2} T_{T_{1-\bar{Y}}R_{\al}}\tilde{w}^s +\f{s}{2} T_{T_{1-Y}\bar{R}_{\al}}\tilde{w}^s-\f{s}{2} T_{J^{-1}\bw_{\al}} \tilde{r}^s-\f{s}{2} T_{(1-\bar{Y})^2\bar{\bw}_{\al}}\tilde{r}^s+G_s ,\\ 
\mathcal{K}^s_1 =& L(T_{1-\bar{Y}}R_\alpha +T_{1-Y}\bar{R}_\alpha, \tilde{r}^s)+2si T_{(1-Y)^2J^{-\f32}\bw_{\al}}\p^3_{\al}\tilde{w}^s +2si T_{J^{-\f52}\bar{\bw}_{\al}}\p^3_{\al}\tilde{w}^s\\
&+3si T_{(1-Y)^2J^{-\f32}\p^2_{\al}\bw}\tilde{w}^s_{\al\al}+3si T_{J^{-\f52}\p^2_{\al}\bar{\bw}}\tilde{w}^s_{\al\al}+3s\left(\f{s}{2}-\f72\right)iT_{(1-Y)^3J^{-\f32}\bw^2_{\al}}\tilde{w}^s_{\al\al}\\
&+3s(s-3)iT_{(1-Y)J^{-\f52}|\bw_{\al}|^2}\tilde{w}^s_{\al\al}+3s \left(\f{s}{2}-\f32 \right)iT_{(1-\bar{Y})J^{-\f52}\bar{\bw}_{\al}^2}\tilde{w}^s_{\al\al}+2si T_{(1-Y)^2J^{-\f32}\p^3_{\al}\bw}\tilde{w}^s_{\al}\\
&+2si T_{J^{-\f52}\p^3_{\al}\bar{\bw}}\tilde{w}^s_{\al}+\f{s}{2}i T_{(1-Y)^2J^{-\f32}\p^4_{\al}\bw}\tilde{w}^s+\f{s}{2}i T_{J^{-\f52}\p^4_{\al}\bar{\bw}}\tilde{w}^s  +\f{s}{2} T_{T_{1-\bar{Y}}R_{\al}}\tilde{r}^s\\
&+\f{s}{2} T_{T_{1-Y}\bar{R}_{\al}}\tilde{r}^s+\f52iL( (1-Y)^2J^{-\f32} \bw_\al, \p_\al^3 \tilde{w}^s) +\f{15}{4}si L( (1-Y)^3J^{-\f32} \bw^2_\al, \p_\al^2 \tilde{w}^s)\\ 
&+6si L( (1-Y)J^{-\f52} |\bw_\al|^2, \p_\al^2 \tilde{w}^s)+\f32iL(J^{-\f52} \bar{\bw}_\al, \p_\al^3 \tilde{w}^s) +\f94 si L( (1-\bar{Y})J^{-\f52} \bar{\bw}^2_\al, \p_\al^2 \tilde{w}^s) \\ 
& +5iL((1-Y)^2J^{-\f32}\bw_{\al\al}, \p_\al^2 \tilde{w}^s) +iL(J^{-\f52}\bar{\bw}_{\al\al}, \p_\al^2 \tilde{w}^s)- \f{35}{2}iL((1-Y)^3J^{-\f32}\bw_{\al}^2, \p_\al^2 \tilde{w}^s) \\ 
&-\f52iL((1-\bar{Y})J^{-\f52}\bar{\bw}_{\al}^2, \p_\al^2 w^s)- 10iL((1-Y)J^{-\f52}|\bw_{\al}|^2, \p_\al^2 \tilde{w}^s) \\ 
&+5iL((1-Y)^2J^{-\f32}\p_\al^3\bw,  \tilde{w}^s_\al)- iL(J^{-\f52}\p_\al^3\bar{\bw}, \tilde{w}^s_\al)  + \f52i L((1-Y)^2J^{-\f32}\p_\al^4\bw,  \tilde{w}^s) \\ 
&-\f32iL(J^{-\f52}\p_\al^4 \bar{\bw}, \tilde{w}^s) + iL((1-Y)^2J^{-\f32}\p_\al^5\bw,  \p_\al^{-1}\tilde{w}^s)+K_s.
\end{align*}

Since $(\mathcal{G}^s_1, \mathcal{K}^s_1)$ still have non-perturbative components, we cannot apply Proposition \ref{t:wellposedflow} to the system \eqref{HsFlowSys} directly to obtain Proposition \ref{t:Hswellposedflow}.
We will construct normal form variables $(w_{NF}^s, r_{NF}^s)$ as the sum of $(\tilde{w}^s, \tilde{r}^s)$, the quadratic normal form corrections $(w^s_1, r^s_1)$, and the cubic normal form corrections $(w^s_2, r^s_2)$ satisfying
\begin{equation*}
    \|(w^s_1, r^s_1)\|_{\H^0} \lesssim \CalAZ \|(w,r)\|_{\H^s}, \quad \|(w^s_2, r^s_2)\|_{\H^0} \lesssim \mathcal{A}^2_0 \|(w,r)\|_{\H^s}.
\end{equation*}
Moreover, $(w_{NF}^s, r_{NF}^s)$ solve the system  
\begin{equation}  \label{HsTildeFlowSys}
\begin{cases}
T_{D_t}w_{NF}^s +T_{1-\bar{Y}}\partial_{\al} r_{NF}^s+T_{T_{1-\bar{Y}}R_{\al}}w_{NF}^s= \mathcal{G}^{s, res}_{2}+G_s,\\
T_{D_t}r_{NF}^s-i\mathcal{L}_{para}w_{NF}^s = \mathcal{K}^{s, res}_{2}+K_s,
\end{cases}
\end{equation}
where $(\mathcal{G}^{s, res}_{2}, \mathcal{K}^{s, res}_{2})$ are non-perturbative cubic terms that may have resonances.
Proposition \ref{t:wellposedflow} can then be applied to the paradifferential system \eqref{HsTildeFlowSys}.
The modified energy
\begin{equation} \label{EsParalinDef}
 E^{s,para}_{lin}(w, r) := E^{0,para}_{lin}(w_{NF}^s , r_{NF}^s) + E^s_{4,cor}(\tilde{w}^s, \tilde{r}^s)
\end{equation}
is the final energy required for Proposition \ref{t:Hswellposedflow},
where the energy $E^{0,para}_{lin}(w,r)$ is the modified energy constructed in Proposition \ref{t:wellposedflow}, and $E^s_{4,cor}$ is a quartic modified energy.

In the rest of this section, we will construct the quadratic normal form corrections $(w^s_1, r^s_1)$, and the cubic normal form corrections $(w^s_2, r^s_2)$ and the quartic modified energy $E^s_{4,cor}(\tilde{w}^s, \tilde{r}^s)$.

\subsubsection{Construction of quadratic normal form corrections $(w^s_1, r^s_1)$}
We will first construct quadratic normal form corrections $(w^s_1, r^s_1)$ such that 
\begin{align*}
 &\p_t w^s_1+T_{1-\bar{Y}}\p_\al r^s_1 + \text{cubic and higher terms}\\
 =&  -\partial_\alpha L(R_\alpha , T_{1-\bar{Y}}\partial_\alpha^{-1}\tilde{w}^s) -\f{s}{2} T_{T_{1-\bar{Y}}R_{\al}}\tilde{w}^s +\f{s}{2} T_{J^{-1}\bw_{\al}} \tilde{r}^s \\
 &  - L(\bar{R}_\alpha , T_{1-Y}\tilde{w}^s) +L(\bar{\bw}_\alpha, T_{(1-\bar{Y})^2}\tilde{r}^s)  -\f{s}{2} T_{T_{1-Y}\bar{R}_{\al}}\tilde{w}^s+\f{s}{2} T_{(1-\bar{Y})^2\bar{\bw}_{\al}}\tilde{r}^s+ G_s,\\
&\p_t r^s_1-iT_{J^{-\frac{3}{2}}(1-Y)}\p_\al^4w^s_1 + \text{cubic and higher terms}\\
=& -L(R_\alpha , T_{1-\bar{Y}}\tilde{r}^s) -\f{s}{2} T_{T_{1-\bar{Y}}R_{\al}}\tilde{r}^s -2si T_{(1-Y)^2J^{-\f32}\bw_{\al}}\p^3_{\al}\tilde{w}^s -\f52iL\Big(\bw_\al, T_{(1-Y)J^{-\f32}}\p_\al^3 \tilde{w}^s\Big) \\
&-3si T_{(1-Y)^2J^{-\f32}\p^2_{\al}\bw}\tilde{w}^s_{\al\al}-5iL\Big(\bw_{\al\al}, T_{(1-Y)^2J^{-\f32}}\p_\al^2 \tilde{w}^s\Big)-2si T_{(1-Y)^2J^{-\f32}\p^3_{\al}\bw}\tilde{w}^s_{\al}\\
&-5iL\Big(\p_\al^3\bw,  T_{(1-Y)^2J^{-\f32}} \tilde{w}^s_\al\Big)-\f{s}{2}i T_{(1-Y)^2J^{-\f32}\p^4_{\al}\bw}\tilde{w}^s-\f52i L\Big(\p_\al^4\bw,  T_{(1-Y)^2J^{-\f32}} \tilde{w}^s \Big) \\
 &-iL\Big(\p_\al^5\bw,  T_{(1-Y)^2J^{-\f32}}\p_\al^{-1}\tilde{w}^s \Big) -L(\bar{R}_\alpha, T_{1-Y}\tilde{r}^s) -\f{s}{2} T_{T_{1-Y}\bar{R}_{\al}}\tilde{r}^s -2si T_{J^{-\f52}\bar{\bw}_{\al}}\p^3_{\al}\tilde{w}^s\\
 &-\f32iL\Big(\bar{\bw}_\al, T_{J^{-\f52}}\p_\al^3 \tilde{w}^s \Big) -3si T_{J^{-\f52}\p^2_{\al}\bar{\bw}}\tilde{w}^s_{\al\al}-iL\Big(\bar{\bw}_{\al\al}, T_{J^{-\f52}}\p_\al^2 \tilde{w}^s \Big)\\
 &-2si T_{J^{-\f52}\p^3_{\al}\bar{\bw}}\tilde{w}^s_{\al}+ iL\Big(\p_\al^3\bar{\bw}, T_{J^{-\f52}}\tilde{w}^s_\al \Big) +\f{s}{2}i T_{J^{-\f52}\p^4_{\al}\bar{\bw}}\tilde{w}^s-\f32iL\Big(\p_\al^4 \bar{\bw}, T_{J^{-\f52}}\tilde{w}^s \Big) + K_s.
\end{align*}

We consider normal form transformations $(w^s_1, r^s_1)$ as the sum of balanced paradifferential bilinear forms of the following type:
\begin{equation*} 
\begin{aligned}
w^s_1 &=B^s_{1,h}\left(\bw , T_{1-Y}\tilde{w}^s\right) + C^s_{1,h}\left(R , T_{J^{\frac{1}{2}}(1+\bw)^2}\tilde{r}^s\right)+ B^s_{1,a}\left(\bar{\bw}, T_{1-\bar{Y}}\tilde{w}^s\right) + C^s_{1,a}\left(\bar{R} , T_{J^{\frac{3}{2}}}\tilde{r}^s\right),\\
r^s_1 &=  A^s_{1,h} \left( \bw, T_{1-Y}\tilde{r}^s\right) + D^s_{1,h}\left(R, \tilde{w}^s\right)+A^s_{1,a} \left(\bar{\bw}, T_{1-\bar{Y}}\tilde{r}^s\right) + D^s_{1,a}\left( \bar{R}, T_{(1-Y)(1+\bar{\bw})}\tilde{w}^s\right).
\end{aligned}
\end{equation*}
For above paradifferential bilinear forms, we compute 
\begin{align*}
&\partial_t w^s_1+T_{1-\bar{Y}} \partial_\alpha r^s_1 + \text{cubic and higher terms} \\
=& T_{J^{-1}}\partial_\alpha A^s_{1,h}(\bw, \tilde{r}^s) -T_{J^{-1}}B^s_{1,h}(\bw, \tilde{r}^s_{\al}) +  T_{J^{-1}}C^s_{1,h}(i\p^4_{\al}\bw, \tilde{r}^s) \\
&-T_{1-\bar{Y}}B^s_{1,h}( R_\alpha, \tilde{w}^s)+T_{1-\bar{Y}}C^s_{1,h}(R,i\p_\al^4 \tilde{w}^s) + T_{1-\bar{Y}}\partial_\alpha D^s_{1,h}(R, \tilde{w}^s)\\
&+T_{(1-\bar{Y})^2}\p_{\al}A^s_{1,a}(\bar{\bw}, \tilde{r}^s)-T_{(1-\bar{Y})^2}B^s_{1,a}(\bar{\bw}, \tilde{r}^s_{\al})-T_{(1-\bar{Y})^2}C^s_{1,a}(i\p^4_{\al}\bar{\bw}, \tilde{r}^s)\\
 &-T_{1-Y}B^s_{1,a}(\bar{R}_\alpha, \tilde{w}^s)+T_{1-Y}C^s_{1,a}(\bar{R},i\p_\al^4 \tilde{w}^s) + T_{1-Y}\partial_\alpha D^s_{1,a}(\bar{R}, \tilde{w}^s),\\
&\partial_t r^s_1  -iT_{J^{-\frac{3}{2}}(1-Y)}\p_\al^4 w^s_1+ \text{cubic and higher  terms} \\
=& -T_{1-\bar{Y}}A^s_{1,h}( R_\alpha, \tilde{r}^s) - iT_{1-\bar{Y}}\partial_\alpha^4 C^s_{1,h}(R, \tilde{r}^s) - T_{1-\bar{Y}}D^s_{1,h}(R, \tilde{r}^s_\alpha)\\
&+ iT_{J^{-\frac{3}{2}}(1-Y)^2}A^s_{1,h}(\bw,\p_\al^4 \tilde{w}^s) -iT_{J^{-\frac{3}{2}}(1-Y)^2}\partial_\alpha^4 B^s_{1,h}(\bw, \tilde{w}^s) +iT_{J^{-\frac{3}{2}}(1-Y)^2}D^s_{1,h}(\p_\al^4\bw, \tilde{w}^s)\\
 &-T_{1-Y}A^s_{1,a}(\bar{R}_\alpha, \tilde{r}^s) - iT_{1-Y}\partial_\alpha^4 C^s_{1,a}( \bar{R}, \tilde{r}^s) - T_{1-Y}D^s_{1,a}(\bar{R}, \tilde{r}^s_\alpha)\\
&+ iT_{J^{-\f52}}A^s_{1,a}(\bar{\bw},\p_\al^4 \tilde{w}^s) -iT_{J^{-\f52}}\partial_\alpha^4 B^s_{1,a}(\bar{\bw}, \tilde{w}^s) -iT_{J^{-\f52}}D^s_{1,a}(\p_\al^4\bar{\bw}, \tilde{w}^s).
\end{align*}
We write $\mathfrak{a}^s_{1, h}(\xi, \eta)$ for the symbol of $A^{1,h}_{bal}(\bw, T_{1-Y}\tilde{r}^s)$, $\mathfrak{a}^s_{1,a}(\eta, \zeta)$ for the symbol of $A^{1,a}_{bal}(\bar{\bw}, T_{1-\bar{Y}}\tilde{r}^s)$ and similarly for other bilinear forms.
To match  paradifferential source terms of holomorphic type in $(\mathcal{G}_1^s, \mathcal{K}_1^s)$, paradifferential symbols of the holomorphic type solve the following algebraic systems:
\begin{equation*}
\begin{cases}
(\xi + \eta)\mathfrak{a}^s_{1, h}-\eta\mathfrak{b}^s_{1, h}+\xi^4\mathfrak{c}^s_{1, h}= \f{s}{2}\xi \chi_1(\xi,\eta),\\
\xi\mathfrak{b}^s_{1, h}-\eta^4\mathfrak{c}^s_{1, h}-(\xi+\eta)\mathfrak{d}^s_{1, h}= (\xi+\xi^2\eta^{-1})\chi_{3}(\xi,\eta)+\f{s}{2}\xi\chi_1(\xi,\eta),\\
\xi\mathfrak{a}^s_{1, h}+(\xi+\eta)^4\mathfrak{c}^s_{1, h}+\eta\mathfrak{d}^s_{1, h}=\xi\chi_{3}(\xi,\eta) + \f{s}{2}\xi\chi_1(\xi,\eta),\\
\eta^4 \mathfrak{a}^s_{1, h}-(\xi+\eta)^4\mathfrak{b}^s_{1, h}+\xi^4\mathfrak{d}^s_{1, h}=-\left(\f52 \xi\eta^3+5\xi^2 \eta^2+5\xi^3\eta+\f52\xi^4+\xi^5\eta^{-1}\right)\chi_{3}(\xi,\eta) \\
-s(2\xi \eta^3 + 3\xi^2\eta^2 + 2\xi^3\eta + \f12\xi^4)\chi_1(\xi,\eta),
\end{cases}
\end{equation*}
where $\chi_3(\xi, \eta)$ represents the symbolic relation for the bilinear form $L$ defined in \eqref{LDef} so that $|\xi| \ll |\eta|$.
The expressions for the bilinear symbols of holomorphic type are given by
\begin{align*}
\mathfrak{a}^s_{1, h}=&\f{-10\xi^7 + 5\xi^6\eta + 80\xi^5\eta^2 + 190\xi^4\eta^3+242\xi^3\eta^4+ 200\xi^2\eta^5+100\xi\eta^6+ 25\eta^7}{2\eta(25\xi^6+100\xi^5\eta+200\xi^4\eta^2+246\xi^3\eta^3+200\xi^2\eta^4+100\xi\eta^5+25\eta^6)}\chi_{3}(\xi,\eta)\\ 
&+\f{-2\xi^7 + 45\xi^6\eta + 190\xi^5\eta^2 + 390\xi^4\eta^3+487\xi^3\eta^4+ 400\xi^2\eta^5+200\xi\eta^6+ 50\eta^7}{4\eta(25\xi^6+100\xi^5\eta+200\xi^4\eta^2+246\xi^3\eta^3+200\xi^2\eta^4+100\xi\eta^5+25\eta^6)}s\chi_{1}(\xi,\eta),\\
\mathfrak{b}^s_{1, h}=&\f{25\xi^7 + 125\xi^6\eta + 300\xi^5\eta^2 + 442\xi^4\eta^3+442\xi^3\eta^4+ 300\xi^2\eta^5+125\xi\eta^6+ 25\eta^7}
{2\eta(25\xi^6+100\xi^5\eta+200\xi^4\eta^2+246\xi^3\eta^3+200\xi^2\eta^4+100\xi\eta^5+25\eta^6)}\chi_{3}(\xi,\eta)\\
&+ \f{-5\xi^7 + 35\xi^6\eta + 180\xi^5\eta^2 + 385\xi^4\eta^3+487\xi^3\eta^4+ 400\xi^2\eta^5+200\xi\eta^6+ 50\eta^7}
{4\eta(25\xi^6+100\xi^5\eta+200\xi^4\eta^2+246\xi^3\eta^3+200\xi^2\eta^4+100\xi\eta^5+25\eta^6)}s\chi_{1}(\xi,\eta) ,\\
\mathfrak{c}^s_{1, h}=&\f{5(\xi^4+3\xi^3\eta + 4\xi^2\eta^2+3\xi\eta^3+\eta^4)}
{\eta(25\xi^6+100\xi^5\eta+200\xi^4\eta^2+246\xi^3\eta^3+200\xi^2\eta^4+100\xi\eta^5+25\eta^6)}\chi_{3}(\xi,\eta)\\
&+\f{\xi^3(\xi+\eta)}
{\eta(25\xi^6+100\xi^5\eta+200\xi^4\eta^2+246\xi^3\eta^3+200\xi^2\eta^4+100\xi\eta^5+25\eta^6)}s\chi_{1}(\xi,\eta),\\
\mathfrak{d}^s_{1, h}=&-\f{5( 5\xi^7 + 20\xi^6\eta + 40\xi^5\eta^2 + 50\xi^4\eta^3+42\xi^3\eta^4+ 24\xi^2\eta^5+9\xi\eta^6+ 2\eta^7)}
{2\eta(25\xi^6+100\xi^5\eta+200\xi^4\eta^2+246\xi^3\eta^3+200\xi^2\eta^4+100\xi\eta^5+25\eta^6)}\chi_{3}(\xi,\eta)\\
&-\f{ 5\xi^7 + 10\xi^6\eta + 10\xi^5\eta^2 + 5\xi^4\eta^3+2\xi^3\eta^4}
{4\eta(25\xi^6+100\xi^5\eta+200\xi^4\eta^2+246\xi^3\eta^3+200\xi^2\eta^4+100\xi\eta^5+25\eta^6)}s\chi_{1}(\xi,\eta).
\end{align*}

Note that $\chi_3(\xi, \eta) = -s\chi_1(\xi,\eta) + \text{lower order terms}$.
These symbols of holomorphic type are of the following type:
\begin{equation*}
\mathfrak{a}^s_{1, h} = O(\xi \eta^{-1})\chi_{1}(\xi, \eta), \quad \mathfrak{b}^s_{1, h} = O( \xi \eta^{-1})\chi_{1}(\xi, \eta),  \quad \mathfrak{c}^s_{1, h} =O(\eta^{-3})\chi_{1}(\xi, \eta), \quad \mathfrak{d}^s_{1, h} = O(1) \chi_{1}(\xi, \eta).   
\end{equation*}

Similarly, to match  paradifferential source terms of the mixed type in $(\mathcal{G}_0^s, \mathcal{K}_0^s)$, paradifferential symbols of the mixed type solve the following algebraic system:
\begin{equation*}
\begin{cases}
(\zeta-\eta)\mathfrak{a}^s_{1, a}-\zeta\mathfrak{b}^s_{1, a}-\eta^4\mathfrak{c}^s_{1, a}=-\eta\chi_{3}(\eta,\zeta) -\f{s}{2}\eta \chi_1(\eta, \zeta),\\
\eta\mathfrak{b}^s_{1, a}+\zeta^4\mathfrak{c}^s_{1, a}+(\zeta-\eta)\mathfrak{d}^s_{1, a}=\eta\chi_{3}(\eta,\zeta) + \f{s}{2}\eta \chi_1(\eta, \zeta),\\
\eta\mathfrak{a}^s_{1, a}-(\zeta-\eta)^4\mathfrak{c}^s_{1, a}-\zeta\mathfrak{d}^s_{1, a}=\eta\chi_{3}(\eta,\zeta)+ \f{s}{2}\eta \chi_1(\eta, \zeta),\\
\zeta^4 \mathfrak{a}^s_{1, a}-(\zeta-\eta)^4\mathfrak{b}^s_{1, a}-\eta^4\mathfrak{d}^s_{1, a}=\left(\f32 \eta\zeta^3-\eta^2\zeta^2-\eta^3\zeta +\f32\eta^4\right)\chi_{3}(\eta,\zeta)\\
+ s\left(2 \eta\zeta^3-3\eta^2\zeta^2+2\eta^3\zeta -\f12\eta^4\right)\chi_{1}(\eta,\zeta),
\end{cases}
\end{equation*}
where $\chi_3(\eta, \zeta)$ represents the symbolic relation for the bilinear form $L$ defined in \eqref{LDef} so that $|\eta| \ll |\zeta|$.
The solutions of the above system are given by
\begin{align*}
\mathfrak{a}^s_{1, a}=&\f{ 8\eta^6-24\eta^5\zeta+49\eta^4\zeta^2-58\eta^3\zeta^3+75\eta^2\zeta^4-50\eta\zeta^5+25\zeta^6}{8\eta^6-24\eta^5\zeta+74\eta^4\zeta^2-108\eta^3\zeta^3+150\eta^2\zeta^4-100\eta\zeta^5+50\zeta^6}\chi_{3}(\eta, \zeta) + \f{s}{2}\chi_1(\eta, \zeta),\\
\mathfrak{b}^s_{1, a}=&\f{-2\eta^6+11\eta^5\zeta - 11\eta^4\zeta^2 + 17\eta^3\zeta^3+25\eta^2\zeta^4-25\eta\zeta^5+25\zeta^6}{8\eta^6-24\eta^5\zeta+74\eta^4\zeta^2-108\eta^3\zeta^3+150\eta^2\zeta^4-100\eta\zeta^5+50\zeta^6}\chi_{3}(\eta, \zeta) + \f{s}{2}\chi_1(\eta, \zeta),\\
\mathfrak{c}^s_{1, a}=&\f{10(\eta^2\zeta -\eta\zeta^2 + \zeta^3)}{8\eta^6-24\eta^5\zeta+74\eta^4\zeta^2-108\eta^3\zeta^3+150\eta^2\zeta^4-100\eta\zeta^5+50\zeta^6}\chi_{3}(\eta, \zeta),\\
\mathfrak{d}^s_{1, a}=&-\f{5(2\eta^6-5\eta^5\zeta+12\eta^4\zeta^2-13\eta^3\zeta^3+12\eta^2\zeta^4-5\eta\zeta^5+2\zeta^6)}{8\eta^6-24\eta^5\zeta+74\eta^4\zeta^2-108\eta^3\zeta^3+150\eta^2\zeta^4-100\eta\zeta^5+50\zeta^6}\chi_{3}(\eta, \zeta).
\end{align*}
Since $\chi_3(\eta, \zeta) = -s\chi_1(\eta,\zeta) + \text{lower order terms}$, these symbols of the mixed type are of the following type:
\begin{equation*}
\mathfrak{a}^s_{1, a} = O(\eta^3 \zeta^{-3})\chi_{1}(\eta, \zeta), \quad \mathfrak{b}^s_{1, a} = O(\eta\zeta^{-1})\chi_{1}(\eta, \zeta),  \quad \mathfrak{c}^s_{1, a} =O(\eta^{-3})\chi_{1}(\eta, \zeta), \quad \mathfrak{d}^s_{1, a}  = O(1)\chi_{1}(\eta, \zeta).
\end{equation*}
Therefore, we get the upper bound of $(w^s_1, r^s_1)$ in $\H^0$.
\begin{equation*}
    \|(w^s_1, r^s_1)\|_{\H^0} \lesssim \CalAZ \|(\tilde{w}^s, \tilde{r}^s)\|_{\H^0}  \lesssim \CalAZ \|(w,r)\|_{\H^s}.
\end{equation*}

\subsubsection{$\H^s$ normal form analysis for cubic terms of the homogeneous paradifferential system}
Then we construct cubic normal form corrections $(w^s_2, r^s_2)$ to eliminate remaining non-resonant non-perturbative cubic terms.
According to the computations for quadratic normal form corrections $(w^s_1, r^s_1)$, we get 
\begin{equation*} 
\begin{cases}
T_{D_t}w^s_1 +T_{1-\bar{Y}}\p_\al r^s_1+T_{T_{1-\bar{Y}}R_{\al}}w^s_1= -\mathcal{G}^s_1 + \mathcal{G}^s_2+G_s,\\
T_{D_t}r^s_1-i\mathcal{L}_{para}w^s_1 = -\mathcal{K}^s_1  +\mathcal{K}^s_2  +K_s,
\end{cases}
\end{equation*}
where $(\mathcal{G}^s_2, \mathcal{K}^s_2)$ are cubic and higher terms given by
\begin{align*}
\mathcal{G}^s_2 &=  C_1 T_{J^{-1}(1-\bar{Y}) |\bw_\al|^2} \partial_\al^{-1}\tilde{r}^s + C_2i T_{(1-\bar{Y})^3 \bar{\bw}_\al^2} \partial_\al^{-1}\tilde{r}^s,\\
\mathcal{K}^s_2 &=  C_3 T_{J^{-\f32}(1-Y)^3\bw^2_\al} \tilde{w}_{\al \al}^s + C_4i T_{J^{-\f52}(1-Y)|\bw_\al|^2} \tilde{w}_{\al \al}^s + C_5i T_{J^{-\f52}(1-\bar{Y})^3 \bar{\bw}^2_\al} \tilde{w}_{\al \al}^s ,
\end{align*}
for some constants $C_i$ that depend on $s$.
The non-resonant part of the cubic terms in $(\mathcal{G}^s_2, \mathcal{K}^s_2)$ are 
\begin{align*}
\mathcal{G}^{s,non}_2 &=   C_2i T_{(1-\bar{Y})^3 \bar{\bw}_\al^2} \partial_\al^{-1}\tilde{r}^s,\\
\mathcal{K}^{s,non}_2 &=  C_3i T_{J^{-\f32}(1-Y)^3\bw^2_\al} \tilde{w}_{\al \al}^s + C_5i T_{J^{-\f52}(1-\bar{Y})^3 \bar{\bw}^2_\al} \tilde{w}_{\al \al}^s .
\end{align*}
According to the discussion in Appendix \ref{s:Discuss}, we can construct cubic modified variables  $(w^s_2, r^s_2) = O_{\H^0}(\mathcal{A}_0^2\|(\tilde{w}^s, \tilde{r}^s)\|^2_{\H^0})$ to eliminate $(\mathcal{G}^{s,non}_2, \mathcal{K}^{s,non}_2)$.
They do not produce any extra non-perturbative terms.

For cubic terms that may have resonance, 
\begin{equation*}
 \mathcal{G}^{s,res}_2 =  C_1 T_{J^{-1}(1-\bar{Y}) |\bw_\al|^2} \partial_\al^{-1}\tilde{r}^s, \quad \mathcal{K}^{s,res}_2 = C_4 T_{J^{-\f52}(1-Y)|\bw_\al|^2} \tilde{w}_{\al \al}^s,
\end{equation*}
one cannot use cubic normal form directly.
Setting 
\begin{equation*}
   w^s_{NF}: = \tilde{w}^s + w^s_1 + w^s_2, \quad  r^s_{NF}: = \tilde{r}^s + r^s_1 + r^s_2.
\end{equation*}
$(w^s_{NF}, r^s_{NF})$ solve the system \eqref{HsTildeFlowSys}. 
We get that
\begin{equation*}
   E^{0,para}_{lin}(w_{NF}^s , r_{NF}^s) = (1+O(\mathcal{A}_0))\|(w^s_{NF},r^s_{NF})\|^2_{\H^0} = (1+O(\mathcal{A}_0))\|(w,r)\|^2_{\H^s}.
\end{equation*}

In addition, according to the computation in Section \ref{s:HomoFlow},
\begin{align*}
    &\f{d}{dt} E^{0,para}_{lin}(w_{NF}^s , r_{NF}^s) = -2\Re \int  T_{J^{-\f54}} \partial_\al\mathcal{G}^{s,res}_2\cdot \p_\al^2 \bar{w}^s_{NF} + 2\Re \int r^s_{NF} \cdot T_{J^\f14}\bar{\mathcal{K}}^{s,res}_2 \,d\al \\
    =&-2C_2\Re\int i T_{J^{-\f94}(1-\bar{Y})|\bw_\al|^2}\tilde{r}^s \cdot \p_\al^2 \bar{\tilde{w}}^s\,d\al - 2C_4 \Re \int i\tilde{r}^s \cdot   T_{J^{-\f94}(1-\bar{Y})|\bw_\al|^2} \p_\al^2 \bar{\tilde{w}}^s\,d\al \\
    &+ O\left(\ASSharp \right) \|(\tilde{w}^s, \tilde{r}^s)\|_{\H^0}^2 \\
    =&-2(C_2 + C_4)\Re\int i T_{J^{-\f94}(1-\bar{Y})|\bw_\al|^2}\tilde{r}^s \cdot \p_\al^2 \bar{\tilde{w}}^s \,d\al + O\left(\ASSharp \right) \|(w, r)\|_{\H^s}^2 .
\end{align*}
We choose the quartic energy correction
\begin{equation*}
   E^s_{4,cor}(\tilde{w}^s, \tilde{r}^s) = (C_2 + C_4)\Re\int \p_\al^{-1} \tilde{r}^s\cdot T_{J^{-\f34}|\bw_\al|^2} \p_\al^{-1} \bar{\tilde{r}}^s\,d\al.
\end{equation*}
The energy satisfies
\begin{equation*}
|E^s_{4,cor}(\tilde{w}^s, \tilde{r}^s)| \lesssim \|\bw \|^2_{C^\epsilon_*} \|\tilde{r}^s\|^2_{L^2} \lesssim \mathcal{A}_0^2 \|r\|^2_{H^s}.
\end{equation*}
For its time derivative, 
\begin{align*}
\f{d}{dt}E^s_{4,cor}(\tilde{w}^s, \tilde{r}^s) =& 2 (C_2 + C_4)\Re\int \p_\al^{-1} \tilde{r}^s\cdot T_{J^{-\f34}|\bw_\al|^2} \p_\al^{-1} \bar{\tilde{r}}^s_t\,d\al \\
&+ (C_2 + C_4)\Re\int \p_\al^{-1} \tilde{r}^s\cdot T_{\p_t(J^{-\f34}|\bw_\al|^2)} \p_\al^{-1} \bar{\tilde{r}}^s\,d\al \\
=& 2(C_2 + C_4)\Re\int i T_{J^{-\f94}(1-\bar{Y})|\bw_\al|^2}\tilde{r}^s \cdot \p_\al^2 \bar{\tilde{w}}^s \,d\al + O\left(\ASSharp \right) \|(w, r)\|_{\H^s}^2 .
\end{align*}

Hence, choosing the modified energy  defined in \eqref{EsParalinDef}, we finish the proof of Proposition \ref{t:Hswellposedflow}.

As a direct corollary of Proposition \ref{t:HatwrHsWellposed} and Proposition \ref{t:Hswellposedflow}, we obtain the modified energy estimate of \eqref{LinearHatEqn}.

\begin{proposition} 
Assume that $\mathcal{A}_0
\lesssim 1$ and $\mathcal{A}_{\sharp,\f74}  \in L^2_t([0,T])$ for some time $T>0$, then if $(\hat{w},\hat{r})$ solve the homogeneous paradifferential system \eqref{LinearHatEqn} on $[0,T]$, there exists an  energy functional $\tilde{E}^{s,para}_{lin}(w, r)$ such that on $[0,T]$ for $s\geq 0$, we have the following two properties:
\begin{enumerate}
\item Norm equivalence:
\begin{equation*}
    \tilde{E}^{s,para}_{lin}(\hat{w},\hat{r}) = (1+O(\CalAZ)) \|(\hat{w},\hat{r})\|_{\H^s}^2.
\end{equation*}
\item The time derivative of $\tilde{E}^{s,para}_{lin}(\hat{w},\hat{r})$ is bounded by
\begin{equation*}
    \frac{d}{dt}  \tilde{E}^{s,para}_{lin}(\hat{w},\hat{r}) \lesssim_{\CalAZ} \ASSharp \|(\hat{w},\hat{r})\|_{\H^s}^2.
\end{equation*}
\end{enumerate}  
\end{proposition}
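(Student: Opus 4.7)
The plan is to obtain the energy for $(\hat w,\hat r)$ by applying Proposition \ref{t:Hswellposedflow} at index $s+1$ to the anti-derivative pair $(w,r)$ supplied by Proposition \ref{t:HatwrHsWellposed}, and then transferring both the norm equivalence and the differential inequality back to $(\hat w,\hat r)$ via the invertibility estimate \eqref{wrInvertible}.

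Concretely, given $(\hat w,\hat r)$ solving \eqref{LinearHatEqn}, I would define
\begin{equation*}
(w,r):=\bigl(\partial_\alpha^{-1}\hat w,\;\partial_\alpha^{-1}T_{1+\bw}\hat r-\partial_\alpha^{-1}T_{R_\alpha}\partial_\alpha^{-1}\hat w\bigr)
\end{equation*}
as in \eqref{wrHatRelation}. By Proposition \ref{t:HatwrHsWellposed}, the pair $(w,r)$ solves the homogeneous paradifferential flow \eqref{ParadifferentialFlow}. Since Proposition \ref{t:Hswellposedflow} holds for all $s\geq 0$, in particular at index $s+1\geq 1$, it produces a modified energy functional $E^{s+1,para}_{lin}(w,r)$ with $E^{s+1,para}_{lin}(w,r)=(1+O(\CalAZ))\|(w,r)\|_{\H^{s+1}}^2$ and $\tfrac{d}{dt}E^{s+1,para}_{lin}(w,r)\lesssim_\CalAZ \ASSharp\|(w,r)\|_{\H^{s+1}}^2$. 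I would then simply set
\begin{equation*}
\tilde E^{s,para}_{lin}(\hat w,\hat r):=E^{s+1,para}_{lin}(w,r).
\end{equation*}

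For the norm equivalence, the key observation is that $\|(w,r)\|_{\H^{s+1}}=\|w\|_{H^{s+\f52}}+\|r\|_{H^{s+1}}\approx \|(w_\alpha,r_\alpha)\|_{\H^s}$ for holomorphic data (the zero-frequency obstruction to $\partial_\alpha^{-1}$ is absent since the Fourier support lies on the negative half-line, so anti-differentiation gains exactly one full derivative on these norms). Combining this with the invertibility bound \eqref{wrInvertible}, which under $\mathcal{A}_0\lesssim 1$ gives $\|(w_\alpha,r_\alpha)\|_{\H^s}=(1+O(\CalAZ))\|(\hat w,\hat r)\|_{\H^s}$, yields
\begin{equation*}
\tilde E^{s,para}_{lin}(\hat w,\hat r)=(1+O(\CalAZ))\|(\hat w,\hat r)\|_{\H^s}^2,
\end{equation*}
and transports the cubic inequality $\tfrac{d}{dt}E^{s+1,para}_{lin}(w,r)\lesssim_\CalAZ \ASSharp\|(w,r)\|_{\H^{s+1}}^2$ into the required bound $\tfrac{d}{dt}\tilde E^{s,para}_{lin}(\hat w,\hat r)\lesssim_\CalAZ \ASSharp\|(\hat w,\hat r)\|_{\H^s}^2$.

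The principal technical point to verify carefully—really the only non-cosmetic step—is the norm equivalence $\|w\|_{H^{s+\f52}}\approx\|\hat w\|_{H^{s+\f32}}$ and its analogue for $r$ on the line: one must check that $\partial_\alpha^{-1}$ is bounded between the relevant inhomogeneous Sobolev spaces when restricted to the holomorphic (resp.\ anti-holomorphic) sector, and that the quadratic para-coefficient correction $T_{1+\bw}\hat r-T_{R_\alpha}\partial_\alpha^{-1}\hat w$ produces only an $O(\CalAZ)$ distortion of $\|\hat r\|_{H^s}$ via the symbolic calculus in Appendix \ref{s:Norms}. Given this (standard) verification, the statement follows without any new energy construction, justifying its description as a direct corollary.
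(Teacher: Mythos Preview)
Your proposal is correct and follows exactly the approach the paper takes. The paper states this proposition as ``a direct corollary of Proposition \ref{t:HatwrHsWellposed} and Proposition \ref{t:Hswellposedflow}'' without further proof, and the sentence immediately preceding that proposition already spells out the mechanism you use: ``the $\H^{s+1}$ modified energy estimate for \eqref{ParadifferentialFlow} implies the $\H^s$ modified energy estimate of \eqref{LinearHatEqn}.'' Your definition $\tilde E^{s,para}_{lin}(\hat w,\hat r):=E^{s+1,para}_{lin}(w,r)$ together with the invertibility bound \eqref{wrInvertible} is precisely the intended argument, and your flagging of the $\partial_\alpha^{-1}$ boundedness on the holomorphic sector as the only technical point is accurate.
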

Given $(\bw, R)$ that solve the hydroelastic waves \eqref{HF19} on $[0,T]$, since the corresponding modified variables $(\bw_{NF}, R_{NF})$  satisfy  the bound \eqref{DiffNfBound} and solve \eqref{LinearHatEqn}.
We get that 
\begin{equation*}
    E_s(\bw, R): = \tilde{E}^{s,para}_{lin}(\bw_{NF}, R_{NF})
\end{equation*}
is the desired modified energy that finishes the proof of Theorem \ref{t:MainEnergyEst}.

\section{Low regularity well-posedness of the hydroelastic waves} \label{s:LowWellPosed}
In this final section, we give an outline of how to prove the local well-posedness of differentiated hydroelastic waves \eqref{HF19}.
For a detailed exposition of the argument, we refer the interested reader to Section $7$ of the low-regularity well-posedness theory for gravity water waves  \cite{AIT}.

First, for any initial data with high regularity $\mathcal{H}^{\f52k}$ for $k>2$, a solution exists based on the result in \cite{MR4104949}.
Moreover, by the modified energy estimate in Theorem \ref{t:MainEnergyEst}, if $\mathcal{A}_0(t) \lesssim 1$ and $\ASSharp(t) \in L^2([0,T])$, using Gronwall's inequality
\begin{equation*}
   \|(\bw, R)(t)\|_{\mathcal{H}^{\f52k}}  \lesssim_\CalAZ \exp{ \ \left\{\int_0^t \ASSharp(\tau) \,d\tau \right\}  }  \|(\bw, R)(0)\|_{\mathcal{H}^{\f52k}}.
\end{equation*}
By Sobolev embedding, if $(\bw, R)(t) \in \H^s$ for $s>\f34$, then $\mathcal{A}_{\sharp, \f74}(t) \lesssim \|(\bw, R)(t) \|_{\H^s}$.

We then construct rough solutions as the unique limit of smooth solutions.
Given solution $(\bw_0, R_0) \in \H^s$ for $s>\f34$, we perform frequency truncation on the initial data, and obtain $(\bw_0^k, R_0^k) = P_{<k}(\bw_0, R_0) \in \mathcal{H}^{\f52k}$ for $k>2$. 
The corresponding solutions $(\bw^k, R^k)$ exist with a uniform lifespan bound.
Utilizing the method of frequency envelopes (see Definition $7.1 $ in \cite{AIT}) and energy estimate Theorem \ref{t:MainEnergyEst}, we obtain an upper bound for $(\bw^k, R^k)$ in $\H^{\f{5}{2}k}$ for $k>2$. 
On the other hand, 
\begin{equation*}
    (w^k, r^k) = (\partial_k W^{k}, \partial_k Q^{k} - R^{k} \partial_k W^{k} )
\end{equation*}
solve the corresponding linearized equations around $(\bw^{k}, R^{k})$.
Using the modified energy estimate for the linearized hydroelastic waves Theorem \ref{t:LinearizedWellposed},  
one can establish the bound for the difference $(\bw^{k+1}-\bw^{k}, R^{k+1}-R^{k})$ in $\mathcal{H}^{0}$.
Summing over $k$ and using interpolation, the sequence $(\bw^{k}, R^{k})$ converges to a solution $(\bw, R)$ with uniform $\mathcal{H}^{s}$ bound in time interval $[0,T]$.
This process also establishes uniqueness, as the solution is defined as the unique limit of regular solutions.

Finally, we demonstrate the continuous dependence on initial data for rough solutions.
The proof follows essentially the same argument as the final paragraph of Section $7.2$ in \cite{AIT} using frequency envelopes, and is therefore omitted here.

\section*{Acknowledgments}
Jiaqi Yang is supported by National Natural Science Foundation of China under Grant: 12471225.

\appendix

\section{Paradifferential estimates} \label{s:Norms}
In this first part of the appendix, we list the definition of norms and recall paraproducts and paradifferential estimates we have used in previous sections.
Many of these definitions and estimates are relatively standard.
They can be found in for instance \cite{MR2931520, MR3260858} or the textbooks \cite{MR2768550, MR2418072}.

\subsection{Norms and function spaces}
We recall the  Littlewood-Paley frequency decomposition,
\begin{equation*}
    I = \sum_{k\in \mathbb{N}} P_k, 
\end{equation*}
where for each $k\geq 1$, $P_k$ are smooth symbols  localized at  frequency $2^k$,  and $P_0$ selects the low frequency components $|\xi|\leq 1$.
\begin{enumerate}
\item Let $s\in \mathbb{R}$, and $p,q \in [1, \infty]$.
The non-homogeneous Besov space $B^s_{p,q}(\mathbb{R})$ is defined as the space of all tempered distributions $u$ such that
\begin{equation*}
\| u\|_{B^s_{p,q}} : = \left\|(2^{ks}\|P_k u \|_{L^p})_{k=0}^\infty \right\|_{l^q} < +\infty.
\end{equation*}
\item When $p = q = \infty$, Besov space $B^s_{\infty, \infty}$ coincides with the \textit{Zygmund space} $C^s_{*}$.
When $p = q =2$, the Besov space $B^s_{2,2}$ becomes the \textit{Sobolev space} $H^s$.
\item Let $1\leq p_1 \leq p_2 \leq \infty$, $1\leq r_1 \leq r_2 \leq \infty$, then for any real number $s$,
\begin{equation*}
    B^s_{p_1, r_1}(\mathbb{R}) \hookrightarrow B^{s-(\frac{1}{p_1} - \frac{1}{p_2})}_{p_2, r_2}(\mathbb{R}).
\end{equation*}
As a special case when $p_1 = r_1 =2$ and $p_2 = r_2 = \infty$, 
\begin{equation}
 H^{s+\frac{1}{2}}(\mathbb{R}) \hookrightarrow C^s_{*}(\mathbb{R}) \quad \forall s, \label{HsCsEmbed}
\end{equation}
the Sobolev space $H^{s+\frac{1}{2}}(\mathbb{R})$ can be embedding into the Zygmund space $C^s_{*}(\mathbb{R})$.
\item Let $k\in \mathbb{N}$, we let $W^{k,\infty}(\mathbb{R})$ the space of all functions such that $\partial_x^j u \in L^\infty(\mathbb{R})$, $0\leq j \leq k$. 
For $\rho = k+ \sigma$ with $k\in \mathbb{N}$ and $\sigma \in (0,1)$, we denote $W^{\rho, \infty}(\mathbb{R})$  the
space of all function $u\in W^{k,\infty}(\mathbb{R})$ such that the $\partial_x^k u$ is $\sigma$- H\"{o}lder continuous on $\mathbb{R}$. 
\item The Zygmund space $C^s_{*}(\mathbb{R})$ is just the H\"{o}lder space $W^{s, \infty}(\mathbb{R})$ when $s\in (0,\infty)\backslash \mathbb{N}$.
One has the embedding properties
\begin{align*}
  &C_{*}^s(\mathbb{R}) \hookrightarrow L^\infty(\mathbb{R}), \quad s>0; \qquad L^\infty(\mathbb{R}) \hookrightarrow C_{*}^s, \quad s<0;\\
  &C_{*}^{s_1}(\mathbb{R})\hookrightarrow C_{*}^{s_2}(\mathbb{R}), \quad H^{s_1}(\mathbb{R})\hookrightarrow H^{s_2}(\mathbb{R}), \qquad s_1>s_2.
\end{align*}
\end{enumerate}

\subsection{ Paradifferential and  Moser type estimates}
\begin{definition}
\begin{enumerate}
\item Let $\rho\in [0,\infty)$, $m\in \mathbb{R}$. 
$\Gamma^m_\rho(\mathbb{R})$ denotes the space of locally bounded functions $a(x, \xi)$ on $\mathbb{R}\times (\mathbb{R}\backslash \{0\})$, which are $C^\infty$ with respect to $\xi$ for $\xi \neq 0$ and such that for all $k \in \mathbb{N}$ and $\xi \neq 0$, the function $x\mapsto \partial_\xi^k a(x,\xi)$ belongs to $W^{\rho,\infty}(\mathbb{R})$ and there exists a constant $C_k$ with
\begin{equation*}
\forall |\xi|\geq \frac{1}{2}, \quad \|\partial_\xi^k a(\cdot,\xi) \|_{W^{\rho,\infty}} \leq C_k (1+ |\xi|)^{m-k}.
\end{equation*}
Let $a\in \Gamma^m_\rho$,  we define the semi-norm
\begin{equation*}
M^m_{\rho}(a) = \sup_{k \leq \frac{3}{2}+\rho} \sup_{|\xi|\geq \frac{1}{2}} \|(1+ |\xi|)^{k-m}\partial_\xi^k a(\cdot,\xi)  \|_{W^{\rho,\infty}}.
\end{equation*}
\item Given $a\in \Gamma^m_\rho(\mathbb{R})$, let $C^\infty$ functions $\chi(\theta, \eta)$ and $\psi(\eta)$ be such that for some $0<\epsilon_1 < \epsilon_2<1$,
\begin{align*}
    &\chi(\theta, \eta) = 1,  \text{ if } |\theta| \leq \epsilon_1(1+ |\eta|), \qquad \chi(\theta, \eta) = 0,  \text{ if } |\theta| \geq \epsilon_2(1+ |\eta|),\\
    &\psi(\eta) = 0, \text{ if } |\eta|\leq \frac{1}{5}, \qquad \psi(\eta) =1, \text{ if } |\eta|\geq \frac{1}{4}.
\end{align*}
We define the paradifferential operator $T_a$ by
\begin{align*}
 \widehat{T_a u}(\xi) = \frac{1}{2\pi}\int \chi(\xi -\eta, \eta) \hat{a}(\xi-\eta, \eta)\psi(\eta)\hat{u}(\eta) d\eta,
\end{align*}
where $\hat{a}(\theta, \xi)$ is the Fourier transform of a with respect to the  variable $x$.

%\item $\Sigma^m_\rho(\mathbb{R})$ denotes the space of symbols $a(x,\xi)$ such that
%\begin{equation*}
%    a = \sum_{0\leq j < \rho} a^{(m-j)}, \quad j\in \mathbb{N},
%\end{equation*}
%where $a^{(m-j)}\in \Sigma^{m-j}_{\rho-j}(\mathbb{R})$ is homogeneous of degree $m-j$ with respect to $\xi$.

\item Let $m\in \mathbb{R}$, an operator  is said to be of order $m$ if, for all $s\in \mathbb{R}$, it is bounded from $H^s$ to $H^{s-m}$.
\item  Let $\rho\in (-\infty, 0)$, $m\in \mathbb{R}$. 
$\Gamma^m_\rho(\mathbb{R})$ denotes the space of distributions $a(x, \xi)$ on $\mathbb{R}\times (\mathbb{R}\backslash \{0\})$, which are $C^\infty$ with respect to $\xi$ for $\xi \neq 0$ and such that for all $k \in \mathbb{N}$ and $\xi \neq 0$, the function $x\mapsto \partial_\xi^k a(x,\xi)$ belongs to $C^{\rho}_* (\mathbb{R})$ and there exists a constant $C_k$ with
\begin{equation*}
\forall |\xi|\geq \frac{1}{2}, \quad \|\partial_\xi^k a(\cdot,\xi) \|_{C^{\rho}_*} \leq C_k (1+ |\xi|)^{m-k}.
\end{equation*}
Let $a\in \Gamma^m_\rho$,  we define the semi-norm
\begin{equation*}
M^m_{\rho}(a) = \sup_{k \leq \frac{3}{2}+|\rho|} \sup_{|\xi|\geq \frac{1}{2}} \|(1+ |\xi|)^{k-m}\partial_\xi^k a(\cdot,\xi)  \|_{C^{\rho}_*}.
\end{equation*}
\end{enumerate}
\end{definition}

We recall the basic symbolic calculus for paradifferential operators in the following result.
\begin{lemma}[Symbolic calculus, \cite{MR2418072}]
Let $m\in \mathbb{R}$ and $\rho\in [0, +\infty)$.
\begin{enumerate}
\item If $a\in \Gamma^m_0$, then the paradifferential operator $T_a$ is of order m. Moreover, for all $s\in \mathbb{R}$, there exists a positive constant $K$ such that
\begin{equation}
\|T_a\|_{H^s\rightarrow H^{s-m}} \leq K M^m_0(a). \label{TABound}
\end{equation}
\item If $a\in \Gamma^m_\rho$, and $b\in \Gamma^{m^{'}}_\rho$ with $\rho>0$, then the operator $T_a T_b -T_{a\sharp b}$ is of order $m+m^{'}-\rho$, where the composition
\begin{equation*}
    a \sharp b := \sum_{\alpha<\rho} \frac{(-i)^\alpha}{\alpha !} \partial^\alpha_\xi a(x,\xi) \partial^\alpha_x b(x,\xi).
\end{equation*}
Moreover,  for all $s\in \mathbb{R}$, there exists a positive constant $K$ such that
\begin{equation}
  \|T_a T_b-T_{a\sharp b} \|_{H^s \rightarrow H^{s-m-m^{'}+\rho}} \leq K\left(M^m_\rho(a)M^{m^{'}}_0(b) + M^m_0(a)M^{m^{'}}_\rho(b)\right). \label{CompositionPara}
\end{equation}
\item Let $a\in \Gamma^m_\rho$ with $\rho > 0$. 
Denote by $(T_a)^{*}$ the adjoint operator of $T_a$ and by $\bar{a}$ the complex conjugate of $a$.
Then $(T_a)^{*} -T_{a^{*}}$ is of order $m - \rho$, where 
\begin{equation*}
    a^{*} =  \sum_{\alpha<\rho} \frac{1}{i^\alpha \alpha !} \partial^\alpha_\xi \partial^\alpha_x \bar{a}.
\end{equation*}
Moreover,  for all $s\in \mathbb{R}$, there exists a positive constant $K$ such that
\begin{equation}
 \|(T_a)^{*} -T_{a^{*}} \|_{H^s \rightarrow H^{s-m+\rho}} \leq KM^m_\rho(a). \label{AdjointBound}
\end{equation}
In particular, if $a$ is a function that is independent of $\xi$, then  $(T_a)^* = T_{\bar{a}}$.
\end{enumerate}
\end{lemma}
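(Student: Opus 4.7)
The plan is to establish the three parts of the symbolic calculus lemma via dyadic decomposition and Taylor expansion of symbols, which are the standard tools for paradifferential operators going back to Bony.

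For part (i), the key observation is that the cutoff $\chi(\xi-\eta,\eta)$ restricts the spectrum of $T_a u$ in the interaction variable $\xi-\eta$ to a frequency strip $|\xi-\eta|\leq \epsilon_2(1+|\eta|)$, i.e.\ the frequency of the symbol is kept much smaller than that of $u$. First I would decompose $a(x,\xi) = \sum_k a_k(x,\xi)$ via a partition of unity in $\xi$ adapted to dyadic scales $|\xi|\sim 2^k$ and similarly for $u = \sum_j P_j u$. The spectral support condition forces the output $T_{a_k}(P_j u)$ to be supported near frequency $2^k$ only when $j\sim k$, so one gets an almost-orthogonal dyadic sum. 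On each piece one estimates using Plancherel and the uniform bound on the $L^\infty_x$ norm of $\partial_\xi^\alpha a$, combined with Bernstein's inequality for the low frequency side. Summing in $\ell^2$ via Cauchy-Schwarz yields $\|T_a u\|_{H^{s-m}}\lesssim M_0^m(a)\|u\|_{H^s}$.

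For part (ii), I would start from the integral kernel representation $\widehat{T_aT_b u}(\xi) = \iint K_a(\xi,\eta) K_b(\eta,\zeta) \hat u(\zeta)\,d\zeta\,d\eta$, where $K_a(\xi,\eta) = \chi(\xi-\eta,\eta)\hat a(\xi-\eta,\eta)\psi(\eta)$ and similarly for $K_b$. The idea is to reduce the double convolution to the symbol $a\sharp b$ via a Taylor expansion of $a(x,\eta)$ at $\eta=\zeta$ up to order $\lfloor\rho\rfloor$. Each term of the Taylor polynomial reconstructs one summand in the definition of $a\sharp b$, modulo small corrections coming from the cutoff $\chi$. The remainder term involves integrals of $\partial_\xi^\rho a$, which lie in $C_*^0$ (or $W^{\rho-\lfloor\rho\rfloor,\infty}$), and whose Fourier decay in the low frequency variable gives exactly $\rho$ extra powers of smoothness, yielding the gain $\|T_aT_b - T_{a\sharp b}\|_{H^s\to H^{s-m-m'+\rho}}\lesssim$ the claimed bilinear semi-norm bound. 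Part (iii) follows a parallel route: write $(T_a)^* u$ via the conjugate kernel $\overline{K_a(\eta,\xi)}$, Taylor expand $\bar a(x,\eta)$ at $\eta=\xi$, and identify the polynomial with $a^*$, estimating the remainder identically.

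The technical obstacle, and the step most deserving of care, will be the remainder bound in the Taylor expansion for part (ii) when $\rho$ is not an integer. There one has to paralinearize the non-integer derivative $\partial_x^\rho$ hitting the symbol $a$ using the Zygmund scale $C_*^{\rho-\lfloor\rho\rfloor}$, and verify that the spectral localization of $\chi$ is compatible with the frequency decomposition used to define $C_*^\sigma$. All remaining steps are largely bookkeeping: expanding in Taylor polynomials, tracking how many powers of $|\eta|^{-1}$ each derivative produces, and re-summing via Littlewood-Paley square functions. The estimates \eqref{TABound}, \eqref{CompositionPara}, \eqref{AdjointBound} then follow by collecting these contributions and invoking part (i) on the remainder symbols, which by construction lie in $\Gamma^{m+m'-\rho}_0$ (respectively $\Gamma^{m-\rho}_0$) with semi-norms controlled by the stated products of $M^m_\rho$ and $M^{m'}_0$.
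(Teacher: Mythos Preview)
The paper does not supply its own proof of this lemma: it is stated in the appendix as a cited result from \cite{MR2418072} (M\'etivier's book on paradifferential calculus), and no argument is given. So there is nothing in the paper to compare your proposal against.

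That said, your sketch is the standard route and is essentially correct. A few small remarks: in part (i) the dyadic decomposition in $\xi$ of the symbol is not strictly needed, since the spectral localization from $\chi$ already forces the output of $T_a(P_j u)$ to live near frequency $2^j$; one simply freezes $\xi$ at scale $2^j$ and uses the $L^\infty_x$ bound on $a(\cdot,\xi)$. In parts (ii)--(iii) the Taylor expansion of the symbol in $\xi$ is indeed the main mechanism, and the fractional-$\rho$ remainder is handled exactly as you describe, via the $W^{\rho,\infty}$ (or $C_*^\rho$) regularity in $x$ of the symbol combined with spectral localization. Finally, the last sentence of part (iii) (``if $a$ is independent of $\xi$ then $(T_a)^* = T_{\bar a}$'') is not a consequence of the estimate \eqref{AdjointBound} but a separate, easier identity following directly from the kernel definition and the symmetry $\chi(\theta,\eta)=\chi(-\theta,\eta+\theta)$ (up to harmless adjustments of the cutoff); you should flag that it requires its own one-line check rather than being absorbed into the remainder bound.
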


In Besov spaces, we have similar results for symbolic calculus.
\begin{lemma}[\hspace{1sp}\cite{MR3585049}]
Let $m, m^{'}, s\in \mathbb{R}$, $q\in [1,\infty]$ and $\rho\in [0, + \infty)$.
\begin{enumerate}
\item If $a\in \Gamma^m_0$, then there exists a positive constant $K$ such that
\begin{equation*}
\|T_a\|_{B^s_{\infty, q}\rightarrow B^{s-m}_{\infty, q}} \leq KM^m_0(a).
\end{equation*}
\item If $a\in \Gamma^m_\rho$, and $b\in \Gamma^m_\rho$, then there exists a positive constant $K$ such that
\begin{equation}
  \|T_a T_b-T_{a \sharp b} \|_{B^s_{\infty, q} \rightarrow B^{s-m-m^{'}+\rho}_{\infty, q}} \leq K\left(M^m_\rho(a)M^{m^{'}}_0(b) + M^m_0(a)M^{m^{'}}_\rho(b)\right). \label{CompositionTwo}
\end{equation}
\end{enumerate}
In particular, when $q = \infty$, above symbolic calculus results hold for Zygmund spaces $C^s_{*}$.
\end{lemma}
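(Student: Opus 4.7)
The plan is to mimic the classical strategy for symbolic calculus of paradifferential operators in Sobolev spaces, as developed in Bony's original work and recalled in the M\'etivier monograph cited as \cite{MR2418072} (and whose Sobolev version has just been stated in the previous lemma, with bounds \eqref{TABound}, \eqref{CompositionPara}, \eqref{AdjointBound}), and to verify that every step admits a Besov-space analogue. The key structural observation is that the paradifferential operator $T_a$ is spectrally almost-diagonal: the cut-off $\chi(\xi-\eta,\eta)$ in its definition forces $T_a$ to map a frequency band $|\eta|\sim 2^k$ of the input into a comparable frequency band $|\xi|\sim 2^k$ of the output. This localization is precisely what lets one swap the $L^2$-Plancherel step used in the Sobolev proof for the direct $L^\infty$-control needed in the Besov setting, at the price of reading off an $\ell^q$ assembly at the end instead of an $\ell^2$ one.

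For the continuity statement (i), I would decompose $u=\sum_{k\ge 0}P_k u$, observe that each $T_a P_k u$ is spectrally localized near frequency $2^k$ with uniformly bounded overlap, and prove the pointwise convolution bound $\|T_a P_k u\|_{L^\infty}\lesssim 2^{km}M^m_0(a)\|P_k u\|_{L^\infty}$. This reduces to estimating the kernel $K_k(x,y)$ of $T_a$ restricted to inputs of frequency $\sim 2^k$: Young's inequality gives $\|K_k(x,\cdot)\|_{L^1_y}\lesssim (1+2^k)^m M^m_0(a)$, after integrating the $\eta$-variable against the bump $\psi$ and using the $L^\infty_x$-control built into the symbol class $\Gamma^m_0$. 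Multiplying by $2^{k(s-m)}$ and taking the $\ell^q$-norm in $k$ yields the claimed $B^s_{\infty,q}\to B^{s-m}_{\infty,q}$ bound with constant $K M^m_0(a)$.

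For the composition estimate (ii), I would Taylor-expand the left symbol $a(x,\xi)$ in the $\xi$-variable around the output frequency $\eta$ of $T_b u$, up to order $\rho$. The partial sum of this expansion is, by definition, the composed symbol $a\sharp b$, and the remainder is a symbol in $\Gamma^{m+m'-\rho}_0$. A routine bookkeeping of the remainder (identical to the Sobolev proof) yields the split seminorm bound $M^m_\rho(a)M^{m'}_0(b)+M^m_0(a)M^{m'}_\rho(b)$: the first factor arises from putting all $\rho$ derivatives on the $x$-regular symbol $a$, while the second comes from integrating by parts the Taylor remainder in $\xi$ against $b$, producing $\rho$ derivatives of $b$ in $x$. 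Applying (i) to this remainder symbol gives exactly the claimed gain of $\rho$ derivatives. The Zygmund-space statement follows immediately by setting $q=\infty$ and using $C^s_*=B^s_{\infty,\infty}$.

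The main obstacle will be the clean handling of the Taylor remainder when $\rho$ is not an integer, so that only $\lfloor\rho\rfloor$ classical $x$-derivatives of $a$ are available in $L^\infty$ and the residual $\rho-\lfloor\rho\rfloor$ regularity has to be extracted from the $C^\rho_*$ modulus of continuity. The standard fix is to introduce an auxiliary Littlewood-Paley decomposition of $a(\cdot,\xi)$ in the $x$-variable and exploit the sharp bound $\|P_j a(\cdot,\xi)\|_{L^\infty}\lesssim 2^{-j\rho}(1+|\xi|)^m M^m_\rho(a)$, together with the analogous bound for $b$. Inserting these into the remainder estimate from (i) and summing geometrically in $j$ gives the missing fractional gain. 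No conceptually new ingredient is required beyond the Sobolev proof: the only change is that the final $L^2$-Plancherel assembly is replaced throughout by an $L^\infty$-$\ell^q$ assembly, exactly as in the proof of Lemma 2.1 of \cite{MR3585049}.
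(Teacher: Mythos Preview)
The paper does not actually prove this lemma: it is stated in Appendix~\ref{s:Norms} as a known result cited from \cite{MR3585049}, with no proof given. Your sketch follows the standard Bony--M\'etivier strategy (spectral localization, kernel bounds, Taylor expansion of the symbol, Littlewood--Paley decomposition of the symbol to handle fractional~$\rho$), which is indeed the approach taken in the cited reference, so there is nothing to compare.
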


When $a$ is only a function of $x$, $T_a u$ is the low high paraproduct.
We then define 
\begin{equation*}
\Pi(a, u) := au -T_a u -T_u a
\end{equation*}
to be the high-high paraproduct.
For later use, we record below some estimates for paraproducts.

\begin{lemma} \label{t:ParaProductEst}
\begin{enumerate}
\item Let $\alpha, \beta \in \mathbb{R}$. 
If $\alpha+ \beta >0$, then
\begin{align}
&\|\Pi(a, u)\|_{H^{\alpha + \beta}(\mathbb{R})} \lesssim \| a\|_{C_{*}^\alpha(\mathbb{R})} \| u\|_{H^\beta(\mathbb{R})}, \label{HCHEstimate}\\
&\|\Pi(a, u)\|_{H^{\alpha + \beta}(\mathbb{R})} \lesssim \| a\|_{W^{\alpha,4}(\mathbb{R})} \| u\|_{W^{\beta,4}(\mathbb{R})}, \label{HLLEstimate}\\
& \|\Pi(a, u)\|_{C_{*}^{\alpha + \beta}(\mathbb{R})} \lesssim \| a\|_{C_{*}^\alpha(\mathbb{R})} \| u\|_{C_{*}^\beta(\mathbb{R})}. \label{CCCEstimate}
\end{align}
\item Let $m > 0$ and $s\in \mathbb{R}$, then
\begin{align}
&\|T_{a} u \|_{H^{s-m}(\mathbb{R})} \lesssim \|a\|_{C_{*}^{-m}(\mathbb{R})} \| u \|_{H^s(\mathbb{R})} \label{HsCmStar}, \\
&\|T_{a} u \|_{H^{s}(\mathbb{R})} \lesssim \|a\|_{L^\infty(\mathbb{R})} \| u \|_{H^s(\mathbb{R})}, \label{HsLinfty}\\
&\|T_{a} u \|_{H^{s-m}(\mathbb{R})} \lesssim \|a\|_{W^{-m,4}(\mathbb{R})} \| u \|_{W^{s,4}(\mathbb{R})}, \label{HsLFour}\\
&\|T_{a} u \|_{H^{s-m}(\mathbb{R})} \lesssim \|a\|_{H^{-m}(\mathbb{R})} \| u \|_{C_{*}^s(\mathbb{R})}, \label{HsHmCStar}\\
&\|T_{a} u \|_{C_{*}^{s-m}(\mathbb{R})} \lesssim \|a\|_{C_{*}^{-m}(\mathbb{R})} \| u \|_{C_{*}^s(\mathbb{R})}, \label{CsCmStar}\\
&\|T_{a} u \|_{C_{*}^{s}(\mathbb{R})} \lesssim \|a\|_{L^\infty(\mathbb{R})} \| u \|_{C_{*}^s(\mathbb{R})}. \label{CsLInfty}
\end{align}
\item Let  a smooth function $F\in C^\infty(\mathbb{C}^N)$ satisfying $F(0) = 0$.
There exists a nondecreasing function $\mathcal{F}: \mathbb{R}_{+} \rightarrow \mathbb{R}_{+}$ such that,
\begin{align}
&\|F(u) \|_{H^s} \leq \mathcal{F}(\|u\|_{L^\infty}) \|u\|_{H^s},\quad s\geq 0,  \label{MoserOne}\\
&\|F(u) \|_{C_{*}^s} \leq \mathcal{F}(\|u\|_{L^\infty}) \|u\|_{C_{*}^s},\quad s>0. \label{MoserTwo}
\end{align}
\item Let $s_1 > s_2 > 0$, then
\begin{equation}
    \|uv\|_{C^{-s_2}_*}\lesssim \|u\|_{C^{s_1}_*} \|v\|_{C^{-s_2}_*}. \label{CNegativeAlpha}
\end{equation}
\end{enumerate}
\end{lemma}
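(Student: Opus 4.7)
The plan is to treat each family of estimates via the Littlewood--Paley frequency decomposition combined with Bony's decomposition
\[ au = T_a u + T_u a + \Pi(a,u), \]
where the high-high remainder is $\Pi(a,u) = \sum_{|j-k|\le N}\Delta_j a \,\Delta_k u$ (up to reindexing by comparability of frequencies). All pieces of the lemma reduce to careful summations involving Bernstein's inequality $\|\Delta_k f\|_{L^p} \lesssim 2^{k(\frac{1}{q}-\frac{1}{p})}\|\Delta_k f\|_{L^q}$ and almost-orthogonality in $L^2$.

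For the high-high estimates \eqref{HCHEstimate}--\eqref{CCCEstimate}, I write $\Pi(a,u) = \sum_k \Lambda_k$ with $\Lambda_k := \sum_{|j-k|\le N} \Delta_j a\,\Delta_k u$, noting that $\widehat{\Lambda_k}$ is supported in a ball of radius $\sim 2^k$. For \eqref{HCHEstimate} I bound $\|\Lambda_k\|_{L^2}\lesssim 2^{-k\alpha}\|a\|_{C^\alpha_*} 2^{-k\beta}c_k\|u\|_{H^\beta}$ with $(c_k)\in \ell^2$, then use $P_\ell \Pi(a,u) = \sum_{k\ge \ell -N}P_\ell\Lambda_k$ together with the condition $\alpha+\beta>0$ to sum the resulting geometric series. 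The same pattern, with Hölder in frequency-localized $L^4\times L^4\to L^2$, yields \eqref{HLLEstimate}; for \eqref{CCCEstimate} I instead measure each $\Lambda_k$ in $L^\infty$ and take the supremum over $\ell$. The low-high estimates \eqref{HsCmStar}--\eqref{CsLInfty} are even simpler: since $\Delta_k (T_a u) \approx S_{k-N}(a)\Delta_k u$ up to paradifferential errors, I bound in the appropriate $L^p$ and multiply by $2^{ks}$.

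For the Moser composition \eqref{MoserOne}--\eqref{MoserTwo}, I use the telescoping identity $F(u) = \sum_k F(S_{k+1}u) - F(S_k u) = \sum_k \Delta_k u \cdot m_k$ with $m_k := \int_0^1 F'(S_k u + t\Delta_k u)\,dt$. Since $F$ is smooth and $u\in L^\infty$, each $m_k$ is bounded by $\mathcal{F}(\|u\|_{L^\infty})$ uniformly in $k$, and the Fourier support of $\Delta_k u \cdot m_k$ lies in a ball of radius $\lesssim 2^k$. Reassembling into Littlewood--Paley pieces gives the Sobolev and Zygmund bounds. Finally, for \eqref{CNegativeAlpha} I Bony-decompose $uv = T_u v + T_v u + \Pi(u,v)$ and estimate each: $\|T_u v\|_{C^{-s_2}_*}\lesssim \|u\|_{L^\infty}\|v\|_{C^{-s_2}_*}$ by \eqref{CsLInfty}, $\|T_v u\|_{C^{s_1-s_2}_*}\lesssim \|v\|_{C^{-s_2}_*}\|u\|_{C^{s_1}_*}$ by \eqref{CsCmStar}, and $\|\Pi(u,v)\|_{C^{s_1-s_2}_*}\lesssim \|u\|_{C^{s_1}_*}\|v\|_{C^{-s_2}_*}$ by \eqref{CCCEstimate}, using the embedding $C^{s_1-s_2}_* \hookrightarrow C^{-s_2}_*$ since $s_1-s_2>-s_2$ (which needs only $s_1>0$, but the stated hypothesis $s_1>s_2$ ensures both an overall positive gain and the needed convergence of the high-high sum).

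The routine step to watch is keeping track of whether the geometric sums in $k$ converge with the given sign of exponents: in particular the condition $\alpha+\beta>0$ in \eqref{HCHEstimate}--\eqref{CCCEstimate} is exactly what forces absolute convergence of $\sum_k 2^{-k(\alpha+\beta)}c_k$, and the hypothesis $s_1>s_2$ in \eqref{CNegativeAlpha} plays the analogous role for the $\Pi$-term. I expect no essential obstacle, only careful bookkeeping; the slightly delicate point is handling the $T_v u$ piece in \eqref{CNegativeAlpha}, where $v$ must be treated as a distribution of negative regularity, so one needs the low-frequency cutoff built into $T_v$ to make sense of the product before Littlewood--Paley summation.
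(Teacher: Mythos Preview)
The paper does not actually prove this lemma: it is presented in the appendix as a collection of standard paraproduct and Moser-type estimates, with the surrounding text referring the reader to the textbooks of Bahouri--Chemin--Danchin and M\'etivier and to the papers of Alazard--Burq--Zuily. Your proposal supplies exactly the standard Littlewood--Paley argument one finds in those references, and the outline is correct in all four parts; in particular your treatment of \eqref{CNegativeAlpha} via Bony decomposition, applying \eqref{CsLInfty}, \eqref{CsCmStar}, and \eqref{CCCEstimate} to the three pieces and then embedding $C^{s_1-s_2}_*\hookrightarrow C^{-s_2}_*$, is the right way to assemble the product estimate from the building blocks already established.
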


When we need to commute the para-coefficients and the balanced paraproducts, we need the following results of the para-associativity.
\begin{lemma}[Para-associativity, \cite{AIT}] \label{t:Paraassociavity} 
 For $s+s_2 > 0$, $s+s_1+s_2>0$, and $s_2<1$, we have
\begin{align*}
&\| T_f\Pi(v,u)-\Pi(v, T_fu)\|_{ C_{*}^{s+s_1+s_2}} \lesssim \|f\|_{C^{s_1}_{*}}\|v\|_{C^{s_2}_{*}}\|u\|_{C^{s}_{*}}, \\
&\| T_f\Pi(v,u)-\Pi(v, T_fu)\|_{ H^{s+s_1+s_2}} \lesssim \|f\|_{C^{s_1}_{*}}\|v\|_{C^{s_2}_{*}}\|u\|_{H^s}, \\
&\| T_f\Pi(v,u)-\Pi(v, T_fu)\|_{ W^{s+s_1+s_2,4}} \lesssim \|f\|_{C^{s_1}_{*}}\|v\|_{C^{s_2}_{*}}\|u\|_{W^{s,4}}.
\end{align*}
\end{lemma}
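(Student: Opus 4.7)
My plan is to prove all three estimates by a unified Littlewood-Paley analysis. Writing the paradifferential operator and balanced paraproduct in dyadic form $T_f u = \sum_k P_{<k-N}f \cdot P_k u$ and $\Pi(v,w) = \sum_{|j-l|\leq N'}P_j v \cdot P_l w$, I would first expand $T_f\Pi(v,u)$ and $\Pi(v,T_f u)$ into triple sums indexed by the frequencies of $f$, $v$, $u$. In the second expression the outer projection $P_l$ acting on $T_f u$ forces the frequency of $f$ to be much smaller than $2^l$; after exploiting this constraint and reorganizing the dyadic bookkeeping, the two expressions differ by a residue that can be written schematically as a sum of discrete commutator terms of the form $[P_{<k-N}f,\,P_j v\,\cdot]\,P_l u$ with $k \ll j \sim l$, plus low-order remainders coming from replacing $P_{<k-N}f$ by $P_{<l-N}f$ inside the balanced paraproduct.

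The key ingredient is a discrete commutator bound: Taylor-expanding $P_{<k-N}f$ at scale $2^{-j}$ against the convolution kernel of $P_j$ produces one derivative of $f$ together with a factor $2^{-j}$, giving
\begin{equation*}
  \|[P_{<k-N}f,\,P_j v\,\cdot]\,P_l u\|_X \lesssim 2^{-j}\,\|\nabla P_{<k-N}f\|_{L^\infty}\,\|P_j v\|_{L^\infty}\,\|P_l u\|_Y,
\end{equation*}
with $(X,Y) = (L^\infty,L^\infty)$, $(L^2, L^2)$, or $(L^4, L^4)$ in the three respective cases. Invoking the Bernstein-type characterizations $\|\nabla P_{<k-N}f\|_{L^\infty} \lesssim 2^{k\max(1-s_1,0)}\|f\|_{C^{s_1}_*}$, $\|P_j v\|_{L^\infty}\lesssim 2^{-js_2}\|v\|_{C^{s_2}_*}$, and $\|P_l u\|_Y \lesssim 2^{-ls}\|u\|_Z$ with $Z$ the matching norm, the dyadic piece at frequency index $j$ carries a weight proportional to $2^{-j(1+s_2)} \cdot 2^{k\max(1-s_1,0)} \cdot 2^{-ls}$ times the product of the three input norms. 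Summing over $k \le j$ — where convergence is immediate because $s_2 < 1$ guarantees the exponents have the correct sign — collapses the weight to $2^{-j(s_1+s_2)} \cdot 2^{-ls}$ on each dyadic piece, which is exactly what a norm of regularity $s+s_1+s_2$ requires when one finally sums over $j \sim l$.

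The three cases of the lemma then follow from a single argument, with only the Lebesgue exponent of the final $P_l u$ factor changing; $f$ and $v$ always remain in $L^\infty$-based spaces. The condition $s + s_2 > 0$ is the standard threshold for the outer balanced paraproduct $\Pi(v,u)$ to be well-defined in its natural norm; $s + s_1 + s_2 > 0$ is what allows the dyadic sum over the highest frequency index $j$ to converge at the improved regularity; and $s_2 < 1$ is precisely what lets the one-derivative gain from the single Taylor step be absorbed into $\|v\|_{C^{s_2}_*}$ rather than being lost. The main technical obstacle I anticipate is the bookkeeping of the boundary regime $k \sim j$, where the gain $2^{k-j}$ becomes $O(1)$ and the Taylor argument no longer produces decay; in that transitional strip one must instead invoke the standard balanced paraproduct estimates \eqref{CCCEstimate}--\eqref{HLLEstimate} applied directly, and check that matching the two dyadic expansions leaves only $O(1)$ values of $k$ per $j$ in this region, so that no logarithmic loss appears in the final summation.
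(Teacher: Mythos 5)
Note first that the paper itself contains no proof of this lemma --- it is imported verbatim from \cite{AIT} --- so your attempt can only be judged on its own terms, and as it stands it has genuine gaps. The central object of your argument, the ``discrete commutator'' $[P_{<k-N}f,\,P_j v\,\cdot]\,P_l u$, is as written a commutator of two multiplication operators and therefore vanishes identically; the $2^{-j}\|\nabla f_{low}\|_{L^\infty}$ gain you invoke is the signature of a projection--multiplication commutator such as $[P_l,\,P_{<k-N}f]$, and the argument needs to be rewritten around that object. More seriously, the terms you set aside as ``low-order remainders coming from replacing $P_{<k-N}f$ by $P_{<l-N}f$'' are in fact the principal part of the difference, not a remainder. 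In $T_f\Pi(v,u)$ the frequency cutoff on $f$ is pinned to the \emph{output} frequency $2^m$ of the balanced product, which can lie far below $j\sim l$; consequently the difference contains all configurations in which $f$ sits at an intermediate frequency $2^k$ with $m\ll k\lesssim l$, multiplied against the portion of $P_jv\,P_lu$ living at frequency $\sim 2^k$. These terms carry no derivative gain and are estimated purely by size, $2^{-ks_1}\,2^{-j(s+s_2)}$, and it is exactly here that both sign hypotheses are consumed: $s+s_2>0$ to sum over $j\gtrsim k$, and $s+s_1+s_2>0$ to sum over $k\gtrsim m$ at fixed output frequency $2^m$. Your proposal never estimates these terms at all.

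The quantitative bookkeeping of your commutator piece is also incorrect where it matters. The sum $\sum_{k\le j}2^{k\max(1-s_1,0)}$ collapses to $2^{j(1-s_1)}$ only when $s_1<1$; its convergence has nothing to do with $s_2<1$, which never appears in your exponents ($s_2$ enters only through the inert factor $\|P_jv\|_{L^\infty}\lesssim 2^{-js_2}\|v\|_{C^{s_2}_*}$), so your stated justification is wrong, and your second explanation of $s_2<1$ (absorbing a derivative into $\|v\|_{C^{s_2}_*}$) is inconsistent with the first and with your own formulas, since $v$ only ever appears through a single dyadic shell. For $s_1\ge 1$ --- precisely the regime in which this paper applies the lemma, with smooth para-coefficients such as $f=J^{-5/4}$ --- a single Taylor step yields at best a gain of order $2^{-j(1+s_2+s)}$ with a logarithmic loss, which falls short of the required $2^{-j(s+s_1+s_2)}$; one must either expand the smooth frequency cutoffs to higher order (the mismatch is $O(|\xi_f|/|\xi_u|)$ per order, so this is available) or bypass commutators entirely. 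Finally, your closing step ``sums over $j\sim l$'' ignores output-frequency localization: the products $P_jv$ times a term at frequency $\sim 2^l$ with $j\sim l$ spread over all output frequencies $2^m\lesssim 2^j$, so the $C_*^{s+s_1+s_2}$ (and $H^{s+s_1+s_2}$) bounds require organizing the sum by $m$ and summing $j\gtrsim m$, which is again where $s+s_1+s_2>0$ is used. The overall framework --- comparing the two placements of the low-pass of $f$ dyadically --- is the right one, but the main term is missing, the key estimate fails for large $s_1$, and the roles of the hypotheses are misassigned.
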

This result shows that para-coefficients act like constant coefficients modulo perturbative error, so that we can freely commute them with balanced paraproducts.

Finally, to paralinearize functions in Besov spaces, we need the following results in Section 2.8 of \cite{MR2768550}.
\begin{lemma}[Paralinearization \cite{MR2768550}] \label{t:Paralinear}
Let $s, \rho>0$, and $F(u)$ be a smooth function of $u$.
Assume that $\rho$ is not an integer.
Let $p, r_1, r_2 \in [1, \infty]$ and such that $r_2 \geq r_1$.
Let $r \in [1,\infty]$ be defined by $\frac{1}{r} = \min \{1, \frac{1}{r_1}+\frac{1}{r_2} \}$.
Then for any $u\in B^s_{p, r_1}\cap B^\rho_{\infty, r_2}$,
\begin{equation*}
    \|F(u)-F(0)-T_{F^{'}(u)}u\|_{B^{s+\rho}_{p,r}} \leq C(\|u\|_{L^\infty})\|u\|_{B^\rho_{\infty, r_2}}\|u\|_{B^s_{p. r_1}}.
\end{equation*}
\end{lemma}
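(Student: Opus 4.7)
The plan is to establish this classical Bony-type paralinearization via a telescoping Littlewood-Paley decomposition of $F(u)$, followed by a careful comparison with the paraproduct $T_{F'(u)}u$. Throughout, let $\Delta_k$ denote the $k$-th Littlewood-Paley projector and $S_k = \sum_{j<k}\Delta_j$, and let $N_0$ be the fixed integer implicit in the chosen cutoff $\chi(\theta,\eta)$ defining the paraproduct, so that $T_a u = \sum_{k\ge N_0} S_{k-N_0}a \cdot \Delta_k u$ up to a smoothing remainder.

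First, I would telescope
\begin{equation*}
F(u)-F(0) \;=\; \sum_{k\ge -1}\bigl[F(S_{k+1}u)-F(S_k u)\bigr] \;=\; \sum_{k\ge -1} m_k\,\Delta_k u,
\qquad m_k := \int_0^1 F'\bigl(S_k u+t\Delta_k u\bigr)\,dt .
\end{equation*}
Comparing this to $T_{F'(u)}u = \sum_k S_{k-N_0}F'(u)\,\Delta_k u$ gives
\begin{equation*}
F(u)-F(0)-T_{F'(u)}u \;=\; \sum_k A_k\,\Delta_k u + \sum_k B_k \,\Delta_k u,
\end{equation*}
where $A_k := m_k - F'(S_k u)$ (the Taylor remainder) and $B_k := F'(S_k u) - S_{k-N_0}F'(u)$ (the low-frequency truncation error). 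Since each $\Delta_k u$ is frequency-localized near $2^k$ and both $A_k,B_k$ have frequency support $\lesssim 2^k$, the terms $A_k\Delta_k u$ and $B_k\Delta_k u$ remain essentially supported in a dyadic annulus of size $2^k$, so the $B^{s+\rho}_{p,r}$-norm reduces to $\ell^r(2^{k(s+\rho)}\|\cdot\|_{L^p})$.

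For the Taylor piece, a second Taylor expansion gives
\[A_k = \int_0^1 (1-t)F''(S_k u+t\Delta_k u)\,dt\cdot \Delta_k u,\]
so $A_k\Delta_k u$ is \emph{quadratic} in $\Delta_k u$. Using $\|\Delta_k u\|_{L^\infty}\lesssim c_k 2^{-k\rho}\|u\|_{B^\rho_{\infty,r_2}}$ with $(c_k)\in\ell^{r_2}$ and $\|\Delta_k u\|_{L^p}\lesssim d_k 2^{-ks}\|u\|_{B^s_{p,r_1}}$ with $(d_k)\in \ell^{r_1}$, together with the Moser bound $\|F''(S_k u+t\Delta_k u)\|_{L^\infty}\le C(\|u\|_{L^\infty})$, Hölder in $\ell^r$ (with $1/r=\min\{1,1/r_1+1/r_2\}$) yields
\[\|\sum_k A_k\Delta_k u\|_{B^{s+\rho}_{p,r}}\lesssim C(\|u\|_{L^\infty})\|u\|_{B^\rho_{\infty,r_2}}\|u\|_{B^s_{p,r_1}}.\]
For the low-frequency truncation piece, I would split $B_k = [F'(S_k u) - F'(u)] + [F'(u)-S_{k-N_0}F'(u)]$. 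The second summand equals $\sum_{j\ge k-N_0}\Delta_j F'(u)$, whose $L^\infty$-norm is bounded in $\ell^{r_2}$ by $2^{-k\rho}\|F'(u)\|_{B^\rho_{\infty,r_2}}$; the first summand telescopes as $\sum_{j\ge k}\int_0^1 F''(\cdots)\,d\tau\,\Delta_j u$, which enjoys the same $2^{-k\rho}$ gain. Moser's estimate \eqref{MoserTwo}, valid here because $\rho>0$, gives $\|F'(u)\|_{B^\rho_{\infty,r_2}}\lesssim C(\|u\|_{L^\infty})\|u\|_{B^\rho_{\infty,r_2}}$, and the same Hölder argument in $\ell^r$ closes the estimate.

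The main obstacle will be handling $F'(S_k u)-F'(u)$ in Besov norms cleanly: one must show this difference behaves like a high-frequency tail of $F'(u)$ with $\ell^{r_2}$-summable coefficients, which requires the hypothesis that $\rho$ is not an integer (so that $B^\rho_{\infty,r_2}$ coincides with a Zygmund-type space and the Moser estimate produces the correct summability) and careful use of Bernstein inequalities on overlapping dyadic blocks. A secondary care point is ensuring that the frequency support of the products $A_k\Delta_k u$ and $B_k\Delta_k u$ stays within a bounded dilation of the annulus $\{|\xi|\sim 2^k\}$, so that the $\ell^r(L^p)$ characterization of $B^{s+\rho}_{p,r}$ applies term-by-term; this follows from the spectral localization of $S_k u$ and $\Delta_k u$ and the fact that $F$ is smooth, so its Nemytskii action preserves spectral support up to a fixed dilation factor absorbed into the constants.
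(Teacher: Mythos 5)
Your telescoping skeleton is the right one --- it is in fact the strategy of the textbook proof, since the paper does not prove this lemma but quotes it from Section 2.8 of \cite{MR2768550} --- but your justification contains a genuine error at the step where you reduce the $B^{s+\rho}_{p,r}$-norm to $\ell^r\bigl(2^{k(s+\rho)}\|\cdot\|_{L^p}\bigr)$ term by term. You assert that $A_k$ and $B_k$ ``have frequency support $\lesssim 2^k$'' and, later, that ``the Nemytskii action preserves spectral support up to a fixed dilation factor.'' This is false: composition with a smooth $F$ does not preserve band-limitedness. If $v$ is spectrally supported in a ball of radius $2^k$, then $F'(v)$ (e.g.\ $F'(v)=e^v$) has, in general, unbounded spectral support; only polynomial $F$ would behave as you claim. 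Consequently $m_k=\int_0^1 F'(S_ku+t\Delta_ku)\,dt$, $F'(S_ku)$, $A_k$ and $B_k$ are not frequency-localized, the products $A_k\Delta_ku$ and $B_k\Delta_ku$ do not live in dyadic annuli, and the term-by-term Besov characterization you invoke does not apply as stated.

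The standard repair, which is the actual content of the textbook argument, is to replace spectral localization by \emph{symbol-type derivative bounds}: by the chain rule, the Fa\`a di Bruno formula, and Bernstein's inequality applied to $S_ku$ and $\Delta_ku$, one shows
\begin{equation*}
\|\partial_x^{\alpha} m_k\|_{L^\infty}\le C_\alpha(\|u\|_{L^\infty})\,2^{k|\alpha|},
\end{equation*}
and similarly for $A_k$, $B_k$, so that each term $f_k:=(m_k-S_{k-N_0}F'(u))\Delta_ku$ satisfies $\|\partial_x^{M} f_k\|_{L^p}\lesssim c_k\,2^{k(M-s-\rho)}$ for every integer $M$, with $(c_k)\in\ell^r$. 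One then invokes the characterization of $B^{\sigma}_{p,r}$ with $\sigma=s+\rho>0$ for series whose terms are \emph{not} spectrally supported but carry bounds on derivatives up to some order $M>\sigma$ (Lemma 2.84 in \cite{MR2768550}); your estimates on $\|A_k\Delta_ku\|_{L^p}$ and $\|B_k\Delta_ku\|_{L^p}$ supply the $M=0$ case, but without the higher-derivative bounds the sum cannot be placed in $B^{s+\rho}_{p,r}$. Your treatment of the pieces themselves (the quadratic Taylor gain in $A_k$, the splitting of $B_k$ and the use of the Moser estimate for $F'(u)$, the H\"older pairing of $\ell^{r_1}$ and $\ell^{r_2}$ sequences) is correct and matches the textbook; it is only the localization claim that must be excised and replaced by the derivative-bound mechanism.
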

We remark that this lemma also works for multivariable functions $F$. 
We simply need to replace $F^{'}$ by partial derivatives of $F$.
See for instance Lemma $3.26$ in ~\cite{MR2805065}.
We will apply this paralinearization result with $p=r=r_1 =r_2 = \infty$, so that this is an estimate in Zygmund spaces.

\subsection{Paradiffential estimates for bilinear forms}
In the following, we consider the estimates for the bilinear forms.
Consider a pseudodifferential operator $A(x, D)$ with symbol $a(x,\xi)$ and a function $u(x)$.
Let $\chi_1(\theta_1, \theta_2), \chi_2(\theta_1, \theta_2)$ be two non-negative smooth functions 
\begin{equation}
    \chi_1(\theta_1, \theta_2) = \left\{
\begin{aligned}
1, \text{ when } |\theta_1|\leq \frac{1}{20}|\theta_2|,\\
0, \text{ when } |\theta_1|\geq \frac{1}{10}|\theta_2|,
\end{aligned}
\right.  \label{ChiOnelh}
\end{equation}
\begin{equation}
  \chi_2(\theta_1, \theta_2) = \left\{
\begin{aligned}
&1, \text{ when } \frac{1}{10}\leq \frac{|\theta_1|}{|\theta_2|} \leq 10,\\
&0, \text{ when } |\theta_1|\leq \frac{1}{20}|\theta_2| \text{ or } |\theta_2|\leq \frac{1}{20}|\theta_1|,
\end{aligned}
\right.  \label{ChiTwohh}
\end{equation}
and such that $\chi_1(\theta_1, \theta_2) + \chi_1(\theta_2, \theta_1) + \chi_2(\theta_1, \theta_2) =1$.
For bilinear forms $B(u,v)$ with symbol $m(\xi, \eta)$, we can define  the paradifferential bilinear forms in Weyl quantization:
\begin{itemize}
\item Low-high part and high-high part of the holomorphic bilinear forms:
\begin{align*}
 \widehat{B_{lh}(u,v)}(\zeta) &= \int_{\zeta = \xi +\eta} \chi_1\left(\xi, \eta+\xi\right) m(\xi, \eta)\hat{u}(\xi)\hat{v}(\eta) d\xi,\\
  \widehat{B_{hh}(u,v)}(\zeta) &= \int_{\zeta = \xi +\eta} \chi_2\left(\xi, \eta+\xi\right) m(\xi, \eta)\hat{u}(\xi)\hat{v}(\eta) d\xi.
\end{align*}
\item Low-high part and high-high part of the mixed bilinear forms:
\begin{align*}
 \widehat{B_{lh}(u,v)}(\eta) &= 1_{\eta>0}\int_{\eta = \zeta-\xi} \chi_1\left(\xi, \zeta-\xi\right) m(\xi, \zeta)\bar{\hat{u}}(\xi)\hat{v}(\zeta) d\xi,\\
  \widehat{B_{hh}(u,v)}(\eta) &=1_{\eta>0}\int_{\eta = \zeta-\xi} \chi_2\left(\xi, \zeta-\xi\right) m(\xi, \zeta)\bar{\hat{u}}(\xi)\hat{v}(\zeta) d\xi.
\end{align*}
\end{itemize}
These represent low-high and high-high paradifferential parts of the bilinear forms $B(u,v)$, respectively $\nP B(\bar{u},v)$, restricted to the holomorphic class. 
We will always assume that bilinear symbols $m$ are homogeneous, and smooth away from $(0,0)$.

When the bilinear symbol $m$  is homogeneous, we have the following direct generalization of Lemma \ref{t:ParaProductEst} for bilinear forms, see Coifman-Meyer \cite{MR0518170}, Kenig-Stein \cite{MR1682725},  Muscalu \cite{MR2371442}, and Muscalu-Tao-Thiele \cite{MR1887641}.
\begin{lemma} \label{t:BMuBound}
Let $B^\mu(f,g)$ be a homogeneous  bilinear form of order $\mu\geq 0$ as above. 
For the high-high bilinear forms, when $\alpha+\beta+\mu>0$, $\mu_1 + \mu_2 = \mu$
\begin{align}
&\|B^\mu_{hh}(f, g)\|_{H^{\alpha + \beta}} \lesssim \| f\|_{C_{*}^{\alpha + \mu_1}} \| g\|_{H^{\beta + \mu_2}}, \label{HHBilinearHCH}\\
& \|B^\mu_{hh}(f, g)\|_{C_{*}^{\alpha + \beta}} \lesssim \| f\|_{C_{*}^{\alpha+\mu_1}} \| g\|_{C_{*}^{\beta+\mu_2}}.  \label{HHBilinearCCC}
\end{align}

For the estimate of low-high bilinear form,
\begin{align}
&\|B^\mu_{lh}(f, g) \|_{H^{s-m}} \lesssim \|f\|_{C_{*}^{-m}} \| g \|_{H^{s+\mu}} \label{BFHsCmStar}, \\
&\|B^\mu_{lh}(f, g) \|_{H^{s}} \lesssim \|f\|_{L^\infty(\mathbb{R})} \| g \|_{H^{s+\mu}}, \label{BFHsLinfty}\\
&\|B^\mu_{lh}(f, g) \|_{C_{*}^{s-m}} \lesssim \|f\|_{C_{*}^{-m}} \| g \|_{C_{*}^{s+\mu}}, \label{BFCsCmStar}\\
&\|B^\mu_{lh}(f, g) \|_{C_{*}^{s}} \lesssim \|f\|_{L^\infty} \| g \|_{C_{*}^{s+\mu}}. \label{BFCsLInfty}     
\end{align}
\end{lemma}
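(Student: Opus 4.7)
The plan is to reduce Lemma~\ref{t:BMuBound} to the scalar paraproduct bounds of Lemma~\ref{t:ParaProductEst} via Littlewood--Paley decomposition, exploiting the frequency cutoffs $\chi_1,\chi_2$ and the homogeneity of the bilinear symbol $m$ of order $\mu$.

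For the low-high estimates \eqref{BFHsCmStar}--\eqref{BFCsLInfty}, I would use that on $\mathrm{supp}\,\chi_1(\xi,\xi+\eta)$ the output frequency $\zeta=\xi+\eta$ satisfies $|\eta|\sim|\zeta|\gg|\xi|$. Writing $m(\xi,\eta)=|\eta|^{\mu}\widetilde{m}(\xi/|\eta|,\operatorname{sgn}\eta)$ with $\widetilde{m}$ smooth in a neighborhood of $\xi/|\eta|=0$, I would Taylor-expand in $\xi/|\eta|$ to order $K$ to obtain
\begin{equation*}
m(\xi,\eta)=\sum_{j=0}^{K}\xi^{j}\,a_{j}(\eta)+\xi^{K+1}R_{K}(\xi,\eta),
\end{equation*}
where each $a_j(\eta)$ is a one-dimensional Fourier multiplier of order $\mu-j$ and $R_K$ has smoothing order $K+1-\mu$ in the high variable. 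Substituting back,
\begin{equation*}
B^\mu_{lh}(f,g)=\sum_{j=0}^{K}T_{\partial_{x}^{j}f}\bigl(a_{j}(D)g\bigr)+\mathrm{Rem}_{K},
\end{equation*}
so each summand is a classical low-high paraproduct. The bounds \eqref{BFHsCmStar}--\eqref{BFCsLInfty} then follow directly from \eqref{HsCmStar}--\eqref{CsLInfty} combined with the elementary embedding $\|\partial_{x}^{j}f\|_{C^{\alpha-j}_{*}}\lesssim\|f\|_{C^{\alpha}_{*}}$; choosing $K$ sufficiently large makes $\mathrm{Rem}_{K}$ smoothing of arbitrary order.

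For the high-high estimates \eqref{HHBilinearHCH}--\eqref{HHBilinearCCC}, I would dyadically decompose $f=\sum_{j}f_{j}$ and $g=\sum_{k}g_{k}$ with $f_{j}=P_{j}f$. The support of $\chi_{2}$ forces $|\xi|\sim|\eta|$, so only pairs with $|j-k|\leq N_{0}$ contribute, and the output frequency satisfies $|\zeta|\lesssim 2^{\max(j,k)+N_{1}}$. Standard bilinear Coifman--Meyer theory applied to the smooth homogeneous symbol of order $\mu$ restricted to the annular region yields the block estimate
\begin{equation*}
\bigl\|P_{l}B^\mu_{hh}(f_{j},g_{k})\bigr\|_{L^{p}}\lesssim 2^{\mu k}\,\|f_{j}\|_{L^{p_{1}}}\|g_{k}\|_{L^{p_{2}}},\qquad \tfrac{1}{p}=\tfrac{1}{p_{1}}+\tfrac{1}{p_{2}},\ l\leq k+N_{1}.
\end{equation*}
Taking $(p_{1},p_{2})=(\infty,2)$ for \eqref{HHBilinearHCH} and $(\infty,\infty)$ for \eqref{HHBilinearCCC}, distributing the Zygmund and Sobolev weights $\|f_{k}\|_{L^{\infty}}\lesssim 2^{-(\alpha+\mu_{1})k}\|f\|_{C^{\alpha+\mu_{1}}_{*}}$ and $\|g_{k}\|_{L^{2}}\lesssim 2^{-(\beta+\mu_{2})k}\|g\|_{H^{\beta+\mu_{2}}}$, and summing over $k\geq l-N_{1}$ reduces everything to the geometric series $\sum_{k\geq l}2^{-(\alpha+\beta+\mu)k}$. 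Applying the square-sum characterization of $H^{\alpha+\beta}$ and the supremum characterization of $C^{\alpha+\beta}_{*}$ then yields the claimed bounds.

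The principal obstacle is precisely the strict inequality $\alpha+\beta+\mu>0$ in the high-high case. In high-high interactions the output frequency may be arbitrarily low due to cancellation between the two factors, and this condition is exactly what guarantees convergence of the geometric sum $\sum_{k\geq l}2^{-(\alpha+\beta+\mu)k}$ controlling the low-output-frequency contribution. Beyond this point, the proof is a structural adaptation of Bony's paraproduct estimates to smooth homogeneous bilinear symbols of non-negative order, requiring no techniques beyond dyadic decomposition and bilinear multiplier theory.
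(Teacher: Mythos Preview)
The paper does not provide its own proof of this lemma; it is stated as a direct generalization of the paraproduct estimates in Lemma~\ref{t:ParaProductEst}, with citations to the bilinear multiplier literature (Coifman--Meyer, Kenig--Stein, Muscalu, Muscalu--Tao--Thiele). Your sketch via Littlewood--Paley decomposition, Taylor expansion of the symbol in the low-high regime, and block-level Coifman--Meyer estimates in the balanced regime is exactly the standard route one finds in those references, so there is nothing to compare against here.

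One small slip worth noting: in your high-high summation you claim the relevant geometric series has exponent $-(\alpha+\beta+\mu)$, but inserting $\mu_1+\mu_2=\mu$ into $2^{\mu k}\cdot 2^{-(\alpha+\mu_1)k}\cdot 2^{-(\beta+\mu_2)k}$ actually gives $2^{-(\alpha+\beta)k}$. Also, for the Sobolev estimate \eqref{HHBilinearHCH} you cannot use the pointwise bound $\|g_k\|_{L^2}\lesssim 2^{-(\beta+\mu_2)k}\|g\|_{H^{\beta+\mu_2}}$ as written (that only holds for $B^{\beta+\mu_2}_{2,\infty}$); instead keep $\{2^{(\beta+\mu_2)k}\|g_k\|_{L^2}\}_k\in\ell^2$ and apply Young's inequality $\ell^1*\ell^2\to\ell^2$ to the resulting discrete convolution with kernel $2^{(\alpha+\beta)n}1_{n\le N_1}$. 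With these two corrections your argument is complete.
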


\section{Hydroelastic waves related estimates} \label{s:HydroWaveEst}
In the second part of the appendix we recall Sobolev and Zygmund estimates for auxiliary functions in hydroelastic waves.
The proofs can be found in Section 2 of \cite{wan2024}.

We first recall the estimate for the frequency shift $a = i\left(\bar \nP [\bar{R} R_\alpha]- \nP[R\bar{R}_\alpha]\right)$ and the advection velocity $b = 2\Re \nP[(1-\bar{Y})R]$.
\begin{lemma}
The frequency shift $a$ satisfies the estimate
\begin{equation} \label{aEst}
 \|a\|_{H^\epsilon} \lesssim \|R \|^2_{W^{\frac{1}{2}+\epsilon, 4}} \lesssim \ASSharp.  
\end{equation}
The advection velocity $b$ satisfies estimates
\begin{equation} \label{BCOneStar}
\| b\|_{C^{\f14}_{*}} \lesssim_\CalAZ \mathcal{A}_{\f74},  \quad \quad
\|b\|_{W^{\f12,4}} \lesssim_{\CalAZ} \mathcal{A}_{\sharp, \f74},
\end{equation}
as well as the Sobolev estimate
\begin{equation} \label{BHsEst}
\| b\|_{H^s}  \lesssim_{\CalAZ} \|R\|_{H^{s}}, \quad s>0.
\end{equation}
\end{lemma}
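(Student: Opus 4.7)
\medskip

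\noindent\textbf{Proof proposal.} The common strategy is to exploit the holomorphic structure together with paraproduct decomposition: for both $a$ and $b$, the holomorphic projections $\nP$, $\bar\nP$ annihilate the ``wrong sign'' low--high paraproducts, leaving only one low--high term and a balanced remainder, each of which is estimated by Lemma \ref{t:ParaProductEst} with carefully chosen exponents. The plan is to treat the four estimates separately but in parallel.

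\smallskip

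\noindent\emph{Step 1: Estimate for $a$.} Since $\overline{\nP[R\bar R_\alpha]}=\bar\nP[\bar R R_\alpha]$, one rewrites $a=2\Im\,\nP[R\bar R_\alpha]$. Because $R$ is holomorphic and $\bar R_\alpha$ is anti-holomorphic, the decomposition $R\bar R_\alpha=T_R\bar R_\alpha+T_{\bar R_\alpha}R+\Pi(R,\bar R_\alpha)$ simplifies under $\nP$: the first term is purely positive-frequency and drops out, so
\[
\nP[R\bar R_\alpha]=T_{\bar R_\alpha}R+\nP\Pi(R,\bar R_\alpha).
\]
For the low--high term apply \eqref{HsLFour} with $m=\tfrac12$ to get $\|T_{\bar R_\alpha}R\|_{H^\epsilon}\lesssim \|R\|_{W^{1/2,4}}\|R\|_{W^{1/2+\epsilon,4}}$. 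For the balanced term apply \eqref{HLLEstimate} with $\alpha=\tfrac12+\epsilon$, $\beta=-\tfrac12$ (so $\alpha+\beta=\epsilon>0$). Both bounds collapse to $\|R\|_{W^{1/2+\epsilon,4}}^2$, and since $\epsilon<\epsilon'$, this is $\lesssim \ASSharp^2 \lesssim_{\CalAZ}\ASSharp$ under the standing hypothesis $\CalAZ\lesssim 1$.

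\smallskip

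\noindent\emph{Step 2: Zygmund and $W^{1/2,4}$ estimates for $b$.} Writing $b=2\,\mathrm{Re}\,\nP[(1-\bar Y)R]$ and paralinearizing,
\[
(1-\bar Y)R=T_{1-\bar Y}R-T_R\bar Y-\Pi(\bar Y,R).
\]
By the same frequency parity argument as in Step 1, $\nP[T_R\bar Y]=0$, leaving $\nP[(1-\bar Y)R]=T_{1-\bar Y}R-\nP\Pi(\bar Y,R)$. For the $C^{1/4}_\ast$ bound, estimate $\|T_{1-\bar Y}R\|_{C^{1/4}_\ast}\lesssim \|1-\bar Y\|_{L^\infty}\|R\|_{C^{1/4+\epsilon}_\ast}\lesssim_{\CalAZ}\mathcal A_{7/4}$ by \eqref{CsLInfty}, and apply \eqref{CCCEstimate} with exponents $\alpha=7/4+\epsilon$, $\beta=-3/2+\epsilon$ to control $\|\Pi(\bar Y,R)\|_{C^{1/4}_\ast}\lesssim_{\CalAZ}\mathcal A_{7/4}$, using paralinearization of $Y=\bw/(1+\bw)$ and Moser \eqref{MoserTwo} to pass the Zygmund norm of $Y$ to that of $\bw$. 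For the $W^{1/2,4}$ bound, run the analogous argument with $L^\infty\cdot L^4$ versions of the paraproduct estimates, using \eqref{HsLFour} for $T_{1-\bar Y}R$ and the standard Besov bilinear bound $\|\Pi(f,g)\|_{W^{s+t,4}}\lesssim \|f\|_{C^s_\ast}\|g\|_{W^{t,4}}$ ($s+t>0$) with $s=2+\epsilon'$, $t=-3/2+\epsilon'$ for the balanced part.

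\smallskip

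\noindent\emph{Step 3: Sobolev estimate for $b$.} The same decomposition yields $\|T_{1-\bar Y}R\|_{H^s}\lesssim_\CalAZ\|R\|_{H^s}$ from \eqref{HsLinfty}. The key point for the remaining balanced piece is to keep all surplus derivatives on $R$, not on $Y$. Apply \eqref{HCHEstimate} with $\alpha=\epsilon$ and $\beta=s-\epsilon$ (so $\alpha+\beta=s>0$) to obtain $\|\Pi(\bar Y,R)\|_{H^s}\lesssim \|Y\|_{C^\epsilon_\ast}\|R\|_{H^s}\lesssim_{\CalAZ}\|R\|_{H^s}$, where $\|Y\|_{C^\epsilon_\ast}$ is bounded via Moser \eqref{MoserTwo} by $\|\bw\|_{C^\epsilon_\ast}\lesssim\CalAZ$.

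\smallskip

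\noindent\emph{Main obstacle.} The only subtle point is the $W^{1/2,4}$ bound for $b$: it mixes a Lebesgue exponent $4$ with half a derivative, and one must be careful in applying the Hölder-type paraproduct estimates so that no regularity is forced onto $Y$ beyond the budget allowed by $\CalAZ$ and $\ASSharp$. The analogous estimates for $a$ and the Sobolev bound for $b$ follow more or less routinely once the low--high term has been canceled by the holomorphic projection.
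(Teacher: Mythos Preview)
Your argument is correct and follows the standard route (the paper itself defers this proof to \cite{wan2024}). Two small fixes: in Step~1 the bound $\|R\|_{W^{1/2+\epsilon,4}}^2\le\ASSharp$ is immediate from the definition $\ASSharp=\mathcal{A}_{\sharp,7/4}^2$ and needs no appeal to $\CalAZ\lesssim 1$; in Step~2 for the $W^{1/2,4}$ bound, the estimate \eqref{HsLFour} lands in $H^s$, not $W^{s,4}$, so use instead the Besov analogue $\|T_a u\|_{W^{s,4}}\lesssim\|a\|_{L^\infty}\|u\|_{W^{s,4}}$, and your exponent choice $s=2+\epsilon'$ for the balanced piece asks for $\|Y\|_{C^{2+\epsilon'}_*}$, a quarter-derivative more than $\mathcal{A}_{\sharp,7/4}$ supplies---simply place the $W^{\cdot,4}$ norm on $R$ instead, e.g.\ $\|\Pi(\bar Y,R)\|_{W^{1/2+\epsilon,4}}\lesssim\|Y\|_{C^\epsilon_*}\|R\|_{W^{1/2,4}}\lesssim_{\CalAZ}\mathcal{A}_{\sharp,7/4}$.
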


Next, we recall estimates for $Y : = \frac{\bw}{1+\bw}$ and $M = 2\Re \nP[R\bar{Y}_\alpha - \bar{R}_\alpha Y]$.
\begin{lemma}
For $s>0$, the auxiliary function $Y$ satisfies
\begin{equation}
\|Y\|_{H^s} \lesssim_\CalAZ \|\bw\|_{H^s}, \quad \|Y\|_{C^s_{*}} \lesssim_\CalAZ \|\bw\|_{C^s_{*}}, \quad \|Y\|_{W^{s,4}}  \lesssim_\CalAZ \|\bw\|_{H^s}. \label{YMoser}
\end{equation}
In particular, $\|Y\|_{C^{\f74}_{*}} \lesssim_\CalAZ \mathcal{A}_{\f74}$ and $\|Y\|_{W^{2,4}} \lesssim_\CalAZ \mathcal{A}_{\sharp,\f74}$.
Moreover, one can write
\begin{equation}
    Y = T_{(1-Y)^2}\bw + E, \label{YWExpression}
\end{equation}
where the error $E$ satisfies the bounds
\begin{equation*}
\|E\|_{C_*^{s+\f74}} \lesssim_\CalAZ \mathcal{A}_{\f74} \|\bw\|_{C_*^{s}}, \quad\|E\|_{H^{s+\f74}} \lesssim_\CalAZ  \mathcal{A}_{\f74} \|\bw\|_{H^{s}}.
\end{equation*}
The auxiliary function $M$ satisfies bounds
\begin{equation} \label{MBound}
\| M\|_{C^\f14_{*}} \lesssim \mathcal{A}_1 \mathcal{A}_{\f74} \lesssim \ASSharp, \quad \| M\|_{H^\f12} \lesssim \ASSharp.
\end{equation}
\end{lemma}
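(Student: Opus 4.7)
\medskip

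\noindent\textbf{Proof proposal.} The plan is to treat the three assertions in order. For the $Y$-bounds, I would view $Y=\mathbf{W}/(1+\mathbf{W})=F(\mathbf{W})$ with $F(z)=z/(1+z)$ smooth near the origin and $F(0)=0$. Since $\CalAZ\lesssim 1$ controls $\|\mathbf{W}\|_{L^\infty}$ (a fortiori $\mathbf{W}$ is uniformly bounded away from $-1$), the Moser-type bounds \eqref{MoserOne}--\eqref{MoserTwo} immediately give the $H^s$ and $C^s_*$ estimates. The $W^{s,4}$ estimate is handled by the same Moser machinery applied in the Besov scale $B^s_{4,q}$, combined with Sobolev embedding in $1$D; the particular specialisations $\|Y\|_{C^{7/4}_*}\lesssim_\CalAZ \mathcal{A}_{7/4}$ and $\|Y\|_{W^{2,4}}\lesssim_\CalAZ \mathcal{A}_{\sharp,7/4}$ then follow by plugging in $s=7/4$ and $s=2$.

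For the paralinearisation $Y=T_{(1-Y)^2}\mathbf{W}+E$, observe that $F'(z)=(1+z)^{-2}$, so $F'(\mathbf{W})=(1-Y)^2$ in view of the identity $1-Y=(1+\mathbf{W})^{-1}$. I would then apply the paralinearisation lemma (Lemma~\ref{t:Paralinear}) to $F(\mathbf{W})$ with the smoothness gain $\rho=7/4$ (which is admissible since $\tfrac{7}{4}\notin\mathbb{N}$), taking $r_1=r_2=r=\infty$, $p=\infty$ for the Zygmund bound on $E$ and $p=2$ for the Sobolev bound on $E$. The resulting estimates are exactly
\[
\|E\|_{C_*^{s+7/4}} \lesssim_\CalAZ \mathcal{A}_{7/4}\|\mathbf{W}\|_{C_*^s},\qquad
\|E\|_{H^{s+7/4}} \lesssim_\CalAZ \mathcal{A}_{7/4}\|\mathbf{W}\|_{H^s},
\]
with the implicit constant absorbed into $\CalAZ$ through the Moser bound on $F'(\mathbf{W})-1$.

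For the $M$-estimates, I would use the paradifferential decomposition
\[
R\bar Y_\alpha = T_R\bar Y_\alpha + T_{\bar Y_\alpha}R+\Pi(R,\bar Y_\alpha),\qquad
\bar R_\alpha Y = T_{\bar R_\alpha}Y + T_Y\bar R_\alpha+\Pi(\bar R_\alpha,Y),
\]
and estimate each piece with the paraproduct rules from Lemma~\ref{t:ParaProductEst}. For the $C^{1/4}_*$-bound I would invoke \eqref{CCCEstimate} on the balanced pieces (with index budget $\tfrac74+(-\tfrac12)=\tfrac54>\tfrac14$) and \eqref{CsCmStar} on the unbalanced pieces, then combine with the already-established $\|Y\|_{C^{7/4}_*}\lesssim_\CalAZ\mathcal{A}_{7/4}$ and the fact that $R\in C^{1/4+\epsilon}_*$ is built into $\mathcal{A}_{7/4}$. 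For the $H^{1/2}$-bound I would use \eqref{HLLEstimate} and \eqref{HsLFour} with Sobolev index budget $2+(-\tfrac54)=\tfrac34>\tfrac12$, feeding in $\|Y\|_{W^{2,4}}\lesssim_\CalAZ\mathcal{A}_{\sharp,7/4}$ and $\|R\|_{W^{-1/4+\epsilon',4}}\lesssim\mathcal{A}_{\sharp,7/4}$. Summing the pieces yields $\|M\|_{C^{1/4}_*}\lesssim\mathcal{A}_1\mathcal{A}_{7/4}\lesssim\ASSharp$ and $\|M\|_{H^{1/2}}\lesssim\ASSharp$.

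The main subtlety, I expect, lies in the $W^{s,4}$-part of the $Y$-bound and in matching indices cleanly in the $M$-estimates: a naive Moser in $W^{s,4}$ does not come packaged in the appendix lemmas I have at hand, so I would either rederive it via the chain rule plus \eqref{HLLEstimate} and Sobolev embedding, or recast $Y=\mathbf{W}\cdot(1-Y)$ and bootstrap using a paraproduct/Moser argument in $W^{s,4}$. Once the $Y$-estimates are in place, the remaining two parts are essentially a bookkeeping exercise in paraproduct calculus.
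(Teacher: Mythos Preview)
The paper does not actually prove this lemma in-text; it states at the start of Appendix~\ref{s:HydroWaveEst} that ``The proofs can be found in Section~2 of~\cite{wan2024}.'' Your approach---Moser estimates \eqref{MoserOne}--\eqref{MoserTwo} for the $Y$-bounds, the paralinearisation Lemma~\ref{t:Paralinear} with $F'(\bw)=(1-Y)^2$ for \eqref{YWExpression}, and a paraproduct decomposition together with the holomorphic projection structure of $M$ for \eqref{MBound}---is the standard route and is what one expects the cited reference to do. So your proposal is correct and, to the extent one can compare, aligned with the intended argument.

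One small observation: the third inequality in \eqref{YMoser} as printed, $\|Y\|_{W^{s,4}}\lesssim_\CalAZ\|\bw\|_{H^s}$, is almost certainly a typo for $\|Y\|_{W^{s,4}}\lesssim_\CalAZ\|\bw\|_{W^{s,4}}$ (the Moser bound in the $L^4$-based scale). Indeed, $H^s$ does not embed into $W^{s,4}$ in one dimension, and the claimed specialisation $\|Y\|_{W^{2,4}}\lesssim_\CalAZ\mathcal{A}_{\sharp,7/4}$ only follows from the $W^{s,4}$-Moser bound, not from an $H^s$-bound. You already anticipated this by proposing to derive the $W^{s,4}$-Moser estimate directly, which is the right fix.
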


\section{Discussion on the resonances} \label{s:Discuss}
In the final part of the appendix, we have a brief discussion on the three-wave and four-wave interactions in the analysis.
We conclude with a brief discussion of why resonances do not impede the analysis, so that the normal forms can remove those non-perturbative quadratic terms and certain non-resonant parts of cubic terms.

Recall that the linearization of the hydroelastic waves around the zero solution is given by \eqref{e:ZeroLinear}, and its dispersion relation is then
\begin{equation} \label{TauXiRelation}
    \tau = \pm |\xi|^{\f52}, \quad \xi<0.
\end{equation}
Thus resonances in bilinear interactions correspond to zeroes of the expression $|\xi|^{\f52} \pm |\eta|^{\f52} \pm |\xi + \eta|^{\f52}$.
A direct calculation shows:
\begin{align*}
    &\prod_{\pm}|\xi|^{\f52}\pm|\eta|^{\f52}\pm|\xi+\eta|^{\f52}\\
=& \xi^2\eta^2[25(\xi^3+2\xi^2\eta+2\xi\eta^2+\eta^3)^2-4\xi^3\eta^3]\\
=&\xi^2\eta^2(25\xi^6+100\xi^5\eta+200\xi^4\eta^2+246\xi^3\eta^3+200\xi^2\eta^4+100\xi\eta^5+25\eta^6), \\
=& \xi^2\eta^2[25(\xi^4+\eta^4)(\xi+\eta)^2+50\xi^2\eta^2(\xi+\eta)^2+\xi^2\eta^2(125\xi^2+146\xi\eta+125\eta^2)] \\
=& \xi^2 \eta^2 \left\{ 25(\xi^2+\eta^2)^2(\xi+\eta)^2+\xi^2\eta^2\left[\left(25\xi+\frac{73}{25}\eta\right)^2+\left(125-\frac{73^2}{125}\right)\eta^2\right]  \right\} \geq 0.
\end{align*}
Equality holds only if $\xi = \eta = 0$.
Since the bilinear symbols for our paradifferential quadratic normal forms involve denominators that are non-zero for $\xi\ne \eta$ (as is the case for low-high or balanced terms), three-wave resonances cannot occur. 
Thus, the bilinear symbols are well-defined.

We then consider the situation of four-wave resonances.
For trilinear terms, suppose the frequencies of three factors are $\xi_1$, $\xi_2$ and $\xi_3$, then the output frequency is $\xi_0 = -\xi_1-\xi_2-\xi_3$. 
Four-wave resonance occurs if
\begin{equation*}
    |\xi_0|^\f52 \pm |\xi_1|^\f52\pm |\xi_2|^\f52\pm|\xi_3|^\f52 = 0
\end{equation*}
for certain choices of plus or minus sign.
Since $f(\xi) = |\xi|^{\f52}$ is strictly convex, this equality is possible only if the frequencies are paired, i.e.
\begin{equation*}
\xi_i = \pm \xi_j, \quad \xi_k = \pm \xi_l, \quad \{i,j,k,l \} = \{0,1,2,3 \},
\end{equation*}
and $\pm$ signs are also chosen property for the cancellation.
For non-perturbative cubic terms that we want to apply cubic normal forms for elimination, the frequencies of each factor satisfy either
\begin{equation*}
    |\xi_i| \ll |\xi_j|< |\xi_k|, \quad \{i,j,k \} = \{1,2,3 \},
\end{equation*}
or
\begin{equation*}
     |\xi_i| , |\xi_j|\ll |\xi_k|, \quad \{i,j,k \} = \{1,2,3 \}, \quad \xi_i,\xi_j \text{ have the same sign}.
\end{equation*}
In either of the situations, four-wave resonances cannot happen. 
Consequently, four-wave resonances are avoided in the construction of the cubic normal forms and quartic energy corrections.

To compute the expression of cubic normal forms or quartic  energy corrections, one will have to solve $8\times 8$ algebraic systems, as in the computation of quadratic normal forms or cubic energy corrections.
While computing the exact expressions of cubic normal forms  or quartic energy corrections involves solving complex algebraic systems, their exact forms are not critical.
Their primary function is to eliminate non-perturbative terms.
Hence, we will not compute them explicitly in the analysis.

In the final paragraph, we give a qualitative explanation of why these cubic normal forms or quartic modified energies can remove non-perturbative cubic terms or quartic energies in our analysis.
Each normal form transformation generates higher-order terms with lower derivative counts. 
For example, when we compute quadratic normal forms to eliminate all non-perturbative quadratic terms, it produces extra cubic and higher terms with at least one lower order.
Similarly, when we compute cubic normal forms to eliminate remaining non-perturbative cubic terms, it produces extra quartic and higher terms with at least one lower order.
Given that hydroelastic waves are dispersive equations of order $\f52$, and non-perturbative source terms have order of at most $\f32$,  the remaining quartic and higher terms of order less than or equal to zero are perturbative after performing quadratic and cubic normal forms.
The same logic applies to the integral corrections.
Every time we compute the integral corrections, it produces extra higher integral terms with lower order.
Hence, after the construction of cubic and quartic integral corrections, the remaining quintic and higher integral terms are perturbative.

\bibliography{HWW}
\bibliographystyle{plain}
	
\end{document}